\documentclass[12pt]{amsart}

\usepackage{graphicx}
\usepackage{amsmath}
\usepackage{amsfonts}
\usepackage{amssymb}
\usepackage{pstricks}
\usepackage{url}
\usepackage{nicefrac}
\usepackage{color}
\usepackage{subfigure}
\input xy
\xyoption{all}

\newtheorem{theorem}{Theorem}

\newtheorem{corollary}[theorem]{Corollary}
\newtheorem{definition}[theorem]{Definition}

\newtheorem{lemma}[theorem]{Lemma}

\newtheorem{proposition}[theorem]{Proposition}
\newtheorem{oldtheorem}{Theorem}

\theoremstyle{definition}
\newtheorem{remark}[theorem]{Remark}

\newcommand{\trace}{\mathrm{trace}}
\renewcommand{\ker}{\mathrm{ker}}
\newcommand{\im}{\mathrm{im}}
\newcommand{\Fix}{\mathrm{Fix}}
\newcommand{\Per}{\mathrm{Per}}
\renewcommand{\int}{\mathrm{int}}
\newcommand{\Inv}{\mathrm{Inv}}
\newcommand{\id}{\mathrm{id}}

\addtolength{\textwidth}{4cm}
\addtolength{\evensidemargin}{-2.05cm}
\addtolength{\oddsidemargin}{-2.15cm}
\addtolength{\textheight}{1.1cm} \addtolength{\topmargin}{-0.5cm}

\title{Infinite series in cohomology: Attractors and Conley index}

\author[L. Hern\'andez-Corbato]{Luis Hern\'andez-Corbato}
\author[F. R. Ruiz del Portal]{Francisco R. Ruiz del Portal}
\author[J. J. S\'anchez-Gabites]{J. J. S\'anchez-Gabites}
\subjclass[2000]{37C25, 37B30, 54H25}
\keywords{Alexander-Spanier cohomology, attractors, Conley index, fixed point index, filtration pairs}

\thanks{The authors have been supported by MINECO grant MTM2015-63612-P}

\address{L. Hern\'andez Corbato \\ Departamento de Matem\'atica Aplicada a las TIC \\ Universidad Polit\'ecnica de Madrid \\  28031 Madrid \\ Spain}
\email{luis.hcorbato@upm.es}

\address{F. R. Ruiz del Portal \\ Departamento de \'Algebra, Geometr\'{\i}a y Topolog\'{\i}a\\ Universidad Complutense de Madrid \\ Plaza de Ciencias 3 \\ 28040 Madrid \\ Spain}
\email{rrportal@ucm.es}

\address{J. J. S{\'{a}}nchez-Gabites \\ Departamento de An\'alisis Econ\'omico (M\'etodos cuantitativos) \\ Facultad de Ciencias Econ\'omicas y Empresariales \\ Universidad Aut\'onoma de Madrid \\ 28049 Madrid \\ Spain}
\email{JaimeJ.Sanchez@uam.es}

\begin{document}

\begin{abstract} In this paper we study the cohomological Conley index of arbitrary isolated invariant continua for continuous maps $f \colon U \subseteq \mathbb{R}^d \to \mathbb{R}^d$ by analyzing the topological structure of their unstable manifold. We provide a simple dynamical interpretation for the first cohomological Conley index, describing it completely, and relate it to the cohomological Conley index in higher degrees. A number of consequences are derived, including new computations of the fixed point indices of isolated invariant continua in dimensions 2 and 3.

Our approach exploits certain attractor-repeller decomposition of the unstable manifold, reducing the study of the cohomological Conley index to the relation between the cohomology of an attractor and its basin of attraction. This is a classical problem that, in the present case, is particularly difficult because the dynamics is discrete and the topology of the unstable manifold can be very complicated. To address it we develop a new method that may be of independent interest and involves the summation of power series in cohomology: if $Z$ is a metric space and $K \subseteq Z$ is a compact, global attractor for a continuous map $g \colon Z \to Z$, we show how to interpret series of the form $\sum_{j \ge 0} a_j (g^*)^j$ as endomorphisms of the cohomology group of the pair $(Z,K)$.
\end{abstract}

\maketitle

\section{Introduction}

The Conley index is a powerful topological invariant of dynamical systems that has proved to be very useful. It can be thought of as a far-reaching generalization of the fixed point index to (almost) arbitrary compact invariant sets $X$, and in fact both indices satisfy formally similar properties. The only requirement on $X$ is that it be \emph{isolated} or \emph{locally maximal}, which means that it has a compact neighbourhood $N$ such that $X$ is the maximal invariant subset of $N$ (in that case \(N\) is called an \emph{isolating neighborhood} of \(X\)). The Conley index was first introduced by Conley \cite{conley1} in the context of continuous dynamical systems under the name ``Morse index'' and extended later on by several authors (see for instance \cite{franksricheson}, \cite{handbookconley}, \cite{Mro1}, \cite{robinsalamon1} or \cite{Szy2}),  to discrete dynamical systems. In either case the index is homotopical in nature, but it has homological and cohomological versions that are often more convenient to work with. For a complete introduction to the theory of Conley index we refer the reader to \cite{handbookconley}.

In this paper we shall be concerned with the (co)homological Conley index for a discrete dynamical system generated by the iteration of some continuous map $f \colon M \to M$. The precise definition is somewhat involved and will be recalled later on, but for the purposes of this Introduction the following rough idea will suffice: for each dimension $q = 0,1,2,\ldots$ the $q$th homological Conley index of an isolated invariant set $X$ is an endomorphism $h_q(f,X)$ of a finite dimensional vector space over a field $\mathbb{K}$; intuitively, $h_q(f,X)$ captures the action of $f$ on a neighbourhood of $X$ at the level of $q$--dimensional homology. The homology is computed with coefficients in $\mathbb{K}$, which we will take to be $\mathbb{Q}$ or $\mathbb{C}$. By considering the eigenvalues or the trace of $h_q(f,X)$ one obtains even simpler, yet still powerful, numerical invariants. Similar considerations apply to the $q$th cohomological Conley index $h^q(f,X)$.

The following two results from \cite{HCR} are our starting point (recall that a compact set $X$ is called \emph{acyclic} if its \v{C}ech homology groups of degree $q \ge 1$ vanish):

\begin{oldtheorem} \label{teo:A}
Let $f$ be a homeomorphism defined on an open subset $U$ of $\mathbb{R}^d$ and let $X \subseteq U$ be an isolated invariant acyclic continuum.
Then there exist a finite set $J$ and a map $\varphi\colon J \to J$ such that the trace of the first homological Conley index satisfies
$$\trace(h_1(f^n, X)) = -1 + \#\Fix(\varphi^n).$$
In particular, $\trace(h_1(f, X)) \ge -1$.
\end{oldtheorem}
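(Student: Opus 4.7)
The plan is to reduce the trace $\trace(h_1(f^n,X))$ to a permutation-count on a finite combinatorial object associated to the dynamics near $X$, by means of a carefully chosen filtration pair $(N,L)$ for $X$ in which $N$ itself is acyclic. Since $X$ is an acyclic continuum in $\mathbb{R}^d$, classical regular-neighborhood and shape-theoretic arguments permit such a choice of $N$, and the exit set $L\subseteq\partial N$ is then a closed subset to which $f$ pushes the part of $N$ that leaves the isolating neighborhood.

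From such a filtration pair, the long exact sequence of the pair in \v{C}ech/Alexander--Spanier cohomology, combined with the acyclicity of $N$, collapses to
\[
0 \longrightarrow H^0(N) \longrightarrow H^0(L) \longrightarrow H^1(N,L) \longrightarrow 0.
\]
Setting $J:=\pi_0(L)$ (necessarily finite, since the Conley index is finite dimensional), this identifies $H^1(N,L)$ with the quotient $\mathbb{K}^J/\mathbb{K}\cdot\mathbf{1}$, where $\mathbf{1}=(1,\dots,1)$.

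Next I would describe $h_1(f,X)$ combinatorially. The Franks--Richeson/Szymczak index map $N/L\to N/L$ induced by $f$ sends each component $C_j$ of $L$ either entirely into another component $C_{\varphi(j)}$ of $L$, or into $N\setminus L$; the latter case contributes nothing after passing to the cokernel $\mathbb{K}^J/\mathbb{K}\cdot\mathbf{1}$, and the former contributes to a partial permutation matrix on $\mathbb{K}^J$. Such a matrix has trace equal to $\#\Fix(\varphi)$; subtracting the trace $1$ contributed by the invariant diagonal $\mathbb{K}\cdot\mathbf{1}$ yields $\trace(h_1(f,X))=\#\Fix(\varphi)-1$, and in particular $\ge -1$.

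The delicate step is iteration: the naive argument yields, for each $n$, some $\varphi_n:J_n\to J_n$ with $\trace(h_1(f^n,X))=\#\Fix(\varphi_n)-1$, whereas the theorem asks for a single $\varphi:J\to J$ whose $n$-th iterate governs \emph{all} iterates of $f$. This compatibility is the main obstacle, because a filtration pair suited to $f$ need not be suited to $f^n$ and the induced maps on $\pi_0$ of the exit sets need not compose correctly. I expect this to be resolved by means of the attractor--repeller decomposition of the unstable manifold $W^u(X)$ hinted at in the abstract: one takes $J$ to be a canonical finite set carrying the dynamical information of the ``branches'' of $W^u(X)$ at $X$ (for instance, the components of $W^u(X)\setminus X$ close to $X$, which should be finite in number by the finite dimensionality of the Conley index together with the acyclicity of $X$), on which $f$ acts as a genuine self-map whose $n$-th iterate automatically governs $f^n$. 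Establishing that $J$ is finite and that this combinatorial model is compatible with the whole sequence of iterates is the principal technical hurdle, after which the formula $\trace(h_1(f^n,X))=-1+\#\Fix(\varphi^n)$ follows from the trace computation above.
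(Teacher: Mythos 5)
There is a genuine gap, and your proposal essentially punts on the point where the real work lies. Your ``Phase~1'' (an acyclic filtration pair $(N,L)$, the exact sequence giving $H^1(N,L)\cong H^0(L)/H^0(N)$, and a permutation on $\pi_0(L)$) is roughly the route of the original proof in \cite{HCR}, which this paper explicitly replaces by a different argument. Two of its steps are not secure as stated. First, you cannot in general choose $N$ acyclic: acyclicity of $X$ does not propagate to its isolating neighbourhoods, and the standard construction only gives a nested sequence of compact manifold neighbourhoods whose cohomology vanishes in the limit (think of a Whitehead-type continuum in $\mathbb{R}^3$, whose small neighbourhoods are solid tori). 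Second, the index map $f_{\#}$ collapses all of $L$ to the basepoint $[L]$, so it does not ``send each component $C_j$ of $L$ into another component $C_{\varphi(j)}$''; the combinatorics of how $f$ interacts with the exit set is more delicate than a partial permutation of $\pi_0(L)$, which is precisely why \cite{HCR} calls its argument a ``difficult woodworking.''

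You correctly identify that the compatibility across all iterates $f^n$ forces a canonical object, and you correctly point at the unstable manifold, but the proposal stops exactly where the paper's contribution begins. The paper's proof of Theorem~A goes through Corollary~\ref{coro:genA_intro}, which specialises Theorem~\ref{teo:main1_intro}: the relevant finite set $J$ is the set of \emph{essential quasicomponents} of $W^u_{\infty}\setminus(\tilde{X}\cup\infty)$, where $W^u_{\infty}$ carries the Robbin--Salamon \emph{intrinsic} topology (not the ambient one) and is compactified by a point $\infty$. Your suggestion, ``the components of $W^u(X)\setminus X$ close to $X$, finite by finite-dimensionality of the Conley index,'' does not work: even in the Smale horseshoe (Subsection~\ref{subsec:horseshoe}) that set of components is uncountable, and it is only the \emph{essential quasicomponents} in the intrinsic topology that are finitely many; their finiteness is not immediate but comes from Proposition~\ref{prop:describeH1}, which relies on the exactness of the truncated Mayer--Vietoris sequence (Theorem~\ref{teo:mvietoris}), which in turn requires the power-series-in-cohomology machinery of Section~\ref{sec:series} to compensate for the failure of Hastings' theorem in discrete dynamics. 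Without that chain you do not obtain a finite $J$, an action $\varphi$ on it, or the identification $\check{H}^1(W^u_{\infty},\tilde{X}\cup\infty)\cong\mathbb{K}^J$, and the trace formula does not follow.
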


\begin{oldtheorem} \label{teo:B}
Let $f$ be a homeomorphism defined on an open subset $U$ of $\mathbb{R}^d$ and let $X \subseteq U$ be an isolated invariant acyclic continuum.
If $\trace(h_{1}(f, X)) = -1$ then $\trace(h_{q}(f, X)) = 0$ for every $q > 1$.
\end{oldtheorem}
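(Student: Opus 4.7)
The plan is to combine Theorem A's combinatorial description of $h_1$ with the infinite-series summation in cohomology advertised in the abstract. First I would read the hypothesis through Theorem A: setting $n=1$ in the identity $\trace(h_1(f^n, X)) = -1 + \#\Fix(\varphi^n)$, the assumption $\trace(h_1(f,X)) = -1$ forces $\Fix(\varphi) = \emptyset$. Thus the map $\varphi\colon J \to J$ on the finite set $J$ has no fixed points: it sends every element of $J$ to a different one.

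Next I would unpack the geometric meaning of $(J, \varphi)$ produced by the proof of Theorem A. The abstract indicates that the argument rests on an attractor-repeller decomposition of the unstable manifold of $X$, and it is natural to expect that $J$ parametrises the connected components of an attractor $A$ sitting inside this unstable manifold while $\varphi$ records the induced action of $f$ on $\pi_0(A)$. The hypothesis then says that $f$ moves every component of $A$ off itself. For $q > 1$ I would work with the cohomological Conley index $h^q(f,X)$ and apply the power-series summation $\sum_{j \geq 0} a_j (f^*)^j$ (the paper's main technical tool) to represent $h^q(f,X)$ as an endomorphism of a relative \v{C}ech cohomology $\check{H}^q(B, A)$, where $B$ is a suitable neighbourhood of $A$ inside the unstable manifold.

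The bulk of the work, and the step I expect to be hardest, is a localization statement: that $\trace(h^q(f, X))$ for $q > 1$ can be written as a sum of contributions indexed by $\Fix(\varphi)$, i.e., as the sum of the diagonal entries of $f^*$ with respect to the decomposition $A = \bigsqcup_{j \in J} A_j$. The \v{C}ech acyclicity of $X$ should be the input that forces the higher relative cohomology to be ``supported component-wise'' on $A$, so that off-diagonal entries of $f^*$ do not contribute to the trace; the series machinery should then package the result into a clean identity expressing $\trace h^q(f,X)$ in terms of the $\varphi$-action on $\pi_0(A)$. Once this is in place, $\Fix(\varphi)=\emptyset$ yields $\trace(h^q(f,X)) = 0$ for every $q > 1$, and the homological version stated in the theorem follows by duality over the coefficient field $\mathbb{K} \in \{\mathbb{Q}, \mathbb{C}\}$.
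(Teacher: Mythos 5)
Your first step is right: specializing Theorem~A to $n=1$ gives $\Fix(\varphi)=\emptyset$, and the final reduction from $\Fix(\varphi)=\emptyset$ to $\trace(h^q(f,X))=0$ is indeed the shape of the argument the paper uses. But the middle of your proposal rests on a misidentification of what the set $J$ and the map $\varphi$ actually are. There is no ``attractor $A$ sitting inside the unstable manifold'' whose components $J$ parametrizes. In the paper's construction the unstable manifold is one--point compactified to $W^u_\infty$, and the attractor--repeller pair is $(\infty,\tilde X)$, i.e.\ the attractor is the single point $\infty$, not a set with interesting $\pi_0$. The set $J$ consists of the \emph{essential quasicomponents} of the ``in--between'' region $W^u_\infty\setminus(\tilde X\cup\infty)$ --- the branches of the unstable manifold reaching both $\tilde X$ and $\infty$ --- and $\varphi$ is the permutation $F_j\mapsto \tilde f^{-1}F_j$. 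So the decomposition you should be aiming for is not of the cohomology of an attractor pair $\check H^q(B,A)$ but of $\check H^{q}(W^u_\infty,\tilde X)\cong\bigoplus_j\check H^{q}(\overline F_j)$ (closure taken in $W^u_\infty/\tilde X$), which is Theorem~\ref{teo:main2_intro}. That direct--sum decomposition, together with the fact that $\tilde f^{-1}$ permutes the $F_j$ without fixed points, is exactly what kills the trace, after feeding it through the long exact sequence of $(W^u_\infty,\tilde X)$ and using acyclicity to kill the boundary terms involving $\check H^{q-1}(\tilde X)$ and $\check H^{q}(\tilde X)$.

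That decomposition is precisely the ``localization statement'' you flag as the step you cannot yet do, and it is the crux of the whole theorem --- without it, the proposal does not close. Your guess at how it is proved is also somewhat off: the power--series machinery is not used to ``represent $h^q(f,X)$ as an endomorphism of $\check H^q(B,A)$'' directly. Rather, the series $\sum_j a_j(g^*)^j$ are used to prove that $g^*$ has no nonzero eigenvalue on $\check H^*(Z,K)$ for an attractor $K$ with basin $Z$ (Corollary~\ref{cor:no_eigen}), which in turn gives the vanishing of certain inclusion--induced maps (Lemma~\ref{lem:f_dimensional}) and hence the truncation of the relative Mayer--Vietoris sequence for $U=W^u_\infty\setminus\infty$, $V=W^u_\infty\setminus\tilde X$ (Theorem~\ref{teo:mvietoris}); this truncated sequence is what ultimately produces the direct--sum decomposition. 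Similarly, acyclicity of $X$ is not what ``forces the higher cohomology to be supported componentwise'' --- the decomposition holds whenever $\check H^q(X)$ is finite dimensional --- its role is merely to annihilate the three correction terms $\trace(\tilde f^*|_{\im(i^*)^q})$, $\trace(\tilde f^*|_{\im(i^*)^{q-1}})$, $\trace(f^*|_{\check H^{q-1}(X)})$ that appear in the general formula of Corollary~\ref{coro:genB_intro}, leaving exactly zero. So the skeleton of your argument matches the paper, but the key technical input is missing and the structural role of both the series and the acyclicity hypothesis is misassigned.
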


The rather difficult proofs of these theorems involve a combinatorial description of the first homological Conley index that is ultimately related to how $f$ acts on the components of the exit set of an isolating neighbourhood $N$ of $X$ (the exit set consists of the points in $N$ whose image under $f$ lies outside $\int(N)$). Several questions arise naturally:

\begin{itemize}
	\item[(i)] Is it possible to provide a dynamical interpretation of the map $\varphi$ and the set $J$?
	\item[(ii)] Is it possible to generalize Theorems \ref{teo:A} and \ref{teo:B} to non acyclic continua $X$? This suggests that we turn to the cohomological Conley index $h^q(f,X)$ and use \v{C}ech cohomology $\check{H}^*$, which is the preferred theory for sets with a complicated local structure, and indeed henceforth we shall do so. (By the universal coefficient theorem, acyclic sets also have vanishing \v{C}ech cohomology in every degree $q \geq 1$).
	\item[(iii)] Is it possible to extend the results for arbitrary continuous maps $f$, rather than homeomorphisms, and possibly more general phase spaces?
\end{itemize}

In this paper was to provide answers to all these questions, obtaining suitable generalizations of Theorems \ref{teo:A} and \ref{teo:B} and deriving from them a number of corollaries concerning the fixed point index of isolated invariant continua (these are discussed in Section \ref{sec:corollaries}). The original proofs in \cite{HCR} involved a very delicate ``woodworking'' with homology classes in an isolating neighbourhood of $X$, but here we shall adopt a radically different approach which is much more algebraic in nature. It accommodates naturally the non acyclic case and involves only dynamically meaningful objects that are canonically associated to $X$ (namely, its unstable manifold), providing a straightforward interpretation of $\varphi$ and $J$. In order to be able to carry out this approach we develop a notion of summation of infinite series in cohomology that may be of independent interest.

\subsection*{Main results} In order to state our main results we need to recall a construction due to Robbin and Salamon \cite{robinsalamon1}. We will be rather informal, postponing precise definitions to Section \ref{sec:background}. Given an invariant set $X$ for a homeomorphism, its \emph{unstable manifold} $W^u$ is defined as the subset of phase space consisting of those points whose negative semitrajectory approaches $X$, in the sense that it eventually enters any neighbourhood of $X$. The unstable manifold (which, in spite of its name, need not be a manifold) is an invariant set that contains $X$. Robbin and Salamon introduced the so-called \emph{intrinsic topology} on $W^u$, which is generally finer than the one that it inherits from the phase space and, roughly speaking, makes two points close if and only if their backward orbits remain close to each other for a large number of iterates. The unstable manifold endowed with the intrinsic topology has the following properties:
\begin{itemize}
	\item[(i)] $W^u$ is a locally compact, Hausdorff space. In particular it has an Alexandroff compactification $W^u_{\infty}$, whose point at infinity we shall denote by $\infty$.
	\item[(ii)] $f|_{W^u}$ generates a discrete dynamical system on $W^u$ which can be trivially extended to all of $W^u_{\infty}$ declaring $\infty$ to be a fixed point.
	\item[(iii)] The pair $(\infty,X)$ is an attractor-repeller Morse decomposition of $W^u_{\infty}$.
\end{itemize}

We will show in Section \ref{sec:background} that, under very general circumstances, the induced homomorphism $f^* \colon \check{H}^*(W^u_{\infty},\infty) \to \check{H}^*(W^u_{\infty},\infty)$ is equivalent to the cohomological Conley index of $X$. Notice that this expresses the cohomological Conley index in terms of a canonical object associated to $X$ (its unstable manifold), as mentioned earlier. This justifies our interest in the cohomology of $W^u_{\infty}$ and the action of $f^*$ on it.

Now consider the set $W^u_{\infty} \setminus (X \cup \infty)$. In general this set may have infinitely many connected components, or rather quasicomponents, as it will turn that it is advantageous to work with the latter. This owes to the fact that the unstable manifold may have very complicated topological structure, as illustrated by the construction given in \cite{Art}. These quasicomponents are typically called \emph{branches} of the unstable manifold. We say that a quasicomponent $F$ is \emph{essential} if its closure in $W^u_{\infty}$ has a nonempty intersection with $X$ and $\infty$; intuitively, it ``joins'' $X$ and $\infty$. Clearly, $f$ permutes the essential quasicomponents of $W^u_{\infty} \setminus (X \cup \infty)$ among themselves.

Since we want to consider the general case when $f$ is not necessarily a homeomorphism but merely a continuous map, the preceding discussion will need some revision. The construction of Robbin and Salamon carries over with appropriate modifications and leads naturally to the replacement of $f|_{W^u_{\infty}}$ and $X \subseteq W^u_{\infty}$ with a homeomorphism $\tilde{f}$ on $W^u_{\infty}$ and a compact invariant set $\tilde{X} \subseteq W^u_{\infty}$ in such a way that properties (i) to (iii) above remain true. These new objects $\tilde{f}$ and $\tilde{X}$ are closely related but not exactly equal to $f$ and $X$ but, since the differences are largely inconsequential, we suggest that the reader assumes $f$ to be a homeomorphism and simply ignores the tildes for the time being.

Now we can state two of the two main results in this paper, which describe the cohomology of $(W^u_{\infty},\tilde{X})$ in terms of the essential quasicomponents. In degree one it is actually convenient to use $\infty$ as a basepoint and work with the relative cohomology group $\check{H}^1(W^u_{\infty},\tilde{X} \cup \infty)$ because it has a more symmetric description than $\check{H}^1(W^u_{\infty},\tilde{X})$. We have the following:

\begin{theorem} \label{teo:main1_intro} Let $f$ be a continuous map and assume $X$ has finitely many connected components. Then there are only finitely many essential quasicomponents $F_1, \ldots, F_m$ in $W^u_{\infty} \setminus (\tilde{X} \cup \infty)$ and the cohomology group $\check{H}^1(W^u_{\infty},\tilde{X} \cup \infty)$ can be identified with the $\mathbb{K}$--vector space having the set $\{F_j\}_{j=1}^m$ as a basis. Moreover, under this identification the map $\tilde{f}^*$ simply becomes the permutation $F_j \longmapsto \tilde{f}^{-1}F_j$.
\end{theorem}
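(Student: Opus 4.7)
The plan is to identify $\check{H}^1(W^u_\infty,\tilde X\cup\infty)$ with a combinatorial object read off the attractor--repeller decomposition, and then to track the action of $\tilde f^*$ through this identification. Set $A = \tilde X \cup \{\infty\}$ and $Y = W^u_\infty\setminus A$. For finiteness of the essential quasicomponents I would fix a filtration (index) pair $(N,L)$ for $\tilde X$ in $W^u_\infty$. Because $X$ has finitely many connected components, so does $\tilde X$, and $L$ can be chosen with finitely many components. Every essential quasicomponent $F_j$ must cross $\partial N$, hence meets $L$; and two essential quasicomponents sharing a component of $L$ would be joined through its forward $\tilde f$-saturation, contradicting the quasicomponent property. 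This bounds $m$ by the number of components of $L$.

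For the main computation I would apply the long exact sequence of the triple $(W^u_\infty,A,\infty)$, together with the excision $\check{H}^*(A,\infty)\cong\check{H}^*(\tilde X)$ (valid since $\{\infty\}$ is clopen in $A$), to obtain
\begin{equation*}
0\to \check{H}^0(\tilde X)\xrightarrow{\delta} \check{H}^1(W^u_\infty,A)\to \check{H}^1(W^u_\infty,\infty)\to \check{H}^1(\tilde X),
\end{equation*}
assuming $W^u_\infty$ connected (the general case reduces to this). So $\delta$ contributes one class per component of $\tilde X$, and the remaining classes inject into the first cohomological Conley index $\check{H}^1(W^u_\infty,\infty)$. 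This is where the series-summation machinery of the paper is essential: because $\infty$ is a global attractor on $W^u_\infty\setminus\tilde X$, operators $\sum a_j(\tilde f^*)^j$ act on these groups and let one contract cohomology classes back toward the repeller, so that each class of $\check{H}^1(W^u_\infty,A)$ is detected by its asymptotic behaviour on the finitely many essential branches. The outcome is a basis $\{e_j\}_{j=1}^m$ of $\check{H}^1(W^u_\infty,A)$ in bijection with $\{F_1,\ldots,F_m\}$, realised by \v{C}ech 1-cocycles supported on open covers that separate the essential branches near $\tilde X$ from their exits near $\infty$.

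To finish, each explicit cocycle representing $[F_j]$ is built (up to coboundary) from the characteristic function of $F_j$ on a two-piece cover; since $\tilde f$ is a homeomorphism permuting the quasicomponents of $Y$ and the construction is natural in $\tilde f$, the pullback $\tilde f^*[F_j]$ is represented by the analogous cocycle for $\tilde f^{-1}(F_j)$, yielding the permutation claimed. The main obstacle is the cohomological identification of the previous paragraph: excluding stray contributions from the global $\check{H}^1(W^u_\infty)$ that are not accounted for by essential branches. This is precisely the role played by the series-summation technique of the paper, through which such potentially global classes appear as convergent geometric series in $\tilde f^*$ that can be pushed across the attractor--repeller decomposition and matched against the finite combinatorial data on $A$.
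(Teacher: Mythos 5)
Your high-level reading is right --- the proof does run on the attractor--repeller structure, explicit $0$-cochains on the complement of $\tilde X\cup\infty$, and the series-summation machinery --- but two of your steps are not proofs and one is likely false, so the proposal has genuine gaps.

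First, the finiteness argument via an index pair $(N,L)$ inside $W^u_\infty$ does not work. The space $W^u_\infty$ has, in general, extremely bad local topology (the horseshoe example in the paper shows that $W^u_\infty\setminus\tilde X$ can have uncountably many quasicomponents), and there is no reason a positively invariant compact neighbourhood $L$ of $\infty$ in $W^u_\infty$ should have finitely many components. The inference ``$\tilde X$ has finitely many components, hence $L$ can be chosen with finitely many components'' is a non sequitur: $L$ lives near $\infty$, not near $\tilde X$. The argument that two essential quasicomponents sharing a component of $L$ ``would be joined through forward $\tilde f$-saturation'' is also unsupported --- quasicomponents are determined by clopen separations, and a common component of $L$ does not obstruct such a separation. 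The paper instead bounds the number of essential quasicomponents by $\dim\check H^1(W^u_\infty,\tilde X\cup\infty)$, which is finite because it is sandwiched between $\check H^0(\tilde X\cup\infty)$ and $\check H^1(W^u_\infty)$; the proof constructs $m$ indicator cochains $\varphi_i$ with $\varphi_i|_{F_j}=\delta_{ij}$ and shows $\Delta(\varphi_i)$ are linearly independent. This dimension-count argument has no analogue in your proposal.

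Second, the choice of the long exact sequence of the triple $(W^u_\infty, \tilde X\cup\infty, \infty)$ is the wrong exact sequence for this purpose: none of its terms is $\check H^0(U\cap V)$, where $U=W^u_\infty\setminus\infty$ and $V=W^u_\infty\setminus\tilde X$, so the quasicomponents of $U\cap V$ never appear. The paper uses instead the relative Mayer--Vietoris sequence for the open cover $U\cup V$, and the crux is Theorem \ref{teo:mvietoris}: the series-summation technique (through Lemma \ref{lem:f_dimensional}) forces the inclusion-induced maps from $\check H^*(W^u_\infty,\tilde X\cup\infty)$ into $\check H^*(U,\tilde X)$ and $\check H^*(V,\infty)$ to vanish, which truncates the Mayer--Vietoris sequence to an isomorphism $\check H^0(U\cap V)/\ker\Delta\cong\check H^1(W^u_\infty,\tilde X\cup\infty)$. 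Combined with the explicit identification of $\ker\Delta$ as the locally constant functions vanishing on all essential quasicomponents (Proposition \ref{prop:qc}), this yields the evaluation isomorphism $\mathrm{ev}$. Your proposal states the desired conclusion --- ``each class is detected by its asymptotic behaviour on the finitely many essential branches'' --- without supplying the exact sequence that makes it a theorem; as written this is an assertion, not a proof. The description of the $\tilde f^*$-action as a permutation does then follow formally from the naturality of the coboundary once the evaluation isomorphism is in place, and that part of your argument is essentially the same as the paper's.
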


This result suggests that we introduce the following notation: $J$ will denote the set of essential quasicomponents of $W^u_{\infty} \setminus (\tilde{X} \cup \infty)$ and $\varphi \colon J \to J$ will denote the permutation induced by $\tilde{f}^{-1}$ on $J$.

For higher degrees, our second main theorem roughly states that the cohomology of $(W^u_{\infty},\tilde{X})$ is entirely determined by that of the essential quasicomponents alone:

\begin{theorem} \label{teo:main2_intro} Let $f$ be a continuous map and assume $X$ has finitely many connected components. In any degree $q \geq 1$ such that $\check{H}^q(X)$ is finite dimensional, there is an isomorphism $\check{H}^{q+1}(W^u_{\infty},\tilde{X}) = \oplus_{j=1}^m \check{H}^{q+1}(\overline{F}_j)$ that commutes with $\tilde{f}^*$. Here $\overline{F}_j$ denotes the closure of $F_j$ in the quotient space $W^u_{\infty}/\tilde{X}$ that results from $W^u_{\infty}$ by collapsing $\tilde{X}$ to a single point.
\end{theorem}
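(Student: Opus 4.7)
The plan is to reduce the problem to a Mayer--Vietoris computation on the quotient space $Y := W^u_\infty / \tilde{X}$, in which $\tilde{X}$ collapses to a single point $x_0$ and the essential quasicomponent closures $\overline{F}_j \subset Y$ meet pairwise only in the two-point set $\{x_0, \infty\}$. Using compactness of $\tilde{X}$ and tautness of \v{C}ech cohomology on compact pairs, we have $\check{H}^{q+1}(W^u_\infty, \tilde{X}) \cong \widetilde{\check{H}}^{q+1}(Y)$, naturally with respect to $\tilde{f}^*$. The task therefore becomes producing an $\tilde{f}^*$-equivariant isomorphism $\widetilde{\check{H}}^{q+1}(Y) \cong \bigoplus_{j=1}^m \check{H}^{q+1}(\overline{F}_j)$ for $q \ge 1$.

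The core step is the construction of a finite open cover $\{U_1, \ldots, U_m\}$ of $Y$ with three properties: each $U_j$ is a neighbourhood of $\overline{F}_j$ whose \v{C}ech cohomology agrees with that of $\overline{F}_j$; every higher intersection $U_{j_1} \cap \cdots \cap U_{j_k}$ with $k \ge 2$ is cohomologically equivalent to the two-point set $\{x_0, \infty\}$, hence vanishes in all degrees $\ge 1$; and non-essential quasicomponents are absorbed into the cover without contributing to higher cohomology (their closures miss at least one of $x_0$, $\infty$, so they can be removed by small attractor--repeller neighbourhoods). Granted such a cover, the \v{C}ech-to-cohomology spectral sequence collapses in the range $p + q \ge 2$ and yields the desired direct sum decomposition. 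Equivariance is then automatic: either the cover can be chosen $\tilde{f}$-invariant directly, or one passes to a direct limit over shrinking $\tilde{f}$-invariant refinements, using that $\tilde{f}$ permutes the essential quasicomponents.

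The main obstacle is the construction of the cover, and this is where both the attractor--repeller dynamics and the finiteness hypotheses enter essentially. Using the attractor--repeller structure $(x_0, \infty)$ on $Y$, I would take shrinking open neighbourhoods $A_\varepsilon \supset \{x_0\}$ and $B_\varepsilon \supset \{\infty\}$ that are respectively backward- and forward-invariant under $\tilde{f}$. For $\varepsilon$ small, the compact remainder $K_\varepsilon := Y \setminus (A_\varepsilon \cup B_\varepsilon)$ decomposes into finitely many clopen pieces (by the finiteness of $m$ guaranteed by Theorem \ref{teo:main1_intro}), each entirely contained in a single essential quasicomponent $F_j$; non-essential quasicomponents lie inside $A_\varepsilon \cup B_\varepsilon$ once $\varepsilon$ is small enough. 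Gluing $A_\varepsilon$ and $B_\varepsilon$ onto the piece belonging to the $j$-th class produces $U_j^\varepsilon$, and the pairwise and higher intersections reduce precisely to $A_\varepsilon \cup B_\varepsilon$, a neighbourhood of $\{x_0, \infty\}$. The finite-dimensionality of $\check{H}^q(X)$ enters when commuting the direct limit as $\varepsilon \to 0$ with \v{C}ech cohomology, ruling out the Mittag-Leffler pathologies that would otherwise obstruct identifying $\varinjlim_\varepsilon \check{H}^{q+1}(U_j^\varepsilon)$ with $\check{H}^{q+1}(\overline{F}_j)$.
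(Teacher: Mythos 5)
Your general architecture is sound and parallel to the paper's: pass to the quotient $Y = W^u_\infty/\tilde{X}$, exploit the attractor--repeller pair $([\tilde{X}],\infty)$, decompose by a Mayer--Vietoris-type argument around the finitely many essential quasicomponents, and take a limit over shrinking sets. However, the two load-bearing claims in your construction are exactly the statements the paper has to work hardest to circumvent, and as written both fail.

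First, you posit that the pairwise intersections $U_{j_1}\cap U_{j_2} = A_\varepsilon\cup B_\varepsilon$ are ``cohomologically equivalent to $\{x_0,\infty\}$, hence vanish in degrees $\ge 1$.'' But $A_\varepsilon$ is a neighbourhood of the repeller and $B_\varepsilon$ a neighbourhood of the attractor, so this is precisely the assertion that the inclusion of an attractor (resp.\ repeller) into a small neighbourhood in its basin induces a cohomology isomorphism. For discrete dynamics that is \emph{false} in general --- the Smale horseshoe discussed in Subsection \ref{subsec:horseshoe} is a counterexample, and removing this obstruction is the entire point of the power-series machinery of Section \ref{sec:series}. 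The paper never claims the ``non-essential part'' $\overline G$ has trivial cohomology; instead, Proposition \ref{prop:inc_zero} (feeding off Theorem \ref{teo:mvietoris} and ultimately Lemma \ref{lem:f_dimensional}) proves the much weaker statement that the inclusion $\overline G \hookrightarrow W^u_\infty/\tilde{X}$ induces the \emph{zero} map in degree $q+1$, which requires showing that $\tilde f^*$ acts nilpotently on the relevant finite-dimensional invariant subspace. Without that nilpotency argument your Mayer--Vietoris sequence does not close up.

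Second, you claim that ``non-essential quasicomponents lie inside $A_\varepsilon\cup B_\varepsilon$ once $\varepsilon$ is small enough,'' so the remainder $K_\varepsilon$ splits into finitely many pieces labelled by the $F_j$. This cannot hold for any fixed $\varepsilon>0$: the non-essential quasicomponents are permuted by $\tilde f$ and, as remarked in Section \ref{sec:describe1}, they accumulate on the essential ones (the horseshoe again gives uncountably many of them clustering onto the single essential $F$). Consequently $K_\varepsilon$ will always meet non-essential quasicomponents, and there is no finite clopen decomposition of $K_\varepsilon$ of the kind your cover construction requires. The paper sidesteps this by never trying to trap the non-essential quasicomponents near the endpoints; it instead uses \emph{clopen} sets $O\supseteq E$ (whose complement $G$ is handled via the attractor--repeller splitting of Lemma \ref{lem:quasi4}), and passes to the limit over the directed family of all such $O$.

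Finally, a smaller misattribution: the finite-dimensionality of $\check H^q(X)$ is not needed to commute limits with \v Cech cohomology --- continuity holds unconditionally for compact spaces. In the paper this hypothesis enters through Theorem \ref{teo:mvietoris} and Proposition \ref{prop:Xtilde}, to guarantee that the image of the inclusion-induced map $\ell^*$ on $\check H^{q+1}(W^u_\infty,\tilde X\cup\infty)$ is a finite-dimensional $\tilde f^*$-invariant subspace of $\check H^{q+1}(U,\tilde X)$, on which Lemma \ref{lem:f_dimensional} can then force nilpotency and hence vanishing.
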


The description given by the theorems above holds for the iterates $f^n$ ($n \geq 1$) of $f$ as well. Upon replacing $f$ with $f^n$ the set $X$ is still isolated for this new dynamical system, the compactified unstable manifold $W^u_{\infty}$ and its subset $\tilde{X}$ remain essentially the same, and the homeomorphism $\widetilde{f^n}$ induced on $W^u_{\infty}$ can be identified with $\tilde{f}^n$. (This is almost immediate when $f$ is a homeomorphism but needs a more careful proof for continuous maps. Statements can be found in Subsection \ref{subsec:convention}). As a consequence, the set of essential quasicomponents for $f^n$ can be identified with $J$ and the permutation induced by $\widetilde{f^n}^{-1}$ on $J$ is then $\varphi^n$. With these changes, Theorems \ref{teo:main1_intro} and \ref{teo:main2_intro} extend to $f^n$.

It is clear from Theorem \ref{teo:main1_intro} that the trace of $\tilde{f}^*$ on $\check{H}^1(W^u_{\infty},\tilde{X} \cup \infty)$ is the number of essential quasicomponents that are fixed by $\tilde{f}^{-1}$ or, equivalently, the number $\#\Fix(\varphi)$ of fixed points of $\varphi$. Since the trace of the $1$--cohomological Conley index of $X$ coincides with that of $f^* \colon \check{H}^1(W^u_{\infty},\infty) \to \check{H}^1(W^u_{\infty},\infty)$, it is just a matter of using the long exact sequence of the triple $(W^u_{\infty},\tilde{X} \cup \infty,\infty)$ and the additivity of the trace to obtain the following generalization of Theorem \ref{teo:A}:

\begin{corollary} \label{coro:genA_intro} Let $f$ be a continuous map and let $X$ be an arbitrary isolated invariant continuum. Denote by $i^*$ the map induced by the inclusion $i\colon \tilde{X} \subseteq W^u_{\infty}$ in one-dimensional cohomology. Then the trace of the first cohomological Conley index of $X$ is given by $$\trace(h^1(f^n, X)) = -1 + \#\Fix(\varphi^n) + \trace((\tilde{f}^*)^n|_{  {\rm im}\ i^* })$$ unless $X$ is an asymptotically stable attractor, in which case only the third summand should be retained.
\end{corollary}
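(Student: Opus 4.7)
The plan is to apply the long exact sequence of the triple $(W^u_\infty, \tilde{X} \cup \infty, \infty)$, combine it with additivity of the trace over exact sequences of finite dimensional vector spaces, and feed in the computation of $\tilde{f}^*$ on $\check{H}^1(W^u_\infty, \tilde{X} \cup \infty)$ supplied by Theorem \ref{teo:main1_intro}. The input from Section \ref{sec:background} is that $h^1(f^n, X)$ is equivalent to $(\tilde{f}^*)^n$ acting on $\check{H}^1(W^u_\infty, \infty)$, so $\trace(h^1(f^n, X)) = \trace\bigl((\tilde{f}^*)^n|_{\check{H}^1(W^u_\infty, \infty)}\bigr)$.

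The relevant segment of the triple long exact sequence is
$$\check{H}^0(W^u_\infty, \infty) \to \check{H}^0(\tilde{X} \cup \infty, \infty) \xrightarrow{\delta} \check{H}^1(W^u_\infty, \tilde{X} \cup \infty) \to \check{H}^1(W^u_\infty, \infty) \to \check{H}^1(\tilde{X}).$$
Because $X$ is a continuum, $\tilde{X}$ is connected, so $\check{H}^0(\tilde{X} \cup \infty, \infty) \cong \mathbb{K}$. Under the canonical identification $\check{H}^1(W^u_\infty, \infty) \cong \check{H}^1(W^u_\infty)$, the last arrow is the restriction $i^*$ and its image is exactly $\im i^*$, which is $\tilde{f}^*$-invariant because $i$ is $\tilde{f}$-equivariant.

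In the generic case $X$ is not asymptotically stable, so the unstable manifold strictly contains $X$ and $W^u_\infty$ is connected; hence $\check{H}^0(W^u_\infty, \infty) = 0$ and the sequence reduces to a four-term exact sequence
$$0 \to \mathbb{K} \xrightarrow{\delta} \check{H}^1(W^u_\infty, \tilde{X} \cup \infty) \to \check{H}^1(W^u_\infty, \infty) \to \im i^* \to 0.$$
All four maps commute with $(\tilde{f}^*)^n$, and on the leftmost $\mathbb{K}$ the induced action is the identity because $\tilde{f}$ fixes $\infty$ and preserves the connected set $\tilde{X}$, so it acts trivially on $\check{H}^0(\tilde{X} \cup \infty, \infty)$. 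Theorem \ref{teo:main1_intro} applied to $f^n$ identifies the trace of $(\tilde{f}^*)^n$ on $\check{H}^1(W^u_\infty, \tilde{X} \cup \infty)$ with $\#\Fix(\varphi^n)$. Additivity of the trace on this exact sequence then produces
$$1 - \#\Fix(\varphi^n) + \trace\bigl((\tilde{f}^*)^n|_{\check{H}^1(W^u_\infty, \infty)}\bigr) - \trace\bigl((\tilde{f}^*)^n|_{\im i^*}\bigr) = 0,$$
which is exactly the claimed formula after rearrangement.

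If $X$ is asymptotically stable then $W^u = X$, so $W^u_\infty = \tilde{X} \cup \infty$ is disconnected (two components) and $\check{H}^*(W^u_\infty, \tilde{X} \cup \infty) = 0$; in particular $J = \emptyset$ and $\#\Fix(\varphi^n) = 0$. The exact sequence now collapses to an injection $\check{H}^1(W^u_\infty, \infty) \hookrightarrow \check{H}^1(\tilde{X})$ whose image coincides with $\im i^*$, so only the third summand survives. The delicate point of the argument is the degree-zero bookkeeping that pins down whether $\delta$ contributes a one-dimensional trivial-action summand; this dichotomy is precisely what produces the $-1$ in the generic case and removes it in the attractor case.
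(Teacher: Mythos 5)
Your proof is correct and follows essentially the same route as the paper's: both use the long exact sequence of the triple $(W^u_\infty, \tilde{X} \cup \infty, \infty)$, truncate it to a four-term exact sequence using the connectedness of $W^u_\infty$ (Corollary \ref{cor:attractor}) and the connectedness of $\tilde{X}$ (Proposition \ref{prop:Xtilde}), identify the trace on $\check{H}^1(W^u_\infty, \tilde{X}\cup\infty)$ via Theorem \ref{teo:main1_intro}, and apply additivity of the trace, with the attractor case handled separately by the observation $W^u_\infty = \tilde{X} \uplus \infty$.
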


If $\check{H}^1(X) = 0$ (in particular, if $X$ is acyclic) then the third term vanishes, yielding an expression which is formally identical to that of Theorem \ref{teo:A}. In our context, however, we have a clear dynamical interpretation for both $\varphi$ and $J$.

The eigenvalues of $h^1(f,X)$ can also be easily described in similar terms. Since the set $J$ of essential quasicomponents is finite, the action of $\varphi$ partitions it into disjoint, irreducible cycles of the form $F_{i_1} \rightarrow F_{i_2} \rightarrow \ldots \rightarrow F_{i_r} \rightarrow F_{i_1} \rightarrow \ldots$ It is straightforward to check from Theorem \ref{teo:main1_intro} that each of these contributes a $\lambda^r - 1$ factor to the characteristic polynomial of $\tilde{f}^*$ on $\check{H}^1(W^u_{\infty},\tilde{X} \cup \infty)$. Thus, when coefficients are taken in $\mathbb{K} = \mathbb{C}$ each cycle of quasicomponents of minimal period $r$ contributes $r$ eigenvalues of $\tilde{f}^*$ that constitute precisely a complete set of $r$th roots of unity. Again through the exact sequence for the pair $(W^u_{\infty},\tilde{X} \cup \infty)$ it follows that every eigenvalue of $h^1(f,X)$ is either one of $\tilde{f}^*|_{{\rm im}\ i^*}$ or one of the roots of unity just described.

The generalization of Theorem \ref{teo:B} reads as follows:

\begin{corollary} \label{coro:genB_intro} Let $f$ be a continuous map and let $X$ be an arbitrary isolated invariant continuum. Denote by $(i^*)^q$ and $(i^*)^{q-1}$ the homomorphisms induced by the inclusion $\tilde{X} \subseteq W^u_{\infty}$ in $q$-- and $(q-1)$--dimensional cohomology. Assume that $\Fix(\varphi^n) = \emptyset$; that is, $\tilde{f}^n$ sends every essential quasicomponent onto a different one. Then \[\trace(h^q(f^n,X)) = \trace((\tilde{f}^*)^n|_{{\rm im}\ (i^*)^q}) + \trace((\tilde{f}^*)^n|_{{\rm im}\ (i^*)^{q-1}}) - \trace((f^*)^n|_{\check{H}^{q-1}(X)})\] in any degree $q > 1$ such that $\check{H}^q(X)$ and $\check{H}^{q-1}(X)$ are finitely generated.
\end{corollary}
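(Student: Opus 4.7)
The plan is to compute $\trace(h^q(f^n,X))$ via the long exact cohomology sequence of the pair $(W^u_\infty,\tilde X)$, using Theorem \ref{teo:main2_intro} to control the traces on the relative groups $\check H^p(W^u_\infty,\tilde X)$. Since $h^q(f^n,X)$ is identified with the action of $(\tilde f^*)^n$ on $\check H^q(W^u_\infty,\infty)$, and since for $q\ge 2$ the long exact sequence of the pair $(W^u_\infty,\infty)$ yields a canonical, $\tilde f^*$-equivariant isomorphism $\check H^q(W^u_\infty,\infty)\cong \check H^q(W^u_\infty)$, it suffices to evaluate $\trace((\tilde f^*)^n|_{\check H^q(W^u_\infty)})$.

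The first step is to show that $\trace((\tilde f^*)^n|_{\check H^p(W^u_\infty,\tilde X)})=0$ for $p=q$ and $p=q+1$. Since $\check H^{q-1}(X)$ and $\check H^q(X)$ are finitely generated, Theorem \ref{teo:main2_intro} gives $\tilde f^*$-equivariant isomorphisms $\check H^p(W^u_\infty,\tilde X)\cong \bigoplus_{j=1}^m \check H^p(\overline F_j)$, and $(\tilde f^*)^n$ permutes these summands according to $\varphi^n$. The hypothesis $\Fix(\varphi^n)=\emptyset$ then says that no summand is mapped to itself, so in block form $(\tilde f^*)^n$ has vanishing diagonal blocks, and hence vanishing trace.

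Next I would extract from the long exact sequence of $(W^u_\infty,\tilde X)$ the three short exact sequences of $(\tilde f^*)^n$-invariant subspaces
\begin{gather*}
0\to \im\alpha\to \check H^q(W^u_\infty)\to \im (i^*)^q\to 0,\\
0\to \im\delta\to \check H^q(W^u_\infty,\tilde X)\to \im\alpha\to 0,\\
0\to \im (i^*)^{q-1}\to \check H^{q-1}(\tilde X)\to \im\delta\to 0,
\end{gather*}
where $\alpha\colon \check H^q(W^u_\infty,\tilde X)\to \check H^q(W^u_\infty)$ and $\delta\colon \check H^{q-1}(\tilde X)\to \check H^q(W^u_\infty,\tilde X)$ are the maps of the pair sequence. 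Additivity of the trace along these sequences lets one eliminate $\trace|_{\im\alpha}$ and $\trace|_{\im\delta}$, and combined with the vanishing of $\trace((\tilde f^*)^n|_{\check H^q(W^u_\infty,\tilde X)})$ from the previous step and the canonical identification $\check H^{q-1}(\tilde X)\cong \check H^{q-1}(X)$ intertwining $(\tilde f^*)^n$ with $(f^*)^n$, a short substitution yields exactly the stated formula.

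The subtlest point is not the trace bookkeeping but the verification that the decomposition supplied by Theorem \ref{teo:main2_intro} is genuinely $\tilde f^*$-equivariant, with the action realized by the permutation $\varphi$ of indices. Once this has been granted, the role of the hypothesis $\Fix(\varphi^n)=\emptyset$ becomes transparent: it is exactly what one needs to annihilate the contributions that would otherwise come from the internal action of $(\tilde f^*)^n$ on each $\check H^p(\overline F_j)$ associated with a fixed essential quasicomponent, and which one would have no means of controlling in general.
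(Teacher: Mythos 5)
Your proposal is correct and follows essentially the same route as the paper: use Theorem \ref{teo:main2_intro} (applied at degree $q-1$, using that $\check{H}^{q-1}(X)$ is finitely generated) to kill $\trace((\tilde f^*)^n|_{\check H^q(W^u_\infty,\tilde X)})$, then run trace-additivity along the long exact sequence of the pair $(W^u_\infty,\tilde X)$ truncated between $\im(i^*)^{q-1}$ and $\im(i^*)^q$, and invoke Propositions \ref{prop:conley} and \ref{prop:Xtilde} exactly as the paper does; breaking that one exact sequence into three short ones is only a cosmetic variation. Two minor imprecisions worth noting: only the case $p=q$ of the relative-trace vanishing is actually used (the $p=q+1$ case is superfluous here), and Proposition \ref{prop:Xtilde} supplies a \emph{shift equivalence} between $f^*$ on $\check H^{q-1}(X)$ and $\tilde f^*$ on $\check H^{q-1}(\tilde X)$ rather than a canonical isomorphism of the groups — which is weaker than what you wrote, but still yields equality of traces of every power, so the conclusion stands.
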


The proof of Corollary \ref{coro:genB_intro} is a simple consequence of Theorem \ref{teo:main2_intro} but involves a minor technical point concerning the relation between $X$ and $\tilde{X}$. We shall discuss this in detail in Section \ref{sec:background} and prove Corollaries \ref{coro:genA_intro} and \ref{coro:genB_intro} in Section \ref{sec:proofs34}.

Corollary \ref{coro:genB_intro} may not be very appealing aesthetically, but it has a neat consequence concerning the fixed point index of $f$ on $X$. This very classical invariant is an integer $i(g,Z)$ that provides an algebraic measure of the amount of fixed points that a continuous map $g \colon Z \to Z$ has. In particular, a non--zero index implies the existence of fixed points. The celebrated Lefschetz--Hopf theorem states that for compact triangulable spaces $Z$ the index $i(g,Z)$ is equal to the so-called \emph{Lefschetz number} of $g$, which is defined as the alternated sum of the traces of the maps induced by $g$ on the singular (co)homology groups $H_q(Z)$ and usually denoted by $\Lambda(g)$. This result was later on extended by Lefschetz himself to the case where $Z$ is a compact absolute neighborhood retract (see \cite{Brown} or \cite{JM}). It is known, however, that it does not generally hold when $Z$ has bad local topological features. The interest of the following corollary is that it provides another instance where the Lefschetz--Hopf theorem holds even though no direct assumption about the topology of $X$ is made; in fact, the latter may well be very complicated:

\begin{corollary} \label{coro:Lef} Let $f$ be a continuous map defined in a manifold and let $X$ be an isolated invariant continuum having finitely generated \v{C}ech cohomology.
Then, the (\v{C}ech) Lefschetz numbers $\Lambda(f^n|_X) = \sum_{q \ge 0} (-1)^q \trace((f^*_q)^n)$, where $f^*_q$ denotes the map induced by $f$ in the $q$th \v{C}ech cohomology groups of $X$, are well defined.
Assume that $\Fix(\varphi) = \emptyset$, so that $\tilde{f}$ sends every essential quasicomponent onto a different one, then $i(f,X) = \Lambda(f|_X)$.

More generally, if we denote by $d$ the greatest common divisor of the periods of the action of $\varphi$ in the set of essential quasicomponents, then $i(f^n,X) = \Lambda(f^n|_X)$ if $n$ is not multiple of any of those periods and $i(f^n, X) \equiv \Lambda(f^n|_X) \mod d$ otherwise.
\end{corollary}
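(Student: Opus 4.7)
The plan is to reduce the corollary to an algebraic manipulation of traces by combining two ingredients. The first is the classical Lefschetz--Hopf-type identity for the cohomological Conley index on a manifold,
\[
i(f^n, X) \;=\; \sum_{q \ge 0} (-1)^q \trace\bigl(h^q(f^n, X)\bigr),
\]
which I would invoke from the Conley index literature. The second is the combination of Corollaries \ref{coro:genA_intro} and \ref{coro:genB_intro} with Theorem \ref{teo:main2_intro}, which together describe each $\trace(h^q(f^n, X))$ in terms of essential quasicomponents and the \v{C}ech cohomology of $X$. Well-definedness of $\Lambda(f^n|_X)$ is immediate: finite generation of $\check{H}^*(X)$ forces $\check{H}^q(X) = 0$ for all sufficiently large $q$, so the defining alternating sum is finite.

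I would first treat the case $\Fix(\varphi^n) = \emptyset$, which covers the second assertion as well as the third whenever $n$ is not divisible by any period of $\varphi$. Writing $a_q := \trace((\tilde{f}^*)^n|_{\im (i^*)^q})$ and $b_q := \trace((f^*)^n|_{\check{H}^q(X)})$, Corollaries \ref{coro:genA_intro} and \ref{coro:genB_intro} supply $\trace(h^1(f^n, X)) = -1 + a_1$ and $\trace(h^q(f^n, X)) = a_q + a_{q-1} - b_{q-1}$ for $q \ge 2$ (the asymptotically stable case is identical after a bookkeeping change: $\trace(h^0) = 1$ and the constant in $\trace(h^1)$ disappears). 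In the alternating sum the $a_q$ cancel telescopically, the $-b_{q-1}$ reassemble as $\sum_{q \ge 1}(-1)^q b_q$, and the leftover constant combines with $\trace(h^0) = 0$ (valid for a non-attractor continuum since every component of a suitably chosen isolating neighborhood meets the exit set) and $b_0 = 1$ (since $X$ is connected) to give exactly $\Lambda(f^n|_X)$.

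For the general case $\Fix(\varphi^n) \ne \emptyset$, the hypothesis of Corollary \ref{coro:genB_intro} fails and its formula must be supplemented. Here I would work directly with Theorem \ref{teo:main2_intro}: the $\tilde{f}^*$-equivariant splitting $\check{H}^{q+1}(W^u_\infty, \tilde X) = \bigoplus_j \check{H}^{q+1}(\overline{F}_j)$, in which $\tilde{f}^*$ permutes summands via $\varphi^{-1}$, shows that the contribution of a $\varphi$-cycle of length $r$ to $\trace((\tilde{f}^*)^n|_{\check{H}^{q+1}(W^u_\infty,\tilde X)})$ vanishes unless $r \mid n$ and otherwise equals $r$ times the trace of $(\tilde{f}^{*r})^{n/r}$ on the cohomology of one component in the cycle, hence is an integer multiple of $r$. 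Chasing this correction through the long exact sequence of the triple $(W^u_\infty, \tilde X \cup \infty, \infty)$ used to obtain Corollary \ref{coro:genB_intro} yields a refined formula $\trace(h^q(f^n, X)) = a_q + a_{q-1} - b_{q-1} + \varepsilon_q$ with $\varepsilon_q$ an integer combination of cycle lengths $r_i$, hence a multiple of $d := \gcd(r_i)$. Simultaneously $\#\Fix(\varphi^n) = \sum_{r_i \mid n} r_i$ from Corollary \ref{coro:genA_intro} lies in $d\,\mathbb{Z}$. Summing with the signs from the Lefschetz--Hopf identity,
\[
i(f^n, X) - \Lambda(f^n|_X) \;=\; -\#\Fix(\varphi^n) + \sum_{q \ge 2}(-1)^q \varepsilon_q \;\in\; d\,\mathbb{Z},
\]
giving the congruence; when $n$ is not a multiple of any period all these terms vanish and we recover equality.

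The main obstacle is precisely the refined version of Corollary \ref{coro:genB_intro} just described: one must track the fixed-quasicomponent contributions of Theorem \ref{teo:main2_intro} through the relevant long exact sequence and verify that they enter each $\trace(h^q(f^n, X))$ as an $\mathbb{Z}$-linear combination of cycle lengths, with signs that survive the alternating sum. A secondary technical point is the integrality of the traces on the $\check{H}^*(\overline{F}_j)$, which is needed to interpret the congruence in $\mathbb{Z}$ rather than merely in $\mathbb{K}$; this reduces, under the finite-generation hypothesis on $X$ and the description of the branches developed in Section \ref{sec:background}, to standard Hopf-trace considerations.
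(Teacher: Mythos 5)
Your proposal follows the paper's own proof: invoke the Lefschetz-like formula $i(f^n,X)=\sum_{q\ge 0}(-1)^q\trace(h^q(f^n,X))$, substitute Corollaries \ref{coro:genA_intro} and \ref{coro:genB_intro}, and observe the telescopic cancellation of the $\trace((\tilde{f}^*)^n|_{\im (i^*)^q})$ terms, with the residual correction $\trace((\tilde f^*)^n|_{\check H^q(W^u_\infty,\tilde X)})\equiv 0\bmod d$ supplied by Theorem \ref{teo:main2_intro} exactly as the paper's preceding remark indicates. One minor slip: the long exact sequence feeding Corollary \ref{coro:genB_intro} is that of the pair $(W^u_\infty,\tilde X)$, not the triple $(W^u_\infty,\tilde X\cup\infty,\infty)$ (the latter is used for Corollary \ref{coro:genA_intro}); this does not affect the argument.
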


Notice that the condition that $\Fix(\varphi) = \emptyset$ is automatically satisfied when $X$ is an asymptotically stable attractor, since in that case the compactified unstable manifold $W^u_{\infty}$ reduces to the disjoint union of $\tilde{X}$ and $\infty$, there are no essential quasicomponents and the hypotheses are trivially satisfied.  Thus for attractors the proposed generalization of Lefschetz--Hopf theorem holds true.

As mentioned earlier, the action of $\varphi$ partitions $J$ into disjoint irreducible cycles of the form $F_{i_1} \rightarrow F_{i_2} \rightarrow \ldots \rightarrow F_{i_r} \rightarrow F_{i_1} \rightarrow \ldots$ Written out in full, this means that $\tilde{f}^{-1}(F_{i_j}) = F_{i_{j+1}}$, so in particular all the quasicomponents involved in a cycle are homeomorphic to each other. This fact can be exploited in several ways to extract information about certain numerical invariants. For instance, $\trace((f^*)^n|_{\check{H}^q(W^u_{\infty}, \tilde{X})}) \equiv 0 \mod d$, for $q \ge 1$, and this implies that equations in Corollaries \ref{coro:genA_intro} and \ref{coro:genB_intro} are valid $\mod d$ no matter how $\varphi$ acts on $J$. This consequence can be easily tracked in the proofs of those corollaries in Section \ref{sec:proofs34}. The second statement of Corollary \ref{coro:Lef} provides an example of this technique (another example is furnished by Theorem \ref{teo:chiX} below):

\begin{proof}[Proof of Corollary \ref{coro:Lef}] This result is a direct application of the Lefschetz--like formula (see \cite{Franks2}, \cite{HCR}, \cite{LCRPS} or \cite{mrozekLefschetz}) \[i(f^n, X) = \sum_{q \ge 0}(-1)^q \trace(h^q(f^n, X)).\] Upon replacing the formulae for $\trace(h^q(f^n,X))$ given in Corollaries \ref{coro:genA_intro} and \ref{coro:genB_intro} and using the remark above a telescopic cancellation ($\mod d$ if $\gcd(n,d) \neq 1$) occurs that eliminates all the traces of $(\tilde{f}^*)^n|_{\im\ (i^*)^q}$, yielding the equality $i(f^n,X) = \Lambda(f^n|_X)$ (valid $\mod d$ if $\gcd(n,d) \neq 1$). The case in which $X$ is an attractor requires special treatment because the term $\trace(h^0(f^n, X))$ does not vanish, it is equal to 1 (cf. Section \ref{sec:corollaries}), but this contribution cancels with the particular formula in Corollary \ref{coro:genA_intro}. 
\end{proof}

Lefschetz--Hopf theorem sharpens the celebrated original result of Lefschetz, usually referred to as Lefschetz theorem, which states that if $\Lambda(g) \neq 0$ then $\Fix(g) \neq \emptyset$. This theorem has been proven in several instances but does not hold for general arbitrary continua (non--ANR). In 1935 Borsuk \cite{Bor01} (see also \cite{BarSad}, \cite{Bing}, \cite{Hag} and references therein) constructed a locally connected acyclic continuum $K$ of \({\mathbb R}^3\) without the fixed point property, that is, such that there is a continuous map $g \colon K \to K$ without fixed points. From the previous discussion we learn, for instance, that it is impossible to embed $K$ (or any other counterexample to the Lefschetz theorem) in $\mathbb{R}^n$ in such a way that $g$ can be extended to a continuous map $f$ defined on a neighborhood of $K$ and such that $K$ is an attractor for $f$.
Indeed, suppose such an embedding $e$ would be possible. Then $f$ and $K$ would satisfy the hypothesis of Corollary \ref{coro:Lef} and the condition $\Fix(\varphi) = \emptyset$ would be automatically satisfied so $i(f, K) = \Lambda(f|_K) = \Lambda(g) \neq 0$, which would imply that $f$ and thus $g$ has fixed points in $K$.


\medskip


\begin{theorem} \label{teo:chiX} Let $X$ be an isolated invariant continuum for a homeomorphism $f$. Denote by $d$ the greatest common divisor of the periods of the essential quasicomponents of $W^u_{\infty} \setminus (X \cup \infty)$. Assume $X$ has finitely generated cohomology in all higher degrees so that $\chi(X)$ is well defined. Then \[\chi(W^u_{\infty},\infty) \equiv  \chi(X)\ \mod d.\]
\end{theorem}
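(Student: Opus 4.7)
The plan is to reduce the statement to a divisibility claim about $\chi(W^u_\infty, X \cup \infty)$ and then exploit the orbit structure of $\varphi$. Since $f$ is a homeomorphism, we identify $\tilde{X}$ with $X$ throughout. First I would apply the Euler characteristic additivity formula to the triple $(W^u_\infty, X \cup \infty, \infty)$, obtaining
\[\chi(W^u_\infty, \infty) = \chi(W^u_\infty, X \cup \infty) + \chi(X \cup \infty, \infty).\]
Because $X \cup \infty$ is the topological disjoint union $X \sqcup \{\infty\}$, a routine computation from the long exact sequence of the pair $(X \cup \infty, \infty)$ gives $\check{H}^n(X \cup \infty, \infty) \cong \check{H}^n(X)$ in every degree, whence $\chi(X \cup \infty, \infty) = \chi(X)$. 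Thus the theorem reduces to proving the congruence $\chi(W^u_\infty, X \cup \infty) \equiv 0 \mod d$.

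Next, I would compute $\chi(W^u_\infty, X \cup \infty)$ degree by degree using the main theorems of the paper. Theorem \ref{teo:main1_intro} gives $\dim \check{H}^1(W^u_\infty, X \cup \infty) = m$ directly. For $q \geq 2$, the long exact sequence of the triple $(W^u_\infty, X \cup \infty, X)$ combined with the vanishing $\check{H}^n(X \cup \infty, X) = 0$ for $n \geq 1$ identifies $\check{H}^q(W^u_\infty, X \cup \infty) \cong \check{H}^q(W^u_\infty, X)$, and Theorem \ref{teo:main2_intro} decomposes the latter as $\bigoplus_{j=1}^m \check{H}^q(\overline{F}_j)$. Granting momentarily that $\check{H}^0(W^u_\infty, X \cup \infty) = 0$, this yields
\[\chi(W^u_\infty, X \cup \infty) = -m + \sum_{j=1}^m \sum_{q \geq 2} (-1)^q \dim \check{H}^q(\overline{F}_j).\]

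Finally, I would partition $\{F_1, \ldots, F_m\}$ into $\varphi$-orbits and exploit the homogeneity of each orbit. Since $\tilde{f}$ is a homeomorphism permuting the essential quasicomponents, all members of an orbit of length $r$ are pairwise homeomorphic; hence such an orbit contributes $r(E-1)$ to the expression above, where $E = \sum_{q \geq 2}(-1)^q \dim \check{H}^q(\overline{F}_{i_1})$ is common to every representative $F_{i_1}$ of the orbit. Because $d$ divides every period $r$ by definition, each orbit contributes a multiple of $d$, and summing over all orbits yields $\chi(W^u_\infty, X \cup \infty) \equiv 0 \mod d$, as desired. The main obstacle I anticipate is the technical verification that the degree-$0$ term $\check{H}^0(W^u_\infty, X \cup \infty)$—which counts quasicomponents of $W^u_\infty$ disjoint from $X \cup \infty$—does not spoil the congruence: this will require either ruling out compact components of $W^u_\infty$ entirely contained in $W^u \setminus X$ or observing that any such components come in $\tilde{f}$-orbits whose lengths are again divisible by $d$.
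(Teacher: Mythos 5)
Your plan coincides with the paper's proof: identify $X$ with $\tilde{X}$, expand $\chi(W^u_\infty,X\cup\infty)$ degree by degree via Theorems \ref{teo:main1_intro} and \ref{teo:main2_intro}, group the essential quasicomponents into $\varphi$--orbits (within which the $\overline{F}_j$ are pairwise homeomorphic and hence have equal Betti numbers), and appeal to additivity of $\chi$ for the triple. The only difference is cosmetic — you sum the alternating contributions orbit by orbit, the paper argues Betti number by Betti number — so there is no genuinely different route.

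The one point you flag, $\check{H}^0(W^u_\infty,X\cup\infty)=0$, is real but closes immediately: it is equivalent to the absence of a component of $W^u_\infty$ disjoint from $X\cup\infty$, and this is exactly what Proposition \ref{prop:reach} (equivalently Remark \ref{rem:components}) rules out — every quasicomponent of $W^u_\infty\setminus(X\cup\infty)$ reaches $X$ or $\infty$, so a clopen piece of $W^u_\infty$ meeting neither cannot exist. Your proposed fallback (``any such components come in $\tilde{f}$--orbits whose lengths are again divisible by $d$'') would not have worked: $d$ is the gcd of the periods of the \emph{essential} quasicomponents only, and there is no reason orbit lengths of hypothetical extra components of $W^u_\infty$ would be controlled by that number. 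Fortunately the first route makes the question moot.
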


In specific examples the Conley index is usually explicitly computable whereas the isolated invariant set $X$ itself might be more elusive. We shall show in Section \ref{sec:background} that $\chi(W^u_{\infty},\infty)$ can in turn be computed from the Conley index of $X$, and so the theorem above shows that it is possible to extract some information about the Euler characteristic of the ``unobservable'' $X$ from its ``observable'' Conley index.

\begin{proof}[Proof of Theorem \ref{teo:chiX}] (We do not need to distinguish between $X$ and $\tilde{X}$ or $f$ and $\tilde{f}$ because $f$ is assumed to be a homeomorphism). Clearly the number of essential quasicomponents in $W^u_{\infty} \setminus (X \cup \infty)$ must be divisible by $d$, and then Theorem \ref{teo:main1_intro} implies that $\beta^1(W^u_{\infty},X \cup \infty) \equiv 0 \ \mod d$. Here $\beta^q$ denotes the $q$th Betti number; that is, the dimension of the corresponding \v{C}ech cohomology group (vector space) $\check{H}^q$. As for the higher Betti numbers, for $q \geq 1$ we have $\beta^{q+1}(W^u_{\infty},X \cup \infty) = \beta^{q+1}(W^u_{\infty},X) =  \sum_i \beta^{q+1}(\overline{F}_i)$ where the last equality follows from Theorem \ref{teo:main2_intro}. Since all the essential quasicomponents in a cycle are homeomorphic to each other and so are their closures in $W^u_{\infty}/X$, the $\overline{F}_i$ all have the same Betti numbers and contribute equally to the above sum. The number of summands corresponding to each cycle is divisible by $d$, so it follows that $\beta^{q+1}(W^u_{\infty},X \cup \infty) \equiv 0 \ \mod d$ too. Finally, $\beta^0(W^u_{\infty},X \cup \infty) = 0$ because, by Proposition \ref{prop:reach}, each quasicomponent of $W^u_{\infty} \setminus (X \cup \infty)$ reaches $X$ or $\infty$ (or both, if essential). Thus $\chi(W^u_{\infty},X \cup \infty) \equiv 0 \ \mod d$. The theorem follows from this and the additivity of the Euler characteristic, which implies that $\chi(W^u_{\infty},X \cup \infty) = \chi(W^u_{\infty},\infty) - \chi(X)$.
\end{proof}

Most of the results in this paper are stated for connected $X$ for simplicity. However, since all of them follow from Theorems \ref{teo:main1_intro} and \ref{teo:main2_intro}, which are valid when $X$ has finitely many connected components, they all have appropriate generalizations to this slightly more general case. However, a qualitative leap takes place when one considers sets $X$ having infinitely many connected components: as the reader will see there arise difficulties which, although algebraic in nature, seem to reflect new dynamical phenomena that cannot occur when $X$ is connected or has only finitely many connected components. For instance, we will prove the following result:

\begin{theorem} \label{teo:components} Assume that the first cohomological Conley index $h^1(f,X)$ has a nonzero eigenvalue $\lambda \in \mathbb{C}$ that is not a root of unity. Then at least one of the following holds:
\begin{itemize}
	\item[(i)] $X$ has infinitely many connected components.
	\item[(ii)] $f^* \colon \check{H}^1(X;\mathbb{C}) \to \check{H}^1(X;\mathbb{C})$ has $\lambda$ as an eigenvalue.
\end{itemize}

In particular, if the phase space is $\mathbb{R}^2$ and $f$ is a homeomorphism, then $X$ must have infinitely many connected components because (ii) cannot hold.
\end{theorem}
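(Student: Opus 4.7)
The plan is to argue the contrapositive: assume $X$ has only finitely many connected components and show that $\lambda$ must then appear as an eigenvalue of $f^*$ on $\check{H}^1(X;\mathbb{C})$. Under this finiteness hypothesis Theorem \ref{teo:main1_intro} applies and tells us that $\tilde{f}^*$ acts on $\check{H}^1(W^u_\infty,\tilde{X}\cup\infty)$ as the permutation $F_j\mapsto \tilde{f}^{-1}F_j$ of a finite basis. In particular every eigenvalue of that action, together with every eigenvalue of $\tilde{f}^*$ on any of its $\tilde{f}^*$--invariant subquotients, is a root of unity; so the hypothetical $\lambda$ cannot live there.

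The algebraic engine is the long exact sequence of the triple $(W^u_\infty,\tilde{X}\cup\infty,\infty)$, which in degree one reads
\[
\check{H}^1(W^u_\infty,\tilde{X}\cup\infty) \xrightarrow{j^*} \check{H}^1(W^u_\infty,\infty) \xrightarrow{i^*} \check{H}^1(\tilde{X}\cup\infty,\infty)
\]
and is $\tilde{f}^*$--equivariant; the right--hand group is canonically $\check{H}^1(\tilde{X})$ since $\infty$ is a clopen point in $\tilde{X}\cup\infty$. Identifying $h^1(f,X)$ with $\tilde{f}^*$ on $\check{H}^1(W^u_\infty,\infty)$ and splitting this (finite dimensional) space via the short exact sequence $0 \to \im(j^*) \to \check{H}^1(W^u_\infty,\infty) \to \im(i^*) \to 0$, multiplicativity of characteristic polynomials forces $\lambda$ to be an eigenvalue of $\tilde{f}^*$ on $\im(i^*)\subseteq \check{H}^1(\tilde{X})$, because the permutation piece $\im(j^*)$ contributes only roots of unity. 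Transporting the conclusion from $(\tilde{f},\tilde{X})$ back to $(f,X)$ via the functorial compatibility of the Robbin--Salamon construction -- the same reconciliation already needed for Corollaries \ref{coro:genA_intro} and \ref{coro:genB_intro} -- then yields (ii).

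For the closing sentence I would show that (ii) is impossible when $f$ is a planar homeomorphism. Alexander duality in $S^2$ identifies $\check{H}^1(X;\mathbb{C})$ with the $\mathbb{C}$--vector space freely generated by the bounded complementary components of $X$. Working with the presentation $\check{H}^1(X)=\varinjlim H^1(V)$ along a cofinal system of neighbourhoods $V$ small enough that $f^{-1}(V)\subseteq U$, one sees that $f^{-1}$ sends every bounded component of $\mathbb{R}^2\setminus X$ whose closure lies in $V$ onto another such component; in the limit this presents $f^*$ on $\check{H}^1(X;\mathbb{C})$ as a permutation of the Alexander--dual basis, whose eigenvalues are therefore all roots of unity. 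This rules out (ii), so the dichotomy from the first half of the theorem forces (i).

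The step I expect to be most delicate is the planar one. Although the slogan ``$f$ permutes the bounded complementary components of $X$'' is intuitively clear, making it rigorous at the level of the Čech direct limit requires a careful choice of cofinal neighbourhoods on which $f$ and $f^{-1}$ are simultaneously defined, and requires controlling how components can merge or disappear as $V$ shrinks to $X$. The first two paragraphs, by contrast, are essentially a clean packaging of the long--exact--sequence machinery of the paper together with the new structural information supplied by Theorem \ref{teo:main1_intro}; the main new idea is simply that the permutation character of $\tilde{f}^*$ on $\check{H}^1(W^u_\infty,\tilde{X}\cup\infty)$ forces any non--root--of--unity eigenvalue of $h^1(f,X)$ to leak through the triple exact sequence into $\check{H}^1(\tilde{X})$.
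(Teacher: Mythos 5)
Your argument is essentially the paper's own: both hinge on the long exact sequence of the triple $(W^u_\infty,\tilde{X}\cup\infty,\infty)$ in degree one and on the fact that $\tilde f^*$ has only roots of unity as eigenvalues on $\check H^1(W^u_\infty,\tilde X\cup\infty)$, so that a non--root--of--unity eigenvalue $\lambda$ of $h^1(f,X)$ must be seen on $\check H^1(\tilde X)$. The packaging differs: you argue contrapositively, invoke Theorem \ref{teo:main1_intro} (valid because you have already assumed $X$ has finitely many components) to identify $\check H^1(W^u_\infty,\tilde X\cup\infty)$ as a finite permutation module, and conclude via multiplicativity of the characteristic polynomial along the short exact sequence $0\to\im j^*\to\check H^1(W^u_\infty,\infty)\to\im i^*\to 0$. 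The paper instead invokes Proposition \ref{prop:rootsofunity} (which holds even without the finiteness hypothesis), works with $P(\tilde f^*)=\tilde f^*-\lambda$, and uses the finiteness only to make the subspace $G=\im(\check H^0(\tilde X)\to\check H^1(W^u_\infty,\tilde X\cup\infty))$ finite dimensional so that injectivity of $P(\tilde f^*)|_G$ forces surjectivity. Your route is slightly cleaner; both are correct.

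Two points to tighten. First, the "functorial compatibility" step transporting $\lambda$ from $\check H^1(\tilde X)$ to $\check H^1(X)$ is stated too loosely: you cannot simply cite Proposition \ref{prop:Xtilde}, since its hypothesis (finite dimensionality of $\check H^1(X)$) is not part of Theorem \ref{teo:components}. What you actually need, and what does hold unconditionally, is that $\check H^1(\tilde X)$ is the direct limit of $\check H^1(X)\xrightarrow{f^*}\check H^1(X)\xrightarrow{f^*}\cdots$ (continuity of \v Cech cohomology together with the inverse-limit description of $\tilde X$), and that any nonzero eigenvalue of the induced map on a direct limit of this form already occurs as an eigenvalue at a finite stage. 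That latter fact is a one-line computation ($\varphi^{k+1}(v)=\lambda\varphi^k(v)$ for some $k$, and $\varphi^k(v)\neq0$ because $[v]\neq0$), but it should be said. Second, the planar step is simpler than you fear: Alexander duality in $S^2$ is a direct isomorphism $\check H^1(X;\mathbb{C})\cong\widetilde H_0(S^2\setminus X;\mathbb{C})$ that intertwines $f^*$ with (a sign times) $f_*$; no passage to a cofinal system of neighbourhoods or any control of merging components is needed. Since $f$ permutes the complementary components, $f_*$ permutes a basis of $\widetilde H_0(S^2\setminus X;\mathbb{C})$, and any eigenvector (having finite support) exhibits a finite cycle, so all eigenvalues are roots of unity. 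Your instinct that something requires care is misplaced here.
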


Unfortunately our present understanding of the interplay between algebra and dynamics when $X$ has infinitely many connected components is still rather poor, so we will have to content ourselves with a characterization of the eigenvalues and eigenvectors of $\tilde{f}^*$ on $\check{H}^1(W^u_{\infty},\tilde{X} \cup \infty)$ (not nearly as neat as that of Theorem \ref{teo:main1_intro}) and a proof of Theorem \ref{teo:components}.

\subsection*{Our approach to the problem} Having given a taste of the sort of results that will be obtained in this paper, we will now present the general reasoning behind the proof of Theorem \ref{teo:main1_intro} (we warn the reader that the ideas presented here will turn out to be too simplistic and are therefore not in their final form). For this informal discussion we shall dispense with the notational distinction between $X$, $\tilde{X}$, $f$ and $\tilde{f}$.

Begin by writing $W^u_{\infty}$ as the union of the two open invariant sets \[U := W^u_{\infty} \setminus \infty \quad\quad \text{and} \quad\quad V := W^u_{\infty} \setminus X\] and consider the associated Mayer--Vietoris sequence \begin{equation} \label{eq:mvietoris_mot0} \xymatrix{ \ldots & \ar[l] \check{H}^{q+1}(W^u_{\infty},\infty) & \ar[l]_-{\Delta} \check{H}^q(U\cap V) & \ar[l] \check{H}^q(U) \oplus \check{H}^q(V,\infty) & \ar[l] \ldots} \end{equation} In the absence of hyperbolicity the local topology of all the spaces involved in the sequence can be very complicated, which further justifies our use of \v{C}ech cohomology.

Recall that $W^u_{\infty}$ has a Morse decomposition consisting of the repeller $X$ and the attractor $\infty$, and notice that $U$ is the basin of repulsion of $X$ and $V$ is the basin of attraction of $\infty$. Now, a classical result of Hastings \cite{hastings1}, later on generalized by other authors (see for instance \cite{sanjurjo3}, \cite{gunthersegal1} or  \cite{kapitanski1}), implies that the inclusion of an attractor of a \emph{flow} in its basin of attraction induces isomorphisms in cohomology. We emphasize the fact that this result is valid for continuous dynamics, and simple examples show that it is not generally true for discrete dynamics (this will be explained in more detail later on). Nevertheless, let us take it as a heuristic guide and accept that we can use it in our situation. Then, observing that $V$ is the basin of attraction of the single point $\infty$, we may expect that its (reduced) cohomology should vanish. Similarly, the cohomology of $U$ should be the same as that of $X$, and therefore the above sequence could be rewritten as \begin{equation} \label{eq:mvietoris_mot} \xymatrix{ \ldots & \ar[l] \check{H}^{q+1}(X) & \ar[l] \check{H}^{q+1}(W^u_{\infty},\infty) & \ar[l]_-{\Delta} \check{H}^q(U\cap V) & \ar[l] \check{H}^q(X) & \ar[l] \ldots} \end{equation}

Writing a second copy of this same sequence below it and connecting the two with vertical arrows given by $f^*$ (since each term in the sequence is the cohomology of an $f$--invariant set, this construction makes sense), the naturality of the Mayer--Vietoris sequence implies that the resulting diagram is commutative and therefore relates the cohomological Conley index in degree $q+1$ to the action of $f$ on $\check{H}^{q+1}(X)$ and on $\check{H}^q(U \cap V)$, which is one degree lower. In particular, for $q = 0$ this suggests that the cohomological Conley index in degree $1$ is related to the action of $f^*$ on $\check{H}^1(X)$ and on $\check{H}^0(U \cap V)$. At least intuitively, the latter should have a description of some sort in terms of permutations of the connected components of the set $U \cap V$, which is precisely $W^u_{\infty} \setminus (X \cup \infty)$. This is reminiscent of the content of Theorem \ref{teo:main1_intro} above.


If the dynamics were given by a flow, the argument outlined would be perfectly valid and straightforward to formalize. However, in our present case of discrete dynamics we need to:
\begin{itemize}
	\item[(i)] Find a manageable description of $\check{H}^0(U \cap V)$.
	\item[(ii)] Account for the fact that, in general, the inclusion of an attractor in its basin of attraction does not induce isomorphisms in \v{C}ech cohomology in discrete dynamics.
\end{itemize}

Addressing (i) requires a moderate amount of work but is not especially hard because, looking back at the exact sequence \eqref{eq:mvietoris_mot0}, all that is needed is a description of the quotient of $\check{H}^0(U \cap V)$ modulo the image of $\check{H}^0(U) \oplus \check{H}^0(V,\infty)$, which turns out to be rather manageable.

By contrast, (ii) poses a serious difficulty indeed because the discrete counterpart of Hastings' theorem is violated even in very simple situations. For instance, consider the homeomorphism $g(x) := x/2$ defined on $\{0\} \cup \{2^k : k \in \mathbb{Z}\}$: clearly $\{0\}$ is a global attractor for $g$, but the $0$--dimensional cohomology of the attractor and its basin of attraction obviously do not coincide. (One can easily modify and generalize this example to worsen the situation). There are several papers in the literature that obtain discrete analogues of Hastings' theorem (see \cite{gobbino1}, \cite{moronpaco1}, \cite{pacoyo1}, \cite{mio4}) but they all require some sort of ``niceness'' condition on the topology of the basin of attraction; at the very least that it be an absolute neighbourhood retract. It is clear that in our context we cannot expect $U$ and $V$ to meet any such condition (we will illustrate this with the well known Smale horseshoe in Section \ref{sec:background}) and so we will need to make a detour to study, as a general problem of independent interest, the relation between the cohomology of an attractor and its basin of attraction in arbitrary spaces. This will lead us, through the heuristic argument described in the following paragraph, to introduce \emph{power series} in cohomology.

Consider a continuous map $g$ having an attractor $K$ with basin of attraction $Z$ (for the situation considered above we would let $g = f$, $Z = V$ and $K = \infty$ or $g = f^{-1}$, $Z = U$ and $K = X$) and suppose we want to prove that the inclusion $K \subseteq Z$ induces isomorphisms in cohomology or, equivalently, that $\check{H}^*(Z,K) = 0$. We may be more modest and try to show that $g^*$ has no (nonzero) eigenvalues on $\check{H}^*(Z,K)$. This amounts to proving that $g^* - \lambda \cdot {\rm Id}$ is injective for $\lambda \neq 0$, which would certainly by true if $g^* - \lambda \cdot {\rm Id}$ had an inverse. And indeed, we can prove that this is the case by playing a notational game: \begin{equation} \label{eq:crazy} (g^*-\lambda \cdot {\rm Id})^{-1} = \frac{1}{g^* - \lambda \cdot {\rm Id}} = -\frac{1}{\lambda}\cdot  \frac{1}{{\rm Id}-\frac{1}{\lambda}\, g^*} = -\frac{1}{\lambda}\cdot \sum_{j=0}^{\infty} \frac{1}{\lambda^j} (g^*)^j \end{equation} where in the last step we have used the formula for the summation of a geometric series. What we will see in Section \ref{sec:series} is that these purely formal manipulations are actually perfectly legitimate. We will show how to assign to any formal power series $A(x) = \sum_{j=0}^{\infty} a_j x^j$ an endomorphism $A(f^*)$ of $\check{H}^*(Z,K)$ in a way that is consistent with the usual operations of formal addition and multiplication of power series so that the following relations hold: \begin{equation} \label{eq:ring_hom_intro} (A+B)(g^*) = A(g^*)+B(g^*) \quad \quad \text{and} \quad\quad (AB)(g^*) = A(g^*) \circ B(g^*).\end{equation} In this framework, the computations performed in \eqref{eq:crazy} can be formalized by considering the (finite) power series $A(x) := x - \lambda$ and its formal inverse $B(x) = -\nicefrac{1}{\lambda} \sum_{j=0}^{\infty} \nicefrac{x^j}{\lambda^j}$ and observing that the relations $A(x) B(x) = B(x) A(x) =  1$ imply, using \eqref{eq:ring_hom_intro}, that $A(g^*) \circ B(g^*) = B(g^*) \circ A(g^*) = {\rm Id}$ so that $B(g^*)$ really is an inverse for $g^* - \lambda \cdot {\rm Id}$.  For later reference we state this result as a corollary:

\begin{corollary} \label{cor:no_eigen} $g^*$ has no nonzero eigenvalues on $\check{H}^*(Z,K)$.
\end{corollary}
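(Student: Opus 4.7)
The plan is to exploit the power‐series calculus introduced in Section \ref{sec:series}, whose construction of $A \mapsto A(g^*)$ for $A(x) = \sum_{j\ge 0} a_j x^j$ we take for granted here. The key algebraic fact we need from that calculus is the pair of identities in \eqref{eq:ring_hom_intro}, namely that $A \mapsto A(g^*)$ is a ring homomorphism from the ring of formal power series in $x$ with coefficients in $\mathbb{K}$ (with the usual operations) to the ring of endomorphisms of $\check{H}^*(Z,K)$ (under composition). Once this is in hand, the argument is essentially the formal manipulation already indicated in \eqref{eq:crazy}.

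Fix a nonzero $\lambda \in \mathbb{K}$. First, consider the polynomial $A(x) := x - \lambda$; plugging $g^*$ into this finite series gives exactly the endomorphism $g^* - \lambda \cdot \mathrm{Id}$ of $\check{H}^*(Z,K)$. Next, consider the formal power series
\[
B(x) := -\frac{1}{\lambda}\sum_{j=0}^{\infty} \frac{x^j}{\lambda^j},
\]
which by the geometric‐series identity is the formal inverse of $A(x)$, so that $A(x)B(x) = B(x)A(x) = 1$ as formal power series. Applying the ring homomorphism property \eqref{eq:ring_hom_intro} to each product (and to the constant series $1$, which evaluates to $\mathrm{Id}$) yields
\[
(g^* - \lambda \cdot \mathrm{Id}) \circ B(g^*) = B(g^*) \circ (g^* - \lambda \cdot \mathrm{Id}) = \mathrm{Id},
\]
so $g^* - \lambda \cdot \mathrm{Id}$ is invertible on $\check{H}^*(Z,K)$. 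In particular, it has trivial kernel, so $\lambda$ cannot be an eigenvalue of $g^*$, which is exactly the claim.

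The only real work in this proof is therefore the legitimacy of substituting $g^*$ into the \emph{infinite} series $B(x)$ and the verification that the resulting assignment satisfies the ring homomorphism identities \eqref{eq:ring_hom_intro}; these are precisely the results developed in Section \ref{sec:series}. The main conceptual obstacle is convincing oneself that a series like $\sum_{j\ge 0} \lambda^{-j}(g^*)^j$ genuinely defines an endomorphism of $\check{H}^*(Z,K)$ even though no assumption of niceness has been made on $Z$ or $K$; this is the content of the summation technology developed earlier and relies crucially on the fact that $K$ is an attractor with basin $Z$, so that iterates of $g$ push arbitrary compact subsets of $Z$ into arbitrarily small neighborhoods of $K$, forcing the tails of the series to vanish in \v{C}ech cohomology of the pair $(Z,K)$. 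Once this convergence is granted, the corollary is a one‐line consequence of the ring‐homomorphism property.
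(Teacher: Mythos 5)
Your proof is correct and follows exactly the route the paper itself takes: the authors present precisely this argument (setting $A(x)=x-\lambda$, invoking its formal geometric-series inverse $B(x)$, and applying the ring-homomorphism property \eqref{eq:ring_hom_intro} established in Section \ref{sec:series}) in the paragraph immediately preceding the corollary. You have also correctly identified the sole real technical content, namely the construction and verification of $A \mapsto A(g^*)$ in Section \ref{sec:series}, which both you and the paper treat as the main supporting result.
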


As a consequence we obtain a one-line proof for the classical result of Hastings about flows mentioned earlier and the various extensions to more general situations (semiflows and arbitrary phase spaces) that are discussed in the literature.

\begin{corollary} \label{cor:flows}
If $K$ is an attractor for a flow or a semiflow, the inclusion $K \subseteq Z$ induces isomorphisms in (\v{C}ech) cohomology.
\end{corollary}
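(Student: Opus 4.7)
The plan is to derive this result from Corollary \ref{cor:no_eigen} applied to a time-$t$ map of the (semi)flow, combined with the homotopy invariance of \v{C}ech cohomology. Fix any $t>0$ and notice that $K$ remains a global attractor with basin $Z$ for the discrete dynamical system generated by iteration of $\phi_t\colon Z\to Z$. Corollary \ref{cor:no_eigen} applied to $g=\phi_t$ then says that $\phi_t^{*}$ has no nonzero eigenvalue on $\check{H}^{*}(Z,K)$.

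Next I would exploit the fact that the (semi)flow itself furnishes a canonical homotopy of pairs from the identity to $\phi_t$: the formula $H(s,x):=\phi_{st}(x)$ with $s\in[0,1]$ is continuous, sends $K$ into $K$ (because $K$ is flow-invariant) and $Z$ into $Z$ (because basins of attraction are positively invariant). Homotopy invariance of \v{C}ech cohomology of pairs then forces $\phi_t^{*}=\mathrm{id}^{*}=\mathrm{Id}$ on $\check{H}^{*}(Z,K)$.

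Combining these two observations closes the argument: if $\check{H}^{*}(Z,K)$ were not zero, then $\mathrm{Id}$ would have $1\neq 0$ as an eigenvalue on it, directly contradicting the first step. Hence $\check{H}^{*}(Z,K)=0$ and the long exact sequence of the pair produces the desired isomorphism $\check{H}^{*}(K)\cong\check{H}^{*}(Z)$ induced by the inclusion.

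I do not anticipate a serious obstacle, as all the delicate work has already been packaged into Corollary \ref{cor:no_eigen}. The only technicalities worth flagging are the invariance of $K$ and the positive invariance of $Z$ (standard features of an attractor and its basin) and the availability of homotopy invariance for \v{C}ech cohomology of pairs in the metric setting the paper operates in, which is classical. Notice in particular that the argument is indifferent to whether $\{\phi_s\}$ is a flow or merely a semiflow, since only nonnegative times $st\geq 0$ appear in the homotopy $H$.
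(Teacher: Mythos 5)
Your argument is correct and is essentially identical to the paper's proof: the paper likewise takes a fixed time-one map $g$ of the (semi)flow, invokes Corollary \ref{cor:no_eigen} to rule out nonzero eigenvalues of $g^*$ on $\check{H}^*(Z,K)$, and uses the flow as a homotopy of pairs from $g$ to the identity to conclude that $1$ would be an eigenvalue unless $\check{H}^*(Z,K)=0$. Your remark that only nonnegative times enter the homotopy, so the argument works verbatim for semiflows, matches the paper's phrasing as well.
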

\begin{proof} Let $g$ be the time--one map of the flow (or semiflow). The flow (or semiflow) provides a homotopy $H \colon (Z,K) \times [0,1] \to (Z,K)$ from $g \colon (Z,K) \to (Z,K)$ to the identity. Thus $\lambda = 1$ is an eigenvalue of $g^* \colon \check{H}^*(Z,K) \to \check{H}^*(Z,K)$, which contradicts the above unless $\check{H}^*(Z,K) = 0$.
\end{proof}

Notice that, for $\lambda = 1$, both $g^* - \lambda \cdot {\rm Id}$ and the series for $(g^* - \lambda \cdot {\rm Id})^{-1}$ obtained in Equation \ref{eq:crazy} have coefficients in $\mathbb{Z}$; therefore, Corollary \ref{cor:flows} is valid for cohomology with integer coefficients or, in fact, with coefficients in any commutative ring with unit. Similarly, the proof shows that the role of the flow (or semiflow) is just to provide a homotopy between $g$ and the identity; hence, the corollary is also valid for any discrete dynamical system whose generator $g$ is homotopic to the identity on the pair $(Z,K)$.

Returning back to our original problem of showing that the inclusions $X \subseteq U$ and $\infty \subseteq V$ induce isomorphisms in cohomology, it will turn out that this is not necessarily true, which is not surprising given the rather fragile status of the discrete counterpart of Hastings' theorem. However, the use of series will allow us to perform a more detailed analysis of the situation and conclude that certain connecting homomorphisms in (a variant of) the exact sequence \eqref{eq:mvietoris_mot0} are zero, and this will be enough for our general strategy, after some modifications, to go through.

\subsection*{Assumptions on the phase space} We have deliberately omitted so far any assumptions regarding the phase space $M$ where the dynamics takes place, and we close this Introduction with a brief discussion of this topic. The phase space will always be assumed to be paracompact and Hausdorff. Besides that, the \emph{only} additional condition that is required for Theorems \ref{teo:main1_intro} and \ref{teo:main2_intro} to hold is that $\check{H}^*(W^u_{\infty};\mathbb{K})$ be finitely generated. This is ensured as soon as any of the following conditions is satisfied:
\begin{itemize}
	\item[(i)] $X$ has an index pair $(N,L)$ such that $\check{H}^*(N/L;\mathbb{K})$ is finitely generated.
	\item[(ii)] The phase space $M$ is a locally compact, metrizable absolute neighbourhood retract (ANR).
	\item[(iii)] The phase space $M$ is a topological manifold.
\end{itemize}

Each of these conditions implies the previous one. In the results of Section \ref{sec:corollaries} we will need to restrict ourselves to phase spaces that are topological manifolds to ensure that certain duality property of the Conley index is satisfied; other than that we will simply assume tacitly that $M$ is such that at least condition (i) is satisfied.

\subsection*{Structure of the paper} In an attempt to make the paper easier to read we have not followed a strictly logical order, but rather tried to present the ideas in an order that makes they seem as motivated and natural as possible. We begin with Section \ref{sec:corollaries}, where some results concerning the fixed point index are derived from the main theorems stated above. Section \ref{sec:background} reviews the basics of the Conley index and the construction of the compactified unstable manifold, its intrinsic topology, and several other background results that are needed later on. Even if the reader is familiar with the construction of the intrinsic topology it would be advisable to read this section, since the case of continuous maps (rather than homeomorphisms) requires some modifications. Section \ref{sec:proofs34} llustrates how the generalizations of Theorems \ref{teo:A} and \ref{teo:B} that we are looking for (Corollaries \ref{coro:genA_intro} and \ref{coro:genB_intro}) follow as simple algebraic exercises from the main Theorems \ref{teo:main1_intro} and \ref{teo:main2_intro}. Section \ref{sec:series} introduces the powerful tool of series in cohomology and, by way of example, it also includes some versions of Hastings' theorem for discrete dynamical systems that are not used elsewhere in the paper. Finally, having all the necessary elements at our disposal, the strategy using the Mayer--Vietoris sequence described above will be carried out without too much effort in Section \ref{sec:describe1}, proving Theorems \ref{teo:main1_intro} and \ref{teo:main2_intro}. A number of technical results are postponed to three appendices to avoid disrupting the exposition along the paper.

\section{Applications to the fixed point index} \label{sec:corollaries}

The intrinsic understanding of the topology of the unstable manifold of an isolated invariant set allowed us to give a description of the cohomological Conley index in Corollaries \ref{coro:genA_intro} and \ref{coro:genB_intro}. In this section we exploit these results to prove statements about the fixed point index of isolated invariant continua. We will make use of some basic concepts and results from Conley index theory. The reader unfamiliar with these will find the necessary information (or appropriate references) in Section \ref{sec:background}.

There are two special types of invariant sets that sometimes have to be treated separately. A compact invariant set $X$ is called an (asymptotically stable) \emph{attractor} if the following two conditions are satisfied: (i) $X$ has a neighbourhood $V$ such that for every $p \in V$ the forward iterates of $p$ approach $X$ indefinitely; that is, they eventually enter any prescribed neighbourhood of $X$, and (ii) $X$ has a neighbourhood basis that consists of positively invariant sets. The maximal neighbourhood $V$ for which (i) is satisfied is called the \emph{basin of attraction} of $X$ and will be denoted by $\mathcal{A}(X)$. It is an open, positively invariant set. When the phase space is locally compact the positively invariant neighbourhoods mentioned in (ii) can be chosen to be compact, and any sufficiently small such neighbourhood $N$ turns out to be an isolating neighbourhood for $X$ (in fact, $X = \cap_{n \geq 0} f^n(N)$), so that $X$ is an isolated invariant set. The 0--cohomological Conley index of $X$ is then very easy to describe: if $X$ is a connected attractor and $N$ is a connected positively invariant isolating neighborhood as above, $(N, \emptyset)$ is an index pair for $f$ (and $f^n$) and $X$. Conversely, the existence of such an index pair $(N,L)$ where $N$ is connected and $L$ is empty implies that $X$ is a connected attractor. As a consequence (cf. \cite[Subsection 3.2]{HCR}), $h^0(f^n, X)$ is trivial except when $X$ is an attractor and in this case $\trace(h^0(f, X)) = 1$.

On the opposite end, if a compact invariant set $X$ has a neighborhood $N$ such that $N \subseteq f(N)$ and $X = \Inv(N)$ we say $X$ is a \emph{repeller}. This condition is stronger if $f$ is not injective than the usual $X = \cap_{n \ge 0} f^{-n}(N)$ (see Lemma \ref{lem:invm=inv}).
In the case $f$ is a homeomorphism, it can be shown (cf. \cite[Prop. 4]{LCRPS}) that $h^d(f, X)$ vanishes unless $X$ is a repellor and in this case $\trace(h^d(f, X)) = \pm 1$ depending on whether $f$ preserves or reverses orientation.

The computation or estimation of the fixed point indices of the iterates of a homeomorphism or a continuous map at an isolated fixed point, or more generally an isolated invariant set \(X\), is a difficult problem which reflects important information about the dynamics around \(X\) specially in low dimensions. The above theorems provide techniques to establish some new results and to obtain new proof of some strong known theorems. The link between the fixed point and the Conley indices is provided by the Lefschetz--like formula (see \cite{mrozekLefschetz}, \cite{Franks2},  \cite{LCRPS} or \cite{HCR})
\begin{equation}\label{eq:Lef}
i(f^n, X) = \sum_{q \ge 0} (-1)^q \trace(h^q(f^n, X))
\end{equation}
that relates the fixed point index of $f^n$ in an isolated invariant continuum $X$ with the traces of the cohomological Conley indices. If the phase space is a manifold of dimension $d$ the indices with $q \ge d + 1$ vanish because the groups $\check{H}^q(N, L)$ become trivial for any index pair $(N, L)$. Moreover, see Section \ref{sec:background}, in differentiable manifolds it is always possible to choose index pairs such that $N, L$ and $N \setminus L$ are manifolds with boundary so $\check{H}^q(N, L) = H^q(N, L)$. This election is implicit in the results of this chapter and \v{C}ech cohomology is replaced with singular cohomology without lose of generality.

The term $q = 1$ in the sum of Equation \eqref{eq:Lef}, $\trace(h^1(f^n, X))$, is described in Corollary \ref{coro:genA_intro}: it decomposes as the sum of $\#\Fix(\varphi^n) - 1$ ($\varphi$ being the permutation of the essential quasicomponents induced by $\tilde{f}$) and $\trace((\tilde{f}^*)^n|_{\im \ i^*})$. Recall that $i^*$ is the map induced by the inclusion $i \colon \tilde{X} \subseteq W_{\infty}^u$ in one--dimensional cohomology. By exploiting the relation between $X$ and $\tilde{X}$ and $f^*$ and $\tilde{f}^*$ (see Section \ref{sec:background}), a way to control the contribution of $\trace((\tilde{f}^*)^n|_{\im \ i^*})$ is to impose constraints on $\check{H}^1(X)$ or on the map induced by $f$ on this cohomology group.

Suppose for the time being that $f$ is a continuous map defined on an open subset $U$ of $\mathbb{R}^2$.  Next corollary extends, when \(X\) is a singleton, the main result of \cite{HCRP} which allowed to give a partial answer of Shub conjecture for maps in $\mathbb{S}^2$ (see \cite{Shub} or \cite{ShubSullivan} and \cite{GMN} or \cite{IPRX} for some very recent advances).

\begin{corollary}\label{coro:R2indicesperiodicos}
Let  \(U \subset S^2\) be an open set, \(f \colon U \rightarrow f(U) \subset S^2\) be a continuous map and \(X \subseteq U\) be an isolated invariant continuum that is not a repeller and is completely invariant, i.e. $f^{-1}(X) = X$.
Then, $i(f^n, X) \le 1$ for infinitely many even $n \in \mathbb N$ and the inequality is strict provided $X$ is not an attractor.
Furthermore, if $f$ is a homeomorphism the sequence \(\{i(f^n, X)\}_{n \ge 1}\) is periodic.


\end{corollary}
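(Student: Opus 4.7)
The plan is to use the Lefschetz--like formula \eqref{eq:Lef} combined with the descriptions of the low-degree Conley indices recalled earlier in this section. Because the phase space has dimension $2$, $h^q = 0$ for $q \ge 3$; because $X$ is not a repeller, $\trace(h^2(f^n,X)) = 0$ for every $n \ge 1$; and combining the description of $h^0$ (equal to $1$ exactly when $X$ is an attractor) with Corollary~\ref{coro:genA_intro} produces the unified identity
\begin{equation*}
i(f^n, X) \;=\; 1 \;-\; \#\Fix(\varphi^n) \;-\; \trace\bigl((\tilde{f}^*)^n|_{\im i^*}\bigr),
\end{equation*}
with the convention that $\#\Fix(\varphi^n) = 0$ in the attractor case (when $J = \emptyset$).

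The essential task is then to control the two subtracted terms. The first, $\#\Fix(\varphi^n)$, is always a nonnegative integer and, since $J$ is finite, depends periodically on $n$. The crux of the proof is the following Claim: every eigenvalue of $\tilde{f}^*$ acting on the finite-dimensional subspace $\im i^* \subseteq \check{H}^1(\tilde{X};\mathbb{C})$ is either zero or a root of unity. Granting it, let $L$ be the l.c.m.\ of the orders of the nonzero eigenvalues and $k := \dim_{\mathbb{C}}\im i^* \ge 0$; then for every positive multiple $n$ of $2L$ (an infinite set of even integers) the trace term equals $k$ and the displayed identity gives $i(f^n, X) = 1 - \#\Fix(\varphi^n) - k \le 1$. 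If additionally $X$ is not an attractor and $k = 0$, the set $J$ is nonempty; choosing an essential cycle of period $p$ and taking $n$ any positive multiple of $2Lp$ yields $\#\Fix(\varphi^n) \ge p > 0$ and hence $i(f^n,X) < 1$. When $f$ is a homeomorphism, finally, the identity holds with no distinction between $X$ and $\tilde{X}$, and both $\#\Fix(\varphi^n)$ and $\trace((f^*)^n|_{\im i^*})$ are periodic in $n$ (the first as a permutation of a finite set, the second because the eigenvalues are roots of unity), giving the periodicity of $\{i(f^n,X)\}_{n\ge 1}$.

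The proof of the Claim is the heart of the matter and its main obstacle. The natural strategy exploits the planarity of $X$ via Alexander duality $\check{H}^1(X;\mathbb{C}) \cong \tilde{H}_0(S^2\setminus X;\mathbb{C})$. Combining the intrinsic characterisation of the essential quasicomponents of $W^u_\infty \setminus (\tilde{X} \cup \infty)$ given by Theorem~\ref{teo:main1_intro} with the fact that $\tilde{f}$ is a homeomorphism on $W^u_\infty$ and that $f$ is completely invariant on $X$, one expects to identify $\im i^*$ with the span of a family of duality classes indexed by those complementary components of $S^2 \setminus X$ that are ``reached'' by the essential quasicomponents; on this basis $\tilde{f}^*$ acts as a permutation matrix induced by $\tilde{f}^{-1}$, whose eigenvalues are roots of unity. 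This is the same kind of planar input used in the proof of Theorem~\ref{teo:components}, and the delicate point is not the eigenvalue statement itself but the matching of the abstract subspace $\im i^*$ with this geometric permutation picture under complete invariance.
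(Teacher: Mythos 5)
Your overall skeleton matches the paper's: reduce via the Lefschetz-like formula, kill the degree-2 term, and control the formula of Corollary~\ref{coro:genA_intro}. But the heart of your argument --- the Claim that every eigenvalue of $\tilde f^*$ on $\im\, i^* \subseteq \check H^1(\tilde X;\mathbb C)$ is zero or a root of unity --- is false for general continuous maps, and the intended Alexander-duality proof of it cannot work outside the homeomorphism case. The duality identification of $f^*$ on $\check H^1(X)$ with (a sign times) the action of $f$ on $\tilde H_0(S^2\setminus X)$ by permutation of components is a homeomorphism phenomenon; for a non-injective $f$ the induced map on $\check H^1(X)$ need not be a permutation-type operator even assuming $f^{-1}(X)=X$. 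A degree-$2$ self-map of an invariant circle $X$ satisfies $f^{-1}(X)=X$ and acts on $\check H^1(X;\mathbb C)\cong\mathbb C$ by multiplication by $2$, which is not a root of unity. So for general continuous $f$ the trace $\trace((\tilde f^*)^n|_{\im\, i^*})$ need not stabilise at $\dim\im\, i^*$ along multiples of any $L$. The paper sidesteps this with a much weaker but robust input, Franks' lemma (\cite[Lemma~2.3]{Franks2}): for any real matrix $A$, $\trace(A^l)\ge 0$ for infinitely many $l$. Applied to $A = (\tilde f^*)^{2r}|_{\im\,i^*}$ (with $r$ the l.c.m.\ of the essential-quasicomponent periods) this gives $\trace((\tilde f^*)^{2rl}|_{\im\,i^*})\ge 0$ for infinitely many $l$; combined with $\#\Fix(\varphi^{2rl})>0$ when $X$ is not an attractor (via Proposition~\ref{prop:noessential}), this yields both $i(f^n,X)\le 1$ on infinitely many even $n$ and the strict inequality. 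Your Claim should be reserved, exactly as the paper does, for the homeomorphism case, where it proves periodicity.

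A second, smaller gap: you invoke $\trace(h^2(f^n,X))=0$ (equivalently nilpotence of $f^*$ on $H^2(N,L)$) as if it were already recorded for continuous maps. The paper only records it as a known fact for homeomorphisms in Section~\ref{sec:corollaries}; for merely continuous $f$ it requires an argument, which the paper supplies: if $f^*$ on $H^2(N,L)$ were non-nilpotent, a cycle of complementary components $B_{i_1}\mapsto B_{i_2}\mapsto\cdots$ of $\mathbb S^2\setminus L$ inside $N$ would produce $f^k(\overline{B_i})\supset \overline{B_i}$, giving a nonempty invariant set $X'\subset B_i$; connectedness forces $X'=X$ and $k=1$, and then $X$ is a repeller, contradiction. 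You should include something of this kind rather than cite it as pre-established.
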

\begin{proof}
Assume that \(f\) is just a continuous map.
Take an index pair $(N, L)$ for $X$ such that $N$ is proper and connected. Let us first prove that if $X$ is not a repeller $\trace(h^2(f^n, X))$ vanishes for every $n \ge 1$ or, equivalently, $f^* \colon H^2(N, L) \to H^2(N, L)$ is nilpotent.
Since $N, L$ are proper subsets of $\mathbb{S}^2$, any 2--cocycle in $H^2(N, L)$ is determined by a connected component of $\mathbb{S}^2 \setminus L$ that is completely contained in $N$. Denote these components by $\{B_1, \ldots, B_m\}$. For the map $f^* \colon H^2(N, L) \to H^2(N, L)$ to be non--nilpotent we need an infinite chain $B_{i_1} \mapsto B_{i_2} \mapsto B_{i_3} \mapsto \ldots$ such that the image by $f$ of each component covers the next one. Since the set of components is finite, $f^k(B_i) \supset B_i$ for some $i$ and $k \in \mathbb{N}$.
By continuity we have that $f^k(\overline{B_i}) \supset \overline{B_i}$ and, as a consequence, $X' = \Inv(f^k, \overline{B_i})$ (the maximal $f^k$--invariant subset of $\overline{B_i}$) is non--empty. From the properties of index pairs we obtain that $X' \subset B_i$ and the connectedness of $X$ implies that $k = 1$ and $X = X'$. It follows that $X$ is a repeller.
Notice further that if $f$ were a homeomorphism then $(B_i \cup L, L)$ would be an index pair, the index map induced in $(B_i \cup L)/L$ would be homotopic to a map of degree $1$ or $-1$ in the 2--sphere and the traces of $h^2(f^n, X)$ would be $1$ or $(-1)^n$, respectively.

Now, Equation (\ref{eq:Lef}) and Corollary \ref{coro:genA_intro} yield $i(f^n, X) = -\trace(h^1(f^n, X)) = 1 - \#\Fix(\varphi^n) - \trace((f^*)^n|_{\im \, i^*})$.
If $n$ is multiple of the l.c.m. $r$ of all the periods of the essential quasicomponents (there are a finite number of them by Theorem \ref{teo:main1_intro}) then $\varphi^n$ is the identity map. By Proposition \ref{prop:noessential}, if there is no essential quasicomponent then $X$ is an attractor. On the other hand, the term $\trace((f^*)^n|_{\im \, i^*})$ can be bounded using the following result (see \cite[Lemma 2.3]{Franks2}): given any real matrix $A$, the trace of $A^l$ is non--negative for infinitely many values of $l \in \mathbb N$. In particular, $\trace((f^*)^{2rl}|_{\im \, i^*}) \ge 0$ for infinitely many $l$. The inequalities in the statement now follow from these considerations.


Alexander's duality provides an isomorphism between $\check{H}^1(X)$ and $\widetilde{H}_0(S^2 \setminus X)$. When \(f\) is a homeomorphism, the action of $f^*$ in the former can be understood as the dual (up to sign) of the map induced by $f$ in $\widetilde{H}_0(S^2 \setminus X)$ and the latter has a very special form because $f$ permutes the connected components of $S^2 \setminus X$. Although this set may be infinite, it is straightforward to prove that the non--zero eigenvalues of the restriction of the map to a finite dimensional invariant subspace of $\widetilde{H}_0(S^2 \setminus X)$ are roots of unity. As a consequence the term $\trace((f^*)^n|_{\im \, i^*})$ is periodic in $n$ and this trivially implies that $\{i(f^n, X)\}_n$ is a periodic sequence.
\end{proof}

The particular case of acyclic $X$ of the previous result was the main theorem of \cite{HCRP}. In the case $f$ is an orientation--preserving homeomorphism and $X$ is a fixed point (neither attracting nor repelling), the exact form of the sequence of indices $i(f^n, p)$ was established by Le Calvez and Yoccoz in \cite{LCY}:
$$
i(f^{n}, p) = \begin{cases}
1-rq & \text{ if } n \in r{\mathbb N} \\
1 & \text{ if } n \notin r{\mathbb N}
\end{cases}
$$
The integer $r \ge 1$ can be easily interpreted in our terms as follows:
all the essential quasicomponents (recall there are finitely many of them) have the same period, and this common period is $r$.
For the sake of completeness, let us give a hint on the argument. Since $f$ is injective, the essential quasicomponents $\{F_1, \ldots, F_k\}$ are legitimate subsets of the plane. Let $N$ be an isolating neighborhood of $p$ and $\cap_{n \ge 0} f^{n}(N)$ the set of points of $N$ whose backward trajectory does not exit $N$. The intersection $F_i \cap N$ splits in several components, take $F'_i$ the union of the components that are adherent to $p$ (with the intrinsic topology, see Section 3). It follows that $f$ permutes the sets $F'_i$ exactly in the same way as it permuted the quasicomponents $F_i$. However, $\{F'_1, \ldots, F'_k\}$ have a circular order that is preserved by the action of $f$: there is an integer $s \ge 1$ such that $f(F'_i) \supset F'_{i+s}$ for every $i$. We conclude that all the essential quasicomponents have the same period, $r$, and, consequently, they come in a number $rq$ which is multiple of $r$.

The knowledge of the structure of the closure of periodic orbits of homeomorphisms of the 2--sphere is an interesting problem that was stated explicitly by Le Calvez in his lecture in the ICM 2006 \cite{LCICM2006}. The question of whether \(\overline{\Per(f)}\) is isolated as invariant set has been studied in \cite{RPSal} and \cite{Sal}. The following result can be deduced from Corollary \ref{coro:R2indicesperiodicos}:

\begin{corollary}
Let \(f\colon \mathbb{S}^2 \rightarrow \mathbb{S}^2\) be a continuous map with \(\mathrm{deg}(f) \neq 0\). Assume that \(K\) is an isolated invariant continuum such that \(\Per(f) \subseteq K\) and $f^{-1}(K) = K$. Then \(K \) is a repeller.
\end{corollary}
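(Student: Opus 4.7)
The plan is to argue by contradiction: assume $K$ is not a repeller, and derive a contradiction by comparing the local information coming from Corollary \ref{coro:R2indicesperiodicos} with the global Lefschetz--Hopf formula on the sphere. First I would apply Corollary \ref{coro:R2indicesperiodicos} to $f\colon \mathbb{S}^2 \to \mathbb{S}^2$ with $X = K$. All of its hypotheses are in place: $K$ is an isolated invariant continuum, it is completely invariant since $f^{-1}(K) = K$, and it is not a repeller by our hypothesis of contradiction. The weakest conclusion of that corollary is already enough, namely
\[
i(f^n, K) \le 1 \quad \text{for infinitely many even } n \in \mathbb{N}.
\]

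The second step is to compute $i(f^n, K)$ globally. The assumption $\Per(f) \subseteq K$ guarantees that $\Fix(f^n)$ is entirely contained in $K$ for every $n$, so that any isolating neighborhood of $K$ also isolates the whole fixed point set of $f^n$ in $\mathbb{S}^2$. Since $\mathbb{S}^2$ is a compact ANR, the Lefschetz--Hopf theorem yields $i(f^n, \mathbb{S}^2) = \Lambda(f^n)$; the cohomology of the sphere, with $f^*$ acting by the identity on $H^0$ and by multiplication by $\deg(f)^n$ on $H^2$, gives $\Lambda(f^n) = 1 + \deg(f)^n$. By additivity of the fixed point index this transfers to $K$, yielding $i(f^n, K) = 1 + \deg(f)^n$.

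The last step is a parity check. For every even $n$ one has $\deg(f)^n = |\deg(f)|^n \ge 1$ because $\deg(f) \neq 0$, so $i(f^n, K) \ge 2$ for all even $n$. This clashes with the estimate $i(f^n, K) \le 1$ obtained in the first step, and the contradiction forces $K$ to be a repeller.

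I do not expect a serious obstacle in this argument, since neither ingredient is deep once Corollary \ref{coro:R2indicesperiodicos} is available. The only points demanding a little care are the verification that that corollary applies verbatim with $U = \mathbb{S}^2$, and the additivity/excision step $i(f^n, K) = i(f^n, \mathbb{S}^2)$, which is standard given that $\Fix(f^n) \subseteq K$ and $K$ is isolated invariant.
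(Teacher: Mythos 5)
Your proof is correct and follows essentially the same route as the paper's: contradict by assuming $K$ is not a repeller, invoke Corollary~\ref{coro:R2indicesperiodicos} to get $i(f^n,K)\le 1$ for infinitely many even $n$, and compare with the global Lefschetz--Hopf computation $i(f^n,K)=i(f^n,\mathbb{S}^2)=\Lambda(f^n)=1+\deg(f)^n$, which is at least $2$ for even $n$ once $\deg(f)\neq 0$. The only difference is that you spell out the additivity step (transferring the index from $\mathbb{S}^2$ to $K$ using $\Fix(f^n)\subseteq K$) and the parity observation, both of which the paper leaves implicit.
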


\begin{proof}
Suppose that $K$ as in the statement exists and it is not a repeller.
By Corollary \ref{coro:R2indicesperiodicos}, \(i(f^n, K) \leq 1 \) for infinitely many even \(n \in {\mathbb N}\). 
Applying Lefschetz--Hopf theorem to $f$ we have \(\Lambda(f^n)=1+\mathrm{deg}(f)^n= i(f^n, K) \) and we deduce that $\mathrm{deg}(f) = 0$.
\end{proof}

An extra tool to analyze higher order terms is the duality between cohomological Conley indices due to Szymczak \cite{szy} in the discrete setting (see also \cite{HCR} for a shorter proof). It states that, given an isolated invariant set $X$ of a homeomorphism $f$ defined on an open set of ${\mathbb R}^d$ (automatically $X$ is also isolated for $f^{-1}$), for any $0 \le q \le d$ the $(d-q)$--cohomological index of $X$ and $f$ is dual, up to sign, to the $q$--cohomological index of $X$ and $f^{-1}$. The sign $+1$ or $-1$ depends on whether $f$ preserves or reverses orientation, respectively. We insist that this only works when the dynamics is invertible. Thus from now on we assume that $f$ is a homeomorphism defined on an open subset of an oriented 3--manifold.

In dimension $3$, Szymczak's duality together with Corollary \ref{coro:genA_intro} provides a description of $\trace(h^2(f^n, X))$ in the following terms: \[\pm \trace(h^2(f^n,X)) = -1 + \#\Fix(\psi^n) + \trace((f^*)^{-n}|_{\im\ j^*})\] where $\psi$ is the permutation induced by $f^{-1}$ on the set of essential quasicomponents of the compactified \emph{stable} manifold $W^s_{\infty}$ (endowed with its intrinsic topology) and $j \colon X \subseteq W^s_{\infty}$ denotes the inclusion. As mentioned earlier in the Introduction, since $f$ is assumed to be a homeomorphism there is no need to distinguish between $\tilde{X}$ and $X$ or $\tilde{f}$ and $f$.



\begin{corollary} \label{cor:periodic} Let \(U \subset {\mathbb R}^3\) be an open set and
\(f\colon U \rightarrow f(U) \subset {\mathbb R}^3\) be a homeomorphism.
Let $X \subseteq \mathbb{R}^3$ be an arbitrary isolated invariant continuum. The sequence \(\{i(f^n, X)\}_n\) is periodic provided the eigenvalues of  \(f^* \colon  \check{H}^1(X;\mathbb{K}) \to \check{H}^1(X;\mathbb{K})\) are roots of unity.

This condition is satisfied, for instance, when $\check{H}^1(X;\mathbb{K}) = 0$ or when the topological entropy of \(f|_{X}\) is zero. In particular, if \(f\) is a volume contracting homeomorphism and the sequence \(\{i(f^n, X)\}_n\) is unbounded then \(\check{H}^1(X) \ne 0\) and the topological entropy of \(f|_{X}\) is nontrivial.
\end{corollary}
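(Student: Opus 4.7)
The plan is to bound every term in the Lefschetz--like formula
\[i(f^n, X) = \sum_{q=0}^{3}(-1)^q \trace(h^q(f^n,X))\]
(higher degrees vanish because the ambient space is a $3$--manifold) and show that each of the four surviving traces is a periodic function of $n$. The summands in degrees $0$ and $3$ are already controlled by the opening discussion of Section \ref{sec:corollaries}: $\trace(h^0(f^n, X))$ is either identically $0$ or identically $1$ (according to whether $X$ is an attractor), and $\trace(h^3(f^n, X))$ is either identically $0$ or equal to $\pm 1$ (according to whether $f$ preserves or reverses orientation in the repeller case), so both are periodic without any extra hypothesis. The volume--contracting assumption, when it is invoked at the end, simply rules out the repeller case and guarantees that $\trace(h^3) \equiv 0$.

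For the $q=1$ term, Corollary \ref{coro:genA_intro} gives
\[\trace(h^1(f^n, X)) = -1 + \#\Fix(\varphi^n) + \trace((\tilde{f}^*)^n|_{\im\, i^*}).\]
The first summand is constant and the middle one is periodic because $\varphi$ is a permutation of the finite set $J$ of essential quasicomponents. The third summand is the sum of the $n$--th powers of the eigenvalues of $\tilde{f}^*|_{\im\, i^*}$. Since $f$ is a homeomorphism, $\tilde{X}=X$ and $\tilde{f}=f$, so $i^*\colon \check{H}^1(X;\mathbb{K})\to \check{H}^1(W^u_\infty,\infty)$ is $f^*$--equivariant; its image therefore carries an action whose eigenvalues lie among the eigenvalues of $f^*$ on $\check{H}^1(X;\mathbb{K})$. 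By hypothesis these are roots of unity, so the trace is a finite sum of periodic functions of $n$ and is itself periodic.

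For $q=2$ I would invoke Szymczak's duality in the form recalled just before the corollary,
\[\pm \trace(h^2(f^n, X)) = -1 + \#\Fix(\psi^n) + \trace((f^*)^{-n}|_{\im\, j^*}),\]
where $\psi$ is a permutation of the (finite) set of essential quasicomponents of $W^s_\infty$ and $j\colon X\hookrightarrow W^s_\infty$. The fixed--point count is periodic for the same reason as before, and the last trace is a sum of $n$--th powers of the eigenvalues of $(f^{-1})^*|_{\im\, j^*}$; these eigenvalues are reciprocals of eigenvalues of $f^*$ on $\check{H}^1(X;\mathbb{K})$, and reciprocals of roots of unity are roots of unity, so the term is periodic too. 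Summing the four contributions in the Lefschetz formula concludes that $\{i(f^n,X)\}_n$ is periodic.

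It remains to verify the two sufficient conditions and the last assertion. The case $\check{H}^1(X;\mathbb{K})=0$ satisfies the hypothesis vacuously. For the entropy statement I would use a Manning--type estimate to get $\log \rho(f^*|_{\check{H}^1(X;\mathbb{Z})/\mathrm{tors}}) \le h_{\mathrm{top}}(f|_X)$, working with a convenient index pair so that the relevant cohomology is finite--dimensional with an integral structure; if the entropy vanishes every eigenvalue lies on the unit circle, and Kronecker's theorem applied to the integer characteristic polynomial forces them to be roots of unity. The final consequence is then the contrapositive: if $\{i(f^n,X)\}$ is unbounded (hence not periodic), neither $\check{H}^1(X)=0$ nor $h_{\mathrm{top}}(f|_X)=0$ can hold. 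The main technical obstacle in this plan is identifying the eigenvalues of $\tilde{f}^*$ on $\im\, i^*$ and of $(f^{-1})^*$ on $\im\, j^*$ as subsets of the spectrum of $f^*$ on $\check{H}^1(X;\mathbb{K})$, which requires a careful tracking of the naturality of the exact sequences relating $X$, $W^u_\infty$ and $W^s_\infty$.
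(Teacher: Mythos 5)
Your plan follows the paper's proof essentially line by line: Lefschetz--like formula, constancy of the $q=0$ and $q=3$ traces, Corollary \ref{coro:genA_intro} for $q=1$, Szymczak duality for $q=2$, reduction of the two surviving trace terms to sums of $n$-th powers of root-of-unity eigenvalues, and Manning's entropy bound for the sufficient condition. Two small details deserve attention, neither of which derails the argument.

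First, the map $i^*$ goes the other way: cohomology being contravariant, the inclusion $i\colon \tilde{X}\subseteq W^u_\infty$ induces $i^*\colon \check{H}^1(W^u_\infty,\infty)\to\check{H}^1(\tilde{X})$, so $\im i^*$ sits inside $\check{H}^1(\tilde{X})\cong\check{H}^1(X)$. This is exactly why the eigenvalues of $\tilde{f}^*|_{\im i^*}$ are (a subset of) eigenvalues of $f^*$ on $\check{H}^1(X)$; your conclusion is right but the stated reason (``carries an action whose eigenvalues lie among\ldots'' via the image inside $\check{H}^1(W^u_\infty,\infty)$) is slightly off unless you also note that $\im i^*$ is an $f^*$-equivariant quotient of $\check{H}^1(X)$.

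Second, Manning's theorem requires a compact ANR, which $X$ itself generally is not. The paper applies Manning to the index map $f_\#$ on $N/L$ (chosen to be an ANR), bounding $\log\rho((f_\#)_*|_{H_1(N/L;\mathbb{Q})})\le h(f_\#)$, and then uses the identity $h(f_\#)=h(f|_X)$ (the nonwandering set of $f_\#$ is $X\cup\{[L]\}$) together with duality and integrality to get roots of unity for the eigenvalues of $(f_\#)^*$ on $H^1(N/L;\mathbb{Q})$. Your phrasing $\log\rho(f^*|_{\check{H}^1(X;\mathbb{Z})/\mathrm{tors}})\le h_{\mathrm{top}}(f|_X)$ skips this detour and is not literally what Manning provides; you gesture at ``working with a convenient index pair,'' which is the right instinct, but the bound should be written for $N/L$, and then the eigenvalues relevant to the Lefschetz sum are recovered via shift equivalence (Proposition \ref{prop:conley}) and the exact sequence from the proof of Corollary \ref{coro:genA_intro}. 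Other than these two points your proposal matches the paper's proof.
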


\begin{proof} The Lefschetz formula \eqref{eq:Lef} and Szymczak's duality entail, through the discussion of the previous paragraph, that $i(f^n, X)$ is a sum of terms of the form $-1+\#\Fix(\varphi^n)$ and $-1+\#\Fix(\psi^n)$, which are certainly periodic, and $\trace((f^*)^n|_{\im\ i^*})$ and $\trace((f^*)^{-n}|_{\im\ j^*})$. Thus the sequence of indices is periodic if and only if these latter terms are periodic in $n$, and this is granted by the assumption that the eigenvalues of $f^*$ on $\check{H}^1(X)$ are roots of unity. Indeed, every eigenvalue $\mu_k$ of $f^*|_{\im\ i^*}$ is an eigenvalue of $f^*$ on $\check{H}^1(X)$, so each $\mu_k$ is a root of unity. Since the trace of an endomorphism is the sum of its eigenvalues (counted with their multiplicity), it follows that $\trace((f^*)^n|_{\im\ i^*}) = \sum \mu_k^n$, which clearly implies that the sequence of traces is periodic. A similar argument applies for $\trace((f^*)^{-n}|_{\im\ j^*})$.

The key element to address the second statement is a result of Manning \cite{Mann} that bounds the topological entropy $h(f|_{\#})$ from below in terms of the logarithm of the spectral radius of $(f_{\#})_*\colon H_1(N/L; \mathbb{Q}) \to H_1(N/L; \mathbb{Q})$. Here $f_{\#}\colon N/L \to N/L$ denotes the index map (see Section \ref{sec:background}) associated to the index pair $(N, L)$ and we assume $N/L$ is a compact absolute neighborhood retract (this is true if, for example, $N$ and $L$ are manifolds with boundary). Since the non--wandering set of $f_{\#}$ is equal to $X \cup \{[L]\}$, from the properties of the topological entropy we deduce that $h(f_{\#}) = h((f_{\#})|_X) = h(f|_X)$. Thus, if $h(f|_X)$ vanishes, by duality, the spectral radius of $(f|_{\#})^*\colon H^1(N/L; \mathbb{Q}) \to H^1(N/L; \mathbb{Q})$ is bounded by 1. By the universal coefficient theorem the previous map is given by an integer matrix $A$. It is a standard arithmetic consequence from the fact that the trace of $A^n$ is always an integer and the modulus of the eigenvalues is bounded by 1 that all the non--zero eigenvalues are roots of unity (cf. \cite[Proposition 2]{LCRPS}).
\end{proof}

Let us apply this result to some particular cases of interest:

\begin{theorem} \label{teo:S1} Let \(U \subset {\mathbb R}^3\) be an open set and
\(f\colon U \rightarrow f(U) \subset {\mathbb R}^3\) be a homeomorphism.
Assume $X$ is an isolated invariant continuum such that \(\check{H}^*(X;\mathbb{Z})=\check{H}^*(\mathbb{S}^1;\mathbb{Z})\). Then the index sequence $\{i(f^n, X)\}_n$ is periodic.
\end{theorem}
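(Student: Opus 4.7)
The plan is to deduce this as a direct application of Corollary \ref{cor:periodic}. According to that corollary, periodicity of $\{i(f^n,X)\}_n$ follows as soon as the eigenvalues of $f^* \colon \check{H}^1(X;\mathbb{K}) \to \check{H}^1(X;\mathbb{K})$ are roots of unity, so the entire task reduces to verifying this eigenvalue condition under the hypothesis that $X$ has the \v{C}ech cohomology of $\mathbb{S}^1$.

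To do this, I would first observe that, since $X$ is an isolated \emph{invariant} continuum for the homeomorphism $f$, the restriction $f|_X \colon X \to X$ is a homeomorphism (it is a continuous bijection of a compact Hausdorff space). Consequently, the induced map $f^* \colon \check{H}^1(X;\mathbb{Z}) \to \check{H}^1(X;\mathbb{Z})$ is an automorphism of the abelian group $\check{H}^1(X;\mathbb{Z})$. Because $\check{H}^*(X;\mathbb{Z}) \cong \check{H}^*(\mathbb{S}^1;\mathbb{Z})$, we have $\check{H}^1(X;\mathbb{Z}) \cong \mathbb{Z}$, and the only automorphisms of $\mathbb{Z}$ are multiplication by $\pm 1$.

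Next I would pass to coefficients in $\mathbb{K}$. Since $\check{H}^2(X;\mathbb{Z}) = 0$, the universal coefficient theorem gives $\check{H}^1(X;\mathbb{K}) \cong \check{H}^1(X;\mathbb{Z}) \otimes_{\mathbb{Z}} \mathbb{K} \cong \mathbb{K}$, and the map $f^*$ on $\check{H}^1(X;\mathbb{K})$ is just the corresponding scalar extension of $\pm\mathrm{Id}$ on $\mathbb{Z}$. Hence its unique eigenvalue is $\pm 1$, which is trivially a root of unity. Invoking Corollary \ref{cor:periodic} then yields periodicity of $\{i(f^n,X)\}_n$.

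There is no real obstacle here; the whole content of the theorem has been packaged in Corollary \ref{cor:periodic}, and the role of the hypothesis $\check{H}^*(X;\mathbb{Z}) \cong \check{H}^*(\mathbb{S}^1;\mathbb{Z})$ is only to guarantee that $\check{H}^1(X;\mathbb{K})$ is one-dimensional, which forces the eigenvalue of any automorphism to be $\pm 1$.
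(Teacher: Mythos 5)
Your argument is correct and follows exactly the paper's route: observe that $f|_X$ is a homeomorphism, hence $f^*=\pm\mathrm{Id}$ on $\check{H}^1(X;\mathbb{Z})\cong\mathbb{Z}$, pass to $\mathbb{K}$-coefficients, and invoke Corollary \ref{cor:periodic}. The only difference is that you spell out why $f|_X$ is a homeomorphism (a continuous bijection of a compact Hausdorff space) and explicitly track the coefficient change, details the paper leaves implicit.
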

\begin{proof} Since $f$ is a homeomorphism, the induced homomorphism $f^*$ on $\check{H}^1(X;\mathbb{Z}) = \mathbb{Z}$ must be invertible; thus, $f^* = \pm {\rm Id}$. By the universal coefficient theorem it follows that $f^* = \pm {\rm Id}$ also on $\check{H}^1(X;\mathbb{Q})$. Thus the hypotheses of Corollary \ref{cor:periodic} are trivially satisfied and the result follows.
\end{proof}

Our techniques can be applied to compute or estimate the sequence of fixed point indices of a given homeomorphism at arbitrary isolated compact polyhedra \(X\) in dimension 3. In the particular case where \(X\) is a cohomology 2--sphere we have:

\begin{theorem} \label{teo:S2} Let \(U \subset {\mathbb R}^3\) be an open set, \(f\colon U \rightarrow f(U) \subset {\mathbb R}^3\) be a homeomorphism and $X \subset U$ be an isolated invariant continuum. Assume that \(\check{H}^*(X;\mathbb{K})=\check{H}^*(\mathbb{S}^2;\mathbb{K})\). Then the index sequence \(\{i(f^n, X)\}_{n \ge 1}\) is periodic. Furthermore, if $f$ reverses orientation, $i(f^n, X) \le 2$ and if, in addition, $f$ fixes the components of the complement of $X$ the inequality is strict for odd $n$. Thus in particular, if \(f\colon  {\mathbb R}^3 \rightarrow  {\mathbb R}^3\) is an orientation--reversing homeomorphism, \(i(f,X) \leq 1\).
\end{theorem}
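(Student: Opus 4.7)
Periodicity is immediate: $\check H^1(X;\mathbb K)=\check H^1(\mathbb S^2;\mathbb K)=0$ makes the eigenvalue hypothesis of Corollary~\ref{cor:periodic} vacuous, so $\{i(f^n,X)\}_n$ is periodic.

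For the bound $i(f^n,X)\le 2$ under the orientation-reversing hypothesis, my plan is to expand the Lefschetz-like formula~\eqref{eq:Lef} term by term, keeping in mind that only $q\in\{0,1,2,3\}$ contribute. The extremal traces $\trace(h^0)\in\{0,1\}$ and $\trace(h^3)\in\{0,(-1)^n\}$ are standard and recalled at the start of Section~\ref{sec:corollaries}: the non-zero value occurs iff $X$ is an attractor, respectively a repeller (with the sign $(-1)^n$ coming from $f$ reversing orientation). Corollary~\ref{coro:genA_intro} yields $\trace(h^1(f^n,X))=-1+\#\Fix(\varphi^n)$ outside the attractor case, because $\check H^1(X)=0$ kills the $\im\,i^*$ summand. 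For $q=2$, I will invoke Szymczak's duality to write $\trace(h^2(f^n,X))=-\trace(h^1(f^{-n},X))$ (the minus sign being the orientation one) and then apply Corollary~\ref{coro:genA_intro} to $f^{-1}$, getting $1-\#\Fix(\psi^n)$ outside the case where $X$ is a repeller for $f$ (which is an attractor for $f^{-1}$, and the attractor exception sends the term to $0$). Assembling these pieces in the three cases attractor / repeller / neither produces formulas such as $i(f^n,X)=2-\#\Fix(\varphi^n)-\#\Fix(\psi^n)$, each manifestly bounded above by $2$.

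The strict inequality under the extra hypothesis that $f$ fixes the two components of $\mathbb R^3\setminus X$ is the heart of the proof. First, Alexander duality identifies $\check H^2(X;\mathbb K)\cong\tilde H_0(\mathbb R^3\setminus X;\mathbb K)$ naturally with respect to ambient homeomorphisms, up to the sign of orientation; since $f_*$ is the identity on $\tilde H_0$ by hypothesis and $f$ reverses orientation, we conclude $\epsilon:=f^*|_{\check H^2(X;\mathbb K)}=-1$. I then argue by contradiction for odd $n$: in each case (attractor, repeller, neither), if the fixed-quasicomponent counts appearing in the bound of Step~2 were simultaneously zero, Corollary~\ref{coro:genB_intro} would become applicable in degrees $q=2,3$ and would provide an independent expression for $\trace(h^q(f^n,X))$ in terms of $\trace((\tilde f^*)^n|_{\im(i^*)^{q-1}})$, $\trace((\tilde f^*)^n|_{\im(i^*)^q})$ and $\epsilon^n$. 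Comparing this alternative formula with the Szymczak-based one of Step~2 and using $\epsilon^n=-1$ produces a numerical impossibility (for example $-1=1$ in the repeller case, or $\trace(\tilde f^*|_{\im(i^*)^2})\in\{0,-1\}$ vs.\ the value $1$ forced in the generic case), so at least one of $\#\Fix(\varphi^n)$, $\#\Fix(\psi^n)$ must be $\ge 1$ and hence $i(f^n,X)\le 1$. The attractor subcase is cleanest: a direct comparison gives $\#\Fix(\psi^n)=1-\epsilon^n=2$, so $i(f^n,X)=2-2=0$.

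Finally, a homeomorphism $f\colon\mathbb R^3\to\mathbb R^3$ is automatically proper (a compactness argument using the continuity of $f^{-1}$), so it sends the bounded component of $\mathbb R^3\setminus X$ to itself and the unbounded one to itself; the hypothesis of the strict inequality holds at $n=1$, giving the final "in particular". The main technical obstacle I foresee is Step~3: the attractor exception in Corollary~\ref{coro:genA_intro}, the hypothesis $\Fix(\varphi^n)=\emptyset$ required by Corollary~\ref{coro:genB_intro}, and the orientation signs in both Alexander's and Szymczak's dualities must be tracked simultaneously across three separate cases, and the sign identification $\epsilon=-1$ has to be established with care.
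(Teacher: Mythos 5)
Your proposal is correct in outline and essentially traces the same path as the paper's proof: periodicity is obtained trivially from Corollary \ref{cor:periodic} because $\check H^1(X;\mathbb K)=0$; the bound $i(f^n,X)\le 2$ comes from assembling the Lefschetz formula \eqref{eq:Lef} with Szymczak duality and Corollary \ref{coro:genA_intro}; and the identification $\epsilon:=f^*|_{\check H^2(X;\mathbb K)}=-1$ via Alexander duality is exactly the paper's move. Where you diverge is in Step~3. The paper observes that $i(f^n,X)=2$ forces $\#\Fix(\varphi^n)=0$ and then simply invokes Corollary \ref{coro:Lef} to conclude $i(f^n,X)=\Lambda(f^n|_X)=1+\epsilon^n=0$ for odd $n$, a contradiction. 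You instead re-derive a contradiction by comparing the Szymczak-dual and Corollary-\ref{coro:genB_intro} expressions for $\trace(h^q(f^n,X))$ at $q=2,3$ across the three sub-cases; I checked your ``repeller'' and ``generic'' computations and they do close, but this is essentially re-running the telescoping cancellation that Corollary \ref{coro:Lef} already packages, which is exactly why you (correctly) anticipate Step~3 as the technical obstacle. Citing that corollary is the intended shortcut. Conversely, your explicit treatment of the attractor/repeller exceptions in Step~2 is more careful than the paper's one-line ``duality implies'' and is genuine added value.

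One caution you share with the paper's own write-up: you record $\trace(h^3(f^n,X))=(-1)^n$ for a repeller (the $n$th power of the eigenvalue $-1$) but then use a fixed minus sign in $\trace(h^2(f^n,X))=-\trace(h^1(f^{-n},X))$. Applied consistently, Szymczak duality for the homeomorphism $g=f^n$ carries the orientation sign of $f^n$, i.e.\ $(-1)^n$; the degree-$(0,3)$ instance $\trace(h^3(f^n,X))=(\pm)\,\trace(h^0(f^{-n},X))$ evaluated on a repeller forces this. Under that convention the clean identity $i(f^n,X)=2-\#\Fix(\varphi^n)-\#\Fix(\psi^n)$ is valid only for odd $n$. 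This does not affect the substantive claim (strictness for odd $n$ and the $n=1$ specialization), which is all the paper actually argues in detail, but the blanket ``$i(f^n,X)\le 2$ for all $n$'' deserves a separate sentence for even $n$ in a fully rigorous write-up.
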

\begin{proof} The periodicity of the index sequence follows directly from Corollary \ref{cor:periodic}. If $f$ is orientation--reversing, duality implies $i(f^n, X) = -\trace(h^1(f^n, X)) - \trace(h^1(f^{-n}, X))$ and, since the first cohomology group of $X$ vanishes, this reduces to $1-\#\Fix(\varphi^n)+1-\#\Fix(\psi^n)$. Here $\varphi$ and $\psi$ stand, as usual, for the permutations induced by $f$ and $f^{-1}$ among the (finite) sets of essential quasicomponents of the unstable and stable manifolds (minus $X$ and their points at infinity). This yields the inequality $i(f^n, X) \le 2$.

Notice that the statement on the connected components of the complement of $X$ is based on purely cohomological information: Alexander duality guarantees that $f^*\colon \check{H}^2(X;\mathbb{K}) \to \check{H}^2(X;\mathbb{K})$ is conjugate to the map induced by $f$ in $\widetilde{H}_0(S^3 \setminus X; \mathbb{K})$ (to be more precise we have to multiply this map by $-1$ because $f$ reverses orientation). We say that $f|_X$ preserves/reverses orientation if $f^* = \id$/$-\id$.
Suppose now that the equality $i(f^n,X) = 2$ is attained for some odd $n$. Then $\Fix(\varphi) = \emptyset$, so the essential quasicomponents of the unstable manifold are permuted without fixed elements and Corollary \ref{coro:Lef} applies to conclude that $i(f^n, X) = \Lambda(f^n|_X)$. However, since $f$ reverses orientation but fixes the components of the complement of $X$ we have that $f|_X$ also reverses orientation and $\Lambda(f^n|_X) = 1 + (-1)^n$. The sum vanishes because $n$ is odd, contradicting the assumption $i(f^n, X) = 2$.
\end{proof}

\begin{corollary}
Let $f$ be an orientation--reversing homeomorphism of a 3--manifold $B$ whose boundary $S$ is homeomorphic to $\mathbb{S}^2$. Assume further that the integral cohomology of $B$ is trivial, i.e., equal to the cohomology of a point. If $S$ is isolated as invariant set then $\Fix(f) \cap \int(B) \neq \emptyset$. In other words, if $\Fix(f) \subseteq S$, for every compactum $I \subseteq \int(B)$ there exists $x \in \int(B)$ whose full orbit is disjoint from $I$.
\end{corollary}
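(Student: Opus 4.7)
The plan is to reduce the boundary setting to a closed $3$--manifold by doubling $B$ along $S$, and then obtain a contradiction by comparing the total fixed--point index on the double with the bound provided by Theorem \ref{teo:S2}. The second, equivalent formulation in the statement is the contrapositive of the first combined with the correspondence between compact neighborhoods $N$ of $S$ in $B$ and compacta $I = B \setminus \int(N) \subseteq \int(B)$, so it suffices to establish the first.

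First I would form the double $DB := B \cup_S B'$, where $B'$ is a second copy of $B$ glued along $S$. The hypothesis $H^*(B;\mathbb{Z}) = H^*(\mathrm{pt};\mathbb{Z})$ forces $B$ to be connected and in fact compact: for a connected orientable $3$--manifold with $\partial B \cong S^2$, Poincar\'e--Lefschetz duality gives $H_c^0(\int B) \cong H_3(B,\partial B) \cong \mathbb{Z}$, which is only possible when $\int B$ is compact. A routine Mayer--Vietoris computation then shows that $DB$ is a closed oriented homology $3$--sphere. Extending $f$ to $Df\colon DB \to DB$ by setting $Df = f$ on each copy of $B$ produces a homeomorphism (continuity and invertibility across $S$ are immediate since $f(S) = S$), and since $f$ reverses the orientation of $B$, $Df$ reverses that of $DB$. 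Hence $\Lambda(Df) = 1 - (-1) = 2$.

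Next I would verify two properties of $S \subseteq DB$ under $Df$. First, $S$ is isolated as an invariant set for $Df$: if $N \subset B$ is a compact isolating neighborhood of $S$ in $B$, then the double $DN \subset DB$ isolates $S$, because any $Df$--invariant subset of $DN$ splits into two $f$--invariant subsets of the respective copies of $N$, each contained in $S$ by the isolation hypothesis in $B$. Second, $Df$ fixes the two connected components of $DB \setminus S$, namely the two copies of $\int(B)$, because $Df$ preserves each copy of $B$. Applying Theorem \ref{teo:S2} to the isolated invariant continuum $S \cong \mathbb{S}^2$ and the orientation--reversing homeomorphism $Df$ therefore yields the strict inequality $i(Df, S) < 2$ at the odd exponent $n=1$, i.e.~$i(Df, S) \le 1$.

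To conclude, suppose for contradiction that $\Fix(f) \cap \int(B) = \emptyset$. Then $\Fix(Df) \subseteq S$, so by additivity of the fixed point index together with the Lefschetz--Hopf theorem for the compact ANR $DB$ we obtain $i(Df, S) = i(Df, DB) = \Lambda(Df) = 2$, contradicting the bound of the previous paragraph. The main technical point to check is that Theorem \ref{teo:S2}, stated for open subsets of $\mathbb{R}^3$, applies verbatim to the isolated invariant continuum $S$ sitting in the closed oriented $3$--manifold $DB$; this should be routine because its proof invokes only Szymczak's duality and the other corollaries of this paper, all of which are valid in any oriented $3$--manifold.
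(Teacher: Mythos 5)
Your proposal follows exactly the paper's argument: double $B$ along $S$, use Mayer--Vietoris to see the double is an integral cohomology $\mathbb{S}^3$, extend $f$ to an orientation--reversing homeomorphism $Df$ with $\Lambda(Df)=2$, observe that $S$ remains isolated and $Df$ fixes the two components of the complement, and then invoke Theorem~\ref{teo:S2} to get $i(Df,S)\le 1 < 2 = i(Df,M)$, forcing a fixed point off $S$. The only additions you make — checking that the cohomological hypothesis forces $B$ compact, and flagging that Theorem~\ref{teo:S2} transfers from $\mathbb{R}^3$ to a neighbourhood of $S$ in the double (which a collar $S^2\times(-1,1)\hookrightarrow\mathbb{R}^3$ handles) — are sound details the paper leaves implicit, not a different route.
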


We remark that the cohomological assumption on $B$ does not force it to be homeomorphic to a 3--ball: for instance, $B$ could be the complement of an open standard ball inside the Poincar\'e sphere.

\begin{proof} Denote by $M$ the double of $B$ along $S$; that is, the manifold obtained from two disjoint copies of $B$ by identifying their boundaries via the identity. $M$ can be written as $M = B \cup B'$, with $B \cap B' = S$, and a straightforward computation using the Mayer--Vietoris sequence of this decomposition then shows that $M$ has the integral \v{C}ech cohomology of $\mathbb{S}^3$. (In principle, the Mayer--Vietoris sequence requires the decomposition to consist of open sets; however, in the present case of \v{C}ech cohomology and compact sets this is not necessary as discussed in p. \pageref{pag:mvieto}).

The map $f$ induces an orientation--reversing homeomorphism, denoted $Df$, on $M$ by acting independently on each copy of $B$. Applying Lefschetz--Hopf theorem on $M$ we obtain $2 = \Lambda(Df) = i(Df, M)$. If we suppose that $S$ is isolated as invariant set in $B$ then it is also isolated in $M$ and by Theorem \ref{teo:S2} we conclude that $i(Df, S) \le 1$. Consequently, $i(Df, M) \neq i(Df, S)$ so there are fixed points in $M \setminus S$ and, by symmetry, also in $\int(B)$.
\end{proof}

The following corollary can be interpreted in terms of minimal dynamics (those having all their orbits dense): there are no orientation--reversing homeomorphism of $S^3$ which are minimal away from a finite number of invariant circles in which the map is homotopic to the identity. Notice that those invariant circles can be made of fixed points and homo/heteroclinic trajectories.

\begin{corollary}
Let $f$ be an orientation--reversing homeomorphism of $\mathbb{S}^3$ that leaves invariant a knot or a link $K$. Assume further that $f|_K$ preserves orientation and $K$ contains all the fixed points of $f$. Then $K$ is not an isolated invariant set, i.e. for every neighborhood $U$ of $K$ there exists a point $x \in U \setminus K$ such that the orbit of $x$ is contained in $U$.
\end{corollary}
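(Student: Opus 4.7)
The plan is to argue by contradiction: assume $K$ is an isolated invariant set and derive an impossibility from the sequence of fixed point indices $\{i(f^n,K)\}_{n\ge 1}$. The first observation is that $i(f,K)=2$. Indeed, $f$ is an orientation-reversing homeomorphism of $\mathbb{S}^3$, so by Lefschetz--Hopf $\Lambda(f)=1-\deg(f)=2$. The hypothesis $\Fix(f)\subseteq K$ together with the assumption that $K$ is isolated allows one to localise the index: picking an isolating neighbourhood $N$ of $K$ one has $\Fix(f)\subseteq N$, and hence $i(f,K)=i(f,\mathbb{S}^3)=\Lambda(f)=2$. I would then show that the sequence $\{i(f^n,K)\}_n$ is periodic. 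Since $K$ is a finite disjoint union of oriented circles and $f|_K$ preserves this orientation, $f$ merely permutes the components and the map $f^{*}\colon\check H^{1}(K;\mathbb K)\to\check H^{1}(K;\mathbb K)$ is a permutation matrix whose eigenvalues are roots of unity. Corollary \ref{cor:periodic}---in the form extended to sets with finitely many connected components---then furnishes a period $T$.

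The pressure on the sequence comes from Corollary \ref{coro:Lef}. Because $f^n|_K$ is still an orientation-preserving homeomorphism of a disjoint union of circles, the actions on $\check H^0(K)$ and on $\check H^1(K)$ are given by the same permutation matrix, whence $\Lambda(f^n|_K)=0$ for every $n\ge 1$. Corollary \ref{coro:Lef} then asserts that $i(f^n,K)=0$ whenever $n$ fails to be a multiple of any period $r_1,\dots,r_m$ of $\varphi$ on the essential quasicomponents of $W^u_\infty\setminus(\tilde K\cup\infty)$. It remains to locate such an $n$ inside the arithmetic progression $\{1+jT\}_{j\ge 0}$. For those $r_i$ with $\gcd(T,r_i)>1$ we have $1+jT\equiv 1\pmod{\gcd(T,r_i)}$ for every $j$, so $r_i$ never divides $1+jT$; for the remaining $r_i$ (those with $\gcd(T,r_i)=1$) exactly one residue of $j$ modulo $r_i$ is forbidden, and since each such $r_i\ge 2$ the Chinese Remainder Theorem yields a $j\ge 0$ that avoids every forbidden residue at once. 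For that $j$, Corollary \ref{coro:Lef} gives $i(f^{1+jT},K)=0$ while periodicity gives $i(f^{1+jT},K)=i(f,K)=2$; this contradicts the first observation, so $K$ cannot be isolated.

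The main obstacle I anticipate is the careful translation of Corollaries \ref{cor:periodic} and \ref{coro:Lef}---stated in the paper for a connected continuum $X$---to the case of a link, where $f$ may nontrivially permute the components. This is handled by the author's remark that all results of the paper extend to sets with finitely many components: the permutation of components is absorbed into the action of $\varphi$, and the vanishing of $\Lambda(f^n|_K)$ follows from the coincidence of the actions on $\check H^0(K)$ and $\check H^1(K)$. As an alternative to the Chinese Remainder step one may invoke Dirichlet's theorem to produce a prime $p\equiv 1\pmod T$ with $p>\max_i r_i$, whence $i(f^p,K)=0$ by Corollary \ref{coro:Lef} but $i(f^p,K)=2$ by periodicity---the same contradiction.
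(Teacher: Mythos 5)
Your approach is genuinely different from the paper's. The paper derives, from the Lefschetz-like formula and Szymczak duality in dimension $3$, the explicit expression
\[
i(f,K) = 2 - \#\Fix(\varphi) - \#\Fix(\psi) - \trace\bigl(f^*|_{\im\, i^*}\bigr) - \trace\bigl((f^{-1})^*|_{\im\, j^*}\bigr),
\]
argues that the last two traces are non--negative (because $f|_K$ and $f^{-1}|_K$ preserve orientation), and then splits into cases: if $\#\Fix(\varphi)=0$ then Corollary~\ref{coro:Lef} gives $i(f,K)=\Lambda(f|_K)=0$; if $\#\Fix(\varphi)\ge 1$ the formula directly gives $i(f,K)\le 1$. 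In both cases $i(f,K)\le 1 < 2 = \Lambda(f)=i(f,\mathbb{S}^3)$, a contradiction. Your route instead fixes $i(f,K)=2$ up front and tries to exhibit some $n$ with $i(f^n,K)=0$, via periodicity plus Corollary~\ref{coro:Lef}.

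There is a real gap: your argument only produces such an $n$ when \emph{every} period $r_i$ of $\varphi$ on the essential quasicomponents is $\ge 2$, i.e.\ when $\Fix(\varphi)=\emptyset$. If some essential quasicomponent is fixed by $\varphi$ then $r_i=1$ for that cycle, every $n\ge 1$ is a multiple of it, the hypothesis ``$n$ is not a multiple of any of those periods'' in Corollary~\ref{coro:Lef} can never be met, and the fallback congruence is only modulo $d=\gcd(r_1,\dots,r_m)=1$, which is vacuous. Your Chinese Remainder / Dirichlet step quietly assumes ``each such $r_i\ge 2$'' without justification; nothing in the hypotheses of the corollary rules out $\Fix(\varphi)\neq\emptyset$, and indeed the paper explicitly treats that case by a separate estimate. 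So your proof establishes the statement only under the unproved extra hypothesis $\Fix(\varphi)=\emptyset$. (Incidentally, in that sub-case the whole periodicity and CRT machinery is unnecessary: taking $n=1$ already gives $i(f,K)=\Lambda(f|_K)=0\neq 2$, since $1$ is not a multiple of any $r_i\ge 2$.) To close the gap you would need, as the paper does, a separate bound on $i(f,K)$ valid when $\#\Fix(\varphi)\ge 1$, which is precisely what the Szymczak-duality formula supplies.
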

\begin{proof}
Suppose that $K$ is an isolated invariant set. With the notations introduced earlier,
$$i(f, K) = 2 - \#\Fix(\varphi) - \#\Fix(\psi) - \trace(f^*|_{\im\ i^*}) - \trace((f^{-1})^*|_{\im\ j^*}).$$ The last two summands are non--positive because the restrictions of $f$ and $f^{-1}$ to $K$ preserve orientation. The first two terms are non--positive as well and if $\#\Fix(\varphi) = 0$ it follows from Corollary \ref{coro:Lef} that $i(f, K) = \Lambda(f|_K) = 0$. Thus, regardless of whether $\Fix(\varphi)$ is empty or not, we may write $i(f, K) \le 1$. However, Lefschetz--Hopf theorem applied to $f$ yields $i(f, \mathbb{S}^3) = \Lambda(f) = 2$ and since $i(f, \mathbb{S}^3) \neq i(f, K)$, this means that there must exist fixed points outside $K$.
\end{proof}

Given a compact metric space \(M\), a homeomorphism \(f\colon M \to M\) is said to be
{\em expansive} if there exist \(c>0\) such that if \(x, y \in M\) and \(dist(f^m(x), f^m(y)) \leq c\) for every \(m \in {\mathbb Z}\) then \(x=y\). There are many papers in the literature about expansiveness of homeomorphisms \cite{lewowicz}, \cite{magne}  and references with very hard problems \cite{hiraide}.

It is easy see that this is equivalent to the diagonal
\(\Delta \subseteq M \times M\) to be an isolated invariant set for \(f \times f\colon M \times M \to M \times M\). For arbitrary continuous maps  it has been introduced the natural notion of positive expansiveness but using the above characterization one can generalize the definition of  expansiveness for continuous maps. A continuous map  \(f\colon M \to M\)  is expansive if
\(\Delta \subseteq M \times M\) is an isolated invariant set for \(f \times f\colon  M \times M \to M \times M\) so Theorems 1, 2  and 6 and Corollaries 3, 4 and 5  are general results that also provide tools to study expansiveness of maps defined in arbitrary compact manifolds. In particular there are only finitely many essential quasicomponents in $W^u_{\infty} \setminus (\tilde{\Delta} \cup \infty)$.  In a forthcoming paper we shall study some applications of our results to this theory.

\section{Some background definitions and results \label{sec:background}}

In this section we recall the basic definitions of the discrete Conley index and the approach of Robbin and Salamon in terms of the intrinsic topology in the unstable manifold, generalizing it to the setting of continuous maps rather than homeomorphisms. Throughout this section coefficients for homology or cohomology are assumed to be taken in a field $\mathbb{K}$ that is not always reflected in the notation.

\subsection{The cohomological Conley index} Consider a continuous map $f$ defined on a locally compact space.
The object of study are invariant sets, that is, sets $X$ that satisfy $f(X) = X$.
 A full orbit of $f$ through a point $p$ is a sequence $(p_i)_{i \in \mathbb{Z}}$ such that $f(p_{i}) = p_{i+1}$ and $p_0 = p$. Given a set $A$, $\mathrm{Inv}(A)$ is defined as the set of points for which there is a full orbit of $f$ passing through them completely contained in $A$. Evidently, $f(\mathrm{Inv}(A)) = \mathrm{Inv}(A)$, i.e., $\Inv(A)$ is an invariant set.
A compact invariant set $X$ is said to be \emph{isolated} or \emph{locally maximal} provided there is a neighborhood $A$ of $X$ such that $X = \mathrm{Inv}(A)$. The compact neighborhoods $A$ of $X$ satisfying $X = \Inv(A)$ are called \emph{isolating neighborhoods} of $X$.

The Conley index associates a topological invariant to a compact isolated invariant set $X$ through a compact pair whose purpose is to encapsulate the local dynamics around $X$. An \emph{index pair} for $f$ and $X$ is a compact pair $(N, L)$, $L \subseteq N$ that satisfies:
\begin{itemize}
\item[(i)] $\overline{N \setminus L}$ is an isolating neighborhood of $X$.
\item[(ii)] $f(N \setminus L) \subseteq N$, that is, the points that are mapped outside $N$ are contained in $L$.
\item[(iii)] $f(L) \cap N \subseteq L$, that is, $L$ is positively invariant in $N$.
\end{itemize}

A classical result in the theory of Conley index \cite{mischaikowmrozek} states that, provided the phase space is a locally compact absolute neighborhood retract, any neighbourhood of an isolated invariant set contains an index pair such that $\check{H}^*(N,L;\mathbb{K})$ is finite dimensional. (Recall that the class of absolute neighbourhood retracts is very wide and includes topological manifolds and simplicial complexes). Furthermore, if the phase space is a differentiable or PL manifold then $N$ and $L$ can be chosen to be manifolds with boundary \cite{franksricheson}. The idea behind this fact is the existence a robust subclass of index pairs that can be constructed as follows. Take an isolating neighborhood $N$ of $X$ that satisfies $f^{-1}(N) \cap N \cap f(N)$ (called \emph{isolating block}) and let $L$ be a small neighborhood in $N$ of $\{x \in N: f(x) \notin \int(N)\}$. Then, see \cite{franksricheson}, any compact pair $(N', L')$ close to $(N, L)$ is an index pair for $f$ and $X$. Choosing $N'$ and $L'$ to be manifolds the result follows.

Consider the pointed quotient space $(N/L,[L])$, where $[L]$ denotes the point consisting of $L$ alone. Define a map $f_{\#} \colon  (N/L,[L]) \to (N/L,[L])$ by \[f_{\#}(p) := \left\{ \begin{array}{cl} f(p) & \text{ if } p, f(p) \in N \setminus L; \\ \left[L\right] & \text{ otherwise.} \end{array} \right.\] This is usually called an \emph{index map} and it follows from the requirements on index pairs that it is continuous. It is this continuity, rather than the particular details of $(N,L)$, that the construction of Robbin and Salamon hinges on. (In fact, they define and index pair as a compact pair $(N,L)$ that satisfies (i) above and makes $f_{\#}$ continuous).

The index map induces a homomorphism $f^*_{\#} \colon  \check{H}^*(N/L,[L]) \to \check{H}^*(N/L,[L])$ in cohomology. We recall that coefficients are taken in some field $\mathbb{K}$ that we omit from the notation. Both $\check{H}^*(N/L,[L])$ and $f^*_{\#}$ depend on the particular index pair $(N,L)$ chosen to perform this construction, but the \emph{shift equivalence class} of $f^*_{\#}$ does not. The $q$--cohomological Conley index can then be legitimately defined as the shift equivalence class of $f^*_{\#} \colon  \check{H}^q(N/L,[L]) \to \check{H}^q(N/L,[L])$, and we will denote it by $h^q(f,X)$.

Shift equivalence is an equivalence relation defined between maps in an abstract category; in this case, between endomorphisms of a vector space and, in fact, of a finite dimensional vector space because the pair $(N,L)$ can be chosen to have finitely generated \v{C}ech cohomology as mentioned above. We will not need to recall the definition of shift equivalence here (the interested reader can find it in \cite{franksricheson}), but just mention the following: in the category of finite dimensional vector spaces, two endomorphisms that are shift equivalent have the same nonzero eigenvalues, counted with their multiplicity, and, in particular, the same trace. Therefore, it makes sense to speak of the nonzero eigenvalues of $h^q(f,X)$, or of the trace of $h^q(f, X)$. We denote the latter by ${\rm trace}(h^q(f,X))$. 




\subsection{The intrinsic topology} Now we recall the definition of the intrinsic topology given by Robbin and Salamon \cite{robinsalamon1}. Since their paper is set up for diffeomorphisms, we go into some detail to highlight the differences that appear when carrying over their construction to the case of merely \emph{continuous} maps.

Let $X$ be an isolated invariant set for a continuous map $f$ and choose an index pair $(N,L)$ for $X$. Consider the inverse sequence of pointed topological spaces \begin{equation} \label{sec:NL} \xymatrix{(N/L,[L]) & (N/L,[L]) \ar[l]_-{f_{\#}} & (N/L,[L]) \ar[l]_-{f_{\#}} & \ldots \ar[l]}\end{equation} and denote provisionally its inverse limit by $(W_{\infty},\infty)$. Being an inverse limit of a sequence of compact spaces, $W_{\infty}$ is a compact (Hausdorff) space. Any point $\xi \in W_{\infty}$ is a sequence $\xi = (\xi_i)_{i \geq 0}$ where $\xi_i \in N/L$ and $f_{\#}(\xi_{i+1}) = \xi_i$. The point $\infty$ corresponds to the constant sequence $\xi = ([L], \ldots, [L], \ldots)$. Any other $\xi \in W_{\infty}$ must have, according to the definition of $f_{\#}$, one of the forms \begin{subequations} \begin{align} \xi & = (p_0,p_1,\ldots) \label{eq:xinoL} \\ \xi & = (\underbrace{[L],[L],\ldots,[L]}_{(n)},p_n,p_{n+1},\ldots) \label{eq:xiL} \end{align} \end{subequations} where the $p_i$ denote points in $N \setminus L$, $f(p_{i+1}) = p_i$ and, in the case of \eqref{eq:xiL}, $f(p_n) \in L$.

The map $f_{\#}$ induces a continuous map $\tilde{f}$ on $W_{\infty}$ in the standard way $\tilde{f}(\xi) := (f_{\#}(\xi_i))_{i \geq 0}$. More explicitly, for $\xi = (\xi_i)_{i \geq 0}$ one has $\tilde{f}(\xi) = (f_{\#}(\xi_0),\xi_0,\xi_1,\ldots)$ which makes it clear that $\tilde{f}$ is a homeomorphism, its inverse being a left shift. We emphasize that this is the case regardless of whether the original map $f$ is a homeomorphism or more generally a continuous map, which is the situation that we consider in this paper. The following result, which was anticipated earlier and justifies our interest in $W_{\infty}$, will be proved later on:

\begin{proposition} \label{prop:conley} Let the phase space be a topological manifold or, more generally, a locally compact absolute neighbourhood retract. Then $(W_{\infty},\infty)$ has finitely generated \v{C}ech cohomology and, moreover, the induced homomorphism $\tilde{f}^* \colon  \check{H}^q(W_{\infty},\infty) \to \check{H}^q(W_{\infty},\infty)$ belongs to the $q$--cohomological Conley index of $X$.
\end{proposition}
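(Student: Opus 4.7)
The plan is to combine two classical ingredients: continuity of \v{C}ech cohomology with respect to inverse limits of compact Hausdorff spaces, and the algebraic fact that an endomorphism of a finite dimensional vector space is shift equivalent to its restriction to its eventual image.

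First, I would invoke continuity of \v{C}ech cohomology. Because $W_{\infty}$ is the inverse limit of the compact pointed spaces $(N/L, [L])$ along the bonding map $f_{\#}$, and $\infty \in W_{\infty}$ arises as the inverse limit of the basepoints (which is well defined since $f_{\#}([L]) = [L]$), continuity yields
\[\check{H}^q(W_{\infty}, \infty) \;=\; \varinjlim\bigl(\check{H}^q(N/L, [L]) \xrightarrow{f_{\#}^*} \check{H}^q(N/L, [L]) \xrightarrow{f_{\#}^*} \cdots\bigr).\]
Under the hypothesis that the phase space is a locally compact ANR, the result of Mischaikow--Mrozek cited above guarantees that the index pair can be chosen with $\check{H}^q(N/L, [L])$ finite dimensional. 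The direct limit of a constant sequence of a finite dimensional vector space $V$ along a single endomorphism $\alpha$ is canonically isomorphic to the eventual image $V_{\infty}^{\mathrm{img}} := \bigcap_{k \ge 0} \alpha^k(V)$, on which $\alpha$ restricts to an automorphism; applying this with $V := \check{H}^q(N/L,[L])$ and $\alpha := f_{\#}^*$ proves finite generation.

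Second, I would identify $\tilde{f}^*$ with this algebraic shift. Let $\pi_i \colon W_{\infty} \to N/L$ denote the projections, so that $\iota_i := \pi_i^*$ are the canonical maps into the direct limit. The defining formula for $\tilde{f}$ yields $\pi_0 \circ \tilde{f} = f_{\#} \circ \pi_0$ and $\pi_i \circ \tilde{f} = \pi_{i-1}$ for $i \ge 1$, hence $\tilde{f}^* \circ \iota_0 = \iota_0 \circ f_{\#}^*$ and $\tilde{f}^* \circ \iota_i = \iota_{i-1}$ for $i \ge 1$. A short computation then shows that, under the identification of the direct limit with $V_{\infty}^{\mathrm{img}}$ above, $\tilde{f}^*$ becomes exactly $f_{\#}^*|_{V_{\infty}^{\mathrm{img}}}$.

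Finally, I would verify that $f_{\#}^*|_{V_{\infty}^{\mathrm{img}}}$ is shift equivalent to $f_{\#}^*$ itself. Choosing $n$ large enough that $(f_{\#}^*)^n(V) = V_{\infty}^{\mathrm{img}}$, let $r := (f_{\#}^*)^n \colon V \to V_{\infty}^{\mathrm{img}}$ and let $s \colon V_{\infty}^{\mathrm{img}} \hookrightarrow V$ be the inclusion. A direct check gives $s \circ r = (f_{\#}^*)^n$, $r \circ s = (f_{\#}^*|_{V_{\infty}^{\mathrm{img}}})^n$, $r \circ f_{\#}^* = f_{\#}^*|_{V_{\infty}^{\mathrm{img}}} \circ r$ and $s \circ f_{\#}^*|_{V_{\infty}^{\mathrm{img}}} = f_{\#}^* \circ s$, which is precisely a shift equivalence. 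The main technical obstacle, in my view, is the careful setup of continuity of \v{C}ech cohomology in the pointed/relative setting and its compatibility with the functorial action of $\tilde{f}^*$; once these generalities are in place, the remainder is purely finite dimensional linear algebra.
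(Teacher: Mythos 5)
Your argument is correct and follows exactly the same route as the paper: continuity of \v{C}ech cohomology to realize $\check{H}^q(W_{\infty},\infty)$ as the direct limit of the constant sequence with bonding map $f_{\#}^*$, finite dimensionality of $\check{H}^q(N/L,[L])$ from the ANR hypothesis, identification of the direct limit with the eventual image (Leray reduction), and shift equivalence between an endomorphism and its Leray reduction. The only difference is presentational — the paper isolates the linear-algebra content as a standalone lemma (Lemma \ref{lem:dirlim}) that is reused for Proposition \ref{prop:Xtilde}, whereas you inline it and supply the explicit shift-equivalence maps $r = (f_{\#}^*)^n$ and $s$ the inclusion.
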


We shall show now that the dynamics of $\tilde{f}$ in $W_{\infty}$ is fairly simple. First we will see that $\infty$ is an asymptotically stable attractor. Recall that $\tilde{f}(\xi_0, \xi_1, \ldots) = (f_{\#}(\xi_0), \xi_0, \xi_1, \ldots)$. Clearly every point $\xi$ of the form displayed in Equation \eqref{eq:xiL} above is attracted to $\infty$, since each additional iterate adds an $[L]$ to the initial part of the sequence $(\xi_i)$. Similarly, points $\xi$ of the form given in \eqref{eq:xinoL} will be attracted to $\infty$ as long as each $p_i$ eventually reaches $L$ upon iteration. Otherwise stated, a point $\xi$ does \emph{not} belong to the basin of attraction of $\infty$ if, and only if, it is of the form $\xi = (p_0,p_1,\ldots)$ where all the forward iterates of each $p_i$ stay in $N \setminus L$. Since through each $p_i$ there is a negative semiorbit contained in $N$ (namely, $p_{i+1}, p_{i+2}, \ldots$), this latter condition is equivalent to saying that each $p_i$ belongs to ${\rm Inv}(N) = X$. Thus, letting $\tilde{X}$ be \[\tilde{X} := \{(\xi_i)_{i \geq 0} \in W_{\infty} : \xi_i \in X \text{ for all $i$}\},\] it follows from the above that the basin of attraction of $\infty$ is precisely $W_{\infty} \setminus \tilde{X}$. The set $\tilde{X}$ is therefore a repeller in $W_{\infty}$ (the so-called dual repeller of $\infty$ in the parlance of Morse decompositions) and its basin of repulsion is $W_{\infty} \setminus \infty$.

Summing up, if $X$ is an isolated invariant set for a continuous map $f$ and $W_{\infty}$ is defined as above, the following hold true:
\begin{itemize}
	\item[(i)] $f$ induces in a natural way a homeomorphism $\tilde{f}$ on $W_{\infty}$.
	\item[(ii)] The pair $(\infty,\tilde{X})$ is an attractor-repeller decomposition of $W_{\infty}$.
\end{itemize}

Notice that $\tilde{X}$ can be identified with $X$ in a straightforward manner when $f|_X$ is a homeomorphism, but otherwise there is no inmediately apparent relation between them. Later on we will establish the following result:

\begin{proposition} \label{prop:Xtilde} If $\check{H}^q(X)$ is finite dimensional, then so is $\check{H}^q(\tilde{X})$ and, moreover, the induced homomorphisms $f^* \colon  \check{H}^q(X) \to \check{H}^q(X)$ and $\tilde{f}^* \colon  \check{H}^q(\tilde{X}) \to \check{H}^q(\tilde{X})$ are shift equivalent. In particular, they have the same nonzero eigenvalues and the same trace.
\end{proposition}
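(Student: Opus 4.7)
The plan is to identify $\tilde{X}$ with an inverse limit of copies of $X$ under $f$, invoke the continuity of \v{C}ech cohomology, and then reduce the whole statement to a standard algebraic fact about direct limits of endomorphisms of finite-dimensional vector spaces. First I would observe that $X\cap L = \emptyset$: since $L$ is positively invariant and $X$ is the maximal invariant set in $\overline{N\setminus L}$, a point of $X\cap L$ would produce an invariant subset of $L$ disjoint from $X$, contradicting the role of $L$ as the exit set. Consequently $f_{\#}|_{X}=f|_{X}$, and inspection of the description of points in $W_\infty$ shows that
\[
\tilde{X} \;=\; \{(\xi_i)_{i\geq 0}\in W_\infty : \xi_i\in X \text{ for all } i\}
\]
is precisely the inverse limit $\varprojlim\bigl(X\xleftarrow{f} X\xleftarrow{f}\cdots\bigr)$. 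Under this identification, $\tilde{f}(\xi_0,\xi_1,\ldots)=(f(\xi_0),\xi_0,\xi_1,\ldots)$ restricts to the natural homeomorphism induced by $f$ on the inverse limit, and the projections $\pi_i\colon\tilde{X}\to X$, $\pi_i(\xi)=\xi_i$, satisfy $\pi_i = f\circ\pi_{i+1}$ and $\pi_0\circ\tilde{f}=f\circ\pi_0$.

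Next, because $\tilde{X}$ is a compact Hausdorff inverse limit of compact Hausdorff spaces, the continuity property of \v{C}ech cohomology gives a canonical isomorphism
\[
\check{H}^q(\tilde{X}) \;\cong\; \varinjlim\bigl(\check{H}^q(X)\xrightarrow{f^*}\check{H}^q(X)\xrightarrow{f^*}\cdots\bigr),
\]
in which the universal structure maps $c_i$ of the colimit are identified with $\pi_i^*$ (the relation $\pi_i^*=\pi_{i+1}^*\circ f^*$ matches $c_i=c_{i+1}\circ f^*$). The equality $\tilde{f}^*\pi_0^*=\pi_0^* f^*$ then shows that, under this identification, $\tilde{f}^*$ corresponds to the endomorphism of the direct limit induced by $f^*$ itself.

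Now set $V:=\check{H}^q(X)$ and $A:=f^*$. Finite-dimensionality ensures that the chain $\ker A\subseteq\ker A^2\subseteq\cdots$ stabilizes at some step $n$, giving a Fitting decomposition $V=K_\infty\oplus V_\infty$ with $K_\infty=\ker A^n$ and $V_\infty=A^n V$, on which $A|_{K_\infty}$ is nilpotent and $A|_{V_\infty}$ is invertible. A direct check (every class in the colimit is of the form $c_0(v)$, and $c_0(v)=c_0(w)$ iff $A^k(v-w)=0$ for some $k$) identifies the direct limit with $V/K_\infty\cong V_\infty$, with the induced shift corresponding to $A|_{V_\infty}$. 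In particular $\check{H}^q(\tilde{X})$ is finite-dimensional, and $\tilde{f}^*$ is conjugate to $A|_{V_\infty}$. Shift equivalence between $A$ and $A|_{V_\infty}$ is then exhibited explicitly by the pair $s\colon V_\infty\hookrightarrow V$ (inclusion) and $r\colon V\to V_\infty$, $r(v)=A^n v$: the identities $rA=(A|_{V_\infty})r$, $sA|_{V_\infty}=As$, $sr=A^n$ and $rs=(A|_{V_\infty})^n$ are immediate, giving a lag-$n$ shift equivalence. Since shift equivalent endomorphisms share nonzero eigenvalues (with multiplicities) and traces, the final assertion follows.

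The step I expect to demand the most care is the bookkeeping in the second paragraph: one must verify that $\tilde{f}^*$ on the direct limit corresponds to the map induced by $f^*$ rather than to its formal inverse. The clockwise/counter-clockwise choice of the shift on an inverse limit is a notorious source of sign errors, so the identities $\pi_i=f\circ\pi_{i+1}$ and $\pi_0\tilde{f}=f\pi_0$ must be tracked carefully. Everything else is either standard \v{C}ech theory (continuity on compact inverse systems) or linear algebra (Fitting decomposition and the explicit shift equivalence with the eventual image).
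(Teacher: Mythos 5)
Your argument is correct and is essentially the paper's own proof: identify $\tilde{X}$ with the inverse limit of copies of $X$ under $f$, use continuity of \v{C}ech cohomology to rewrite $\check{H}^q(\tilde{X})$ as the direct limit of $\check{H}^q(X)$ under $f^*$, and reduce to linear algebra about the Leray reduction of a finite-dimensional endomorphism --- the paper cites this as Lemma~\ref{lem:dirlim}, whereas you re-derive it with an explicit lag-$n$ shift equivalence $(r,s)$. The one shaky spot is your justification that $X\cap L=\emptyset$: a point of $X\cap L$ has its forward orbit contained in $X\cap L$ by positive invariance of $L$ in $N$, but this set is a \emph{subset} of $X$, not disjoint from it, so no contradiction arises as stated; the correct source of this fact is that index pairs can always be chosen with $X\subseteq N\setminus L$ (for instance from isolating blocks), a point the paper also leaves implicit when it writes ``$f=f_{\#}$ on $X$''.
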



When $f$ is injective there is a natural identification of $W_{\infty} \setminus \infty$, as a point set, with the unstable manifold $W^u$ of $X$ (this is essentially \cite[Theorem 7.3, p. 390]{robinsalamon1}). The topology that $W^u$ receives under this identification is called the \emph{intrinsic topology} and, since $W_{\infty}$ is tautologically the one-point compactification of $W_{\infty} \setminus \infty$, it can also be regarded as the one--point compactification of the unstable manifold $W^u$ endowed with the intrinsic topology. However, since in this paper $f$ is merely continuous but not necessarily injective, this identification is not valid any more. This is the only aspect of the construction of Robbin and Salamon that does not carry over to our situation.

Let us see how the above affects the basic strategy sketched in the Introduction. Replacing $W^u_{\infty}$ with $W_{\infty}$, $f$ with $\tilde{f}$, and $X$ with $\tilde{X}$ and making use of the attractor-repeller decomposition of $W_{\infty}$ given by (ii), the Mayer--Vietoris exact sequence \eqref{eq:mvietoris_mot} now reads \[ \xymatrix{\ldots & \ar[l] \check{H}^{q+1}(\tilde{X}) & \ar[l] \check{H}^{q+1}(W_{\infty},\infty) & \ar[l]_-{\Delta} \check{H}^q(U\cap V) & \ar[l] \check{H}^q(\tilde{X}) & \ar[l] \ldots} \] so it relates the homomorphism induced by $\tilde{f}$ on $\check{H}^{q+1}(W_{\infty},\infty)$, which according to Proposition \ref{prop:conley} belongs to the $(q+1)$-cohomological Conley index of $X$, to the homomorphisms that $\tilde{f}$ induces on $\check{H}^q(U \cap V)$ and $\check{H}^{q+1}(\tilde{X})$. This involves the set $\tilde{X}$ rather than $X$, the original dynamical object of interest in our problem. In most applications we shall only be interested in the eigenvalues or the trace of the Conley index, which in turn only involves the eigenvalues of $\tilde{f}$ on $\check{H}^{q+1}(\tilde{X})$ and, according to Proposition \ref{prop:Xtilde} above, this information \emph{can} be recovered from the action of $f^*$ on $\check{H}^*(X)$.

The only element foreign to the original problem that remains is $U \cap V = W_{\infty} \setminus (\tilde{X} \cup \infty)$, which in general differs from $W^u \setminus X$. However, as the reader will see this is not really an issue because in our arguments the explicit form of $U \cap V$ will play no role.

\subsection{An example}\label{subsec:horseshoe} Consider the realization of Smale's horsehoe in the plane suggested by Figure \ref{fig:horse}. $D$ is a region of the plane consisting of a square $N$ (in thicker outline) and two semicircles. The dynamics is generated by a homeomorphism $f \colon D \to f(D) \subseteq D$ that squeezes $N$ horizontally and bends it into the shape of a horseshoe, as in Figure \ref{fig:horse}.(b). The image of the square $N$ intersects $N$ itself in two vertical rectangles. The maximal invariant subset of $N$ is the horseshoe $\Lambda$, which in this realization is the Cartesian product of two Cantor sets. Together with an attracting fixed point $p$ outside $N$, these are the only two isolated invariant subsets of $D$ and therefore the set of points whose orbits travel from $\Lambda$ to $\{p\}$ is equal to $W^u(\Lambda) \setminus \Lambda$, where $W^u(\Lambda)$ denotes the unstable manifold of $\Lambda$. 


\begin{figure}[h!]
\null\hfill
\subfigure{
\begin{pspicture}(0,0)(4.6,9.2)
	\rput[bl](0,0){\scalebox{0.75}{\includegraphics{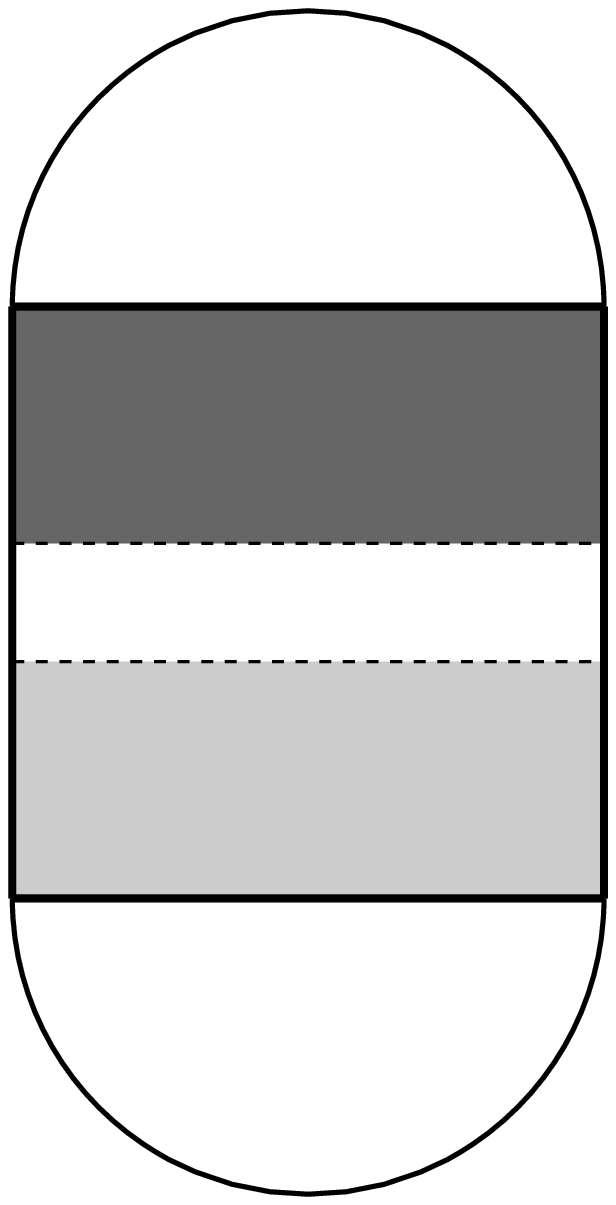}}}
	\rput[bl](3,1.8){$N$}
	\rput[bl](0.2,0.2){$D$}
\end{pspicture}
}
\hfill
\subfigure{
\begin{pspicture}(0,0)(4.6,9.2)
	\rput[bl](0,0){\scalebox{0.75}{\includegraphics{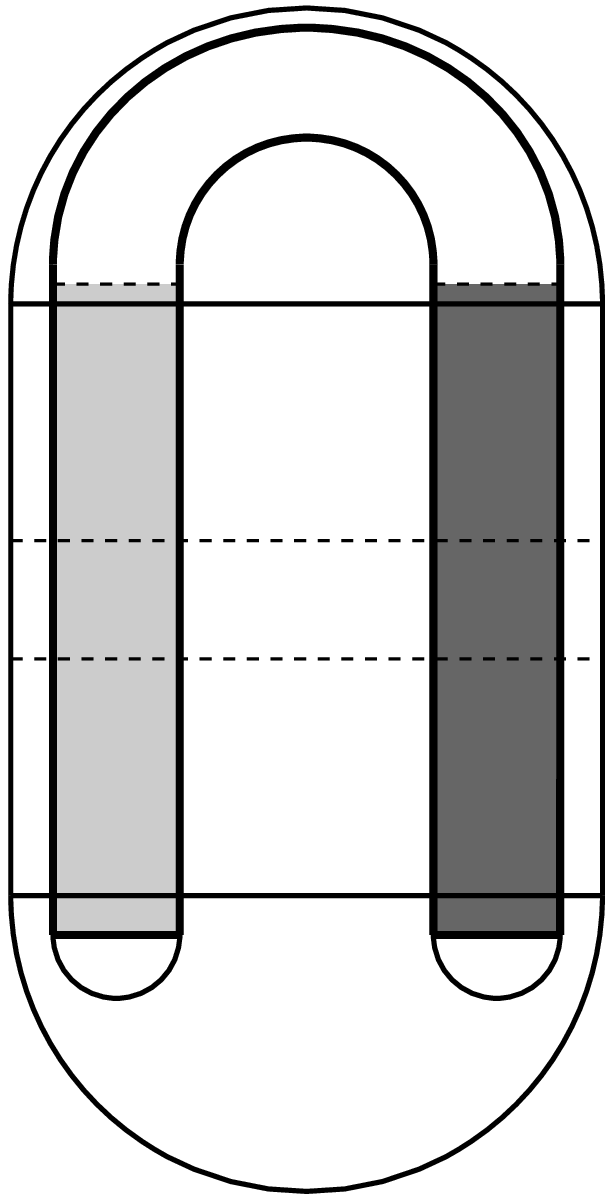}}}
	\rput[bl](1.2,1.2){$f(D)$}
\end{pspicture}
}
\hfill
\subfigure{
\begin{pspicture}(0,0)(4.6,9.2)
	\rput[bl](0,0){\scalebox{0.75}{\includegraphics{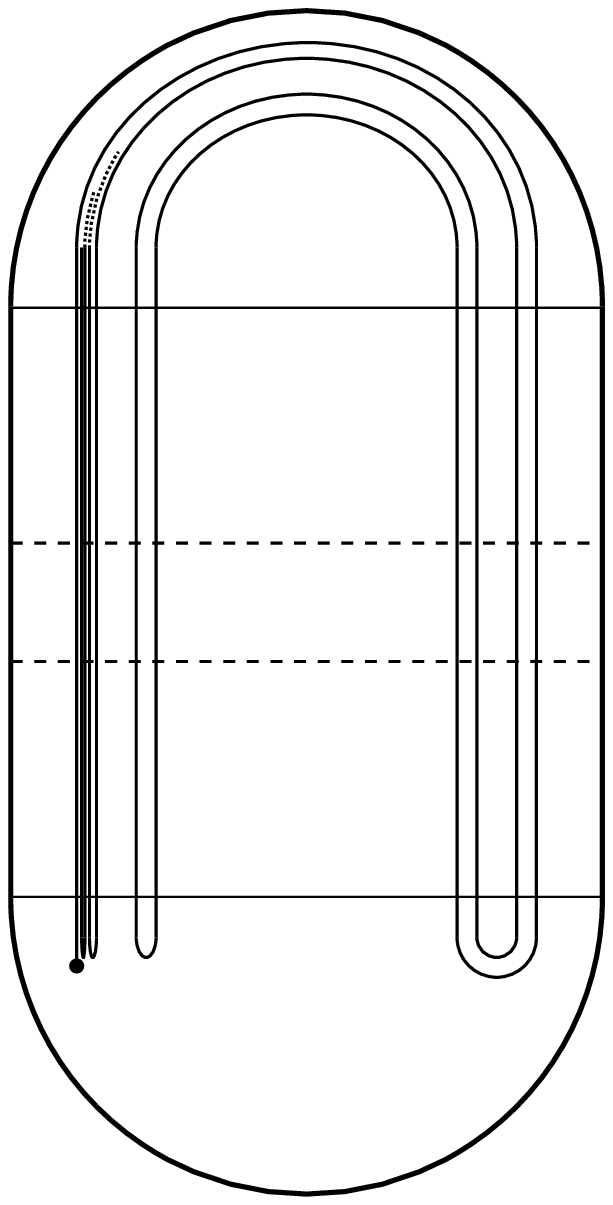}}}
	\rput[bl](1.2,1.6){$W^u(\Lambda)$}
	\rput[b](0.5,1.4){$p$}
\end{pspicture}
}
\hfill\null
\caption{The unstable manifold of a horseshoe \label{fig:horse}}
\end{figure}

The portion of $W^u(\Lambda)$ contained in $N$ is equal to the union of (a Cantor set of) vertical lines that pass through $\Lambda$. Each of these vertical lines is separated by $\Lambda$ into countably many open intervals. The intrinsic topology of the unstable manifold is finer than the ambient topology that it inherits from the plane. Indeed, for two points $x, y \in W^u(\Lambda) \cap N$ to be close in the intrinsic topology they must belong to the same vertical line. Otherwise there would exist some $n \in \mathbb{N}$ for which $f^{-n}(x)$ and $f^{-n}(y)$ lie in different halves of $N$ so $(x, \ldots, f^{-n}(x), \ldots)$ and $(y, \ldots, f^{-n}(y), \ldots)$ would not be close in the inverse limit. Notice, however, that the closure of $W^u(\Lambda)$ in the usual topology coincides with its one--point compactification $W^u(\Lambda) \cup \{p\} = \cap_{n \ge 0} f^n(N)$ and, as a point set, this is equivalent to the one--point compactification of $W^u(\Lambda)$ with the intrinsic topology, $W^u_{\infty}$. Thus, with our usual notation there is a correspondence $\infty = p$, $X = \Lambda$, $U = W^u(\Lambda)$ and $V = \{p\} \cup W^u(\Lambda) \setminus \Lambda$.


The set of connected components of $V$ is uncountable: in every vertical line of $W^u(\Lambda) \cap N$ there are already countably many components of $V$ and there are uncountably many such lines. Each connected component corresponds to an arc in $W^u(\Lambda)$ connecting two different points of $\Lambda$ except for an arc $F$ that reaches $p$. (Notice that in this case both the ambient and the intrinsic topology on $W^u_{\infty}(\Lambda)$ yields the same components and quasicomponents). Since every connected component is also a quasicomponent, the only essential quasicomponent in the example is $F$.

Evidently, Hastings' Theorem fails dramatically in this example, both $V$ and $U/X$ have very complicated topology: $V$ has uncountably many quasicomponents and the first integral cohomology group of $U/X$ is not finitely generated (to see this one can examine the long exact sequence of $(U, X)$). Notice, however, that this observation agrees with Lemma \ref{lem:uncountable}.


\subsection{Proofs of Proposition \ref{prop:conley} and \ref{prop:Xtilde}} As a preparation we need a rather general observation from elementary linear algebra.

Let $V$ be a $\mathbb{K}$--vector space and let $\varphi \colon  V \to V$ be an endomorphism of $V$. Consider the constant direct sequence \[\mathcal{S} \ : \ \xymatrix{V_{(0)} = V \ar[r]^-{\varphi} & V \ar[r]^-{\varphi} & V \ar[r]^-{\varphi} & \ldots}\] and denote by $V_{\infty}$ its direct limit and by $\pi \colon  V_{(0)} \to V_{\infty}$ the canonical projection from the first term of the sequence to its direct limit. Recall that $V_{\infty}$ is the direct sum of the (infinitely many) copies of $V$ that appear in $\mathcal{S}$ modulo the smallest equivalence relation $\sim$ that identifies each $v$ lying in any one copy of $V$ with its image $\varphi(v)$, which lies in the next.

Similarly, consider two copies of the sequence $\mathcal{S}$ connected by vertical arrows given by $\varphi$. This diagram is commutative and therefore the vertical arrows induce a homomorphism $\varphi_{\infty} \colon  V_{\infty} \to V_{\infty}$ between the direct limits. Then, with these notations, one has the following:

\begin{lemma} \label{lem:dirlim} Suppose $V$ is finite dimensional, so that the sequence of images $V \supseteq {\rm im}\ \varphi \supseteq {\rm im}\ \varphi^2 \supseteq \ldots$ stabilizes; that is, there exists $m$ such that ${\rm im}\ \varphi^m = {\rm im}\ \varphi^{m+1} = \ldots$ Denote by $W$ this subspace of $V$. Then:
\begin{itemize}
	\item[(1)] The restriction $\pi|_W \colon  W \to V_{\infty}$ is an isomorphism.
	\item[(2)] Moreover, $\pi|_W$ provides a conjugation between the restriction $\varphi|_W \colon  W \to W$ and $\varphi_{\infty} \colon  V_{\infty} \to V_{\infty}$.
	\item[(3)] The maps $\varphi$ and $\varphi_{\infty}$ are shift equivalent.
\end{itemize}

The isomorphism $\varphi|_W$ is usually called the \emph{Leray reduction} of $\varphi$.
\end{lemma}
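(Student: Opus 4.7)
The plan is to first prove that $\varphi$ restricts to an automorphism of $W$, and then use this fact to handle the three claims in turn.

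First I would observe that, by definition of $W$, one has $\varphi(W) = \varphi(\mathrm{im}\,\varphi^m) = \mathrm{im}\,\varphi^{m+1} = W$, so $\varphi|_W \colon W \to W$ is surjective. Since $W$ is a finite dimensional $\mathbb{K}$--vector space, surjectivity implies bijectivity, and so $\varphi|_W$ is an automorphism whose inverse I shall freely use below.

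For part (1), injectivity of $\pi|_W$ is immediate: if $v \in W$ and $\pi(v) = 0$, then by the standard description of the direct limit there is $N \geq 0$ with $\varphi^N(v) = 0$; since $(\varphi|_W)^N$ is invertible, $v = 0$. Surjectivity is the only place where a small computation is needed. Any class in $V_\infty$ is represented by some $w \in V_{(n)}$, so the task is to find $v \in W \subseteq V_{(0)}$ with $\pi(v) = [w]$; by the equivalence relation defining $V_\infty$ it suffices to produce $v \in W$ such that $\varphi^{n+m}(v) = \varphi^m(w)$. The right-hand side lies in $W$, so $v := (\varphi|_W)^{-(n+m)}\bigl(\varphi^m(w)\bigr)$ does the job. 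Part (2) then comes for free from the universal property of the direct limit, which gives the tautological identity $\varphi_\infty \circ \pi = \pi \circ \varphi$; restricting to $W$ and invoking the isomorphism from (1) provides the desired conjugation.

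For part (3), I would set $r := \varphi^m \colon V \to W$ (well defined since $\mathrm{im}\,\varphi^m = W$) and $s \colon W \hookrightarrow V$ the inclusion. A direct check gives $s \circ r = \varphi^m$ as an endomorphism of $V$, $r \circ s = (\varphi|_W)^m$ as an endomorphism of $W$, and the intertwining relations $r \circ \varphi = \varphi|_W \circ r$ and $\varphi \circ s = s \circ \varphi|_W$. These are exactly the axioms of shift equivalence with lag $m$ between $\varphi$ and $\varphi|_W$. Combining with parts (1) and (2), which identify $\varphi|_W$ with $\varphi_\infty$ up to conjugation (and hence shift equivalence), one concludes that $\varphi$ and $\varphi_\infty$ are shift equivalent.

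There is no real obstacle here; the only care needed is to keep the indexing in the direct limit straight when proving surjectivity of $\pi|_W$. The key conceptual input is the triviality that $\varphi|_W$ is an automorphism, which both powers the explicit preimage construction and guarantees that the shift equivalence data above behave as required.
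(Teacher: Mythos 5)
Your proposal is correct and follows essentially the same approach as the paper: the key observation in both is that $\varphi|_W$ is an automorphism of the finite-dimensional $W$, which is then used to identify $V_\infty$ with $W$ and to deduce the shift equivalence. The only cosmetic difference is in part (3), where you exhibit a single explicit lag-$m$ shift equivalence $(r=\varphi^m, s=\text{inclusion})$ between $\varphi$ and $\varphi|_W$, whereas the paper chains together $m$ applications of the one-step fact that $\varphi$ is shift equivalent to $\varphi|_{\mathrm{im}\,\varphi}$; these amount to the same thing.
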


In the proof of the lemma we will make use of two simple properties of shift equivalence:
\begin{itemize}
	\item[(i)] Suppose $\varphi$ is an endomorphism of a vector space $V$. Then $\varphi$ and the restriction $\varphi|_{{\rm im}\ \varphi} \colon  {\rm im}\ \varphi \to {\rm im}\ \varphi$ are shift equivalent.
	\item[(ii)] Two isomorphisms are conjugate if and only if they are shift equivalent.
\end{itemize}

\begin{proof} (1) The proof is easy and very similar to that of \cite[Proposition 6.(2), p. 2830]{pacoyo1}, so we only sketch it. A general element of $V_{\infty}$ is represented by a sum of elements belonging to different copies of $V$, but each of them can be pushed forward along the sequence until they all lie in the same copy $V_{(j)}$ of $V$, where they can be summed to yield an element $v$. Pushing the latter forward yet another $m$ times we have $v \sim \varphi^m(v)$, where now $\varphi^m(v)$ sits in $W$. Thus, every element of $V_{\infty}$ can be represented by an element of $W$ and so the direct limit of $\mathcal{S}$ remains unchanged if we replace the sequence with \[\mathcal{S}' \ : \ \xymatrix{W \ar[r]^-{\varphi|_W} & W \ar[r]^-{\varphi|_W} & W \ar[r]^-{\varphi|_W} & \ldots}\] It only remains to observe that $\varphi|_W$ is an isomorphism (it is surjective by the definition of $W$ and, since $W$ is finite dimensional because $V$ is finite dimensional by assumption, it must be an isomorphism) and so the direct limit of $\mathcal{S}'$ is canonically isomorphic to $W$.

(2) This is a straightforward computation.

(3) By repeated application of property (i) of shift equivalence $\varphi, \varphi|_{{\rm im}\ \varphi}, \varphi|_{{\rm im}\ \varphi^2}, \ldots, \varphi|_W$ are all shift equivalent. According to part (1) of this lemma the latter is conjugate to $\varphi_{\infty}$, and so both are also shift equivalent by (ii) above.
\end{proof}

As a complement we have the following result:

\begin{remark} \label{rem:betti} Let $m(0)$ denote the algebraic multiplicity (possibly zero) of $\lambda = 0$ as a root of the characteristic polynomial of $\varphi$. Then $\dim\ V_{\infty} = \dim\ V - m(0)$.
\end{remark}
\begin{proof} By Lemma \ref{lem:dirlim}.(1) we have $\dim\ V_{\infty} = \dim\ W$ where, we recall, $W = \im\ \varphi^m$; thus $\dim\ V_{\infty} = \dim\ V - \dim\ \ker\ \varphi^m$. It is easy to see from the definition of $m$ that $V = (\im\ \varphi^m) \oplus (\ker\ \varphi^m)$. Both summands are $\varphi$ invariant, so the eigenvalues of $\varphi$ are the union of those of $\varphi|_{\im\ \varphi^m}$ and $\varphi|_{\ker\ \varphi^m}$. The first of these is an isomorphism (again because of the definition of $m$) and has nonzero eigenvalues. The second is nilpotent so it has a single eigenvalue $\lambda = 0$ with algebraic multiplicity $\dim\ \ker\ \varphi^m$. Thus $m(0) = \dim\ \ker\ \varphi^m$, completing the proof.
\end{proof}

We are now ready to prove Propositions \ref{prop:conley} and \ref{prop:Xtilde}.

\begin{proof}[Proof of Proposition \ref{prop:conley}] The definition of $(W_{\infty},\infty)$ together with the continuity property of \v{C}ech cohomology imply that $\check{H}^*(W_{\infty},\infty)$ can be computed as the direct limit of \[\xymatrix{\check{H}^*(N/L,[L]) \ar[r]^-{(f_{\#})^*} & \check{H}^*(N/L,[L]) \ar[r]^-{(f_{\#})^*} & \check{H}^*(N/L,[L]) \ar[r]^-{(f_{\#})^*} & \ldots}\] Notice that this sequence has the form considered earlier with $V = \check{H}^*(N/L,[L])$, $\varphi = (f_{\#})^*$ and $V_{\infty} = \check{H}^*(W_{\infty},\infty)$. Moreover, it is straightforward to check that $\varphi_{\infty} = \tilde{f}^*$. As mentioned earlier, the assumption about the phase space guarantees that $(N,L)$ can be chosen to have finite dimensional cohomology. Thus $V$ is finite dimensional and so Lemma \ref{lem:dirlim} can be applied. Part (1) implies that $\check{H}^*(W_{\infty},\infty)$ is finite dimensional (because it is isomorphic to a subspace of a finite dimensional space) and part (2) means that $(f_{\#})^* \colon  \check{H}^*(N/L,[L]) \to \check{H}^*(N/L,[L])$ and $\tilde{f}^* \colon  \check{H}^*(W_{\infty},\infty) \to \check{H}^*(W_{\infty},\infty)$ are shift equivalent, proving the proposition.
\end{proof}

When analyzing a specific isolated invariant set $X$ it is often the case that index pairs and the associated index map can be explicitly computed, even with numerical methods (see \cite{Mro2} and its references), rendering the Conley index an essentially computable object (this is precisely one of the reasons why the Conley index is so useful). As a consequence, the Betti numbers of $W^u_{\infty}$ are also explicitly computable. Indeed, in the situation just considered in the previous paragraph while proving Proposition \ref{prop:conley}, an application of Remark \ref{rem:betti} shows that $\beta^q(W^u_{\infty},\infty) = \beta^q(N/L,[L]) - m_q(0)$, where $m_q(0)$ is the algebraic multiplicity of $\lambda = 0$ as a root of the characteristic polynomial of $(f_{\#})^*$ in $\check{H}^q(N/L,[L])$. This also justifies our claim made in the statement of Theorem \ref{teo:chiX} that $\chi(W^u_{\infty},\infty)$ is explicitly computable.

\begin{proof}[Proof of Proposition \ref{prop:Xtilde}] One only needs to observe that $\tilde{X}$ can alternatively be defined, taking into account that $f = f_{\#}$ on $X$, as the inverse limit \[\tilde{X} = \varprojlim\ \left\{ \ \xymatrix{X & X \ar[l]_-{f} & X \ar[l]_-{f} & \ldots \ar[l]} \ \right\}\] Then Lemma \ref{lem:dirlim} applies directly to $V = \check{H}^q(X)$ and $\varphi = f^*$ to yield the desired result.
\end{proof}

\subsection{Notations and remarks}\label{subsec:convention} To aid intuition it is useful (and harmless) to think of $W_{\infty}$ as the compactification of the unstable manifold $W^u$ (endowed with the intrinsic topology) with the point at infinity $\infty$ although, as we have already explained, this is not rigorously true. Thus \emph{from now on we shall revert to the more expressive notation $W^u_{\infty}$ for $W_{\infty}$ and speak of the ``compactified unstable manifold''}. This accords with the notation used in the Introduction and should not cause any confusion.

One more comment may be in order. We have defined $W^u_{\infty} (=W_{\infty})$ as the inverse limit of the inverse sequence \eqref{sec:NL}, which in principle could depend on the choice of the index pair $(N,L)$, but this is not reflected in the notation; in fact, we speak of ``the'' compactified unstable manifold as if it were a canonical object. It turns out that this is the case. This was proved by Robbin and Salamon \cite[Theorem 7.3, p. 390]{robinsalamon1} for diffeomorphisms (and translates directly to homeomorphisms) but holds as well for continuous maps. For completeness we have included a proof in Appendix \ref{app:canonical}, although we will not make use of this result.

\begin{proposition}\label{prop:unstablecanonical}
$W_{\infty}$ is a canonical object, it does not depend on the choice of index pair.
\end{proposition}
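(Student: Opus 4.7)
Let $(N_1,L_1)$ and $(N_2,L_2)$ be two index pairs for $X$, with associated index maps $f_{\#,1}$, $f_{\#,2}$, inverse limits $W^{(1)}_\infty$, $W^{(2)}_\infty$, and induced homeomorphisms $\tilde{f}^{(1)}$, $\tilde{f}^{(2)}$. The plan is to produce a homeomorphism $\Phi\colon W^{(1)}_\infty \to W^{(2)}_\infty$ sending the point at infinity to the point at infinity and satisfying $\Phi \circ \tilde{f}^{(1)} = \tilde{f}^{(2)} \circ \Phi$, by promoting the (cohomological) shift equivalence between the two index maps to a \emph{topological} shift equivalence realized by honest continuous maps, and then applying the inverse limit functor.

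Concretely, I would construct an integer $k \geq 1$ together with continuous pointed maps
\[ \phi\colon (N_1/L_1,[L_1]) \to (N_2/L_2,[L_2]), \qquad \psi\colon (N_2/L_2,[L_2]) \to (N_1/L_1,[L_1]) \]
satisfying
\[ \phi \circ f_{\#,1} = f_{\#,2} \circ \phi, \quad \psi \circ f_{\#,2} = f_{\#,1} \circ \psi, \quad \psi \circ \phi = f_{\#,1}^{\,k}, \quad \phi \circ \psi = f_{\#,2}^{\,k}. \]
A standard recipe is available. Since $X$ is the maximal invariant subset of both $N_1$ and $N_2$, for $k$ sufficiently large any $p$ whose first $2k$ forward iterates all lie in $N_1\setminus L_1$ must have $f^{k}(p)$ inside a small neighbourhood of $X$, which one may arrange to be contained in the interior of $N_2\setminus L_2$. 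Then $\phi([p]) := [f^{k}(p)]$ on this ``trapped'' region and $\phi([p]) := [L_2]$ on its complement (and $\psi$ analogously, swapping the roles of the two index pairs). Continuity of $\phi$ is ensured by the positive invariance of $L_1$ in $N_1$ (axiom (iii) of an index pair), which forces the transition region to collapse to the basepoint, and the four relations above follow by direct computation.

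From there the argument is formal. The inverse limit functor applied to $\phi$ and $\psi$ yields continuous maps $\Phi\colon W^{(1)}_\infty \to W^{(2)}_\infty$ and $\Psi\colon W^{(2)}_\infty \to W^{(1)}_\infty$, and the four relations lift verbatim with $f_{\#,i}^{\,k}$ replaced by $(\tilde{f}^{(i)})^{k}$. Because each $\tilde{f}^{(i)}$ is a homeomorphism of the compact Hausdorff space $W^{(i)}_\infty$, so is its $k$-th iterate; hence $\Phi$ is both injective (its left composition $\Psi \circ \Phi$ is a homeomorphism) and surjective (its right composition $\Phi \circ \Psi$ is a homeomorphism). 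A continuous bijection between compact Hausdorff spaces is automatically a homeomorphism, and the basepoints match by construction.

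The principal obstacle is the construction of $\phi$ and $\psi$ with continuous piecewise definitions that satisfy the four relations exactly, rather than merely up to homotopy (which would suffice for cohomological shift equivalence but not for a homeomorphism of inverse limits). Verifying continuity at the boundary between the ``trapped'' and ``escape'' regions relies delicately on the index pair axioms together with the fact that isolating neighbourhoods of the same $X$ become effectively nested after finitely many iterations. In the continuous (non-injective) setting this requires a small additional verification compared with the homeomorphism case originally treated by Robbin--Salamon, but no essentially new phenomena arise; the passage to the inverse limit is then purely formal.
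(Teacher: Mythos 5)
Your strategy—promote the shift equivalence of index maps to a \emph{strict} topological shift equivalence $\phi,\psi$ between the pointed spaces $N_i/L_i$, then pass to inverse limits—is genuinely different from the paper's, and the formal part at the end is fine: once $\phi,\psi$ exist with the four exact relations, the inverse limit functor and ``continuous bijection of compact Hausdorff spaces is a homeomorphism'' do the rest. You are also right that homotopy commutativity would not suffice at the level of inverse limits, which is the correct thing to worry about.

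The gap is precisely where you flag it, and it is not a small verification. Consider the trapped region $T=\{p\in N_1: f^j(p)\in N_1\setminus L_1,\ 0\le j\le 2k\}$. Using axiom (ii) one checks $T$ is open in $N_1$, so continuity at points of $T$ is fine; the problem is at $p\in\overline T\setminus T$. There the positive invariance of $L_1$ (axiom (iii)) tells you $f^k(p)\in L_1$ (when $f^{j_0}(p)\in L_1$ for some $j_0\le k$), but $L_1\cap N_2\not\subseteq L_2$ in general, so nothing forces $[f^k(p)]=[L_2]$ in $N_2/L_2$; the piecewise formula can jump. The relation $\phi\circ f_{\#,1}=f_{\#,2}\circ\phi$ has the same difficulty: if $p$ is $2k$-trapped but $f(p)$ is not, the left side is the basepoint while the right side is $[f^{k+1}(p)]$, which need not be $[L_2]$. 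Increasing $k$ does not cure this, because for any fixed $k$ new boundary points of the trapped region appear; what is really needed is a depth of trapping that varies from point to point. That is exactly what the paper's construction provides: it bypasses the quotients $N_i/L_i$ entirely and identifies $W_\infty\setminus\infty$ with a canonical direct limit $W^X=\bigcup_j W^{f^j(A)}$ built from $\Inv^-(A)$ for isolating neighborhoods $A$, where the homeomorphism $\mathbf g$ onto $W_\infty\setminus\infty$ uses an $m$ depending on the individual backward orbit $\xi$. That pointwise freedom is what makes the paper's map well defined and continuous, and it is precisely the flexibility your fixed-$k$ construction lacks. As written, your proof is therefore incomplete; to salvage it along these lines you would have to exhibit $\phi,\psi$ satisfying the exact relations, and it is not clear such maps exist between arbitrary index pairs.
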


Another technical details arises when one replaces $f$ by an iterate $f^n$, $n \ge 1$. It turns out that $X$ is still an isolated invariant set for $f^n$ and one can apply our results to this map. However, as the next proposition shows, all the objects associated to $f^n$ can be recovered from those associated to $f$. The proof is also postponed to Appendix \ref{app:canonical}.

\begin{proposition}\label{prop:iteratesappendix}
For any $n \ge 1$ the following statements hold:
\begin{itemize}
\item $X$ is an isolated invariant set for $f^n$.
\item The compactified unstable manifold for $f^n$, denoted $W_{n, \infty}$, is homemorphic to $W_{\infty}$.
\item The maps $\widetilde{f^n} \colon W_{n, \infty} \to W_{n, \infty}$ and $(\tilde{f})^n \colon W_{\infty} \to W_{\infty}$ are conjugate.
\end{itemize}
\end{proposition}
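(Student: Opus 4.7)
The plan is to handle part (1) by a short stability check and parts (2)--(3) together by realizing both compactified unstable manifolds as inverse limits built from a common index pair and then identifying them via an explicit subsampling map. For part (1), fix an $f$-isolating neighborhood $N$ of $X$ and set $N^* := \bigcap_{k=0}^{n-1} f^{-k}(N)$. Because $X$ is $f$-invariant, $X = f^k(X) \subseteq \int(N)$ for every $k$, so $X \subseteq \int(N^*)$. Any full $f^n$-orbit $(p_i)_{i \in \mathbb{Z}}$ in $N^*$ interpolates to a full $f$-orbit $(q_j)$ via $q_{ni+k} := f^k(p_i)$ for $0 \leq k < n$; each $q_j$ lies in $N$ because $p_i \in N^*$, hence $p_0 = q_0 \in \Inv_f(N) = X$, and $\Inv_{f^n}(N^*) = X$.

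For parts (2) and (3), the first step is to fix an index pair $(N,L)$ that is simultaneously an $f$- and an $f^n$-index pair and whose two index maps satisfy $(f^n)_\# = f_\#^n$ on $N/L$. The equality of the index maps is then automatic: positive invariance $f(L) \cap N \subseteq L$ forces any $f$-orbit segment $p, f(p), \ldots, f^n(p)$ with $p, f^n(p) \in N \setminus L$ to avoid $L$ entirely (a first entry into $L$ would trap every subsequent iterate in $L$ until it left $N$), giving $f_\#^n(p) = f^n(p) = (f^n)_\#(p)$, while the remaining cases reduce both sides to $[L]$. Producing such a common pair is the main technical obstacle of the proof. My plan is to start from an $f$-isolating block $N_0$ with associated $f$-exit set $L_0$ and enlarge $L_0$ to
\[
L := \{p \in N_0 : f_\#^k(p) = [L_0] \text{ for some } 1 \leq k \leq n\},
\]
where $f_\#$ denotes the $f$-index map on $N_0/L_0$, and then verify the three index-pair axioms for $(N_0,L)$ as an $f^n$-index pair by bookkeeping with positive invariance of $L_0$. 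Alternatively one can invoke the canonicity statement of Proposition \ref{prop:unstablecanonical} to sidestep the explicit construction, since any two realizations of the compactified unstable manifold of $f^n$ are canonically homeomorphic.

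With $(N,L)$ fixed, both compactified unstable manifolds become explicit inverse limits,
\[
W_\infty = \varprojlim(N/L, f_\#) \qquad \text{and} \qquad W_{n,\infty} = \varprojlim(N/L, f_\#^n),
\]
and I define the subsampling map $\Phi \colon W_\infty \to W_{n,\infty}$ by $\Phi(\xi_0, \xi_1, \xi_2, \ldots) := (\xi_0, \xi_n, \xi_{2n}, \ldots)$. Its inverse is given explicitly by $\Psi(\eta_0, \eta_1, \ldots) := (\zeta_j)_{j \geq 0}$ with $\zeta_{in} := \eta_i$ and $\zeta_{in+k} := f_\#^{n-k}(\eta_{i+1})$ for $0 < k < n$; the bonding identities $f_\#(\zeta_{j+1}) = \zeta_j$ follow at once from $f_\#^n(\eta_{i+1}) = \eta_i$, $\Psi$ is continuous coordinatewise, and $\Phi, \Psi$ are mutually inverse, so $\Phi$ is a homeomorphism of compact Hausdorff spaces. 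Finally, iterating $\tilde f(\xi) = (f_\#(\xi_0), \xi_0, \xi_1, \ldots)$ yields
\[
\tilde f^n(\xi_0, \xi_1, \ldots) = (f_\#^n(\xi_0), f_\#^{n-1}(\xi_0), \ldots, f_\#(\xi_0), \xi_0, \xi_1, \ldots),
\]
whose $n$-fold subsample $\Phi(\tilde f^n \xi) = ((f^n)_\#(\xi_0), \xi_0, \xi_n, \xi_{2n}, \ldots)$ coincides with $\widetilde{f^n}(\Phi(\xi))$; hence $\Phi \circ \tilde f^n = \widetilde{f^n} \circ \Phi$, completing the proof.
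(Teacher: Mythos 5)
Your proof is correct in its essentials but takes a genuinely different route from the paper's. The paper proves this in Appendix C by realizing $W_\infty$ (minus $\infty$) as the increasing union $W^X = \bigcup_j W^{f^j(A)}$ of inverse limits built directly from the ``local unstable manifolds'' $\Inv^-(A)$, and then constructing explicit interleaving/subsampling maps $\beta$ and $\gamma$ between $W^X$ and the analogous $W_n^X$ for $f^n$; this avoids index pairs entirely and shares machinery with the canonicity result (Proposition \ref{prop:unstablecanonical}). Your proof instead works at the level of an index pair and relies on producing a \emph{common} pair $(N,L)$ for $f$ and $f^n$ with $(f^n)_\# = (f_\#)^n$ (the adapted pair $L_n = \{x \in N : \{x,f(x),\ldots,f^n(x)\} \cap L_0 \neq \emptyset\}$), after which the subsampling homeomorphism $\Phi$ and the conjugacy check are clean and complete --- this is in fact the route the paper hints at in a remark in Subsection~3.4 but does not carry out. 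Both give the same result; yours is more concrete and self-contained, the paper's packages the same argument at the level of $\Inv^-(A)$ where it also yields canonicity for free. Your argument for part (1) via $N^* = \bigcap_{k<n} f^{-k}(N)$ is also fine and sidesteps the paper's Lemma \ref{lem:invm=inv}.

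One point needs tightening in your justification of $(f^n)_\# = (f_\#)^n$. You argue that positive invariance $f(L)\cap N \subseteq L$ alone forces the segment $p,\ldots,f^n(p)$ to avoid $L$ once $p,f^n(p)\in N\setminus L$, because a first entry into $L$ traps subsequent iterates in $L$ ``until it left $N$''. But for a general common index pair that parenthetical is exactly the loophole: nothing in the axioms prevents the orbit from entering $L$, escaping $N$, and re-entering $N$ at step $n$ outside $L$, which would make $(f_\#)^n(p) = [L]$ while $(f^n)_\#(p) = f^n(p)$. What actually closes this is specific to the adapted pair: $p \notin L_n$ means $p, f(p), \ldots, f^n(p)$ all avoid $L_0$, hence by $f(N_0\setminus L_0) \subseteq N_0$ they all remain in $N_0$; only then does the trapping argument run to completion without the escape case. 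You should make that intermediate step explicit --- it is the reason $L$ must be the $n$-step anticipation of $L_0$ and not an arbitrary enlargement. Once that is in place, the rest (the axioms for $(N_0, L_n)$ as an $f$- and $f^n$-index pair, the subsampling $\Phi$ and its inverse $\Psi$, and the conjugacy $\Phi \circ \tilde f^n = \widetilde{f^n} \circ \Phi$) is correct as written.
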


Note that Proposition \ref{prop:iteratesappendix} explains why the conclusion of Corollary \ref{coro:genA_intro} is expressed in terms of the fixed points of $\varphi^n$: the set of essential quasicomponents of $W_{n, \infty}$ and $W_{\infty}$ is identical and the permutation induced by $\widetilde{f^n}$ in the former corresponds to the $n$th power of the permutation of the latter induced by $\tilde{f}$.

The description above together with Proposition \ref{prop:conley} yield as a corollary the relationship between the Conley indices of $X$ for $f$ and $f^n$: the $n$th power of an endomorphism contained in $h^q(f, X)$ belongs to $h^q(f^n, X)$. This fact can be deduced in a simple manner using adapted index pairs: start with an index pair $(N, L)$ and replace it with $(N, L_n)$, index pair for $f$ and $f^n$, where $L_n$ consists of the points $x$ of $N$ such that $\{x, f(x), \ldots, f^n(x)\} \cap L \neq \emptyset$, then the associated index maps $f_{\#}$ and $f^n_{\#}$ satisfy $(f_{\#})^n = f^n_{\#}$ and the conclusion follows. A consequence of this connection is that the eigenvalues of $h^q(f^n, X)$ are exactly the $n$th powers of the eigenvalues of $h^q(f, X)$. Thus, the knowledge of the spectrum of the latter is enough to compute the traces of $h^q(f^n, X)$ for any $n \ge 1$.

\subsection{The quasicomponents of $W^u_{\infty} \setminus (\tilde{X} \cup \infty)$} Recall that the \emph{quasicomponent} of a point $z$ in a space $Z$ is the intersection of all the clopen sets that contain $z$.

Elaborating on a definition already given in the Introduction, let us say that a component or a quasicomponent of $W^u_{\infty} \setminus (\tilde{X} \cup \infty)$ \emph{reaches} $\infty$ if its closure in $W^u_{\infty}$ contains $\infty$ and that it \emph{reaches} $\tilde{X}$ if its closure in $W^u_{\infty}$ has a nonempty intersection with $\tilde{X}$. An essential quasicomponent, then, is one that reaches both $\infty$ and $\tilde{X}$.

Throughout this section we will always assume that $W^u_{\infty}$ has finitely many connected components. This is equivalent to assuming that  the \v{C}ech cohomology group $\check{H}^0(W^u_{\infty},\infty)$ is finitely generated, and it is guaranteed whenever Proposition \ref{prop:conley} is applicable; in particular, whenever the phase space is a topological manifold or, more generally, a locally compact absolute neighbourhood retract. (In fact, it holds whenever $X$ has an index pair $(N,L)$ such that $N$ has finitely many connected components).

The following remark is very simple but worth singling out for later reference:

\begin{remark} \label{rem:components} Let $C$ be a component of $W^u_{\infty}$ other than the one that contains $\infty$. Then $C$ is a component of $\tilde{X}$.
\end{remark}
\begin{proof} First notice that, since $W^u_{\infty}$ has finitely many connected components, there exists a power of $\tilde{f}$ that leaves $C$ invariant. Now, $C$ is a compact (being a component of the compact space $W^u_{\infty}$) periodic subset of $W^u_{\infty} \setminus \infty$, which is the basin of repulsion of $\tilde{X}$. These two facts together imply that $C \subseteq \tilde{X}$ and, in particular, there exists a component $\tilde{X}_0$ of $\tilde{X}$ such that $C \subseteq \tilde{X}_0$. In turn, there exists a component of $W^u_{\infty}$ that contains $\tilde{X}_0$, but this must be $C$ itself (otherwise one component of $W^u_{\infty}$ would contain another). Thus $C = \tilde{X}_0$, as was to be shown.
\end{proof}

\begin{proposition} \label{prop:reach} Every component of $W^u_{\infty} \setminus (\tilde{X} \cup \infty)$ reaches either $\infty$, $\tilde{X}$ or both. Consequently the same is true of its quasicomponents, because they are unions of components.
\end{proposition}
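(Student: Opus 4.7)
I would argue by contradiction: suppose $C$ is a component of $W^u_\infty \setminus S$, where $S := \tilde{X} \cup \{\infty\}$, whose closure in $W^u_\infty$ misses $S$. First, $\overline{C}$ is a connected subset of $W^u_\infty \setminus S$ containing $C$, so by maximality $\overline{C} = C$; hence $C$ is closed in $W^u_\infty$ and therefore compact. Being a nonempty compact connected set disjoint from $\tilde{X} \cup \{\infty\}$, Remark \ref{rem:components} forces $C$ to lie inside $\tilde{C}_\infty$, the component of $W^u_\infty$ containing $\infty$ (every other component of $W^u_\infty$ sits inside $\tilde{X}$). Moreover $C \subsetneq \tilde{C}_\infty$ since $\infty \in \tilde{C}_\infty \setminus C$, so $\tilde{C}_\infty$ is a nondegenerate continuum.

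The heart of the proof is an application of the classical Boundary Bumping Theorem from continuum theory (see, e.g., Nadler's \emph{Continuum Theory}). The set $V := \tilde{C}_\infty \setminus S$ is a proper nonempty open subset of the continuum $\tilde{C}_\infty$: nonempty because $C \subseteq V$, and proper because $\infty \in \tilde{C}_\infty \cap S$. Moreover $C$ is a component of $V$ itself, because any larger connected subset of $V$ containing $C$ would also be a larger connected subset of $W^u_\infty \setminus S$, violating the maximality of $C$. The Boundary Bumping Theorem then guarantees that the component $K$ of $\overline{V}$ (closure in $\tilde{C}_\infty$) which contains $C$ satisfies $K \cap \partial V \neq \emptyset$. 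Since $\partial V \subseteq \tilde{C}_\infty \cap S \subseteq S$, the continuum $K$ contains $C$ and also meets $S$.

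To finish, I would analyze the decomposition $K = (K \cap V) \cup (K \cap S)$. Both pieces are closed in $K$ (since $S$ is closed and $C$ is closed in $W^u_\infty$), both are nonempty, and they are disjoint because $C \cap S = \emptyset$. The connectedness of $K$ therefore forces a point of $C$ to be a limit of points in $K \setminus C$. Such limit points must come either from other components of $K \cap V$ or from $K \cap S$: in the first case, a closure argument combines one such component with $C$ to give a strictly larger connected subset of $V$, contradicting the maximality of $C$; in the second case, the closedness of $S$ forces the limit point to lie in $S$, contradicting $C \cap S = \emptyset$. Either horn of the alternative yields the desired contradiction.

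The main obstacle is controlling the last step in the scenario where infinitely many distinct components of $K \cap V$ accumulate collectively against $C$, yet no single one has closure meeting $C$. Ruling this out requires a careful argument leveraging the closedness of $S$ together with the continuum structure of $K$, essentially to show that such dispersed accumulation is incompatible with $K$ being a connected compact set sandwiched between $C$ and the compact set $S$.
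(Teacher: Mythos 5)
There is a genuine gap, and it is the one you yourself flag at the end. Your reduction to the component $\tilde{C}_\infty$ containing $\infty$ via Remark \ref{rem:components}, and the observation that under the contradiction hypothesis $\overline{C}=C$ is compact and is a component of the open set $V=\tilde{C}_\infty\setminus S$, are all correct and match the paper's setup. But you then invoke the boundary bumping lemma in the form ``every component $K$ of $\overline{V}$ meets $\partial V$,'' and try to transfer the conclusion from $K$ back down to $C$ by an elementary connectedness analysis. That transfer is exactly where the argument breaks. Note first that your claim that $K\cap V$ and $K\cap S$ are both closed in $K$ is false: $K\cap V$ is open in $K$, and in general not closed, so there is no immediate separation of $K$. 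Your repaired version (a point of $C$ is a limit of points of $K\setminus C$) is fine, but the ensuing dichotomy is not exhaustive: such a point need not lie in the closure of any single other component of $K\cap V$, nor in $\overline{K\cap S}$; the approaching points may hop through infinitely many distinct components of $K\cap V$, none of which individually accumulates on $C$. Ruling out this ``dispersed accumulation'' is not a technicality one can wave away with closedness of $S$ and connectedness of $K$ --- it is essentially the full content of the component version of the boundary bumping theorem, whose proof in the compact Hausdorff setting goes through quasicomponents (the \v{S}ura--Bura lemma). So as written the proof is incomplete.

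The fix is simply to use the other, stronger form of boundary bumping from the start, which is precisely what the paper does: if $Z$ is a connected compact Hausdorff space, $K\subseteq Z$ is closed and nonempty, and $C$ is a component of $Z\setminus K$, then $\overline{C}\cap K\neq\emptyset$ (this is the result quoted from \cite{wilder1}; equivalently, a component of a proper nonempty open subset of a continuum has closure meeting the boundary of that subset). Applied directly to $C$ as a component of $V$ in the continuum $\tilde{C}_\infty$ (or, as in the paper, applied componentwise in $W^u_{\infty}$, with the dichotomy ``$\overline{C}$ meets $\tilde{X}\cup\infty$ or $C$ is a whole component of $W^u_{\infty}$'' and Remark \ref{rem:components} excluding the second alternative), it gives $\overline{C}\cap(\tilde{X}\cup\infty)\neq\emptyset$ at once; the detour through the component $K$ of $\overline{V}$ and the final case analysis become unnecessary. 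Two minor remarks: Nadler's book is stated for metric continua, whereas here one wants the Hausdorff-continuum version (Wilder, or Engelking), since the phase space is only assumed paracompact Hausdorff; and your use of Remark \ref{rem:components} implicitly relies, as does the paper's, on the standing assumption that $W^u_{\infty}$ has finitely many components.
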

\begin{proof} We make use of the following result \cite[Theorem 1.10, p. 101]{wilder1}: if $Z$ is a connected, compact (Hausdorff) space, $K \subseteq Z$ is a closed nonempty subspace and $C$ is a component of $Z \setminus K$, then $\overline{C} \cap K$ is nonempty. This result can be readily generalized to a non connected $Z$: by considering each component $Z_i$ of $Z$ and the (possibly empty) closed subset $K_i := K \cap Z_i$ in turn, one concludes that in this more general situation either $\overline{C} \cap K$ is nonempty or $C$ actually coincides with a component of $Z$.

Now let $Z = W^u_{\infty}$ and $K = \tilde{X} \cup \infty$ and let $C$ be a connected component of $Z \setminus K = W^u_{\infty} \setminus (\tilde{X} \cup \infty)$. It follows from the above that either (i) $\overline{C}$ has a nonempty intersection with $K$, in which case it clearly reaches $\infty$ or $\tilde{X}$ as was to be shown, or (ii) $C$ is actually a connected component of $W^u_{\infty}$. In this case $C$ must be the component that contains $\infty$ or either a component of $\tilde{X}$ by Remark \ref{rem:components}, but none of these is possible since $C$ is a subset of $W^u_{\infty} \setminus (\tilde{X} \cup \infty)$.
\end{proof}

The next result characterizes when there exist essential quasicomponents:

\begin{proposition} \label{prop:noessential} The following are equivalent:
\begin{itemize}
	\item[(i)] $X$ is an asymptotically stable attractor.
	\item[(ii)] $W^u_{\infty}$ consists only of $\tilde{X}$ and the point $\infty$.
	\item[(iii)] There are no essential quasicomponents.
\end{itemize}
\end{proposition}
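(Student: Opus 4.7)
The natural strategy is to prove the cyclic chain (i) $\Rightarrow$ (ii) $\Rightarrow$ (iii) $\Rightarrow$ (i), since each step reduces to material already established in the section.

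For (i) $\Rightarrow$ (ii), I would begin by choosing a compact positively invariant isolating neighborhood $N$ of $X$, which exists because the phase space is locally compact and $X$ is asymptotically stable. Then $(N,\emptyset)$ is an index pair, so every element of the inverse limit $W^u_{\infty}$ is either the basepoint $\infty = ([L],[L],\ldots)$ or a sequence $(\xi_i)_{i \ge 0}$ with $\xi_i \in N$ and $f(\xi_{i+1}) = \xi_i$. Positive invariance of $N$ gives $f^k(\xi_0) \in N$ for every $k \ge 0$, while the sequence itself supplies a full backward orbit of $\xi_0$ in $N$; hence $\xi_0 \in \Inv(N) = X$, and applying the same argument coordinatewise yields $\xi_i \in X$ for all $i$. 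So every non-$\infty$ element lies in $\tilde{X}$, i.e., $W^u_{\infty} = \tilde{X} \cup \{\infty\}$. The implication (ii) $\Rightarrow$ (iii) is immediate, since under (ii) the set $W^u_{\infty} \setminus (\tilde{X} \cup \infty)$ is empty and has no quasicomponents at all.

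For (iii) $\Rightarrow$ (i), the plan is to pass first through (ii). Suppose for contradiction that $U := W^u_{\infty} \setminus (\tilde{X} \cup \infty)$ is nonempty and pick $p \in U$. By Remark \ref{rem:components}, $p$ must lie in the unique component $C_{\infty}$ of $W^u_{\infty}$ that contains $\infty$, because every other component is contained in $\tilde{X}$. Since $\tilde{f}$ is a homeomorphism permuting the finitely many components of $W^u_{\infty}$ and fixing $C_{\infty}$, the orbit of $p$ stays in $C_{\infty}$; the attractor-repeller structure $(\infty,\tilde{X})$ then forces the $\alpha$-limit of $p$ to be a nonempty subset of $\tilde{X} \cap C_{\infty} =: \tilde{X}_0$. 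Thus $C_{\infty}$ is a continuum containing the two disjoint nonempty closed subsets $\tilde{X}_0$ and $\{\infty\}$, whose union is proper in $C_{\infty}$. The key topological step, modelled on the use of Wilder's theorem in the proof of Proposition \ref{prop:reach}, is to combine boundary bumping with the theory of irreducible subcontinua to produce a quasicomponent $F$ of $C_{\infty} \setminus (\tilde{X}_0 \cup \{\infty\}) = U$ whose closure in $W^u_{\infty}$ meets both $\tilde{X}_0$ and $\{\infty\}$; any such $F$ is essential, contradicting (iii). Hence $U = \emptyset$, which is (ii).

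Finally I would deduce (i) from (ii). Under (ii) the set $\tilde{X}$ is clopen in $W^u_{\infty}$, and since $W^u_{\infty}$ is an inverse limit of the compact pairs $(N/L,[L])$, this clopen set has the form $\pi_n^{-1}(V)$ for some $n$ and some clopen $V \subseteq N/L$ with $[L] \notin V$. From $V$ one extracts a compact subset $M \subseteq N \setminus L$ containing $X$ with the property that every full backward orbit inside $M$ already lies in $X$; a standard refinement, taking the positive orbit of $M$ intersected with $N$ and shrinking via compactness, yields a positively invariant compact isolating neighborhood of $X$, i.e., $X$ is an asymptotically stable attractor. The main obstacle I expect is the continuum-theoretic step in the proof of (iii) $\Rightarrow$ (ii): upgrading boundary bumping from a statement about components of $U$ (each reaching only $\tilde{X}_0$ or only $\{\infty\}$ would already be compatible) to a statement about the existence of a single \emph{essential} quasicomponent requires careful use of normality in the compact Hausdorff space $C_{\infty}$ and the structure of irreducible subcontinua between the two closed sets.
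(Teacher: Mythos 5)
The directions (i) $\Rightarrow$ (ii) $\Rightarrow$ (iii) are fine; your explicit argument for (i) $\Rightarrow$ (ii) via an index pair $(N,\emptyset)$ with $N$ positively invariant is essentially the "trivial" content the paper has in mind (and implicitly uses the canonicity of $W^u_{\infty}$ from Proposition \ref{prop:unstablecanonical}, since you are switching the index pair). The real issue is your step (iii) $\Rightarrow$ (ii), which you yourself flag as the main obstacle, and which the proposal does not actually resolve.

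Concretely, your plan is to use boundary bumping plus irreducible subcontinua in $C_{\infty}$ to \emph{exhibit} a quasicomponent of $U$ adherent to both $\tilde{X}_0$ and $\{\infty\}$. But boundary bumping (Wilder's theorem, used in Proposition \ref{prop:reach}) gives only component-level information — each component reaches $\tilde{X}$ \emph{or} $\infty$ — and passing from "each component reaches one end" to "some quasicomponent reaches both" is exactly the nontrivial point. One cannot just take an irreducible subcontinuum $C$ between $\tilde{X}_0$ and $\{\infty\}$ and claim $C\setminus(\tilde{X}_0\cup\infty)$ lands in a single quasicomponent of $U$: the removal of two closed sets (rather than two points) from an irreducible continuum need not leave a connected set, and even if it did, it would only land in a component, not settle the quasicomponent structure of the whole $U$ that the definition of "essential" refers to. The paper sidesteps this entirely by arguing in the contrapositive with Lemma \ref{lem:quasi4} applied to $O=\emptyset$: absence of essential quasicomponents makes the two unions $G_{\tilde{X}}$ and $G_{\infty}$ into $\tilde{f}$-invariant clopen subsets of $W^u_{\infty}\setminus\tilde{X}$ and $W^u_{\infty}\setminus\infty$ with compact closures avoiding $\infty$ and $\tilde{X}$ respectively; the attractor–repeller decomposition then forces $G_{\tilde{X}}\subseteq\tilde{X}$ and $G_{\infty}\subseteq\{\infty\}$, hence $U=\emptyset$. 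That lemma (proved in Appendix B by a convergence-of-quasicomponents analysis) is precisely the machinery that replaces the continuum-theoretic step you could not close, and without invoking it or something equivalent your (iii) $\Rightarrow$ (ii) is incomplete.

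Your (ii) $\Rightarrow$ (i) is also not the paper's argument and is sketchier than you acknowledge: the claim that a clopen $\tilde{X}\subseteq W^u_{\infty}$ pulls back from a clopen set at a finite stage of the inverse system, and that from this one extracts a positively invariant compact isolating neighborhood of $X$, needs justification (the stage projections $\pi_n$ are not surjective, and the "standard refinement" is doing nontrivial work). The paper instead argues by contradiction and is more elementary: if $X$ were not an attractor one could pick points $p_k\to X$ whose forward orbits escape a fixed isolating neighborhood $A$, pass to a convergent subsequence of the last-in-$A$ iterates, apply Lemma \ref{lem:invm=inv} to produce a full backward semiorbit in $A$ through the limit point $q\notin X$, and thereby exhibit a point of $W^u_{\infty}\setminus(\tilde{X}\cup\infty)$, contradicting (ii). I would recommend adopting both the Lemma \ref{lem:quasi4} route for (iii) $\Rightarrow$ (ii) and the direct contradiction for (ii) $\Rightarrow$ (i).
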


To establish the proposition we need the following technical lemma, which will also be useful later on. Its proof is postponed to Appendix \ref{app:two}.

\begin{lemma} \label{lem:quasi4} Let $O \subseteq W^u_{\infty} \setminus (\tilde{X} \cup \infty)$ be a clopen set that contains all the essential quasicomponents. Define the sets \[G_{\tilde{X}} := \bigcup \{F : F \text{ is a quasicomponent that reaches $\tilde{X}$ and such that } F \cap O = \emptyset\}\] and \[G_{\infty} := \bigcup \{F : F \text{ is a quasicomponent that reaches $\infty$ and such that } F \cap O = \emptyset\}.\] Then:
\begin{itemize}
	\item[(1)] $W^u_{\infty} \setminus (\tilde{X} \cup \infty) = O \uplus G_{\tilde{X}} \uplus G_{\infty}$.
	\item[(2)] $G_{\tilde{X}}$ is a clopen subset of $W^u \setminus \tilde{X}$ and $G_{\infty}$ is a clopen subset of $W^u_{\infty} \setminus \infty$.
	\item[(3)] $\overline{G}_{\tilde{X}}$ is disjoint from $\infty$ and $\overline{G}_{\infty}$ is disjoint from $\tilde{X}$. Here the closures are taken in $W^u_{\infty}$ (and are therefore compact).
\end{itemize}
\end{lemma}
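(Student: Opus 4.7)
The plan is to prove (1), (3), and (2) in that order, where (3) is the main obstacle and (2) follows from (3) without serious additional effort.

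\emph{Part (1).} This is purely set--theoretic. Every point of $Y := W^u_\infty \setminus (\tilde X \cup \infty)$ lies in a unique quasicomponent $F$. Since $O$ is clopen in $Y$ and a quasicomponent is the intersection of all clopen subsets of $Y$ containing one of its points, either $F \subseteq O$ or $F \cap O = \emptyset$. In the first case the point is in $O$. In the second, $F$ is non--essential (all essential quasicomponents lie in $O$ by hypothesis), so Proposition \ref{prop:reach} forces $F$ to reach exactly one of $\tilde X$ or $\infty$, placing the point in $G_{\tilde X}$ or $G_\infty$ respectively. Disjointness of the three pieces is automatic.

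\emph{Part (3).} By symmetry I only prove $\infty \notin \overline{G_{\tilde X}}$. Suppose for contradiction $\infty \in \overline{G_{\tilde X}}$. Since $O$ is open in $W^u_\infty$ and disjoint from $G_{\tilde X}$, the closure lies in $K := W^u_\infty \setminus O$, a compact subspace containing both $\tilde X$ and $\{\infty\}$. I would invoke the classical dichotomy for compact Hausdorff spaces: two disjoint closed subsets either admit a clopen separation of $K$, or are joined by a component of $K$. Consider first a putative clopen partition $K = U_1 \sqcup U_2$ with $\tilde X \subseteq U_1$ and $\infty \in U_2$. Each $U_i$ is closed in $K$ and hence in $W^u_\infty$ (as $K$ itself is closed); being open in $K$, each $U_i$ is of the form $V_i \cap K$ for some open $V_i$ of $W^u_\infty$, and therefore $U_i \cap Y = V_i \cap Y$ is clopen in $Y$. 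Thus $\{U_1 \cap Y,\ U_2 \cap Y,\ O\}$ is a clopen partition of $Y$. Take $x_n \in G_{\tilde X}$ with $x_n \to \infty$; eventually $x_n \in V_2 \cap K = U_2$, which forces the quasicomponent $F_n$ of $x_n$ to be contained in $U_2 \cap Y$. But $F_n$ reaches $\tilde X \subseteq U_1$, so $\overline{F_n} \cap U_1 \neq \emptyset$; since $U_2$ is closed in $W^u_\infty$ we also have $\overline{F_n} \subseteq U_2$, contradicting $U_1 \cap U_2 = \emptyset$. Hence no clopen separation exists and a component $C \subseteq K$ meets both $\tilde X$ and $\{\infty\}$. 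Repeating the clopen--partition reasoning one level deeper, the quasicomponent in $Y$ of any point of $C \cap Y$ reaches both $\tilde X$ and $\infty$, so is essential, yet is disjoint from $O$ (being contained in $K = W^u_\infty \setminus O$), contradicting the hypothesis on $O$.

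\emph{Part (2).} Once (3) and its symmetric twin are in hand, $\overline{G_\infty}$ is disjoint from $\tilde X$ and $\overline{G_{\tilde X}}$ is disjoint from $\infty$, so both closures are compact and contained in $W^u_\infty \setminus \tilde X$ and $W^u_\infty \setminus \infty$ respectively. The same bumping argument used in (3) also rules out $\overline{G_{\tilde X}} \cap G_\infty \neq \emptyset$ and $\overline{G_\infty} \cap G_{\tilde X} \neq \emptyset$, for either would again produce a component in $K$ spanning $\tilde X$ and $\{\infty\}$ and thus an essential quasicomponent outside $O$. Therefore $\overline{G_{\tilde X}} \setminus \tilde X = G_{\tilde X}$ and $\overline{G_\infty} \setminus \infty = G_\infty$. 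Since $O$ is open in $W^u_\infty \setminus \tilde X$ (being clopen in $Y$, which is open in $W^u_\infty \setminus \tilde X$), the complement of $G_{\tilde X}$ in $W^u_\infty \setminus \tilde X$ decomposes as $O \cup \overline{G_\infty}$, the union of an open and a closed set whose closures are already accounted for by the preceding; this complement is closed, so $G_{\tilde X}$ is open. Closedness of $G_{\tilde X}$ in $W^u_\infty \setminus \tilde X$ was just established, and the statement for $G_\infty$ in $W^u_\infty \setminus \infty$ follows by symmetry.
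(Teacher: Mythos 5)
Your part (1) is the same argument as the paper's and is fine. The real issue lies in the ``bumping'' step on which both your (3) and your (2) depend: from a component $C$ of $K := W^u_\infty \setminus O$ that meets both $\tilde{X}$ and $\{\infty\}$ you assert that ``the quasicomponent in $Y$ of any point of $C \cap Y$ reaches both $\tilde X$ and $\infty$, so is essential.'' This does not follow from ``repeating the clopen--partition reasoning one level deeper,'' and it is in fact the entire content of parts (2)--(3). Removing $\tilde{X} \cup \{\infty\}$ from the continuum $C$ can shatter it into many pieces; a single application of the boundary bumping lemma inside $C$ only shows that each piece reaches \emph{at least one} of the two ends, never both. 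The assertion you need is precisely the contrapositive of the statement that the clopen separation (alternative (a) of your dichotomy) must exist under the hypotheses of the lemma, so the detour through the $\check{\rm S}$ura-Bura dichotomy relabels the problem rather than solving it. What is actually required is the compactness-and-limits machinery that the paper sets up in Propositions \ref{prop:convergence}, \ref{prop:quasi2}, \ref{prop:longer} and Remark \ref{rem:subseq}: take a sequence $a_n \in G_\infty$ converging to a point of $\tilde{X}$, pick in the same \emph{component} (not quasicomponent) $C_n$ of $a_n$ a second sequence $p_n \to \infty$, and use the compactness of the frontiers of two disjoint neighbourhoods of $\tilde{X}$ and $\infty$, together with Proposition \ref{prop:convergence}.(2), to produce a limit quasicomponent that reaches both ends and lies outside $O$, a contradiction. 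The paper uses exactly this to prove part (2) directly (closedness of $G_{\tilde{X}}$ in $W^u_\infty \setminus \tilde{X}$ and of $G_\infty$ in $W^u_\infty \setminus \infty$) and then reads off part (3) in one line; reversing the order, as you do, is not wrong in principle but still requires the same argument.

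Two smaller points. First, the claimed identity $U_i \cap Y = V_i \cap Y$ need not hold, since $V_i$ may meet $O$; the correct reason $U_i \cap Y$ is clopen in $Y$ is that $U_i \cap Y = U_i \cap (K \cap Y)$ is clopen in $K \cap Y$, which is itself clopen in $Y$. Second, in your part (2) the sentence deducing that the complement of $G_{\tilde X}$ in $W^u_\infty \setminus \tilde X$ is closed because it is ``the union of an open and a closed set'' is not a valid inference; the paper instead obtains openness of $G_\infty$ by first showing that both $G_{\tilde X}$ and $G_\infty$ are closed in their respective ambient spaces and then using part (1).
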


Now we can prove Proposition \ref{prop:noessential}:

\begin{proof}[Proof of Proposition \ref{prop:noessential}] The implications (i) $\Rightarrow$ (ii) $\Rightarrow$ (iii) are trivial.

(iii) $\Rightarrow$ (ii) Suppose there are no essential quasicomponents and apply Lemma \ref{lem:quasi4} with $O = \emptyset$. The condition $F \cap O = \emptyset$ in the definitions of $G_{\tilde{X}}$ and $G_{\infty}$ is then vacuous whereas the other condition is $\tilde{f}$--invariant. Thus $G_{\tilde{X}}$ and $G_{\infty}$ are both invariant under $\tilde{f}$. By part (3) of the lemma we see that both $\tilde{X} \cup \overline{G}_{\tilde{X}}$ and $\infty \cup \overline{G}_{\infty}$ are compact invariant subsets of $W^u_{\infty} \setminus \infty$ and $W^u_{\infty} \setminus \tilde{X}$ respectively. But the latter are the basins of repulsion and attraction of $\tilde{X}$ and $\infty$, so this implies that $\overline{G}_{\tilde{X}} \subseteq \tilde{X}$ and $\overline{G}_{\infty} = \{\infty\}$. In particular also $G_{\tilde{X}} \subseteq \tilde{X}$ and $G_{\infty} \subseteq \{\infty\}$, and from part (1) of the lemma it follows that $W^u_{\infty} = \tilde{X} \cup \infty$.

(ii) $\Rightarrow$ (i) It will suffice to prove the following: given any isolating neighborhood $A \subset N \setminus L$ for $X$, there is a neighbourhood $V$ of $X$ such that $\bigcup_{k \geq 0} f^kV \subseteq A$. This guarantees that $X$ is an asymptotically stable attractor.

We reason by contradiction. Suppose that no such neighbourhood $V$ exists. Then there exists a sequence $(p_k)$ in $A \setminus X$ that converges to $X$, in the sense that it eventually enters any given neighbourhood of $X$, and such that the forward trajectory of each $p_k$ exits $A$. For each $k$ denote by $n_k \geq 0$ the first iterate of $p_k$ that exits $A$, so that $\{p_k,f(p_k),\dots,f^{n_k-1}(p_k)\} \subseteq A$ and $f^{n_k}(p_k) \not\in A$. Clearly $n_k \to +\infty$ because $p_k$ approaches $X$.

Since $f(X) = X \subseteq \int(A)$, there exists an open neighbourhood $U$ of $X$ such that both $U$ and $f(U)$ are contained in the interior of $A$. In particular $f^{n_k-1}(p_k)$ cannot belong to $U$ because its next iterate is already outside $A$ (by the definition of $n_k$) whereas the points in $U$ have the property that their next iterate still lies in $A$. Thus the sequence $(f^{n_k-1}(p_k))_{k \geq 0}$ is contained in the compact set $A \setminus U$ and so, after passing to a subsequence, we may assume that it converges to some $q \in A \setminus U$.
As limit of points that belong to $\Inv^-_m(A)$, $q \in \Inv^-_m(A)$ for every $m \ge 1$ so, by Lemma \ref{lem:invm=inv}, $q \in \Inv^-(A)$. It follows that $q$ has a full negative semiorbit in $A$; that is, we can find points $q_i \in A$ such that $q_0 = q$ and $f(q_{i+1}) = q_i$ for every $i$. But then $\xi = (q_0, q_1, q_2, \ldots) \in W^u_{\infty}$ by definition, and $\xi \not\in \tilde{X}$ (because $q_0 = q \in A \setminus U \subseteq A \setminus X$) and $\xi \neq \infty$, because $q_0 = q \not\in L$. Thus we have exhibited a point $\xi \in W^u_{\infty} \setminus (\tilde{X} \cup \infty)$.
\end{proof}

The following particular case will be useful later on:

\begin{corollary} \label{cor:attractor} Let $X$ be connected. Then $W^u_{\infty}$ is connected whenever $X$ is not an asymptotically stable attractor.
\end{corollary}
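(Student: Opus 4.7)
The plan is to argue by contradiction, exploiting Propositions \ref{prop:reach} and \ref{prop:noessential} together with the connectedness of a suitable version of $X$. First I would observe that $\tilde{X}$ is connected: recalling from the proof of Proposition \ref{prop:Xtilde} the description $\tilde{X} = \varprojlim\,(X \xleftarrow{f} X \xleftarrow{f} \cdots)$, this is the inverse limit of a sequence of connected compact Hausdorff spaces with continuous bonding maps, hence connected.

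Next, assume toward a contradiction that $W^u_{\infty}$ is disconnected, so it admits a nontrivial clopen decomposition $W^u_{\infty} = A \sqcup B$. Without loss of generality $\infty \in A$, and by connectedness of $\tilde{X}$ we have either $\tilde{X} \subseteq A$ or $\tilde{X} \subseteq B$. I would handle the two cases separately.

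In the first case, $\tilde{X} \subseteq B$. Since $X$ is not an asymptotically stable attractor, Proposition \ref{prop:noessential} furnishes an essential quasicomponent $F$ of $W^u_{\infty} \setminus (\tilde{X} \cup \infty)$. Intersecting $A$ and $B$ with this subspace yields a clopen partition $(A \setminus \{\infty\}) \sqcup (B \setminus \tilde{X})$, so $F$ is entirely contained in one of the two pieces. Because $A$ and $B$ are closed in $W^u_{\infty}$, the closure $\overline{F}$ (taken in $W^u_{\infty}$) is contained in $A$ or in $B$, and is therefore disjoint from $\tilde{X}$ or from $\{\infty\}$. This contradicts the essentiality of $F$. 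In the second case, $\tilde{X} \cup \{\infty\} \subseteq A$, so the nonempty clopen set $B$ sits inside $W^u_{\infty} \setminus (\tilde{X} \cup \infty)$. For any point $p \in B$, its quasicomponent in that subspace is trapped inside $B$ (since $B$ is clopen in $W^u_{\infty}$ and thus in the subspace), and its closure in $W^u_{\infty}$ lies in $\overline{B} = B$, which meets neither $\tilde{X}$ nor $\infty$. This contradicts Proposition \ref{prop:reach}.

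The conceptual heart of the argument is simply that essential quasicomponents, whose closures straddle both $\tilde{X}$ and $\infty$, behave like ``bridges'' that prevent any clopen separation of the two ends of the Morse decomposition, while Proposition \ref{prop:reach} rules out stray clopen pieces lying off to the side. I do not anticipate any real obstacle: once Propositions \ref{prop:reach} and \ref{prop:noessential} are invoked, the verification is a short exercise in compact Hausdorff topology. The only mildly subtle point is ensuring $\tilde{X}$ is connected, which is what lets us force $\tilde{X}$ entirely into one side of the putative clopen partition.
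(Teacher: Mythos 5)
Your proof is correct, and it follows a genuinely different route than the paper's. The paper first invokes Remark~\ref{rem:components} to conclude that $W^u_{\infty}$ can have at most two components (one containing $\infty$, the other being $\tilde{X}$ itself once $\tilde{X}$ is known to be connected), then argues that a disconnected $W^u_{\infty}$ would force $C_\infty = \{\infty\}$ and hence $W^u_{\infty} = \tilde{X} \uplus \{\infty\}$, which by Proposition~\ref{prop:noessential} means $X$ is an attractor. You instead proceed by contradiction directly from a putative clopen partition $A \sqcup B$: Proposition~\ref{prop:noessential} supplies an essential quasicomponent, and you observe that any quasicomponent is trapped in a single clopen piece, so its closure misses $\tilde{X}$ or $\infty$ (or both, via Proposition~\ref{prop:reach} in the case where $\tilde{X}$ and $\infty$ lie on the same side). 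This avoids Remark~\ref{rem:components} and the compact-invariant-set argument entirely, replacing them with the ``bridge'' intuition and a point-set exercise with clopen sets. You also justify connectedness of $\tilde{X}$ differently — as an inverse limit of connected compacta — rather than via the shift-equivalence of $\check{H}^0$ from Proposition~\ref{prop:Xtilde} as the paper does; both are valid, and yours is arguably more elementary. The two proofs have comparable length; the paper's reuses existing structural facts about the component decomposition, while yours is more self-contained given the two cited propositions.
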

\begin{proof} Observe that by Proposition \ref{prop:Xtilde} the hypothesis that $X$ is connected implies (through the \v{C}ech cohomology groups in degree zero) that $\tilde{X}$ is connected as well. It follows from Remark \ref{rem:components} that $W^u_{\infty}$ can have at most two connected components: $C_{\infty}$ containing $\infty$ and $\tilde{X}$ itself. The assumption that $W^u_{\infty}$ is not connected then requires that $C_{\infty}$ be disjoint from $\tilde{X}$. The same argument used in the proof of Remark \ref{rem:components}, now applied to the compact invariant subset $C_{\infty}$ of $W^u_{\infty} \setminus \tilde{X}$, shows that $C_{\infty} = \{\infty\}$. Thus $W^u_{\infty} = \tilde{X} \uplus \infty$ and it follows from Proposition \ref{prop:noessential} that $X$ is an asymptotically stable attractor.
\end{proof}

\section{Proofs of Corollaries \ref{coro:genA_intro} and \ref{coro:genB_intro}} \label{sec:proofs34}

Having introduced the necessary background about the intrinsic topology, we are now ready to prove Corollaries \ref{coro:genA_intro} and \ref{coro:genB_intro}, which generalize the main theorems of \cite{HCR}. As mentioned in the Introduction, they follow immediately from Theorems \ref{teo:main1_intro} and \ref{teo:main2_intro} through a simple algebraic computation.

\subsection{Proof of Corollary \ref{coro:genA_intro}} When $X$ is an asymptotically stable attractor Corollary \ref{coro:genA_intro} is vacuously trivial. To see this, first observe that $W^u_{\infty} = \tilde{X} \uplus \infty$ and so in particular $\im\ i^* = \check{H}^1(\tilde{X})$. Also, there are no essential quasicomponents so $\#\Fix(\varphi) = 0$. Since by Proposition \ref{prop:conley} the trace of the first Conley index of $X$ is just the trace of $\tilde{f}^* \colon \check{H}^1(\tilde{X}) \to \check{H}^1(\tilde{X})$, Corollary \ref{coro:genA_intro} reduces to the trivial statement that the trace of $\tilde{f}^*$ on $\check{H}^1(X)$ is the trace of $\tilde{f}^*$ on $\im\ i^*$, which is certainly true.

Suppose now that $X$ is not an (asymptotically stable) attractor. Consider the exact sequence for the triple $(W^u_{\infty},\tilde{X} \cup \infty,\infty)$ in \v{C}ech cohomology. We may identify $\check{H}^*(\tilde{X} \cup \infty,\infty) = \check{H}^*(\tilde{X})$; moreover, $\check{H}^0(W^u_{\infty},\infty) = 0$ because $W^u_{\infty}$ is connected by Corollary \ref{cor:attractor}. Therefore the exact sequence reads, in the lowest degrees, \[\xymatrix{\check{H}^1(\tilde{X})  & \ar[l]_-{i^*} \check{H}^1(W^u_{\infty},\infty) & \ar[l] \check{H}^1(W^u_{\infty},\tilde{X} \cup \infty) & \ar[l] \check{H}^0(\tilde{X}) & \ar[l] 0}\] where $i$ denotes the inclusion $(\tilde{X},\emptyset) \subseteq (W^u_{\infty},\tilde{X} \cup \infty)$. Replacing $\check{H}^1(\tilde{X})$ with the image of $i^*$ yields \[\xymatrix{0 & \im\ i^* \ar[l] & \ar[l]_-{i^*} \check{H}^1(W^u_{\infty},\infty) & \ar[l] \check{H}^1(W^u_{\infty},\tilde{X} \cup \infty) & \ar[l] \check{H}^0(\tilde{X}) & \ar[l] 0}\] Stacking two of these sequences on top of each other and connecting them with $\tilde{f}$ we obtain the commutative diagram \[\xymatrix{0 & \im\ i^* \ar[l] \ar[d]^{\tilde{f}^*|_{\im\ {i}^*}} & \ar[l]_-{i^*} \check{H}^1(W^u_{\infty},\infty)  \ar[d]^{\tilde{f}^*} & \ar[l] \check{H}^1(W^u_{\infty},\tilde{X} \cup \infty) \ar[d]^{\tilde{f}^*} & \ar[l] \check{H}^0(\tilde{X}) \ar[d]^{(\tilde{f}|_{\tilde{X}})^*} & \ar[l] 0 \\ 0 & \im\ i^* \ar[l] & \ar[l]_-{i^*} \check{H}^1(W^u_{\infty},\infty) & \ar[l] \check{H}^1(W^u_{\infty},\tilde{X} \cup \infty) & \ar[l] \check{H}^0(\tilde{X}) & \ar[l] 0  }\]

All the vector spaces that appear in the above diagram have finite dimension; hence, the traces of the vertical arrows are all well defined. Among them we identify:
\begin{itemize}
	\item By Proposition \ref{prop:conley}, $\trace\ (\tilde{f}^* \colon \check{H}^1(W^u_{\infty},\infty) \to \check{H}^1(W^u_{\infty},\infty)) = \trace\ h^1(f,X)$.
	\item By Theorem \ref{teo:main1_intro}, $\trace\ (\tilde{f}^* \colon \check{H}^1(W^u_{\infty},\tilde{X} \cup \infty) \to \check{H}^1(W^u_{\infty},\tilde{X} \cup \infty)) = \#\Fix(\varphi)$.
	\item Since $X$ is connected by assumption, it follows from Proposition \ref{prop:Xtilde} in degree zero that $\tilde{X}$ is connected too. Therefore $\trace\ (\tilde{f}^* \colon \check{H}^0(\tilde{X}) \to \check{H}^0(\tilde{X})) = 1$.
\end{itemize}

The additivity of the trace then gives the equation \[ \trace\ h^1(f,X) = \#\Fix(\varphi) + \trace\ (\tilde{f}^*|_{\im\ {i}^*}) - 1,\] which is precisely what we wanted.

\subsection{Proof of Corollary \ref{coro:genB_intro}} The condition that $\tilde{f}$ does not fix any essential quasicomponent implies, through the description of $\check{H}^q(W^u_{\infty},\tilde{X})$ given in Theorem \ref{teo:main2_intro} for $q > 1$, that the trace of $\tilde{f}^*$ on $\check{H}^q(W^u_{\infty},\tilde{X})$ is zero. An appropriate truncation of the exact sequence for the pair $(W^u_{\infty},\tilde{X})$ yields \[\xymatrix{0 & \ar[l] {\rm im}\ (i^*)^q & \check{H}^q(W^u_{\infty}) \ar[l] & \check{H}^q(W^u_{\infty},\tilde{X}) \ar[l] & \check{H}^{q-1}(\tilde{X}) \ar[l] & {\rm im}\ (i^*)^{q-1} \ar[l] & 0 \ar[l]}\] and the additivity of the trace then gives \[\trace(\tilde{f}^*|_{\check{H}^q(W^u_{\infty})}) =  \trace(\tilde{f}^*|_{{\rm im}\ (i^*)^q}) - \trace(\tilde{f}^*|_{\check{H}^{q-1}(\tilde{X})}) + \trace(\tilde{f}^*|_{{\rm im}\ (i^*)^{q-1}})\] where we have already omitted the summand corresponding to $\check{H}^q(W^u_{\infty},\tilde{X})$ since that is zero as just mentioned. By Proposition \ref{prop:conley} the trace on the left hand side of the equality coincides with the trace of $h^q(f,X)$ (the absence of the basepoint $\infty$ is irrelevant here since $q > 1$), and by Proposition \ref{prop:Xtilde} we may replace the middle summand with $\trace(f^*|_{\check{H}^{q-1}(X)})$. This proves the corollary.

\section{Summing power series in cohomology \label{sec:series}}

In the Introduction we provided some heuristic motivation to study the sum of power series in cohomology as a means to obtain information about the relation between the cohomology of an attractor and its basin of attraction. In this section we develop and formalize these ideas. We set ourselves in a very general context: let $Z$ be a paracompact space, $g \colon  Z \to Z$ a continuous map and $K \subseteq Z$ a compact, asymptotically stable attractor for $g$ whose basin of attraction is all of $Z$. Recall that this means that the following two conditions are satisfied: (i) $K$ has a neighbourhood basis comprised of positively invariant sets and (ii) for every $p \in Z$ and every neighbourhood $P$ of $K$ there exists an iterate $n_0$ such that $g^{n_0}(p) \in P$. (And in particular, choosing $P$ to be positively invariant by (i) one has $g^n(p) \in P$ for every $n \geq n_0$.)

We denote by $\check{H}^q(Z,K)$ the \v{C}ech cohomology groups of the pair $(Z,K)$. Coefficients are assumed to be taken in some ring $R$ (commutative and with unit) that will not always be displayed explicitly in the notation. The cohomology groups $\check{H}^q(Z,K)$ are $R$--modules. In any dimension $q \geq 0$ there is an induced endomorphism $g^* \colon  \check{H}^q(Z,K) \to \check{H}^q(Z,K)$. We use the standard notation $(g^*)^j$ to mean the composition of $g^*$ with itself $j$ times, setting $(g^*)^j = {\rm id}$ for $j = 0$.

Recall that a \emph{formal power series} with coefficients in $R$ is an expression of the form $A(x) := \sum_{j=0}^{\infty} a_j x^j$, where $x$ is just a dummy variable that serves as a placeholder and the coefficients $a_j \in R$. Formal power series can be added and multiplied in the natural way: if $A(x) = \sum_{j=0}^{\infty} a_j x^j$ and $B(x) = \sum_{j=0}^{\infty} b_j x^j$, then \[(A+B)(x) := \sum_{j=0}^{\infty} (a_j+b_j)x^j \quad\quad \text{and} \quad\quad (AB)(x) := \sum_{j=0}^{\infty} c_j x^j, \text{where } c_j := \sum_{k=0}^j a_k b_{j-k}.\] The set of all power series in the indeterminate $x$ with coefficients in $R$, endowed with these operations, is a commutative ring with unit denoted by $R[[x]]$. A property of this ring that we shall use repeatedly is the following: a series $A(x) = \sum_{j=0}^{\infty} a_j x^j$ has a multiplicative inverse if, and only if, its independent term $a_0$ has a multiplicative inverse in the ring $R$ (this fact is easily proved by writing out the equation $A(x) B(x) = 1$ in full and solving for the unknown coefficients $b_j$ in terms of the $a_j$).


Evidently a polynomial $A(x) = \sum_{j=0}^N a_j x^j$ can be considered as a (finite) power series. Formally replacing $x$ with $g^*$ in $A(x)$ yields the expression $A(g^*) := \sum_{j=0}^N a_j (g^*)^j$, which is a well defined endomorphism of $\check{H}^q(Z,K)$. Our goal in to use the extra dynamical structure on $(Z,K)$, namely that $K$ is an attractor with basin of attraction $Z$, to give a meaningful definition of $A(g^*)$ when $A(x)$ is not a polynomial but more generally a formal power series, thus ascribing a precise meaning to expressions of the form $A(g^*) = \sum_{j=0}^{\infty} a_j (g^*)^j$ as an endomorphism of $\check{H}^q(Z,K)$. This interpretation will be consistent with the one for finite series (that is, for polynomials) and verify the relations \begin{equation} \label{eq:ring_hom} (A+B)(g^*) = A(g^*)+B(g^*) \quad \quad \text{and} \quad\quad (AB)(g^*) = A(g^*) B(g^*),\end{equation} where for notational simplicity we shall henceforth omit the symbol $\circ$ and simply intend the product $A(g^*) B(g^*)$ on the right hand side of the second formula to mean the composition of the endomorphisms $A(g^*)$ and $B(g^*)$. In a fancier language, these properties mean that summation of series will provide a homomorphism from $R[[x]]$ to ${\rm Hom}(\check{H}^*(Z,K;R))$.

Since $K$ is a forward invariant set, the map $g$ descends to a continuous map $\bar{g} \colon  Z/K \to Z/K$ on the quotient space $Z/K$ obtained by collapsing $K$ to a single point $a$. Clearly $a$ is still an asymptotically stable, global attractor for $\bar{g}$. The strong excision property of \v{C}ech cohomology (see \cite[p. 318]{spanier1}) implies that there is an isomorphism $\check{H}^*(Z,K) \cong \check{H}^*(Z/K,a)$ that conjugates $g^*$ and ${\bar{g}}^*$, so instead of defining the summation of series on $\check{H}^*(Z,K)$ we may as well define it on $\check{H}^*(Z/K,a)$. Thus, renaming $Z/K$ and $\bar{g}$ again as $Z$ and $g$ for notational simplicity, for the rest of this section we will set ourselves, without loss of generality, in the case when $g \colon  Z \to Z$ is a continuous map having a single point $a$ as a global, asymptotically stable attractor.

\subsection{Preliminary results about cohomology} In our arguments we will need to construct explicitly certain cohomology classes, and this is most easily done in the Alexander--Spanier cohomology theory. This theory coincides with the more familiar one of \v{C}ech for paracompact Hausdorff spaces \cite[Corollary 8, p. 334]{spanier1}, which certainly encompasses all the situations of interest to us, so we shall treat both theories as equivalent.

Let us begin by recalling the definition of Alexander--Spanier cohomology as given in \cite[pp. 306ff.]{spanier1}. For each degree $q = 0,1,2, \ldots$ denote by $C^q(Z)$ the set of maps $\xi \colon  Z^{q+1} \to R$. We remark that these are just maps in the set-theoretical sense. Clearly $C^q(Z)$ is an $R$--module with pointwise addition and multiplication by scalars. The coboundary homomorphism $\partial \colon  C^q(Z) \to C^{q+1}(Z)$ is defined by the standard formula \[(\partial(\xi))(z_0,\ldots,z_q,z_{q+1}) = \sum_{j=0}^{q+1} (-1)^j \xi(z_0,\ldots,\hat{z}_j,\ldots,z_{q+1})\] where the hat over the entry $\hat{z}_j$ means that it has to be omitted from the arguments on which $\xi$ is evaluated. As usual, $\partial^2 = 0$.

An element $\xi \in C^q(Z)$ is said to be \emph{locally zero} if there exists a covering of $Z$ by open sets such that $\xi(z_0, \ldots, z_q)$ vanishes whenever all the $z_i$ belong to the same member of the covering. It is easy to see that this condition is equivalent to requiring that $\xi$ vanishes on an open neighbourhood of the diagonal $\{(z, \ldots, z) : z \in Z\}$ of $Z^{q+1}$.

Denote by $\overline{C}^q(Z)$ the quotient of $C^q(Z)$ modulo the locally zero functions. The coboundary of a locally zero map is easily seen to be also locally zero, so that the coboundary map $\partial$ descends to a homomorphism $\overline{\partial} \colon  \overline{C}^q(Z) \to \overline{C}^{q+1}(Z)$ between the quotients. One thus obtains a chain complex $\{\overline{C}^q(R),\overline{\partial}\}$ whose cohomology is, by definition, the Alexander--Spanier cohomology of the space $Z$.

An element $u \in \check{H}^q(Z)$ is, then, obtained from a map $\xi \in C^q(Z)$ from a double quotienting process: first one considers the class $\overline{\xi}$ of $\xi$ in the quotient $\overline{C}^q(Z)$ and then the class $[\overline{\xi}]$ of $\overline{\xi}$ after quotienting by ${\rm im}\ \overline{\partial}$. We call $\xi$ a \emph{representative} of $u$. Notice that, in order to define a cohomology class, $\overline{\xi}$ must be a cocycle; that is, $\overline{\partial}(\overline{\xi})$ must be zero in $\overline{C}^q(Z)$. This amounts to requiring that $\partial(\xi)$ be locally zero as an element of $C^{q+1}(Z)$. Conversely, any $\xi \in C^q(Z)$ such that $\partial(\xi)$ is locally zero defines a cohomology class $u = [\overline{\xi}] \in \check{H}^q(Z)$. It is straightforward to check that two maps $\xi, \xi' \in C^q(Z)$ whose coboundaries are locally zero define the same cohomology classes in $\check{H}^q(Z)$ if and only if there exists $\varphi \in C^{q-1}(Z)$ such that $\xi-\xi'-\partial \varphi$ is locally zero.
\smallskip

{\it Notation.} To simplify notation as much as possible we shall frequently omit parentheses and commas as long as this does not cause any confusion. Thus for instance we may write the expression for the coboundary map $\partial$ on $C^q(Z)$ simply as $\partial \xi (z_0 \ldots z_q z_{q+1}) = \sum_{j=0}^{q+1} (-1)^j \xi(z_0 \ldots \hat{z}_j \ldots z_{q+1})$. For a map $\xi \in C^q(Z)$ we will on occasion use the expression ``$\xi$ vanishes on $P \subseteq Z$'' to mean that $\xi(z_0\ldots z_q)$ is zero whenever all the $z_i \in P$; that is, that $\xi$ vanishes on $P^{q+1}$. Hopefully these conventions will not cause any confusion.
\smallskip

Any map $g\colon Z_1 \to Z_2$ induces a homomorphism $g^* \colon  C^q(Z_2) \to C^q(Z_1)$ given by $\xi \longmapsto g^*\xi$ with $g^*\xi$ defined by $g^*\xi(z_0 \ldots z_q) := \xi(g(z_0) \ldots g(z_q))$. One can easily check that this $g^*$ commutes with the respective coboundary operators of $Z_1$ and $Z_2$. If $g$ is continuous then $g^*$ sends locally zero functions to locally zero functions and therefore descends to another homomorphism $\overline{C}^q(Z_2) \to \overline{C}^q(Z_1)$. This map still commutes with the coboundary maps of $Z_1$ and $Z_2$, and so induces a homomorphism $g^* \colon  \check{H}^q(Z_2) \to \check{H}^q(Z_1)$. Notice that, in our spirit of keeping notation simple, we make no notational distinction between the homomorphisms induced by $g$ at the levels of $C^q$ and $\check{H}^q$.

We shall now prove two simple results about representatives of cohomology classes that will be needed later on. To avoid having to distinguish the cases $q = 0$ and $q \geq 1$ it is more convenient to work in the relative cohomology group $\check{H}^q(Z,a)$ instead of the absolute cohomology group $\check{H}^q(Z)$. Its definition exactly parallels all that has been laid out above, with the only exception that in place of $C^q(Z)$ we now consider $C^q(Z,a)$, the set of maps $\xi \colon  Z^{q+1} \to R$ that vanish on $a$; that is, $\xi(a \ldots a) = 0$. For completeness one defines $C^q(Z,a) = \{0\}$ for $q < 0$.

\begin{lemma} \label{lem:poincare} Let $a \in Z$. Assume that a map $\xi \in C^q(Z,a)$ satisfies $\partial \xi = 0$ on some neighbourhood $P$ of $a$. Then there exists $\alpha \in C^{q-1}(Z,a)$ such that $\xi = \partial \alpha$ on $P$.
\end{lemma}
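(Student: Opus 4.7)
The plan is to prove this via a cone construction at the point $a$. Specifically, I would define $\alpha \in C^{q-1}(Z,a)$ by
\[
\alpha(z_0,\ldots,z_{q-1}) := \xi(a, z_0, \ldots, z_{q-1}),
\]
a standard ``contracting homotopy'' formula for Alexander--Spanier cochains. First I would check that $\alpha$ really lies in $C^{q-1}(Z,a)$: evaluating on the constant tuple gives $\alpha(a,\ldots,a) = \xi(a,a,\ldots,a) = 0$ since $\xi \in C^q(Z,a)$. (The case $q=0$ has to be handled separately, since $C^{-1}(Z,a) = \{0\}$; but then $\partial\xi = 0$ on $P$ forces $\xi$ to be constant on $P$, and the value is $\xi(a) = 0$, so $\alpha = 0$ works.)

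Next I would compute the coboundary $\partial \alpha$ by exploiting the cocycle identity for $\xi$ evaluated at the augmented tuple $(a, z_0, \ldots, z_q)$. Expanding the definition of $\partial \xi$ and relabelling the summation index,
\[
\partial\xi(a, z_0, \ldots, z_q) = \xi(z_0, \ldots, z_q) - \sum_{k=0}^{q}(-1)^k \xi(a, z_0, \ldots, \widehat{z_k}, \ldots, z_q) = \xi(z_0,\ldots,z_q) - \partial\alpha(z_0,\ldots,z_q).
\]
Thus, for any tuple $(z_0,\ldots,z_q)$ whose entries all lie in $P$, the fact that $(a,z_0,\ldots,z_q)$ is a tuple of points of $P$ together with the hypothesis $\partial \xi = 0$ on $P$ yields $\xi(z_0,\ldots,z_q) = \partial\alpha(z_0,\ldots,z_q)$. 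In other words, $\xi = \partial\alpha$ on $P$, as required.

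There is essentially no obstacle here; the only point requiring a moment of care is the sign computation in the coboundary identity, which hinges on the fact that deleting the inserted copy of $a$ from the front of $(a,z_0,\ldots,z_q)$ contributes exactly the term $\xi(z_0,\ldots,z_q)$ with sign $+1$, while the remaining terms reassemble into $-\partial\alpha$. Everything else is purely formal, and the argument makes no use of topology beyond the fact that $a$ and the $z_i$ lie in the same neighbourhood $P$ where $\partial\xi$ vanishes.
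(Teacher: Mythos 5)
Your proof is correct and follows essentially the same cone-type construction as the paper: you set $\alpha(z_0,\ldots,z_{q-1}) := \xi(a,z_0,\ldots,z_{q-1})$, derive the identity $\partial\xi(a,z_0,\ldots,z_q) = \xi(z_0,\ldots,z_q) - \partial\alpha(z_0,\ldots,z_q)$, use $a\in P$ to kill the left-hand side on $P$, and treat $q=0$ separately. This matches the paper's argument line for line, including the separate $q=0$ case.
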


\begin{proof} When $q = 0$ the lemma simply claims that $\xi = 0$ on $P$, since $C^{-1}(Z,a) = \{0\}$. This is very easy to prove: for any point $z \in P$ the equality $\partial \xi = 0$ on $P$, evaluated on $(az)$, implies that $\xi(z) = \xi(a) = 0$.

In the case $q \geq 1$, set $\alpha(z_0 \ldots z_{q-1}) := \xi(a z_0 \ldots z_{q-1})$. Evidently $\alpha(a \ldots a) = 0$ because the same is true of $\xi$. It is easy to check that $\partial \alpha (z_0 \ldots z_q) = \xi(z_0 \ldots z_q) - \partial \xi (az_0 \ldots z_q)$. When $z_0, \ldots, z_q$ all belong to $P$ the second summand in the right hand side vanishes because of the hypothesis concerning $\partial \xi$. Thus $\partial \alpha = \xi$ on $P$, as was to be proved.
\end{proof}

\begin{lemma} \label{lem:represent} Let $a \in Z$. Let $u$ be a cohomology class in $\check{H}^q(Z,a)$. Then:
\begin{itemize}
	\item[(i)] There exists a map $\xi \in C^q(Z,a)$ that vanishes on a neighbourhood of $a$ and represents $u$; that is, $u = [\overline{\xi}]$.
	\item[(ii)] If $\xi' \in C^q(Z,a)$ is any other such map, then there exists $\varphi \in C^{q-1}(Z,a)$ that also vanishes on some neighbourhood of $a$ and satisfies that $\xi-\xi'-\partial \varphi$ is locally zero.
\end{itemize}
\end{lemma}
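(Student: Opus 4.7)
The plan is to build both parts on Lemma \ref{lem:poincare}, which lets us trivialize a cocycle locally around $a$ by an explicit primitive. The price of using it is that we must be careful that all correction terms actually belong to $C^{\bullet}(Z,a)$ (i.e.\ vanish on the tuple $(a,\ldots,a)$) and that we correctly translate ``locally zero'' near $a$ into ``vanishing on a neighbourhood of $a$''.

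For part (i), I would start with any representative $\eta\in C^q(Z,a)$ of $u$. Since $\partial\eta$ is locally zero, there is an open covering of $Z$ on each element of which $\partial\eta$ vanishes on $(q{+}2)$-tuples; picking the element containing $a$ yields an open neighbourhood $P$ of $a$ with $\partial\eta=0$ on $P^{q+2}$. Lemma \ref{lem:poincare} then produces $\alpha\in C^{q-1}(Z,a)$ with $\eta=\partial\alpha$ on $P$. Setting $\xi:=\eta-\partial\alpha$ gives a map that vanishes on $P^{q+1}$, agrees with $\eta$ up to a coboundary (so $[\overline\xi]=u$), and lies in $C^q(Z,a)$ since $\xi$ vanishes on all of $P^{q+1}$ and $a\in P$. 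The case $q=0$ is degenerate but even easier: Lemma \ref{lem:poincare} directly gives $\eta=0$ on $P$, so $\xi:=\eta$ already works.

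For part (ii), I would first invoke the definition of the cohomology to obtain some $\psi\in C^{q-1}(Z,a)$ such that $\xi-\xi'-\partial\psi$ is locally zero (this is standard, and reduces to $\xi-\xi'$ being locally zero when $q=0$, in which case $\varphi=0$ suffices). The task is then to upgrade $\psi$ to a $\varphi$ that vanishes near $a$ without spoiling the identity. Here is where Lemma \ref{lem:poincare} is used a second time: pick a common neighbourhood $Q$ of $a$ on which both $\xi$ and $\xi'$ vanish, and then, inside $Q$, a smaller neighbourhood $Q'$ contained in a single element of the covering witnessing local vanishing of $\xi-\xi'-\partial\psi$. On $Q'$ we then have $\partial\psi=0$, so Lemma \ref{lem:poincare} applied to $\psi$ yields $\beta\in C^{q-2}(Z,a)$ with $\psi=\partial\beta$ on some neighbourhood $Q''\subseteq Q'$ of $a$. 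Setting $\varphi:=\psi-\partial\beta$ gives an element of $C^{q-1}(Z,a)$ vanishing on $Q''$, and since $\partial^2=0$ we have $\partial\varphi=\partial\psi$, so $\xi-\xi'-\partial\varphi$ coincides with $\xi-\xi'-\partial\psi$ and is therefore locally zero.

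The main technical nuisance, rather than a real obstacle, is the bookkeeping around the basepoint: one must verify at each step that the chosen primitives $\alpha$ and $\beta$ produced by Lemma \ref{lem:poincare} really land in $C^{\bullet}(Z,a)$ (which is built into that lemma's statement) and one must keep shrinking neighbourhoods to pass from ``locally zero'' to ``zero on a full neighbourhood of $a$''. Checking the low-degree cases $q=0$ and $q=1$ separately confirms that the argument does not degenerate: for $q=0$ the group $C^{-1}(Z,a)$ is trivial and everything collapses to the direct statement of Lemma \ref{lem:poincare}, while for $q=1$ the $\beta$ step forces $\psi$ itself to vanish near $a$, so $\varphi=\psi$ works.
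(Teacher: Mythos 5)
Your argument is correct and follows essentially the same path as the paper's own proof: in both parts you pass to a neighbourhood $P$ of $a$ on which the relevant coboundary vanishes, apply Lemma \ref{lem:poincare} to produce a primitive in $C^{\bullet}(Z,a)$, and subtract its coboundary to obtain a representative (resp.\ a cochain $\varphi$) vanishing on $P$, using $\partial^2=0$ to see that nothing else changes. The only cosmetic difference is that you allow Lemma \ref{lem:poincare} to return its primitive on a possibly smaller neighbourhood $Q''\subseteq Q'$, whereas the lemma as stated already hands it back on the same neighbourhood, so that extra shrinking is unnecessary; your low-degree sanity checks ($q=0,1$) match the paper's implicit treatment of these degenerate cases.
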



\begin{proof} (i) Let $\xi_0 \in C^q(Z,a)$ be any cocycle such that $[\overline{\xi_0}] = u$. Since $\xi_0$ is a cocyle, $\partial \xi_0$ is locally zero so, in particular, it vanishes on some neighbourhood $P$ of $a$. By Lemma \ref{lem:poincare} there exists $\alpha \in C^{q-1}(Z,a)$ such that $\xi_0 = \partial \alpha$ on $P$. Set $\xi := \xi_0 - \partial \alpha$. By construction $\xi$ vanishes on $P$ and, since it differs from $\xi_0$ only by a coboundary, $\xi$ is a cocycle whose cohomology class in $\check{H}^q(Z,a)$ is still $u$.

(ii) Since both $\xi$ and $\xi'$ represent the same cohomology class, there exists a map $\varphi_0 \in C^{q-1}(Z,a)$ such that $\xi - \xi' - \partial \varphi_0$ is locally zero. By assumption there exist neighbourhoods $P$ and $P'$ of $a$ over which $\xi$, $\xi'$ vanish. Moreover, since $\xi-\xi'-\partial \varphi_0$ is locally zero, it vanishes over some neighbourhood $Q$ of $a$ too. Replacing $P$, $P'$ and $Q$ by the intersection of the three and denoting the latter by $P$ again we may simply assume that $\xi$, $\xi'$ and $\xi - \xi' - \partial \varphi_0$ all vanish over $P$. This entails that $\partial \varphi_0$ also vanishes over $P$, so by Lemma \ref{lem:poincare} there exists $\alpha \in C^{q-2}(Z,a)$ such that $\varphi_0 = \partial \alpha$ over $P$. Define $\varphi' := \varphi_0 - \partial \alpha$. By construction $\varphi'$ vanishes over $P$. Moreover, $\xi - \xi' - \partial \varphi' = \xi - \xi' - \partial \varphi_0 + \partial \partial \alpha = \xi-\xi'-\partial \varphi_0$ because $\partial^2 = 0$, so $\xi-\xi'-\partial \varphi'$ is locally zero because the same is true of $\xi-\xi'-\partial \varphi_0$.
\end{proof}

\subsection{Summing cohomology classes} Let us return to the case of interest to us, where dynamics are present. Recall that we had reduced ourselves to the case where $g \colon  Z \to Z$ is a continuous map having a single point $a$ as a global, asymptotically stable attractor.

\smallskip

{\it Provisional definition.} Let a sequence of coefficients $a_j \in R$ be fixed once and for all. For any map $\xi \in C^q(Z,a)$ that vanishes on a neighbourhood of the attracting point $a$, define $S(\xi) \in C^q(Z,a)$ by the following formula: \begin{equation} \label{eq:def_T1} S(\xi)(z_0 \ldots z_q) := \sum_{j=0}^{\infty} a_j \xi(g^j(z_0) \ldots g^j(z_q)). \end{equation} The requirement that $\xi$ vanishes on a neighbourhood of $a$ (say $P$) guarantees that this definition is correct. Indeed: given a particular tuple $(z_0 \ldots z_q)$ there exists $j_0$ such that $g^j(z_0),\ldots,g^j(z_q) \in P$ for every $j \geq j_0$, and so the infinite series of Equation \eqref{eq:def_T1} actually truncates to a finite summation with $j$ running only up to $j = j_0$. (Notice however that $j_0$ depends on the particular tuple $z_0 \ldots z_q$ being considered.)
\smallskip

\begin{proposition} \label{prop:summation} $S$ has the following properties:
\begin{itemize}
	\item[(i)] If $\xi_1, \xi_2 \in C^q(Z,a)$ both vanish on a neighbourhood of $a$, so does any linear combination $c_1\xi_1 + c_2\xi_2$, and the relation $S(c_1\xi_1+c_2\xi_2) = c_1S(\xi_1)+c_2S(\xi_2)$ holds.
	\item[(ii)] If $\xi$ vanishes on a neighbourhood of $a$ so does $\partial \xi$, and the relation $\partial (S(\xi)) = S(\partial \xi)$ holds.
	\item[(iii)] If $\xi$ is locally zero (so in particular it vanishes in a neighbourhood of $a$), then so is $S(\xi)$.
\end{itemize}
\end{proposition}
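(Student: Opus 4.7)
My plan would be to dispatch (i) and (ii) quickly by exploiting the pointwise finiteness that is already built into the definition of $S$, and then concentrate on (iii), where one must upgrade an open cover of $Z$ witnessing local vanishing for $\xi$ to one for $S(\xi)$. For (i), I would first note that if $\xi_1$ vanishes on a neighbourhood $P_1$ of $a$ and $\xi_2$ on $P_2$, then any linear combination $c_1\xi_1+c_2\xi_2$ vanishes on $P_1\cap P_2$, so $S(c_1\xi_1+c_2\xi_2)$ is defined; the identity $S(c_1\xi_1+c_2\xi_2)=c_1 S(\xi_1)+c_2 S(\xi_2)$ is then immediate because for each tuple $(z_0,\ldots,z_q)$ the series \eqref{eq:def_T1} truncates to a \emph{finite} sum in which linearity is trivial. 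For (ii), if $\xi$ vanishes on a neighbourhood $P$ of $a$ then $\partial\xi$ plainly does too, so $S(\partial\xi)$ makes sense; because the defining series for $S$ truncates at an index depending only on the tuple, one may freely interchange the alternating sum in the coboundary formula with the (essentially finite) sum in $S$ and conclude $\partial S(\xi)=S(\partial\xi)$.

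The substantive step is (iii). I would begin by fixing an open cover $\mathcal{U}$ of $Z$ witnessing that $\xi$ is locally zero, choosing $U_a\in\mathcal{U}$ with $a\in U_a$, and using asymptotic stability of $a$ to select a \emph{positively invariant} open neighbourhood $V$ of $a$ contained in $U_a$. For each $z\in Z$ the forward orbit of $z$ eventually enters $V$, so there is a least $j_0=j_0(z)\ge 0$ with $g^{j_0}(z)\in V$, and by positive invariance $g^j(z)\in V\subseteq U_a$ for every $j\ge j_0$. By continuity of $g^{j_0}$ I can pick an open neighbourhood $W_{z,\infty}\ni z$ with $g^{j_0}(W_{z,\infty})\subseteq V$, which forces $g^j(W_{z,\infty})\subseteq U_a$ for all $j\ge j_0$. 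For each of the finitely many earlier indices $j<j_0$ I pick $U_j\in\mathcal{U}$ containing $g^j(z)$ and, by continuity of $g^j$, an open $W_{z,j}\ni z$ with $g^j(W_{z,j})\subseteq U_j$. The intersection $W_z = W_{z,\infty}\cap\bigcap_{j<j_0}W_{z,j}$ is then an open neighbourhood of $z$ such that every iterate $g^j$ maps $W_z$ into a single member of $\mathcal{U}$. The family $\{W_z\}_{z\in Z}$ will witness that $S(\xi)$ is locally zero, since each term $\xi(g^j(z_0),\ldots,g^j(z_q))$ in \eqref{eq:def_T1} vanishes whenever $z_0,\ldots,z_q\in W_z$.

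The main obstacle, as I see it, is the uniform control of the infinite tail of iterates in (iii): both clauses of asymptotic stability are genuinely needed, the attraction clause to guarantee that some iterate lands in $V$ and the positive invariance of $V$ to ensure that \emph{all} subsequent iterates remain inside a single fixed member of the cover $\mathcal{U}$. Without positive invariance the tail iterates could oscillate in and out of $U_a$ and no single element of $\mathcal{U}$ would absorb them uniformly on a neighbourhood of $z$, which is exactly what the argument requires.
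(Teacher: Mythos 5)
Your argument is correct and in substance identical to the paper's: parts (i) and (ii) follow from pointwise finiteness of the series exactly as you say, and for (iii) your construction of the refined cover $\{W_z\}$ (using a positively invariant neighbourhood $V\subseteq U_a$ to control the tail iterates and continuity of $g^j$ for the finitely many $j<j_0$) is exactly the construction the paper carries out, merely phrased in the open-cover formulation of ``locally zero'' rather than being packaged as the auxiliary claim that the diagonal of $Z^{q+1}$ admits a basis of neighbourhoods positively invariant under $g\times\cdots\times g$.
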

\begin{proof} The proofs of (i) and (ii) are easy and we omit them. To prove (iii) we need the following auxiliary result:
\smallskip

{\it Claim.} For any $q \geq 0$, the diagonal of $Z^{q+1}$ has a basis of neighbourhoods that are positively invariant under $g \times \stackrel{(q+1)}{\ldots} \times g$.

{\it Proof of claim.} For the proof of this claim only, symbols carrying a tilde will denote objects related to the product space $Z^{q+1}$ whereas symbols without a tilde denote objects concerning the original space $Z$. Set $\tilde{g} := g \times \stackrel{(q+1)}{\ldots} \times g$ and $\tilde{Z} := Z^{q+1}$. Let $\tilde{U}$ be a neighbourhood of the diagonal of $\tilde{Z}$. We need to show that there is another neighbourhood $\tilde{V}$ of the diagonal such that $\tilde{V} \subseteq \tilde{U}$ and $\tilde{V}$ is positively invariant under $\tilde{g}$.

Let $P$ be a positively invariant neighbourhood of $a$ in $Z$ so small that $\tilde{P} := P^{q+1}$ is contained in $\tilde{U}$. For any point $z \in Z$ we make the following construction. Since $a$ is a global attractor in $Z$ there exist $j_0$ and a neighbourhood $W$ of $z$ in $Z$ such that $g^{j_0}(W) \subseteq P$. Since $P$ is positively invariant under $g$, all the subsequent iterates of $W$ are contained in $P$ too; that is, $g^j(W) \subseteq P$ for every $j \geq j_0$. Set $\tilde{z} :=(z,\stackrel{(q+1)}{\ldots},z) \in \tilde{Z}$ and $\tilde{W} := W^{q+1} \subseteq \tilde{Z}$. Clearly $\tilde{W}$ is a neighbourhood of $\tilde{z}$ in $\tilde{Z}$. Since all the points $\tilde{z}, \tilde{g}(\tilde{z}), \ldots, \tilde{g}^{j_0-1}(\tilde{z})$ belong to the diagonal of $\tilde{Z}$ and are therefore contained in $\tilde{U}$, by choosing $W$ sufficiently small we can guarantee that $\tilde{W}$ is in turn so small that $\tilde{W}, \tilde{g}(\tilde{W}),\ldots,\tilde{g}^{j_0-1}(\tilde{W})$ are all contained in $\tilde{U}$. Also, since we had $g^j(W) \subseteq P$ for $j \geq j_0$, we have $\tilde{g}^j(\tilde{W}) \subseteq \tilde{P} \subseteq \tilde{U}$ for $j \geq j_0$. Thus, setting $\tilde{V}_{\tilde{z}} := \bigcup_{j \geq 0} \tilde{g}^j(\tilde{W})$ we see that $\tilde{V}_{\tilde{z}} \subseteq \tilde{U}$ and by construction $\tilde{V}_{\tilde{z}}$ is positively invariant under $\tilde{g}$. It only remains to let $\tilde{V} := \bigcup_{z \in Z} \tilde{V}_{\tilde{z}}$: this is a neighbourhood of the diagonal of $\tilde{Z}$ that is contained in $\tilde{U}$ and is positively invariant under $\tilde{g}$. This concludes the proof of the claim. $_{\blacksquare}$
\medskip

Back to the proof of part (iii) of the proposition, assume that $\xi$ is locally zero. Then there exists a neighbourhood $U$ of the diagonal of $Z^{q+1}$ such that $\xi$ vanishes over $U$. By the claim above there exists a positively invariant neighbourhood $V \subseteq U$ of the diagonal. Clearly $\xi$ vanishes over $V$ too, and this entails that $S(\xi)$ also does: indeed, if $(z_0,\ldots,z_q) \in V$ then $(g^j(z_0),\ldots,g^j(z_q)) \in V$ for every $j \geq 0$, so $\xi(g^j(z_0) \ldots g^j(z_q)) = 0$ for every $j \geq 0$. Thus $S(\xi)(z_0 \ldots z_q) = 0$.
\end{proof}

\begin{proposition} Let $u \in \check{H}^q(Z,a)$ be some cohomology class. By Lemma \ref{lem:represent}.(i) there exists a map $\xi \in C^q(Z,a)$ such that $u = [\overline{\xi}]$ and $\xi$ vanishes on some neighbourhood of $a$. Consider the map $S(\xi) \in C^q(Z,a)$ defined by Equation \eqref{eq:def_T1}. Then:
\begin{itemize}
	\item[(a)] $\overline{S(\xi)}$ is a cocycle, so that it indeed determines a cohomology class $v = [\overline{S(\xi)}] \in \check{H}^q(Z,a)$;
	\item[(b)] moreover, $v$ is independent of the choice of $\xi$.
\end{itemize}
\end{proposition}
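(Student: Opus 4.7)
The plan is to derive both statements directly from Proposition \ref{prop:summation} together with Lemma \ref{lem:represent}, since all the nontrivial analytic content (convergence of the series, compatibility with sums, coboundary, and local zeroness) is already encapsulated in those results. The remaining work is purely formal bookkeeping.

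For part (a), I would argue as follows. The hypothesis that $u = [\overline{\xi}]$ means that $\overline{\xi}$ is a cocycle in $\overline{C}^q(Z,a)$, which is to say that $\partial\xi$ is locally zero. Since $\xi$ vanishes on a neighbourhood of $a$, so does $\partial\xi$ (part (ii) of Proposition \ref{prop:summation} applies), and by part (ii) again $\partial(S(\xi)) = S(\partial\xi)$. Now invoke part (iii) of Proposition \ref{prop:summation}: because $\partial\xi$ is locally zero, so is $S(\partial\xi)$. Hence $\partial(S(\xi))$ is locally zero, i.e.\ $\overline{S(\xi)}$ is a cocycle, and defines a class $v \in \check{H}^q(Z,a)$.

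For part (b), let $\xi'$ be another representative of $u$ in $C^q(Z,a)$ that vanishes on a neighbourhood of $a$. By Lemma \ref{lem:represent}(ii) there is some $\varphi \in C^{q-1}(Z,a)$, itself vanishing on a neighbourhood of $a$, such that $\xi - \xi' - \partial\varphi$ is locally zero. Then $S(\varphi)$ is defined, and combining linearity with the commutation $S \circ \partial = \partial \circ S$ (parts (i) and (ii) of Proposition \ref{prop:summation}), we get
\[ S(\xi) - S(\xi') - \partial(S(\varphi)) = S(\xi - \xi' - \partial\varphi). \]
Part (iii) of Proposition \ref{prop:summation} sends the locally zero function on the right to a locally zero function. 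Therefore $S(\xi)$ and $S(\xi')$ differ by $\partial(S(\varphi))$ plus a locally zero term, which means $[\overline{S(\xi)}] = [\overline{S(\xi')}]$ in $\check{H}^q(Z,a)$, proving the independence of $v$.

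There is no genuine obstacle here; the only subtlety to keep track of is that every auxiliary cochain that appears (namely $\xi$, $\xi'$, and $\varphi$) vanishes on a neighbourhood of $a$, so that all the applications of $S$ are legal under the ``provisional definition''. This is precisely what Lemma \ref{lem:represent} was designed to supply, so the argument above goes through with no further work.
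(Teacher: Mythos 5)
Your proof is correct and follows essentially the same route as the paper's: part (a) combines the commutation $\partial S = S\partial$ (Proposition~\ref{prop:summation}.(ii)) with preservation of local zeroness (part (iii)), and part (b) uses Lemma~\ref{lem:represent}.(ii) to produce a $\varphi$ vanishing near $a$, then linearity plus the same two facts to show $S(\xi)-S(\xi')-\partial(S(\varphi))$ is locally zero. (Incidentally, your identification of the coboundary witness as $S(\varphi)$ is cleaner than the paper's own wording, which sets $\psi := \partial(S(\varphi))$ but then treats $\partial\psi$ as $\partial(S(\varphi))$ — evidently a typo for $\psi := S(\varphi)$.)
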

\begin{proof} (a) We have $\partial (S(\xi)) = S(\partial \xi)$ by Proposition \ref{prop:summation}.(ii). Since $\overline{\xi}$ represents a cohomology class it must be a cocycle; that is, $\partial \xi$ is locally zero. By Proposition \ref{prop:summation}.(iii) this implies that $S(\partial \xi)$ is locally zero too. Hence $\partial S(\xi)$ is locally zero and therefore $\overline{S(\xi)}$ is a cocycle in $\overline{C}^q(Z,a)$, so it does define a cohomology class $v := [\overline{S(\xi)}]$.

(b) Suppose that $\xi'$ is another map in $C^q(Z,a)$ that represents $u$ and vanishes on a neighbourhood of $a$. Let $v' := [\overline{S(\xi')}]$. We need to show that $v = v'$, which amounts to proving that there exists $\psi \in C^{q-1}(Z,a)$ such that $S(\xi) - S(\xi') - \partial \psi$ is locally zero. By Lemma \ref{lem:represent}.(ii) there exists $\varphi \in C^{q-1}(Z,a)$ such that $\xi - \xi' - \partial \varphi$ is locally zero and $\varphi$ vanishes in some neighbourhood of $a$. The latter condition implies that $S(\varphi)$ is well defined, and we set $\psi := \partial (S(\varphi))$. Using Proposition \ref{prop:summation}.(i) and (ii) and observing that each of $\xi$, $\xi', \varphi$ vanish on a neighbourhood of $a$ we may write the difference $S(\xi) - S(\xi') - \partial \psi$ as \[S(\xi)-S(\xi')-\partial (S(\varphi)) = S(\xi)-S(\xi')-S(\partial \varphi) = S(\xi-\xi'-\partial \varphi). \] Since $\xi-\xi'-\partial \varphi$ is locally zero, so is $S(\xi-\xi'-\partial \varphi)$ by Proposition \ref{prop:summation}.(iii). Thus we see that $S(\xi) - S(\xi') - \partial \psi$ is locally zero, which was to be proved.
\end{proof}

The above results justify the following definition:

\begin{definition} Let $u \in \check{H}^q(Z,a)$ and $a_j \in R$. Choose a map $\xi \in C^q(Z,a)$ such that $u = [\overline{\xi}]$ and $\xi$ vanishes on some neighbourhood of $a$. Then the sum of the series $\sum_{j=0}^{\infty} a_j (g^*)^j(u)$ is defined as the cohomology class $v = [\overline{S(\xi)}]$, where $S(\xi)$ is given by Equation \eqref{eq:def_T1}.
\end{definition}

It is evident that this definition reduces to the standard one for finite sums.


We are finally ready to interpret power series in $g^*$ as endomorphisms of $\check{H}^q(Z,a)$. For any formal power series $A(x) = \sum_{j=0}^{\infty} a_j x^j \in R[[x]]$ define a map $A(g^*) \colon  \check{H}^q(Z,a) \to \check{H}^q(Z,a)$ by \[A(g^*) \colon  u \longmapsto \sum_{j=0}^{\infty} a_j (g^*)^j(u).\] That $A(g^*)$ is indeed a homomorphism of $R$--modules is a straightforward consequence of Proposition \ref{prop:summation}.(i) and the definition of $\sum_{j=0}^{\infty} a_j (g^*)^j(u)$. Thus, it only remains to check that the equalities of Equation \ref{eq:ring_hom} are satisfied. This is the content of the following result:

\begin{proposition} \label{prop:T_hom} For any two series $A,B \in R[[x]]$ one has $(A+B)(g^*) = A(g^*) + B(g^*)$ and $(AB)(g^*) = A(g^*)B(g^*)$.
\end{proposition}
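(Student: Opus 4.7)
The plan is to verify both equalities at the level of Alexander--Spanier cochains, by working with a representative $\xi \in C^q(Z,a)$ of a class $u$ that vanishes on a \emph{positively invariant} neighborhood $P$ of $a$. Lemma \ref{lem:represent}.(i) gives a representative vanishing on some neighborhood of $a$, and the assumption that $a$ is asymptotically stable lets us shrink this neighborhood to a positively invariant one. The key preliminary observation I would establish is that when $\xi$ vanishes on such a positively invariant $P$, so does $S(\xi)$: for any tuple in $P^{q+1}$ and any $j \geq 0$, positive invariance gives $g^j(z_l) \in P$, hence $\xi(g^j(z_0) \ldots g^j(z_q)) = 0$. This closure property is what makes it legitimate to iterate the summation operator, and it will be used in the multiplicative case. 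Throughout, I write $S_A$ and $S_B$ for the cochain operator defined by \eqref{eq:def_T1} with coefficients from $A$ and $B$ respectively.

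For additivity, I would note that for a fixed tuple $(z_0, \ldots, z_q)$ the series defining $S_A(\xi)$, $S_B(\xi)$ and $S_{A+B}(\xi)$ each truncate to a genuine finite sum (the forward orbit of the tuple eventually lies in $P^{q+1}$, where $\xi$ vanishes). Termwise addition then yields the pointwise equality $S_{A+B}(\xi) = S_A(\xi) + S_B(\xi)$ on $Z^{q+1}$, and passing to cohomology classes gives $(A+B)(g^*)(u) = A(g^*)(u) + B(g^*)(u)$.

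For the multiplicative statement, I would use the preliminary observation to feed $S_B(\xi)$ (which vanishes on $P$) into $S_A$. A direct substitution yields
\[
S_A\bigl(S_B(\xi)\bigr)(z_0 \ldots z_q) \;=\; \sum_i a_i \sum_k b_k \, \xi\bigl(g^{i+k}(z_0) \ldots g^{i+k}(z_q)\bigr).
\]
The only real technical point, and the main obstacle, is to justify reindexing by $j = i+k$ so that this becomes $\sum_j c_j \xi(g^j(z_0)\ldots g^j(z_q)) = S_{AB}(\xi)(z_0 \ldots z_q)$, where $c_j = \sum_{i+k=j} a_i b_k$. Here I would argue that for each fixed tuple only finitely many pairs $(i,k)$ actually contribute: because $S_B(\xi)$ vanishes on $P$ (by the preliminary observation) and the tuple is eventually attracted into $P^{q+1}$, there is $i_0$ with $S_B(\xi)(g^i(z_0)\ldots g^i(z_q)) = 0$ for all $i \geq i_0$; for each $i < i_0$ the inner sum over $k$ is finite by the same reasoning applied to $\xi$ itself. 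A finite double sum can always be reorganized by antidiagonals, so the identity $S_A \circ S_B = S_{AB}$ holds pointwise on cochains.

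Finally, I would combine the three ingredients. Taking cohomology classes in the additive and multiplicative cochain identities, together with the fact (built into the construction) that the class $[\overline{S_A(\xi)}]$ depends only on $u$ and $A$, yields $(A+B)(g^*) = A(g^*) + B(g^*)$ and $(AB)(g^*)(u) = A(g^*)\bigl(B(g^*)(u)\bigr)$, which is what was to be proved.
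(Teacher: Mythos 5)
Your proof is correct and takes essentially the same approach as the paper's: choose a representative cochain vanishing on a positively invariant neighborhood $P$ of $a$, observe that $S_B(\xi)$ again vanishes on $P$, iterate the summation operator, and reorganize the resulting finite double sum by anti-diagonals before passing to cohomology. The only cosmetic difference is that you spell out the additive case, which the paper declares "similar but easier" and omits.
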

\begin{proof} We will only prove that $(AB)(g^*) = A(g^*) B(g^*)$, since the proof of the other assertion is similar but easier. We begin by analyzing how the composition $A(g^*)B(g^*)$ acts on a cohomology class $u \in \check{H}^q(Z,a)$. Let $v := B(g^*)(u)$ and $w := A(g^*)(v) = A(g^*)B(g^*)(u)$.

Set $A(x) = \sum_{j=0}^{\infty} a_j x^j$ and $B(x) = \sum_{k=0}^{\infty} b_k x^k$. Pick a map $\xi \in C^q(Z,a)$ that represents $u$; that is, $u = [\overline{\xi}]$, and vanishes on a neighbourhood $P$ of $a$. Without loss of generality we may take $P$ to be positively invariant. By definition $v = [\overline{\eta}]$ where $\eta \in C^q(Z,a)$ is given by \begin{equation} \label{eq:comp_eta} \eta(z_0 \ldots z_q) =  \sum_{k=0}^{\infty} b_k \xi(g^k(z_0) \ldots g^k(z_q)).\end{equation} Since $P$ was chosen to be positively invariant, not only $\xi$ but also $(g^*)^k\xi$ vanish on $P$ for any $k$, so the map $\eta$ vanishes on the same neighbourhood $P$ of $a$. Therefore we may use it to compute $w = [\overline{\alpha}]$ with $\alpha \in C^q(Z,a)$ given by \begin{equation} \label{eq:comp_alpha} \alpha(z_0 \ldots z_q) = \sum_{j=0}^{\infty} a_j \eta(g^j(z_0)\ldots g^j(z_q)).\end{equation}

Evaluating Equation \eqref{eq:comp_eta} on $(g^j(z_0) \ldots g^j(z_q))$ rather than on $(z_0 \ldots z_q)$ and replacing it in Equation \eqref{eq:comp_alpha} yields the following expression: \[\alpha(z_0 \ldots z_q) = \sum_{j=0}^{\infty} \sum_{k=0}^{\infty} a_jb_k \xi(g^{k+j}(z_0) \ldots g^{k+j}(z_q))\] which, upon eliminating the summation index $k$ in favour of $\ell := j+k$, turns into \[\alpha(z_0 \ldots z_q) = \sum_{j=0}^{\infty} \sum_{\ell=j}^{\infty} a_j b_{\ell-j} \xi(g^{\ell}(z_0) \ldots g^{\ell}(z_q)).\] We recall that all the series are actually finite sums, so these manipulations are legitimate. Interchanging the order of summation and modifying the summation limits accordingly, \begin{equation} \label{eq:comp} \alpha(z_0 \ldots z_q) = \sum_{\ell=0}^{\infty} \sum_{j=0}^{\ell} a_j b_{\ell-j} \xi(g^{\ell}(z_0) \ldots g^{\ell}(z_q)) = \sum_{\ell=0}^{\infty} c_{\ell} \xi(g^{\ell}(z_0)\ldots g^{\ell}(z_q)),\end{equation} where in the last step we have just expressed the sum in terms of $c_{\ell} := \sum_{j=0}^{\ell} a_j b_{\ell - j}$. These $c_{\ell}$ are readily recognized as the coefficients of the product power series $(AB)(x)$, so Equation \eqref{eq:comp} can be interpreted as stating that $(AB)(g^*)(u)$ is represented by $\alpha$; that is, $(AB)(g^*)(u) = [\overline{\alpha}]$. Since by construction $\alpha$ was a representative of $w = A(g^*)B(g^*)(u)$, it follows that $(AB)(g^*)(u) = A(g^*)B(g^*)(u)$ as was to be shown.
\end{proof}

\subsection{A discrete version of Hastings' theorem} We are now going to obtain a suitable version of Hastings' theorem in the setting of discrete dynamics, characterizing when the inclusion $K \subseteq Z$ induces isomorphism in \v{C}ech cohomology with integer coefficients $R = \mathbb{Z}$. Although this result will not be used in the paper, we feel that it provides an interesting illustration of the use of power series in cohomology.

\begin{lemma}\label{lem:uncountable} If $\check{H}^q(Z,K;R)$ is countable, then for every $u \in \check{H}^q(Z,K;R)$ there exists $d \in \mathbb{N}$ such that $(g^*)^d(u) = 0$. In particular, if $g$ is a homeomorphism then $\check{H}^q(Z,K;R) = 0$.
\end{lemma}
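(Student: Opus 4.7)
The plan is to argue by contradiction. I will assume there exists $u \in \check{H}^q(Z,K;R)$ with $(g^*)^d(u) \neq 0$ for every $d \geq 0$, and then exhibit $2^{\aleph_0}$ pairwise distinct elements of $\check{H}^q(Z,K;R)$, contradicting the countability hypothesis.

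The central intermediate claim will be: for every \emph{nonzero} power series $C(x) = \sum_{j \geq 0} c_j x^j \in R[[x]]$ whose coefficients satisfy $c_j \in \{-1, 0, 1\}$, one has $C(g^*)(u) \neq 0$. To prove this, I will let $k$ be the smallest index with $c_k \neq 0$ and write $C(x) = x^k D(x)$, so that $D(0) = c_k = \pm 1$. Since $\pm 1$ is a unit in any ring with identity, the constant term of $D$ is a unit, hence $D$ is invertible in $R[[x]]$. Using the homomorphism property of Proposition \ref{prop:T_hom} to compose with $D^{-1}(g^*)$ will then give
\[
(g^*)^k(u) \;=\; (D^{-1} C)(g^*)(u) \;=\; D^{-1}(g^*)\bigl(C(g^*)(u)\bigr) \;=\; 0,
\]
contradicting the standing hypothesis that no iterate of $g^*$ annihilates $u$.

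Once the claim is in hand, the argument will conclude as follows. For any two distinct subsets $S, S' \subseteq \mathbb{N}$ the series $\sum_{j \in S} x^j - \sum_{j \in S'} x^j$ is nonzero with coefficients in $\{-1,0,1\}$, so by the claim $\sum_{j \in S}(g^*)^j(u) \neq \sum_{j \in S'}(g^*)^j(u)$. This yields $2^{\aleph_0}$ distinct elements of $\check{H}^q(Z,K;R)$, contradicting countability and proving the first statement. For the ``in particular'' part, if $g$ is a homeomorphism then $(g^{-1})^*$ is a two-sided inverse of $g^*$, so every power $(g^*)^d$ is injective; hence $(g^*)^d(u) = 0$ forces $u=0$, and combined with the first half of the lemma this yields $\check{H}^q(Z,K;R) = 0$ under the countability assumption.

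The main obstacle is the intermediate claim, and its essential ingredient is the availability of units: the factorization $C = x^k D$ produces an invertible $D \in R[[x]]$ precisely because $c_k = \pm 1$, and $-1$ is automatically a unit in any ring with identity. This is what allows us to pass from the infinite series identity $C(g^*)(u) = 0$ to the finite statement $(g^*)^k(u)=0$, closing the argument and converting the abstract summation machinery of Section \ref{sec:series} into concrete nilpotency information on individual cohomology classes.
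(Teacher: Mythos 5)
Your proof is correct and takes essentially the same approach as the paper's: both rest on mapping the uncountable family of $\{0,1\}$-sequences into the countable group $\check{H}^q(Z,K;R)$, extracting a nonzero $\{-1,0,1\}$-power series annihilating $u$, and factoring out the lowest-order term to invert and conclude $(g^*)^d(u)=0$. The only difference is cosmetic — you package the pigeonhole step as a contradiction and isolate the invertibility argument as an intermediate claim, whereas the paper runs the same computation directly.
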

\begin{proof} Fix any element $u \in \check{H}^q(Z,K;R)$. Let $\mathcal{S} := \{0,1\}^{\mathbb{N}}$ denote the set of sequences $(a_j)$ such that each $a_j$ is either $0$ or $1$. To each of these sequences we associate $v := \sum_{j=0}^{\infty} a_j (g^*)^j(u)$. Since $\mathcal{S}$ is uncountable but $\check{H}^q(Z,K;R)$ is countable by assumption, the correspondence $\mathcal{S} \ni (a_j) \longmapsto v \in \check{H}^q(Z,K;R)$ cannot be injective; that is, there must exist two different sequences $(a_j)$ and $(a'_j)$ in $\mathcal{S}$ such that $\sum_{j=0}^{\infty} a_j (g^*)^j(u) = \sum_{j=0}^{\infty} a'_j (g^*)^j(u)$. According to Equation \eqref{eq:ring_hom} we can rewrite this relation as $\sum_{j=0}^{\infty} (a_j-a'_j) (g^*)^j(u) = 0$. Consider the power series $D(x) := \sum_{j=0}^{\infty} (a_j-a'_j) x^j \in R[[x]]$. Letting $d$ be the first entry where the sequences $(a_j)$ and $(a'_j)$ differ, one has $D(x) = x^d D_0(x)$ with $D_0(x) := \sum_{j=d}^{\infty} (a_j-a'_j)x^{j-d}$. The independent term of $D_0$ is the difference $a_d - a'_d$, which is $\pm 1$ because the sequences from $\mathcal{S}$ consist only of zeroes and ones and $a_j \neq a'_j$. Thus $D_0$ has an inverse $G \in R[[x]]$. Multiplying $D(x) = x^d D_0(x)$ through by $G(x)$ yields $ G(x) D(x) = x^d$. In turn, this implies $ G(g^*)D(g^*)(u) = (g^*)^d(u)$ because of Equation \eqref{eq:ring_hom}. However, by construction $D(g^*)(u) = 0$, and it follows that $(g^*)^d(u) = 0$ too. If $g^*$ is an isomorphism this implies $u = 0$.
\end{proof}

The following theorem is the discrete version of Hastings' theorem that we announced earlier. It broadly generalizes one of the main results (Theorem 2) of \cite{pacoyo1}.

\begin{theorem} \label{teo:ANR} Let $K$ be an attractor for a homeomorphism of a locally compact, metrizable ANR. Take coefficients in $R = \mathbb{Z}$. The inclusion of $K$ in its basin of attraction induces isomorphisms in \v{C}ech cohomology if, and only if, the latter has countable \v{C}ech cohomology groups.
\end{theorem}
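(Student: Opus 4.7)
My plan is to combine Lemma \ref{lem:uncountable} with the long exact sequence of the pair $(Z,K)$ in \v{C}ech cohomology. This sequence is available here because $Z$, being an open subset of a locally compact metrizable ANR, is paracompact, and $K$ is closed in $Z$.

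The first step is to show that $\check{H}^q(K;\mathbb{Z})$ is countable in every degree. This is a standard fact for any compact metric space $K$: one can compute $\check{H}^*(K;\mathbb{Z})$ as the direct limit of $H^*(\mathcal{N}(\mathcal{U});\mathbb{Z})$ over a countable cofinal family of finite open covers $\mathcal{U}$, and each nerve $\mathcal{N}(\mathcal{U})$ is a finite simplicial complex with finitely generated integral cohomology. A direct limit over a countable directed system of countable (in fact finitely generated) groups is countable.

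With this in hand, the exact sequence
\[\cdots \longrightarrow \check{H}^{q-1}(K) \longrightarrow \check{H}^q(Z,K) \longrightarrow \check{H}^q(Z) \longrightarrow \check{H}^q(K) \longrightarrow \cdots\]
shows that countability of $\check{H}^*(Z;\mathbb{Z})$ is equivalent to countability of $\check{H}^*(Z,K;\mathbb{Z})$, because the flanking groups $\check{H}^*(K;\mathbb{Z})$ are already countable by the previous step and subgroups and extensions of countable groups are countable. Suppose then that $\check{H}^*(Z;\mathbb{Z})$ is countable. Then so is $\check{H}^*(Z,K;\mathbb{Z})$, and Lemma \ref{lem:uncountable} applied to the homeomorphism $g$ forces $\check{H}^q(Z,K;\mathbb{Z}) = 0$ for every $q$; the long exact sequence then gives that $K \hookrightarrow Z$ induces isomorphisms in \v{C}ech cohomology. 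Conversely, if this inclusion induces isomorphisms then $\check{H}^*(Z;\mathbb{Z}) \cong \check{H}^*(K;\mathbb{Z})$ is countable by the first step.

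The main point to be careful about is not the algebra but the \v{C}ech setting: I need the countability statement for $\check{H}^*$ of a compact metric space with integer coefficients, and the exactness of the cohomology sequence of the pair $(Z,K)$ for paracompact $Z$ and closed $K$. Both are classical; once they are invoked, Lemma \ref{lem:uncountable} closes the argument immediately without any further dynamical or topological input.
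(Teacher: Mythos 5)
Your proof is correct, and it replaces the paper's key input with a simpler one. Both arguments share the same skeleton: combine the long exact sequence of the pair $(Z,K)$ with Lemma~\ref{lem:uncountable} to reduce the statement to the countability of $\check{H}^*(K;\mathbb{Z})$. The paper obtains that countability from Proposition~\ref{prop:att_countable}, whose proof in Appendix~\ref{app:one} uses fairly heavy infinite-dimensional topology (stabilizing by the Hilbert cube, Edwards' ANR theorem, Chapman's triangulation of $Q$-manifolds) to produce a compact ANR neighbourhood of the attractor inside a $Q$-manifold and then passes to a direct limit of the finitely generated cohomology groups of its forward iterates. You instead invoke the general fact that \emph{any} compact metric space has countable integral \v{C}ech cohomology: the Lebesgue-number argument produces a countable cofinal chain of finite open covers, each nerve is a finite simplicial complex with finitely generated cohomology, and a direct limit of a sequence of countable groups is countable. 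This is more elementary and more general; it uses neither the ANR hypothesis on the phase space nor the dynamics on $K$ (those are only needed to have $g = f|_Z$ a homeomorphism and to invoke Lemma~\ref{lem:uncountable}). In particular your argument shows that Appendix~\ref{app:one} is not required for Theorem~\ref{teo:ANR}, and that local compactness and metrizability of the phase space already suffice.
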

\begin{proof} As usual, let $Z$ be the basin of attraction of $K$. We will make use of the fact that an attractor in a locally compact ANR always has countable \v{C}ech cohomology (see Proposition \ref{prop:att_countable} in Appendix \ref{app:one}). Thus if the inclusion $K \subseteq Z$ induces isomorphisms, evidently $Z$ has countable \v{C}ech cohomology. Conversely, assume that $Z$ has countable \v{C}ech cohomology. Since $K$ also has countabe \v{C}ech cohomology, it follows from the exact sequence of the pair $(Z,K)$ that $\check{H}^*(Z,K;\mathbb{Z})$ is countable too. Now the previous lemma implies that the group is actually zero.
\end{proof}

As a consequence we have the following:

\begin{corollary} Let $K$ be an attractor for a homeomorphism $g$ of $\mathbb{R}^n$. Take coefficients in $R = \mathbb{Z}$. The inclusion of $K$ in its basin of attraction induces isomorphisms in \v{C}ech cohomology if, and only if, $K$ has finitely generated cohomology.
\end{corollary}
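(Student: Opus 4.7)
The plan is to combine Theorem~\ref{teo:ANR} with a Mittag--Leffler style analysis of the direct and inverse systems that arise from a positively invariant isolating neighbourhood of $K$. Since $\mathbb{R}^n$ is a locally compact metrizable ANR, Theorem~\ref{teo:ANR} applies and reduces the corollary to the following purely topological statement: $\check{H}^*(Z;\mathbb{Z})$ is countable if and only if $\check{H}^*(K;\mathbb{Z})$ is finitely generated.

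To set this up I would choose a positively invariant compact isolating neighbourhood $N$ of $K$ that is a manifold with boundary (possible because $\mathbb{R}^n$ is smooth, so isolating blocks of this form exist); then $\check{H}^*(N;\mathbb{Z})$ is finitely generated. Using that $g$ is a homeomorphism with $g(N) \subseteq \mathrm{int}(N)$ one has
\[
K = \bigcap_{n \ge 0} g^n(N), \qquad Z = \bigcup_{n \ge 0} g^{-n}(\mathrm{int}(N)),
\]
with each $g^{-n}(\mathrm{int}(N))$ homeomorphic via $g^n$ to $\mathrm{int}(N) \simeq N$. Continuity of \v{C}ech cohomology on the decreasing intersection, together with the Milnor short exact sequence for the increasing union of open subsets of the ANR $\mathbb{R}^n$ (where \v{C}ech and singular cohomology coincide), yield in each degree $q$
\[
\check{H}^q(K;\mathbb{Z}) \cong \varinjlim\bigl(\check{H}^q(N) \xrightarrow{g^*} \check{H}^q(N) \xrightarrow{g^*} \cdots\bigr),
\]
\[
0 \to {\varprojlim}^{\,1} \check{H}^{q-1}(N) \to \check{H}^q(Z;\mathbb{Z}) \to \varprojlim \check{H}^q(N) \to 0,
\]
where the inverse-system transition maps are also induced by $g^*$ (a straightforward naturality check using the identifications by $g^n$).

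The central algebraic ingredient is a dichotomy for the constant tower attached to a finitely generated abelian group $A$ and an endomorphism $\phi$: the direct limit $\varinjlim(A,\phi)$ is finitely generated if and only if the descending chain of images $\phi^n(A)$ stabilises (Mittag--Leffler), equivalently ${\varprojlim}^{\,1}(A,\phi) = 0$; and when the Mittag--Leffler condition fails, ${\varprojlim}^{\,1}(A,\phi)$ is uncountable, the paradigm being $A = \mathbb{Z}$ with $\phi$ multiplication by $2$, for which ${\varprojlim}^{\,1} = \mathbb{Z}_2/\mathbb{Z}$. Applying this degree by degree: $\check{H}^*(K;\mathbb{Z})$ is finitely generated iff Mittag--Leffler holds in every degree iff all the ${\varprojlim}^{\,1}$ terms vanish iff $\check{H}^*(Z;\mathbb{Z}) \cong \varprojlim \check{H}^*(N;\mathbb{Z})$ embeds in the finitely generated group $\check{H}^*(N;\mathbb{Z})$ and is therefore itself finitely generated (in particular countable), which by Theorem~\ref{teo:ANR} is what we need. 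The main obstacle is the algebraic dichotomy; the ``finitely generated iff Mittag--Leffler'' half reduces, after killing the stable kernel, to observing that an injective endomorphism of a torsion-free finitely generated $A$ with $|\det\phi|>1$ produces a strictly ascending chain $A \subsetneq \phi^{-1}(A) \subsetneq \cdots$ in the localisation $A[\phi^{-1}]$, and the uncountability of ${\varprojlim}^{\,1}$ under failure of Mittag--Leffler follows from a $p$-adic completion computation modelled on the $\mathbb{Z} \xleftarrow{2} \mathbb{Z} \xleftarrow{2} \cdots$ prototype.
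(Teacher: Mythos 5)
Your proposal is correct, but it takes a genuinely different route from the paper's. The paper handles the forward direction (isomorphism $\Rightarrow$ $\check{H}^*(K)$ finitely generated) with a one-line sandwich argument: pick a compact manifold $N$ with $K \subseteq N \subseteq Z$, observe that $j^* \colon \check{H}^*(N) \to \check{H}^*(K)$ must be surjective, and quote finite generation for $N$. For the converse it invokes Alexander duality twice: it compactifies to $\mathbb{S}^n$, passes to the complementary repeller $R = \mathbb{S}^n \setminus Z$ with basin $Z' = \mathbb{S}^n \setminus K$, uses Alexander duality and the universal coefficient theorem to see $H^*(Z')$ is finitely generated (hence countable), applies Theorem~\ref{teo:ANR} to the dual system to get $\check{H}^*(R)$ finitely generated, and finally dualizes back to conclude $H^*(Z)$ is countable and applies Theorem~\ref{teo:ANR} again. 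You instead compute $\check{H}^*(K)$ and $\check{H}^*(Z)$ simultaneously from a single positively invariant manifold block $N$: continuity of \v{C}ech cohomology gives $\check{H}^q(K) \cong \varinjlim(\check{H}^q(N),g^*)$, while the Milnor $\varprojlim^1$ sequence for the exhaustion $Z = \bigcup_n g^{-n}(\mathrm{int}\,N)$ places $\check{H}^q(Z)$ between $\varprojlim^1\check{H}^{q-1}(N)$ and $\varprojlim\check{H}^q(N)$ for the same constant tower, reducing the corollary to the algebraic dichotomy you state (Mittag--Leffler $\Leftrightarrow$ colimit finitely generated $\Leftrightarrow$ $\varprojlim^1 = 0$; otherwise $\varprojlim^1$ uncountable). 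The dichotomy is correct: for finitely generated $A$, the colimit is finitely generated iff it equals $A/\ker\phi^N$ for large $N$ with $\bar\phi$ an automorphism, which forces $\phi^N(A)=\phi^{N+1}(A)$, and the converse is the Leray reduction; the uncountability of $\varprojlim^1$ when Mittag--Leffler fails is the Gray/Jensen/McGibbon theorem for countable towers of countable groups. Trade-offs: the paper's argument is shorter and stays inside classical duality, but treats the two implications asymmetrically and leans on Theorem~\ref{teo:ANR} via the dual system; your argument avoids Alexander duality entirely, treats both directions by one computation, and makes the mechanism (Mittag--Leffler as the precise obstruction) transparent, at the cost of importing the $\varprojlim^1$ machinery and the not-entirely-trivial uncountability theorem. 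Both proofs are sound.
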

\begin{proof} Suppose first that the inclusion $i \colon  K \subseteq Z$ induces isomorphisms in cohomology. Let $N$ be a compact manifold that satisfies $K \subseteq N \subseteq Z$ and denote by $j \colon  K \subseteq N$ and $k \colon  N \subseteq Z$ the inclusions. Since $i = k \circ j$ and $i^*$ is an isomorphism, $j^*$ is surjective. But $N$, being a compact manifold, has finitely generated \v{C}ech cohomology and therefore so does $K$.

To prove the converse, consider the $n$--sphere $\mathbb{S}^n$ as the compactification of $\mathbb{R}^n$ with a point $\infty$ and extend the dynamical system to all of $\mathbb{S}^n$ by letting $\infty$ be fixed. Denote by $R = \mathbb{S}^n \setminus Z$ and by $Z' = \mathbb{S}^n \setminus K$. It is straightforward to check that $R$ is a repeller whose basin of repulsion is $Z'$. Now, by Alexander duality $H_*(Z') = H^{n-1-*}(K)$ is finitely generated and it follows from the universal coefficient theorem that $H^*(Z')$ is countable. The previous theorem then guarantees that the inclusion $R \subseteq Z'$ induces isomorphisms in integral cohomology, so in particular $\check{H}^*(R)$ has finitely generated integral cohomology. The same argument using Alexander duality and the universal coefficient theorem then shows that $H^*(Z)$ is countable, and the corollary follows from the previous theorem.
\end{proof}

\section{The cohomology of $(W^u_{\infty},\tilde{X})$ \label{sec:describe1}}

We are finally ready to carry out the strategy sketched in the Introduction to obtain a description of the cohomology of $(W^u_{\infty},\tilde{X})$ and the action of $\tilde{f}^*$ on it, proving Theorems \ref{teo:main1_intro} and \ref{teo:main2_intro} (these correspond to Theorems \ref{teo:main1} and \ref{teo:essential} in the present section). As mentioned earlier, in degree one it is convenient to work with the relative group $\check{H}^1(W^u_{\infty},\tilde{X} \cup \infty)$ rather than $\check{H}^1(W^u_{\infty},\tilde{X})$. By Proposition \ref{prop:noessential} both are related (via the inclusion) by $\check{H}^1(W^u_{\infty},\tilde{X} \cup \infty) = \check{H}^1(W^u_{\infty},\tilde{X}) \oplus \mathbb{K}$ unless $X$ is an asymptotically stable attractor, in which case $\check{H}^1(W^u_{\infty},\tilde{X} \cup \infty) = 0$. Similarly, in higher degrees $q \geq 2$ we shall use the strong excision property of \v{C}ech cohomology to write $\check{H}^q(W^u_{\infty},\tilde{X}) = \check{H}^q(W^u_{\infty}/\tilde{X},[\tilde{X}]) = \check{H}^q(W^u_{\infty}/\tilde{X})$, and it is in terms of this latter quotient space that we shall express our results.

It might be useful to have some rough visual reference for the cohomology we intend to compute. Let $W^u_{\infty}/\tilde{X}$ denote the space that results from $W^u_{\infty}$ by collapsing $\tilde{X}$ to a single point $[\tilde{X}]$. Figure \ref{fig:quotient} provides a very schematic drawing intended to illustrate the \emph{a priori} very complicated structure of $W^u_{\infty}/\tilde{X}$. Clearly the dynamics on $W^u_{\infty}$ descends to the quotient space $W^u_{\infty}/\tilde{X}$ yielding also an attractor--repeller structure: $[\tilde{X}]$ is a repeller whose basin of repulsion is $U/\tilde{X}$ and $\infty$ is an attractor whose basin of attraction is $V$. The relative cohomology group $\check{H}^*(W^u_{\infty},\tilde{X} \cup \infty)$ can be thought of as the cohomology of the space that results from $W^u_{\infty}/\tilde{X}$ by collapsing $[\tilde{X}]$ and $\infty$ to a single point.

\begin{figure}[h!]
\begin{pspicture}(0,-0.5)(2,6)
	\rput[bl](0,0){\scalebox{0.75}{\includegraphics{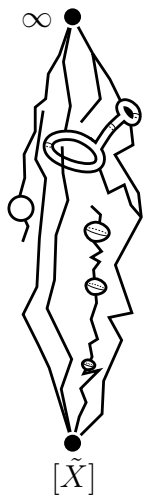}}}
	\rput[bl](0.6,-0.5){$[\tilde{X}]$}
	\rput[tl](0.2,5.9){$\infty$}
\end{pspicture}
\caption{The quotient $W^u_{\infty}/\tilde{X}$ \label{fig:quotient}}
\end{figure}

The set $U \cap V$ that appears in the sequence \eqref{eq:mayer0} can be written and thought of in several equivalent ways: $U \cap V = W^u_{\infty} \setminus (\tilde{X} \cup \infty) = (W^u_{\infty}/\tilde{X}) \setminus ([\tilde{X}] \cup \infty)$. In the sequel we will sometimes alternate between these notations without further comment. Recall that those quasicomponents of this set that are adherent to both $[\tilde{X}]$ and $\infty$ are called essential. We will prove in Proposition \ref{prop:describeH1} that when $X$ has finitely many connected components there are only finitely many essential quasicomponents.

Figure \ref{fig:quotient} only illustrates part of the complexity that may be found in $W^u_{\infty}$, in that it shows that every quasicomponent $F$ of $W^u_{\infty} \setminus (\tilde{X} \cup \infty)$ can have its own, nonzero cohomology. The situation is worsened by the fact that the iterates $\tilde{f}^nF$ (which are also quasicomponents of $W^u_{\infty} \setminus (\tilde{X} \cup \infty)$ and are not shown in Figure \ref{fig:quotient}) are homeomorphic to $F$ and so they also have nonzero cohomology; moreover, it can be shown that they accumulate on essential quasicomponents. Thus one expects that $W^u_{\infty} \setminus (\tilde{X} \cup \infty)$ will generally have a non finitely generated cohomology and a very complicated topological structure.

\medskip

{\bf Standing assumptions.} \emph{From now on we shall always assume that $W^u_{\infty}$ has finitely generated \v{C}ech cohomology (with coefficients in the field $\mathbb{K}$) in every degree.} As discussed in Section \ref{sec:background}, this is guaranteed as soon as the phase space is a locally compact absolute neighbourhood retract; for instance, a topological manifold.
\medskip

The following remark will be useful later on to prove Corollary \ref{cor:minimal}:

\begin{remark}\label{rmk:Egeneral} As the reader will see, the results in this section do not make use of the specific nature of $W^u_{\infty}$ but, rather, only of the facts that: (i) it has finitely generated \v{C}ech cohomology, (ii) it has an attractor-repeller decomposition where the attractor is a singleton. In particular, if $W$ is any compact invariant subset of $W^u_{\infty}$ that contains $\tilde{X}$ and $\infty$ and has finitely generated cohomology, the forthcoming results are still true if we replace $W^u_{\infty}$, $U$ and $V$ by the more general $W$, $W \setminus \infty$ and $W \setminus \tilde{X}$. (Definitions, however, have to be treated with care: for example, not every quasicomponent of $U \cap V$ contained in $W$ is automatically a quasicomponent of $W \setminus (\tilde{X} \cup \infty)$).
\end{remark}

\subsection{A Mayer--Vietoris sequence} Recall that the argument hinged on a Mayer--Vietoris sequence \eqref{eq:mvietoris_mot0} which exploited the attractor--repeller structure of $W^u_{\infty}$. The relative form of the results of Section \ref{sec:series} suggest that it is best to consider the relative Mayer--Vietoris sequence \begin{equation} \label{eq:mayer} \xymatrix{ \ldots & \ar[l] \check{H}^{q+1}(W^u_{\infty},\tilde{X} \cup \infty) & \ar[l]_-{\Delta} \check{H}^q(U\cap V) & \ar[l] \check{H}^q(U,\tilde{X}) \oplus \check{H}^q(V,\infty) & \ar[l] \ldots} \end{equation} where (as before) $U := W^u_{\infty} \setminus \infty$ and $V := W^u_{\infty} \setminus \tilde{X}$. If the dynamics were generated by a flow, the relative cohomology groups $\check{H}^q(U,\tilde{X})$ and $\check{H}^q(V,\infty)$ would vanish and so $\Delta$ would be an isomorphism. The following theorem is the appropriate generalization to the discrete case that concerns us:

\begin{theorem} \label{teo:mvietoris} Suppose that $\check{H}^q(X)$ has finite dimension. Then the truncated Mayer--Vietoris sequence \[\xymatrix{0 & \ar[l] \check{H}^{q+1}(W^u_{\infty},\tilde{X} \cup \infty) & \ar[l]_-{\Delta} \check{H}^q(U\cap V) & \ar[l] \check{H}^q(U,X) \oplus \check{H}^q(V,\infty)}.\] is exact.
\end{theorem}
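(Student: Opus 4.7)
The relative Mayer--Vietoris sequence for the covering $W^u_{\infty} = U \cup V$ with the subsets $\tilde{X} \subseteq U$ and $\infty \subseteq V$ reads, in full,
\[ \xymatrix@C=14pt{ \ldots \ar[r] & \check{H}^q(U\cap V) \ar[r]^-{\Delta} & \check{H}^{q+1}(W^u_{\infty},\tilde{X} \cup \infty) \ar[r]^-{\rho} & \check{H}^{q+1}(U,\tilde{X}) \oplus \check{H}^{q+1}(V,\infty) \ar[r] & \ldots} \]
and is exact. Hence the truncation in the statement is exact if and only if $\Delta$ is surjective, equivalently, $\rho = 0$. This is what I would aim to prove.

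First I would check that $\check{H}^{q+1}(W^u_{\infty},\tilde{X} \cup \infty)$ is finite dimensional. By Proposition \ref{prop:Xtilde}, the hypothesis that $\check{H}^q(X)$ is finite dimensional transfers to $\check{H}^q(\tilde{X})$, so $\check{H}^q(\tilde{X} \cup \infty) = \check{H}^q(\tilde{X}) \oplus \check{H}^q(\infty)$ is finite dimensional; combined with the standing assumption that $\check{H}^*(W^u_{\infty})$ is finite dimensional, the exact sequence of the pair $(W^u_{\infty},\tilde{X} \cup \infty)$ sandwiches the group of interest between two finite dimensional ones. Consequently, the image $W := \rho(\check{H}^{q+1}(W^u_{\infty},\tilde{X}\cup \infty))$ is a finite dimensional subspace of $\check{H}^{q+1}(U,\tilde{X}) \oplus \check{H}^{q+1}(V,\infty)$, and it is $\tilde{f}^*$--invariant by naturality of the Mayer--Vietoris sequence.

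Next I would show that $W = 0$ by ruling out eigenvalues of $\tilde{f}^*|_W$. The point $\infty$ is a global, asymptotically stable attractor for $\tilde{f}|_V$, so Corollary \ref{cor:no_eigen} (read off its proof, which actually produces an explicit inverse as a power series) gives that $\tilde{f}^* - \lambda \cdot \mathrm{Id}$ is invertible on $\check{H}^*(V,\infty)$ for every $\lambda \neq 0$. Analogously, $\tilde{X}$ is a global attractor for $\tilde{f}^{-1}|_U$, so the same corollary applied with $g = \tilde{f}^{-1}$ shows $(\tilde{f}^{-1})^* - \mu \cdot \mathrm{Id}$ is invertible on $\check{H}^*(U,\tilde{X})$ for every $\mu \neq 0$; since $\tilde{f}$ is a homeomorphism of both pairs $(U,\tilde{X})$ and $(V,\infty)$, $(\tilde{f}^{-1})^*$ is a two-sided inverse of $\tilde{f}^*$ on each summand, and the factorization $\tilde{f}^* - \lambda \cdot \mathrm{Id} = -\lambda\, \tilde{f}^* \circ ((\tilde{f}^{-1})^* - \lambda^{-1} \cdot \mathrm{Id})$ transfers invertibility, so $\tilde{f}^* - \lambda \cdot \mathrm{Id}$ is invertible on $\check{H}^*(U,\tilde{X})$ for every $\lambda \neq 0$ as well. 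Both statements extend to the direct sum.

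To conclude, note that $\tilde{f}^*|_W$ is the restriction of an automorphism to an invariant subspace, hence is itself an isomorphism. Passing if needed to the algebraic closure $\bar{\mathbb{K}}$, its characteristic polynomial factors as $p(x) = \prod_i (x - \lambda_i)$ with all $\lambda_i \neq 0$. Each factor $\tilde{f}^* - \lambda_i \cdot \mathrm{Id}$ is invertible on the whole ambient direct sum, so $p(\tilde{f}^*)$ is invertible there; but Cayley--Hamilton forces $p(\tilde{f}^*) = 0$ on $W$, whence $W = 0$ and $\rho = 0$, as required. The main obstacle in this plan is the asymmetry between the two summands: $\check{H}^*(U,\tilde{X})$ is controlled naturally by the backward dynamics $\tilde{f}^{-1}$ rather than by $\tilde{f}$, so one must carefully trade eigenvalue information between $\tilde{f}^*$ and $(\tilde{f}^{-1})^*$ using that $\tilde{f}$ acts as a homeomorphism on each pair.
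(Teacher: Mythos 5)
Your proof is correct and follows essentially the same route as the paper: establish finite-dimensionality of $\check{H}^{q+1}(W^u_{\infty},\tilde{X}\cup\infty)$ via the exact sequence of the pair together with Proposition \ref{prop:Xtilde}, and then kill the resulting finite-dimensional $\tilde f^*$--invariant image in $\check{H}^{q+1}(U,\tilde X)\oplus\check{H}^{q+1}(V,\infty)$ by the power series machinery combined with Cayley--Hamilton. The paper packages the last step as Lemma \ref{lem:f_dimensional} applied to the two inclusion-induced maps $k^*$, $\ell^*$ separately, whereas you apply Corollary \ref{cor:no_eigen} to the image of $\rho$ in one stroke and spell out explicitly the transfer of invertibility from $(\tilde f^{-1})^*$ to $\tilde f^*$ on the $(U,\tilde X)$ summand — the same argument in a slightly different arrangement.
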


We need the following auxiliary lemma. The notation of its statement is the same as in Section \ref{sec:series}.

\begin{lemma} \label{lem:f_dimensional} Let coefficients be taken in a field $\mathbb{K}$. Then the action of $g^*$ in every finite dimensional, $g^*$--invariant subspace $S$ of $\check{H}^*(Z,A;\mathbb{K})$ is nilpotent. In particular, if $g$ is a homeomorphism then $g^*$ is an isomorphism and so $S = \{0\}$.
\end{lemma}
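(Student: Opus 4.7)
The plan is to exploit the ring-homomorphism property from Proposition \ref{prop:T_hom} in the form already implicit in Corollary \ref{cor:no_eigen}: any polynomial $p(x) \in \mathbb{K}[x]$ with $p(0)\neq 0$ is a unit in $\mathbb{K}[[x]]$, and hence $p(g^*)$ is an automorphism of $\check{H}^*(Z,A;\mathbb{K})$. Combining this fact with the Cayley--Hamilton theorem applied on $S$ will force the characteristic polynomial of $g^*|_S$ to be a pure power of $x$, i.e., force $g^*|_S$ to be nilpotent.

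First I would reduce to the invertible case. Write $T := g^*|_S$ and observe that, since $S$ is finite-dimensional, the descending chain $S \supseteq TS \supseteq T^2 S \supseteq \ldots$ stabilizes at some $W := T^n S$. Then $T$ maps $W$ onto $W$, and by finite-dimensionality it does so bijectively, so $T|_W$ is an automorphism of $W$.

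Suppose for contradiction that $W \neq \{0\}$ and let $p(x) \in \mathbb{K}[x]$ be the characteristic polynomial of $T|_W$. Because $T|_W$ is invertible, $p(0) = (-1)^{\dim W}\det(T|_W) \neq 0$, so $p$ is a unit in $\mathbb{K}[[x]]$, with some inverse $q(x) \in \mathbb{K}[[x]]$. By Proposition \ref{prop:T_hom} we get $p(g^*) q(g^*) = q(g^*) p(g^*) = \mathrm{Id}$ on all of $\check{H}^*(Z,A;\mathbb{K})$, so $p(g^*)$ is an automorphism of the full cohomology and in particular injective. But Cayley--Hamilton applied to $T|_W$ yields $p(T|_W) = 0$, so $p(g^*)$ vanishes on $W$, contradicting the existence of any nonzero vector in $W$. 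Hence $W = \{0\}$, which means $T^n = 0$ on $S$, and so $g^*|_S$ is nilpotent. The second assertion is then immediate: if $g$ is a homeomorphism then $g^*$ is an automorphism of $\check{H}^*(Z,A;\mathbb{K})$ and hence injective, and an injective nilpotent endomorphism of a finite-dimensional vector space is only possible on the zero space.

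I expect the main (and rather minor) obstacle to be that Corollary \ref{cor:no_eigen} as stated only rules out eigenvalues of $g^*$ lying in $\mathbb{K}$, whereas nilpotency requires ruling out roots of the characteristic polynomial of $g^*|_S$ in every algebraic extension of $\mathbb{K}$. The Cayley--Hamilton step above sidesteps this issue cleanly, without requiring any universal-coefficient argument relating $\check{H}^*(Z,A;\mathbb{K})$ to $\check{H}^*(Z,A;\overline{\mathbb{K}})$; one keeps all computations inside $\mathbb{K}[[x]]$ and appeals only to the already-established Proposition \ref{prop:T_hom}.
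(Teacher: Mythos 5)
Your proof is correct and uses the same key mechanism as the paper's: combine the Cayley--Hamilton theorem with the fact that a polynomial with nonzero constant term is a unit in $\mathbb{K}[[x]]$, and then use Proposition \ref{prop:T_hom} to conclude that the corresponding operator on $\check{H}^*(Z,A;\mathbb{K})$ is invertible. The only presentational difference is that you first perform a Leray-type reduction to the stable image $W := T^n S$ and then derive a contradiction, whereas the paper factors the characteristic polynomial of $g^*|_S$ directly as $x^d P_0(x)$ (with $P_0$ a unit in $\mathbb{K}[[x]]$), obtains $(g^*)^d = Q(g^*)P(g^*)$, and reads off $(g^*)^d|_S = 0$ in one stroke; the paper's route is marginally more direct but the two are in essence the same argument.
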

\begin{proof} Consider the restriction $g^*|_S \colon  S \to S$. Since $S$ is finite dimensional, $g^*|_S$ has a well defined, nonzero characteristic polynomial $P(x)$. Factor $P(x)$ as $P(x) = x^d P_0(x)$, where $P_0(x)$ has a nonzero independent term. Since coefficients are taken in a field there exists $Q(x) \in \mathbb{K}[[x]]$ such that $Q(x)P_0(x) = 1$, and multiplying through by $x^d$ we obtain $x^d = Q(x) P(x)$. Thus $(g^*)^d = Q(g^*) P(g^*)$. Evaluating this on any vector $u \in S$ yields $(g^*)^d(u) = Q(g^*) P(g^*)(u)$ and, since $P(g^*)(u) = 0$ by the Cayley--Hamilton theorem, it follows that $(g^*)^d(u) = 0$.
\end{proof}

\begin{proof}[Proof of Theorem \ref{teo:mvietoris}] Let $k$ and $\ell$ be, respectively, the inclusions of $(V,\infty)$ and $(U,\tilde{X})$ into $(W^u_{\infty},\tilde{X} \cup \infty)$. It will suffice to prove the following two assertions:
\begin{enumerate}
	\item[(i)] The induced homomorphism $k^*$ is zero in every degree.
	\item[(ii)] If $\check{H}^q(X)$ is finite dimensional, then ${\ell}^*$ is zero in degree $q+1$.
\end{enumerate}

The images of $\check{H}^*(W^u_{\infty},\tilde{X} \cup \infty)$ under $k^*$ and ${\ell}^*$ are evidently $f^*$--invariant, so by the previous lemma it suffices to show that they are finite dimensional (in every degree in the case of $k^*$ and in degree $q+1$ for ${\ell}^*$). Now:

(i) The inclusion $k \colon  (V,\infty) \subseteq (W^u_{\infty},\tilde{X} \cup \infty)$ factors through $(W^u_{\infty},\infty)$, so $k^*$ factors through $\check{H}^*(W^u_{\infty},\infty)$. Since the latter has finite dimension by Proposition \ref{prop:conley}, the same is true of the image of $k^*$.

(ii) Since $\check{H}^q(X)$ is finitely generated by assumption, the same holds true for $\check{H}^q(\tilde{X})$ by Proposition \ref{prop:Xtilde}. The exact sequence \[\xymatrix{\ldots & \ar[l] \check{H}^{q+1}(W^u_{\infty}) & \ar[l] \check{H}^{q+1}(W^u_{\infty},\tilde{X} \cup \infty) & \ar[l] \check{H}^q(\tilde{X} \cup \infty) & \ar[l] \ldots}\] then sandwiches $\check{H}^{q+1}(W^u_{\infty},\tilde{X} \cup \infty)$ between two finite dimensional vector spaces, so that it must be finite dimensional itself. Thus the image of ${\ell}^*$ is finite dimensional.
\end{proof}

\subsection{Cohomology in degree one} When $X$ has finitely many connected components $\check{H}^0(X)$ has finite dimension and according to Theorem \ref{teo:mvietoris} the Mayer--Vietoris sequence \eqref{eq:mayer} in degree $q = 0$ truncates to the following exact sequence: \begin{equation} \label{eq:mayer0} \xymatrix{ 0 & \ar[l] \check{H}^{1}(W^u_{\infty},\tilde{X} \cup \infty) & \ar[l]_-{\Delta} \check{H}^0(U\cap V) & \ar[l] \check{H}^0(U,\tilde{X}) \oplus \check{H}^0(V,\infty) & \ar[l] 0} \end{equation} Our description of $\check{H}^1(W^u_{\infty},\tilde{X} \cup \infty)$ will result from a detailed examination of this sequence.

We first recall some simple, general facts about $0$--dimensional Alexander--Spanier cohomology that will be used without further explanation in the sequel. It follows easily from the definitions (or see \cite[Theorem 5, p.309]{spanier1}) that $\check{H}^0(Z,Y;R)$ is the $R$--module of locally constant functions $\varphi \colon  Z \to R$ that vanish on $Y$. Locally constant means that every point $p \in Z$ has a neighbourhood $U$ over which $\varphi$ is constant or equivalently, as one can easily prove, that for any $c \in R$ the preimage $\varphi^{-1}(c)$ is a clopen set of $Z$. In particular, if $F$ is a quasicomponent of $Z$, any $\varphi \in \check{H}^0(Z,Y;R)$ is constant over $F$: picking $p \in F$ one has that $\varphi^{-1}(\varphi(p))$ is a clopen set of $Z$ that contains $p$; hence it must contain all of $F$ and so $\varphi(F) = \{\varphi(p)\}$. Thus it makes sense to evaluate elements from $\check{H}^0(Z,Y;R)$ over quasicomponents $F$ of $Z$ (or even over quasicomponents of subsets of $Z$). We will be somewhat careless with the notation and treat $\varphi(F)$ or $\varphi|_{F}$ as the single value $\varphi(p)$ rather than the singleton $\{\varphi(p)\}$.

\begin{proposition} \label{prop:qc} An element $\varphi \in \check{H}^0(U \cap V)$ belongs to the kernel of $\Delta$ if, and only if, $\varphi|_F = 0$ for every essential quasicomponent $F$.
\end{proposition}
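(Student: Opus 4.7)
The proof will hinge on the exactness property of the Mayer--Vietoris sequence \eqref{eq:mayer0} at $\check{H}^0(U\cap V)$, established in Theorem \ref{teo:mvietoris}. Exactness identifies $\ker \Delta$ with the image of the map $\check{H}^0(U,\tilde{X}) \oplus \check{H}^0(V,\infty) \to \check{H}^0(U\cap V)$ sending $(\alpha,\beta)$ to $\alpha|_{U\cap V} - \beta|_{U\cap V}$. So the claim is equivalent to saying that $\varphi$ admits such a decomposition, with $\alpha$ locally constant on $U$ and vanishing on $\tilde{X}$ and $\beta$ locally constant on $V$ and vanishing on $\infty$, if and only if $\varphi$ vanishes on every essential quasicomponent.

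The key auxiliary observation, used in both directions, is: any locally constant function $\alpha\colon U\to \mathbb{K}$ that vanishes on $\tilde{X}$ must vanish on every quasicomponent $F$ of $U\cap V$ that reaches $\tilde{X}$ (with the symmetric statement for $\beta$ and quasicomponents reaching $\infty$). To see this, fix such an $F$. Each level set $\alpha^{-1}(c)$ is clopen in $U$, and for $c\neq 0$ it is disjoint from $\tilde{X}$ and hence contained in $U\cap V$, where it remains clopen; so $F$, being a quasicomponent of $U\cap V$, is either contained in or disjoint from each such level set, forcing $\alpha|_F$ to be constant. If that constant value $c$ were nonzero, the closedness of $\alpha^{-1}(c)$ in $U$ would imply that it contains $\overline{F}\cap U$, which in particular meets the nonempty set $\overline{F}\cap \tilde{X}\subseteq U$; this contradicts $\alpha|_{\tilde{X}}=0$. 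This gives the forward direction at once: on any essential quasicomponent both $\alpha$ and $\beta$ vanish, so their difference $\varphi$ does as well.

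For the converse, suppose $\varphi|_F=0$ for every essential quasicomponent and set $O:=\varphi^{-1}(0)$. This is a clopen subset of $U\cap V$ containing every essential quasicomponent, so Lemma \ref{lem:quasi4} applies and provides a decomposition $U\cap V = O \uplus G_{\tilde X}\uplus G_\infty$ in which $G_{\tilde X}$ is clopen in $V$ and $G_\infty$ is clopen in $U$. Define $\alpha\colon U\to \mathbb{K}$ to equal $\varphi$ on $G_\infty$ and $0$ on $U\setminus G_\infty$, and $\beta\colon V\to\mathbb{K}$ to equal $-\varphi$ on $G_{\tilde X}$ and $0$ on $V\setminus G_{\tilde X}$. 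Since $G_\infty$ is clopen in $U$ and $\varphi|_{G_\infty}$ is locally constant (in the subspace topology inherited from $U\cap V$, which agrees with that inherited from $U$), $\alpha$ is locally constant on $U$; the inclusion $G_\infty\subseteq U\cap V$ also guarantees $\alpha|_{\tilde X}=0$. The analogous conclusions hold for $\beta$. A case analysis on the three pieces of $U\cap V$ then verifies $\alpha|_{U\cap V}-\beta|_{U\cap V}=\varphi$.

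The main subtlety of the argument is concentrated in the converse direction and in the appeal to Lemma \ref{lem:quasi4}: one needs not merely a partition of $U\cap V$ consistent with its own topology, but one whose pieces are additionally clopen in the larger spaces $U$ and $V$, so that the corresponding indicator--like functions extend as locally constant maps on $U$ and $V$ vanishing on $\tilde{X}$ and $\infty$ respectively. Without this refinement, a naive splitting of $\varphi$ according to which side of the attractor--repeller pair each quasicomponent reaches would fail to produce legitimate classes in $\check{H}^0(U,\tilde{X})$ and $\check{H}^0(V,\infty)$.
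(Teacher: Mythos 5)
Your proof is correct and follows essentially the same route as the paper's: exactness of the truncated Mayer--Vietoris sequence identifies $\ker\Delta$ with the image of $\check{H}^0(U,\tilde{X})\oplus\check{H}^0(V,\infty)$, the forward direction rests on the observation that a locally constant map on $U$ vanishing on $\tilde{X}$ must vanish on every quasicomponent of $U\cap V$ reaching $\tilde{X}$, and the converse uses Lemma~\ref{lem:quasi4} with $O=\varphi^{-1}(0)$ to build the required decomposition. The only cosmetic differences are a sign convention in the Mayer--Vietoris map, a level-set argument where the paper passes to quasicomponents of $U$, and defining $\alpha,\beta$ directly from the clopen sets $G_\infty,G_{\tilde{X}}$ rather than via the paper's pointwise case analysis (which yields the same functions).
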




\begin{proof}[Proof of Proposition \ref{prop:qc}] First suppose that $\varphi \in {\rm ker}\ \Delta$, so that it belongs to the image of the $\check{H}^0(U,\tilde{X}) \oplus \check{H}^0(V,\infty) \to \check{H}^0(U \cap V)$ in the Mayer--Vietoris sequence \eqref{eq:mayer0}. Recall that the latter is defined as the sum of inclusion induced homomorphisms; thus, there exist maps $\psi_U \in \check{H}^0(U,\tilde{X})$ and $\psi_V \in \check{H}^0(V,\infty)$ such that $\varphi = \psi_U|_{U \cap V} + \psi_V|_{U \cap V}$. To unclutter the notation in the sequel we shall omit the symbols denoting the restriction to $U \cap V$ and simply write $\varphi = \psi_U + \psi_V$, in the understanding that this equality holds only on $U \cap V$.

Let $F$ be an essential quasicomponent of $U \cap V$ and let $F'$ be the quasicomponent of $U$ that contains $F$. Notice that $\psi_U$ is constant on $F'$. Denoting by $\overline{F}$ the closure of $F$ in $U$ we have $F \subseteq \overline{F} \subseteq F'$ because $F'$ is closed in $U$. Since $\overline{F} \cap \tilde{X} \neq \emptyset$ because $F$ is essential, $F' \cap \tilde{X} \neq \emptyset$ too. Now $\psi_U$ is constant on $F'$ and vanishes on $\tilde{X}$, so it follows that $\psi_U(F') = 0$. Then $\psi_U(F) = 0$. An entirely analogous argument shows that $\psi_V(F) = 0$, and from the equality $\varphi = \psi_U + \psi_V$ on $F$ we conclude that $\varphi(F) = 0$.

Now suppose that $\varphi$ vanishes over every essential quasicomponent of $U \cap V$. We need to find $\psi_U \in \check{H}^0(U,\tilde{X})$ and $\psi_V \in \check{H}^0(V,\infty)$ such that $\varphi = \psi_U + \psi_V$. Define $\psi_U$ and $\psi_V$ as follows. First set $\psi_U|_{\tilde{X}} = 0$ and $\psi_V(\infty) = 0$. Now, for any point $p \in U \cap V$ let $F$ be the quasicomponent that contains $p$ and set \[\psi_U(p) := \left\{\begin{array}{cl} \varphi(p) & \text{ if $F$ does not reach $\tilde{X}$} \\ 0 & \text{ otherwise} \end{array} \right.\] and \[\psi_V(p) := \left\{\begin{array}{cl} \varphi(p) & \text{ if $F$ does not reach $\infty$} \\ 0 & \text{ otherwise} \end{array} \right.\]

According to this definition and to Proposition \ref{prop:reach}, for every quasicomponent $F$ of $U \cap V$ we have three possible cases (recall that $\varphi$, being locally constant, is constant over every quasicomponent $F$ of $U \cap V$, so it makes sense to speak of the single value $\varphi|_F$):
\begin{itemize}
	\item[(i)] If $F$ reaches $\infty$ but not $\tilde{X}$, then $\psi_U|_F = \varphi|_F$ and $\psi_V|_F = 0$.
	\item[(ii)] If $F$ reaches $\tilde{X}$ but not $\infty$, then $\psi_U|_F = 0$ and $\psi_V|_F = \varphi|_F$.
	\item[(iii)] If $F$ reaches both $\infty$ and $\tilde{X}$, then $\psi_U|_F = \psi_V|_F = 0$.
\end{itemize}

Clearly in cases (i) and (ii) we have $\varphi = \psi_U + \psi_V$. Moreover, since $\varphi$ vanishes on every quasicomponent that reaches both $\infty$ and $\tilde{X}$, we also have $\varphi = \psi_U + \psi_V$ in case (iii). By construction $\psi_U$ vanishes on $\tilde{X}$ and $\psi_V$ vanishes on $\infty$. Thus it only remains to show that $\psi_U$ and $\psi_V$ locally constant, so that they indeed define legitimate $0$--cocycles in their respective cohomology groups. We shall argue for $\psi_U$, because the argument for $\psi_V$ is completely analogous.

Let $O := \varphi^{-1}(0)$. This is a clopen set of $U \cap V$ that contains all the essential quasicomponents since $\varphi$ vanishes on them by assumption. Consider the sets $G_{\tilde{X}}$ and $G_{\infty}$ defined in the statement of Lemma \ref{lem:quasi4}. Write $U = (\tilde{X} \cup G_{\tilde{X}} \cup O) \uplus G_{\infty}$ and observe that, since $G_{\infty}$ is a clopen subset of $U$ by Lemma \ref{lem:quasi4}.(2), this is actually a partition of $U$ into two open sets. Notice that $\psi_U$ can be written as \[\psi_U = \left\{\begin{array}{cl} \varphi & \text{ on } G_{\infty} \\ 0 & \text{ on } \tilde{X} \cup G_{\tilde{X}} \cup O \end{array} \right.\] Since $\varphi$ is locally constant, we see from the above expression that $\psi_U$ is locally constant on the two open sets that partition $U$, and so $\psi_U$ is indeed locally constant on $U$.
\end{proof}

\begin{remark} The proof of Proposition \ref{prop:qc} makes no use of any assumptions about the phase space or the invariant set $\tilde{X}$ itself. Thus the resulting characterization of ${\rm ker}\ \Delta$ holds generally.
\end{remark}

The next proposition provides crucial information about the number of essential quasicomponents:

\begin{proposition} \label{prop:describeH1} The number of essential quasicomponents of $W^u_{\infty} \setminus (\tilde{X} \cup \infty)$ is bounded above by the dimension of $\check{H}^1(W^u_{\infty},\tilde{X} \cup \infty)$. In particular, if $X$ has finitely many connected components then the number of essential quasicomponents is also finite.
\end{proposition}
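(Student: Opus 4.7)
The plan is to detect each essential quasicomponent by a locally constant function on $U \cap V$ and push it through $\Delta$ to produce an independent family of classes in $\check{H}^1(W^u_\infty,\tilde{X} \cup \infty)$. Explicitly, I would fix any finite subcollection $F_1, \ldots, F_n$ of distinct essential quasicomponents and construct $\varphi_1, \ldots, \varphi_n \in \check{H}^0(U \cap V)$ with $\varphi_j|_{F_k} = \delta_{jk}$ for $1 \le j, k \le n$. This relies on the standard separation property of quasicomponents: if $F_j \neq F_i$ and $p \in F_i$, then $p \notin F_j$, so by the very definition of $F_j$ as the intersection of all clopens containing (any of) its points, there exists a clopen $A \subseteq U \cap V$ with $F_j \subseteq A$ and $p \notin A$; since the clopen $U \cap V \setminus A$ then contains the quasicomponent of $p$, namely $F_i$, the set $A$ must be disjoint from $F_i$. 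Taking a finite intersection of such clopens yields a clopen $A_j \subseteq U \cap V$ containing $F_j$ and disjoint from every $F_i$ with $i \neq j$ in $\{1,\ldots,n\}$; the indicator function $\varphi_j := \chi_{A_j}$ is then locally constant and satisfies the required condition.

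Next I would verify that the classes $\Delta(\varphi_1), \ldots, \Delta(\varphi_n)$ are linearly independent in $\check{H}^1(W^u_\infty,\tilde{X} \cup \infty)$. Suppose $\sum_j c_j \Delta(\varphi_j) = 0$; by linearity $\sum_j c_j \varphi_j \in \ker \Delta$, and Proposition \ref{prop:qc} forces this combination to vanish on every essential quasicomponent. Evaluating on $F_k$ gives $c_k = 0$ for each $k$. This produces $n$ linearly independent vectors, proving $n \le \dim \check{H}^1(W^u_\infty,\tilde{X} \cup \infty)$; since this holds for every finite subcollection of essential quasicomponents, the first assertion follows.

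For the second assertion, if $X$ has finitely many connected components then $\check{H}^0(X)$ is finite dimensional, and by Proposition \ref{prop:Xtilde} the same is true of $\check{H}^0(\tilde{X})$. The long exact sequence of the triple $(W^u_\infty,\tilde{X} \cup \infty,\infty)$ sandwiches $\check{H}^1(W^u_\infty,\tilde{X} \cup \infty)$ between $\check{H}^0(\tilde{X})$ and $\check{H}^1(W^u_\infty,\infty)$, and the latter is finite dimensional by Proposition \ref{prop:conley}; hence $\check{H}^1(W^u_\infty,\tilde{X} \cup \infty)$ is finite dimensional and the number of essential quasicomponents is finite. The only delicate point is the quasicomponent separation argument, but since it rests only on the definition of quasicomponent as an intersection of clopens it should go through without trouble despite the possibly wild topology of $U \cap V$.
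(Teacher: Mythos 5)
Your proof is correct and follows essentially the same route as the paper: the construction of Kronecker-delta locally constant functions is precisely the content of the paper's Lemma~\ref{lem:kronecker} (which you reproduce inline), the linear independence of $\Delta(\varphi_1),\ldots,\Delta(\varphi_n)$ via Proposition~\ref{prop:qc} is identical, and for finite dimensionality you use the exact sequence of the triple $(W^u_\infty,\tilde X\cup\infty,\infty)$ together with Propositions~\ref{prop:Xtilde} and~\ref{prop:conley}, whereas the paper invokes the exact sequence of the pair $(W^u_\infty,\tilde X\cup\infty)$ together with the standing assumption that $\check H^*(W^u_\infty)$ is finitely generated — a cosmetically different but equivalent sandwich.
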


To prove the proposition we need the following simple lemma:

\begin{lemma} \label{lem:kronecker} Let $Q_1, \ldots, Q_m$ be different quasicomponents of a space $Z$. Then there exist locally constant maps $\varphi_1, \ldots, \varphi_m$ such that $\varphi_i|_{Q_j} = \delta_{ij}$ (the Kronecker delta) for every $1 \leq i,j \leq m$.
\end{lemma}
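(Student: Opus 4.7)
The plan is to produce, for each index $i$, a clopen set $C_i$ with $Q_i \subseteq C_i$ and $C_i \cap Q_j = \emptyset$ for every $j \neq i$, and then simply take $\varphi_i$ to be the characteristic function of $C_i$. Since the characteristic function of a clopen set is locally constant on the whole of $Z$, this immediately yields maps with the required Kronecker delta behaviour.

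The key observation is the standard separation property of quasicomponents: if $p$ and $q$ lie in distinct quasicomponents of $Z$, then there is a clopen set $C$ containing $p$ and not $q$. This is essentially built into the definition, since two points belong to the same quasicomponent precisely when every clopen set containing one also contains the other. Moreover, once such a $C$ exists, the description of the quasicomponent of $p$ as the intersection of all clopen sets containing $p$ forces $Q_p \subseteq C$; and since $C^c$ is a clopen neighbourhood of $q$, the same reasoning applied to $q$ gives $Q_q \subseteq C^c$, so $C \cap Q_q = \emptyset$.

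Applying this observation to each ordered pair $(i,j)$ with $i \neq j$, I would pick points $p_i \in Q_i$ and $p_j \in Q_j$ and obtain a clopen set $C_{ij}$ with $p_i \in C_{ij}$ and $p_j \notin C_{ij}$; by the preceding paragraph this $C_{ij}$ actually satisfies $Q_i \subseteq C_{ij}$ and $Q_j \cap C_{ij} = \emptyset$. Since $m$ is finite, the intersection $C_i := \bigcap_{j \neq i} C_{ij}$ is again clopen, contains $Q_i$, and is disjoint from each $Q_j$ with $j \neq i$. Setting $\varphi_i := \chi_{C_i}$ then gives a locally constant map with $\varphi_i|_{Q_j} = \delta_{ij}$ for every $1 \leq i,j \leq m$.

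There is really no substantial obstacle here; the proof is a direct unravelling of the definition of quasicomponent, together with the elementary fact that finite intersections of clopen sets are clopen. The only point that deserves verification is the separation statement for distinct quasicomponents, which follows at once from the definition once one notes that quasicomponents form a partition of $Z$.
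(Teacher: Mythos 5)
Your proof is correct and follows essentially the same route as the paper: for each $i$ you intersect finitely many clopen sets, each separating $Q_i$ from one of the other $Q_j$, and take the characteristic function of the result. The paper constructs only $\varphi_1$ explicitly and notes the rest are analogous, while you spell out the separation property of quasicomponents in slightly more detail; the content is the same.
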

\begin{proof} It will be enough to show how to construct $\varphi_1$. For each $i = 2, \ldots, m$ there exists a clopen set $W_i$ that contains $Q_1$ and is disjoint from $Q_i$. The intersection $W := \bigcap_{i \geq 2} W_{1,i}$ is a clopen set that contains $Q_1$ and is disjoint from the remaining $Q_2, \ldots, Q_m$. Define $\varphi_1|_{W} := 1$ and $\varphi_1|_{W^c} := 0$. This map is locally constant by definition and evidently satisfies $\varphi_1(Q_j) = \delta_{1j}$ for every $j$.
\end{proof}

\begin{proof}[Proof of Proposition \ref{prop:describeH1}] Denote by $d$ be the dimension of $\check{H}^1(W^u_{\infty},\tilde{X} \cup \infty)$. Let $F_1, \ldots, F_m$ be essential quasicomponents of $U \cap V$ (not necessarily all of them). According to the previous lemma there exist elements $\varphi_1, \ldots, \varphi_m \in \check{H}^0(U \cap V)$ such that $\varphi_i(F_j) = \delta_{ij}$. Set $\xi_i := \Delta(\varphi_i) \in \check{H}^1(W^u_{\infty},\tilde{X} \cup \infty)$. It will be enough to prove that $\{\xi_i : 1 \leq i \leq m\}$ is a linearly independent set, for this implies that $m \leq d$ and therefore establishes the proposition. Consider, then, a null linear combination $\sum_i \lambda_i \xi_i = 0$. Writing this as $\sum_i \lambda_i \xi_i = \sum_i \lambda_i \Delta(\varphi_i) = \Delta(\sum_i \lambda_i \varphi_i)$ we see that $\sum_i \lambda_i \varphi_i \in {\rm ker}\ \Delta$. Thus according to Proposition \ref{prop:qc} the sum $\sum_i \lambda_i \varphi_i$ must vanish on every essential quasicomponent; in particular, it must vanish on each $F_j$. This gives $0 = \sum_i \lambda_i \varphi_i (F_j) = \sum_i \lambda_i \delta_{ij} = \lambda_j$, completing the proof.

Finally, when $X$ has finitely many connected components then $d$ is finite (this was argued in the proof of Theorem \ref{teo:mvietoris}) and so there are only finitely many essential quasicomponents.
\end{proof}

From now on we assume that $X$ has indeed finitely many connected components. This has two consequences: one the one hand, Theorem \ref{teo:mvietoris} shows that \eqref{eq:mayer0} is an exact sequence; on the other hand, $W^u_{\infty} \setminus (\tilde{X} \cup \infty)$ has finitely many essential quasicomponents $F_1, \ldots, F_m$ by Proposition \ref{prop:describeH1}. We now introduce an \emph{evaluation map} as follows. Let $\xi \in \check{H}^1(W^u_{\infty},\tilde{X} \cup \infty)$. According to \eqref{eq:mayer0} there exists $\varphi \in \check{H}^0(U \cap V)$ such that $\Delta(\varphi) = \xi$, where $\Delta$ is the coboundary homomorphism, and given any essential quasicomponent $F$ we define the \emph{evaluation of $\xi$ on $F$} to be the element $\xi(F) := \varphi|_F \in \mathbb{K}$. This definition is correct because $\xi(F)$ does not depend on the element $\varphi$ chosen to compute it: if $\varphi, \varphi' \in \check{H}^0(U \cap V)$ both satisfy $\Delta(\varphi) = \Delta(\varphi') = \xi$ then $\varphi - \varphi' \in {\rm ker}\ \Delta$ and by Proposition \ref{prop:qc} one has $(\varphi - \varphi')|_F = 0$; that is, $\varphi|_F = \varphi|_{F'}$. Finally, the \emph{evaluation map} is the result of collecting the evaluations over the various essential quasicomponents into a single map \[\begin{array}{ccccc} {\rm ev} & : & \check{H}^1(W^u_{\infty},\tilde{X} \cup \infty) & \to & \mathbb{K}^m \\ & & \xi & \longmapsto & (\xi(F_1),\ldots,\xi(F_m)) \end{array}\] It is straightforward to check that ${\rm ev}$ is a linear map. The main result of this section is the following:

\begin{theorem} \label{teo:main1} Let $X$ have finitely many connected components. Then the map ${\rm ev}$ is an isomorphism of vector spaces. Moreover, it conjugates $\tilde{f}^*$ with the permutation that $\tilde{f}^{-1}$ induces among the essential quasicomponents of $W^u_{\infty} \setminus (\tilde{X} \cup \infty)$ in the following sense: if \[\xi \longmapsto (\xi(F_1),\ldots,\xi(F_m))\] then \[\tilde{f}^*\xi \longmapsto (\xi(\tilde{f}F_1),\ldots,\xi(\tilde{f}F_m)).\]
\end{theorem}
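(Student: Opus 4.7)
The plan is to verify in turn that ${\rm ev}$ is surjective, injective, and $\tilde{f}^*$--equivariant. All three claims should reduce quickly to results already established (Theorem \ref{teo:mvietoris}, Proposition \ref{prop:qc}, and Lemma \ref{lem:kronecker}); the only real subtlety is keeping track of who does what, since ${\rm ev}$ is defined only indirectly, through a choice of preimage under the connecting homomorphism $\Delta$.

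For surjectivity, given any prescribed values $(\lambda_1,\ldots,\lambda_m)\in\mathbb{K}^m$, I would invoke Lemma \ref{lem:kronecker} on the finitely many essential quasicomponents $F_1,\ldots,F_m$ of $U\cap V$ to produce locally constant maps $\varphi_1,\ldots,\varphi_m\in\check{H}^0(U\cap V)$ with $\varphi_i|_{F_j}=\delta_{ij}$. Setting $\varphi:=\sum_i\lambda_i\varphi_i\in\check{H}^0(U\cap V)$ and $\xi:=\Delta(\varphi)\in\check{H}^1(W^u_{\infty},\tilde{X}\cup\infty)$, the very definition of ${\rm ev}$ forces ${\rm ev}(\xi)=(\lambda_1,\ldots,\lambda_m)$, so ${\rm ev}$ hits every tuple.

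For injectivity, suppose ${\rm ev}(\xi)=0$. By Theorem \ref{teo:mvietoris} the truncated Mayer--Vietoris sequence \eqref{eq:mayer0} is exact, so $\Delta$ is surjective and we may choose $\varphi\in\check{H}^0(U\cap V)$ with $\Delta(\varphi)=\xi$. The assumption ${\rm ev}(\xi)=0$ says precisely that $\varphi|_{F_i}=0$ for every essential quasicomponent $F_i$, and Proposition \ref{prop:qc} then places $\varphi$ in $\ker\Delta$, so $\xi=\Delta(\varphi)=0$. Combined with the surjectivity step this proves ${\rm ev}$ is an isomorphism (and incidentally sharpens the inequality of Proposition \ref{prop:describeH1} to an equality).

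For the equivariance, the key point is the naturality of $\Delta$ with respect to self-maps of $W^u_{\infty}$ that preserve the cover $\{U,V\}$. Since $\tilde{f}$ is a homeomorphism of $W^u_{\infty}$ fixing both $\tilde{X}$ and $\infty$, it restricts to $U$ and to $V$, hence $\Delta\circ\tilde{f}^*=\tilde{f}^*\circ\Delta$. Thus if $\Delta(\varphi)=\xi$ then $\Delta(\tilde{f}^*\varphi)=\tilde{f}^*\xi$, and evaluating on an essential quasicomponent $F_i$ (which $\tilde{f}$ carries into another essential quasicomponent) yields
\[
(\tilde{f}^*\xi)(F_i)=(\tilde{f}^*\varphi)|_{F_i}=\varphi\bigl(\tilde{f}(F_i)\bigr)=\xi(\tilde{f}F_i),
\]
which is exactly the claimed conjugation formula. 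The only genuine obstacle is purely bookkeeping: ensuring that ``evaluation on a quasicomponent'' is insensitive to the choice of representative $\varphi$, but this was already handled when ${\rm ev}$ was defined (it amounts to the kernel description of Proposition \ref{prop:qc} again). Once the three ingredients above are assembled, each step is a one-line verification.
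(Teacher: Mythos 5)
Your proposal is correct and follows essentially the same route as the paper: injectivity/surjectivity both hinge on Proposition \ref{prop:qc} and Lemma \ref{lem:kronecker} respectively (the paper presents the dimension count $m\le d\le m$ as its primary argument but explicitly offers your surjectivity argument as an equivalent alternative), and the equivariance statement comes from naturality of the Mayer--Vietoris connecting homomorphism in both treatments. The only difference is cosmetic ordering of the three steps.
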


Considering $F_1, \ldots, F_m$ as formal symbols, the $\mathbb{K}$--vector space $\langle F_1, \ldots, F_m \rangle$ having the $F_i$ as a basis can be identified with $\mathbb{K}^m$ in the natural way $\sum_i \lambda_i F_i \leftrightarrow (\lambda_1,\ldots,\lambda_m) \in \mathbb{K}^m$. Then Theorem \ref{teo:main1} says that $\check{H}^1(W^u_{\infty},\tilde{X} \cup \infty)$ can be thought of as $\langle F_1, \ldots, F_m \rangle$ and the action of $\tilde{f}^*$ on $\check{H}^1(W^u_{\infty},\tilde{X} \cup \infty)$ corresponds, under this identification, to the permutation of the $F_i$ by the map $\tilde{f}^{-1}$. It is in this form that we presented the above theorem in the Introduction as Theorem \ref{teo:main1_intro}.

\begin{proof}[Proof of Theorem \ref{teo:main1}] The map ${\rm ev}$ is injective. Indeed, suppose ${\rm ev}(\xi) = (0,\ldots,0)$. Pick $\varphi \in \check{H}^0(U \cap V)$ that gets sent to $\xi$ under the coboundary map of the Mayer--Vietoris sequence \eqref{eq:mayer0} and observe that $\varphi(F_i) = 0$ for every $i = 1, \ldots, m$. Since the $F_i$ exhaust all the essential quasicomponents, according to Proposition \ref{prop:qc} we have that $\varphi$ belongs to the kernel of $\Delta$ and so $\xi = \Delta(\varphi) = 0$.

In Proposition \ref{prop:describeH1} we established that $m \leq d$, where $d$ is the dimension of $\check{H}^1(W^u_{\infty},\tilde{X} \cup \infty)$. The previous paragraph implies that $d \leq m$. Therefore $d = m$ and ${\rm ev}$ is an isomorphism. Alternatively, one can check explicitly that ${\rm ev}$ is surjective: given $(\lambda_1, \ldots, \lambda_m) \in \mathbb{K}^m$, let $\varphi_i \in \check{H}^0(U \cap V)$ be as in Lemma \ref{lem:kronecker} and set $\varphi := \sum_i \lambda_i \varphi_i \in \check{H}^0(U \cap V)$. It is straightforward to check that $\xi := \Delta(\varphi)$ gets sent to $(\lambda_1,\ldots,\lambda_m)$ by the evaluation map.

Finally, let us analyze the effect of $\tilde{f}^*$ under the isomorphism ${\rm ev}$. Let $\xi$ be an element of $\check{H}^1(W^u_{\infty},\tilde{X} \cup \infty)$ and let $\varphi \in \check{H}^0(U \cap V)$ such that $\Delta(\varphi) = \xi$. The naturality of the Mayer--Vietoris sequence implies that $\tilde{f}^*\xi = \Delta(\tilde{f}^* \varphi)$. By definition $\tilde{f}^* \varphi = \varphi \circ \tilde{f}$. Thus $(\tilde{f}^*\xi)(F) = (\varphi \circ \tilde{f})(F) = \varphi (\tilde{f}F)$, as claimed.
\end{proof}

\subsection{Cohomology in higher degrees} Finally we venture into the higher dimensional cohomology of $(W^u_{\infty},\tilde{X})$ proving that the essential quasicomponents indeed deserve that name, in the sense that (under the appropriate circumstances) they carry all the cohomological information about $(W^u_{\infty},\tilde{X})$. That is the content of the following theorem:

\begin{theorem} \label{teo:essential} Let $X$ have finitely many connected components. Denote by $F_1, \ldots, F_m$ the (finitely many) essential quasicomponents of $U \cap V$ and let $\overline{F}_j$ be their closures in $W^u_{\infty}/\tilde{X}$ (notice that $\overline{F}_j  = F_j \cup \infty \cup [\tilde{X}]$). For every $q \geq 1$ such that $\check{H}^q(X)$ is finite dimensional there is an isomorphism \[\check{H}^{q+1}(W^u_{\infty}/\tilde{X}) = \oplus_j \check{H}^{q+1}(\overline{F}_j)\] induced by the inclusions $\overline{F}_j \subseteq W^u_{\infty}/\tilde{X}$.
\end{theorem}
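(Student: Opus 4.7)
The plan is to extend the Mayer--Vietoris analysis used in the proof of Theorem \ref{teo:main1_intro} to higher degrees. First I would identify $\check{H}^{q+1}(W^u_\infty/\tilde{X}) \cong \check{H}^{q+1}(W^u_\infty, \tilde{X} \cup \infty)$ for $q \geq 1$: strong excision gives $\check{H}^{q+1}(W^u_\infty/\tilde{X}) = \check{H}^{q+1}(W^u_\infty, \tilde{X})$, and the long exact sequence of the triple $(W^u_\infty, \tilde{X} \cup \infty, \tilde{X})$ provides the second identification, since $(\tilde{X} \cup \infty, \tilde{X})$ contributes only the isolated point $\infty$ and so has vanishing cohomology in degrees $\geq 1$. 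Theorem \ref{teo:mvietoris} applied in degree $q$ (where $\check{H}^q(X)$ is finite-dimensional by hypothesis) then yields the exact sequence
\[\check{H}^q(U, \tilde{X}) \oplus \check{H}^q(V, \infty) \xrightarrow{\alpha_q} \check{H}^q(U \cap V) \xrightarrow{\Delta} \check{H}^{q+1}(W^u_\infty/\tilde{X}) \to 0,\]
presenting the target as the quotient $\check{H}^q(U \cap V)/\mathrm{im}(\alpha_q)$.

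The heart of the proof is to identify this quotient with $\bigoplus_j \check{H}^{q+1}(\overline{F}_j)$. I would apply Lemma \ref{lem:quasi4}, choosing $O$ a clopen neighborhood of the (finitely many, by Theorem \ref{teo:main1_intro}) essential quasicomponents, so that $U \cap V = O \uplus G_{\tilde{X}} \uplus G_\infty$ and correspondingly $\check{H}^q(U \cap V) = \check{H}^q(O) \oplus \check{H}^q(G_{\tilde{X}}) \oplus \check{H}^q(G_\infty)$. Because $\overline{G}_{\tilde{X}} \cap \infty = \emptyset$, classes in $\check{H}^q(G_{\tilde{X}})$ extend by zero to elements of $\check{H}^q(U, \tilde{X})$ (the set $G_{\tilde{X}} \cup \tilde{X}$ is a clopen neighborhood of $\tilde{X}$ in $U$), so they lie in $\mathrm{im}(\alpha_q)$; a symmetric argument handles $\check{H}^q(G_\infty)$. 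Taking a direct limit over a neighborhood basis of shrinking clopen sets $O$ around $\bigsqcup_j F_j$ and invoking continuity of \v{C}ech cohomology, the quotient collapses to the contribution of the essential quasicomponents alone. Applying the same Mayer--Vietoris and connecting-homomorphism analysis individually to each $\overline{F}_j = F_j \cup \{a, \infty\}$ converts $\check{H}^q(F_j)/(\text{attractor and repeller images})$ into $\check{H}^{q+1}(\overline{F}_j)$; summing over $j$ recovers $\bigoplus_j \check{H}^{q+1}(\overline{F}_j)$, and this identification can be repackaged as the map induced by the inclusions $\overline{F}_j \subseteq W^u_\infty/\tilde{X}$.

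Equivalently, once one knows $\check{H}^{q+1}(W^u_\infty/\tilde{X}) \cong \check{H}^{q+1}(\bigcup_j \overline{F}_j)$, a standard Mayer--Vietoris argument on $\bigcup_j \overline{F}_j$ finishes the job, since the pairwise intersections $\overline{F}_i \cap \overline{F}_j = \{a, \infty\}$ (the $F_j$ being distinct quasicomponents of $U \cap V$) have trivial \v{C}ech cohomology in degrees $\geq 1$. Compatibility with $\tilde{f}^*$ is automatic from naturality of all the homomorphisms. The main obstacle I anticipate is rigorously carrying out the direct-limit argument over $O$ and showing that $\mathrm{im}(\alpha_q)$ absorbs \emph{precisely} the non-essential part of $\check{H}^q(U \cap V)$: this is exactly the point where the discrete analogue of Hastings' theorem fails, because $\check{H}^q(U, \tilde{X})$ and $\check{H}^q(V, \infty)$ need not vanish and need not even be finite-dimensional. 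Here the power-series machinery of Section \ref{sec:series}, in particular Corollary \ref{cor:no_eigen} (that $\tilde{f}^*$ has no nonzero eigenvalues on these relative groups), combined with Lemma \ref{lem:f_dimensional} (forcing the finite-dimensional $\tilde{f}^*$-invariant images in the Mayer--Vietoris sequence to be nilpotent and hence, via the connecting map, zero) is indispensable.
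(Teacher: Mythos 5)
Your overall plan --- cut down $\check{H}^q(U\cap V)/\mathrm{im}(\alpha_q)$ using the decomposition of Lemma~\ref{lem:quasi4}, then pass to a limit over shrinking $O$ --- is a genuinely different route from the one the paper takes, and it has a real gap in the middle. After showing $\check{H}^q(G_{\tilde{X}})\oplus\check{H}^q(G_\infty)\subseteq\mathrm{im}(\alpha_q)$ (a correct observation, though the justification as written is off: by Lemma~\ref{lem:quasi4}.(2), $G_{\tilde{X}}$ is clopen in $V=W^u_\infty\setminus\tilde{X}$, not $U$, so its classes extend by zero to $\check{H}^q(V,\infty)$, and symmetrically $G_\infty$ is clopen in $U$ so its classes extend to $\check{H}^q(U,\tilde{X})$), all you have is a surjection $\check{H}^q(O)\twoheadrightarrow\check{H}^{q+1}(W^u_\infty/\tilde{X})$ whose kernel $\check{H}^q(O)\cap\mathrm{im}(\alpha_q)$ you have no description of. The proposed fix, ``take a direct limit over shrinking clopen $O$ and invoke continuity of \v{C}ech cohomology,'' does not close this: continuity converts \emph{inverse} limits of \emph{compact} spaces into direct limits in cohomology, whereas the sets $O$ are open, non-compact subsets of $U\cap V$; and in any case the restriction maps $\check{H}^q(O)\to\check{H}^q(O')$ for $O'\subset O$ are not compatible with the surjections onto $\check{H}^{q+1}(W^u_\infty/\tilde{X})$ (restricting to $O'$ and then extending by zero is not the same as extending by zero from $O$), so there is no compatible directed system to take a limit of. The claim that ``the quotient collapses to the contribution of the essential quasicomponents alone'' is therefore unsupported, and the subsequent per-$\overline{F}_j$ analysis rests on it.

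The paper sidesteps all of this by switching decompositions: instead of the open Mayer--Vietoris cover $(U,V)$, it writes the \emph{compact} space $W^u_\infty/\tilde{X}$ as $\overline{O}\cup\overline{G}$ where the closures are taken in $W^u_\infty/\tilde{X}$, so $\overline{O}\cap\overline{G}\subseteq\{[\tilde{X}],\infty\}$ and the degree $q+1\ge 2$ Mayer--Vietoris sequence simply gives $\check{H}^{q+1}(W^u_\infty/\tilde{X})\cong\check{H}^{q+1}(\overline{O})\oplus\check{H}^{q+1}(\overline{G})$. The power-series machinery (Corollary~\ref{cor:no_eigen} and Lemma~\ref{lem:f_dimensional}) --- which you correctly identify as the crucial input --- is used in Proposition~\ref{prop:inc_zero} to show the inclusion $(U,\tilde{X})\hookrightarrow(W^u_\infty,\tilde{X})$ is zero in degree $q+1$, and since $\overline{G}_{\tilde{X}}\subseteq U/\tilde{X}$ (Lemma~\ref{lem:quasi4}.(3)) this kills the $\overline{G}$ summand, giving Proposition~\ref{prop:inc_iso}: $\check{H}^{q+1}(\overline{O})\cong\check{H}^{q+1}(W^u_\infty/\tilde{X})$. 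Now the $\overline{O}$ are compact with $\bigcap_O\overline{O}=\overline{E}$, so continuity of \v{C}ech cohomology applies legitimately and yields $\check{H}^{q+1}(\overline{E})\cong\check{H}^{q+1}(W^u_\infty/\tilde{X})$. The split into $\oplus_j\check{H}^{q+1}(\overline{F}_j)$ is then exactly the routine Mayer--Vietoris on $\overline{E}=\bigcup_j\overline{F}_j$ that you sketch in your ``Equivalently'' paragraph, so you had the right picture of the endgame; the missing idea was to work throughout with the compact closures $\overline{O}$, $\overline{G}$ rather than the open sets $O$, $U$, $V$.
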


To prove Theorem \ref{teo:essential} we need Propositions \ref{prop:inc_zero} and \ref{prop:inc_iso} below, of which the second is a strengthening of the first. Both involve an arbitrary neighbourhood $O \subseteq U \cap V$ of the union of the essential quasicomponents as well as its complement $G$ in $U \cap V$. These two sets $O$ and $G$ could eventually be empty in some cases (for instance, if there are no essential quasicomponents at all, which according to the discussion of Section \ref{sec:background} about the connectedness of $W^u_{\infty}$ corresponds to the case when $X$ is an asymptotically stable attractor) but, to avoid a cumbersome case distinction, we shall prove the propositions under the assumption that both $O$ and $G$ are nonempty. If one (or both) of them are empty, the propositions are still true, but trivially so.

\begin{proposition} \label{prop:inc_zero} Let $O \subseteq U \cap V$ be a clopen neighbourhood of the union of the essential quasicomponents and denote by $G$ its complement in $U \cap V$. Suppose that $\check{H}^q(X)$ has finite dimension for some $q \geq 0$. Then the inclusion $\overline{G} \subseteq W^u_{\infty}/\tilde{X}$ induces the zero homomorphism in cohomology in degree $q+1$.
\end{proposition}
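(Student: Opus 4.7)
The plan is to split $\overline{G}$ into its two parts near the attractor $\infty$ and the repeller $[\tilde X]$, lift the problem to relative cohomology via a pair sequence, and then invoke the vanishing of the maps $k^*$ and $\ell^*$ established in the proof of Theorem~\ref{teo:mvietoris}.

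First, by Lemma~\ref{lem:quasi4} the set $\overline{G}$ decomposes inside $W^u_\infty/\tilde X$ as a disjoint union $\overline{G}_{\tilde X} \sqcup \overline{G}_\infty$, with $\overline{G}_{\tilde X}$ disjoint from $\infty$ (hence contained in $U/\tilde X$) and $\overline{G}_\infty$ disjoint from $[\tilde X]$ (hence contained in $V$). The disjointness gives $\check{H}^{q+1}(\overline{G}) = \check{H}^{q+1}(\overline{G}_{\tilde X}) \oplus \check{H}^{q+1}(\overline{G}_\infty)$, so it suffices to kill each restriction separately. Next, the long exact sequence of the pair $(W^u_\infty/\tilde X,\,\infty \cup [\tilde X])$, together with the vanishing $\check{H}^{q+1}(\infty \cup [\tilde X]) = 0$ for $q \geq 0$ (a two-point set has no positive-degree cohomology), shows that the forgetful map $\check{H}^{q+1}(W^u_\infty/\tilde X,\,\infty \cup [\tilde X]) \to \check{H}^{q+1}(W^u_\infty/\tilde X)$ is surjective; by strong excision its domain is canonically isomorphic to $\check{H}^{q+1}(W^u_\infty,\,\tilde X \cup \infty)$. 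It is thus enough to show that each of the two compositions
\[
\check{H}^{q+1}(W^u_\infty,\,\tilde X \cup \infty) \longrightarrow \check{H}^{q+1}(W^u_\infty/\tilde X) \longrightarrow \check{H}^{q+1}(\overline{G}_*)
\]
vanishes, for $* \in \{\infty,\tilde X\}$.

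For $\overline{G}_\infty$ the composition factors, by naturality of the pair sequence, as
\[
\check{H}^{q+1}(W^u_\infty,\,\tilde X \cup \infty) \xrightarrow{k^*} \check{H}^{q+1}(V,\infty) \longrightarrow \check{H}^{q+1}(\overline{G}_\infty,\infty) \longrightarrow \check{H}^{q+1}(\overline{G}_\infty),
\]
where $k \colon (V,\infty) \hookrightarrow (W^u_\infty, \tilde X \cup \infty)$ is precisely the Mayer--Vietoris inclusion appearing in Theorem~\ref{teo:mvietoris}. Part~(i) of the proof of that theorem gives $k^* = 0$ in every degree, so the composition is zero. Symmetrically, the restriction to $\overline{G}_{\tilde X}$ factors through the pair inclusion $(U,\tilde X) \hookrightarrow (W^u_\infty, \tilde X \cup \infty)$ and reduces to the vanishing of $\ell^*$ in degree $q+1$, which is the content of part~(ii) and uses precisely our hypothesis that $\check{H}^q(X)$ be finite dimensional.

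The main obstacle is essentially bookkeeping: one has to recognize that each restriction can be routed through the pair $(V,\infty)$ or $(U,\tilde X)$, and that under the strong excision isomorphism the resulting map is literally $k^*$ or $\ell^*$, so the vanishing already proved in Theorem~\ref{teo:mvietoris} applies verbatim. The substantive dynamical input—the vanishing of $k^*$ and $\ell^*$—was already absorbed into that theorem, which itself rests on Corollary~\ref{cor:no_eigen} (the series summation applied to the attractor basin $V$ and, dually via $\tilde f^{-1}$, to the repeller basin $U$) and on the finite dimensionality of the Conley-index group $\check{H}^*(W^u_\infty,\infty)$; no further series manipulation is needed at this stage.
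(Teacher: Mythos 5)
Your argument is correct and follows the same core strategy as the paper's proof: decompose $\overline{G} = \overline{G}_{\tilde X} \uplus \overline{G}_\infty$ via Lemma~\ref{lem:quasi4} and kill each piece by routing through the basin of attraction or repulsion, where the vanishing of the relevant inclusion-induced map comes from the finite-dimensionality argument of Theorem~\ref{teo:mvietoris} (ultimately Lemma~\ref{lem:f_dimensional}). The only cosmetic difference is routing: you lift through $(W^u_\infty,\tilde X\cup\infty)$ via the surjective forgetful map (valid since $\check H^{q+1}$ of a two-point set vanishes for $q\ge 0$) so that you can cite $k^*=0$ and $\ell^*=0$ verbatim, whereas the paper re-runs the same finite-dimensional-image argument for the pair $(W^u_\infty,\tilde X)$ and then applies strong excision directly to $U/\tilde X\subseteq W^u_\infty/\tilde X$; both routes are sound and equivalent.
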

\begin{proof} In the notation of Lemma \ref{lem:quasi4} one has $G = G_{\tilde{X}} \uplus G_{\infty}$, and so $\overline{G} = \overline{G}_{\tilde{X}} \cup \overline{G}_{\infty}$ (unlike Lemma \ref{lem:quasi4}, the closures here are taken in the quotient space $W^u_{\infty}/\tilde{X})$. Part (2) of the lemma implies that $\overline{G}_{\tilde{X}} = G_{\tilde{X}} \cup [\tilde{X}]$ (if nonempty) and $\overline{G}_{\infty} = G_{\infty} \cup \infty$ (if nonempty), so in particular $\overline{G} = \overline{G}_{\tilde{X}} \uplus \overline{G}_{\infty}$ and therefore $\check{H}^*(\overline{G}) \cong \check{H}^*(\overline{G}_{\tilde{X}}) \oplus \check{H}^*(\overline{G}_{\infty})$ because both $\overline{G}_{\tilde{X}}$ and $\overline{G}_{\infty}$ are open in $\overline{G}$. Thus to prove the proposition it suffices to show that both inclusions $\overline{G}_{\tilde{X}} \subseteq W^u_{\infty}/\tilde{X}$ and $\overline{G}_{\infty} \subseteq W^u_{\infty}/\tilde{X}$ induce the zero homomorphism in degree $q+1$.

The same argument used to establish assertion (ii) in the proof of Theorem \ref{teo:mvietoris} shows that the inclusion $(U,\tilde{X}) \subseteq (W^u_{\infty},\tilde{X})$ induces the zero homomorphism in cohomology in degree $q+1$, and the strong excision property of \v{C}ech cohomology then implies that the same is true of the inclusion $U/\tilde{X} \subseteq W^u_{\infty}/\tilde{X}$. Now, part (3) of Lemma \ref{lem:quasi4} guarantees that $\overline{G}_{\tilde{X}}$ is a subset of $U/\tilde{X}$, so the inclusion $\overline{G}_{\tilde{X}} \subseteq W^u_{\infty}/\tilde{X}$ factors through $U/\tilde{X}$ and consequently it also induces the zero homomorphism in cohomology in degree $q+1$, as was to be shown. A similar argument shows that the same holds (in every degree) for the inclusion $\overline{G}_{\infty} \subseteq W^u_{\infty}/\tilde{X}$.
\end{proof}

For the proof of the next proposition we need to make a simple observation. \label{pag:mvieto} Let a compact space $Z$ be written as the union of two compact spaces $Z_1$ and $Z_2$ and consider the corresponding Mayer--Vietoris sequence \begin{equation} \label{eq:mvietoris} \xymatrix{\ldots & \check{H}^q(Z_1 \cap Z_2) \ar[l] & \check{H}^q(Z_1) \oplus \check{H}^q(Z_2) \ar[l] & \check{H}^q(Z) \ar[l] & \ldots \ar[l]}\end{equation} The standard sufficient condition for this sequence to be exact is that the interiors of $Z_1$ and $Z_2$ cover $Z$. However, in the present case the above sequence is always exact. To prove this, let $(U_k)$ and $(V_k)$ be decreasing neighbourhood bases of $Z_1$ and $Z_2$ respectively. For each $k$ there is an exact Mayer--Vietoris sequence for the decomposition $Z = U_k \cup V_k$; furthermore, all these sequences are connected by homomorphisms induced by the inclusions $U_{k+1} \subseteq U_k$ and $V_{k+1} \subseteq V_k$ (shown as unlabeled vertical arrows in the diagram): \[\xymatrix{\ldots & \check{H}^q(U_k \cap V_k) \ar[l] \ar[d] & \check{H}^q(U_k) \oplus \check{H}^q(V_k) \ar[l] \ar[d] & \check{H}^q(Z) \ar[l] \ar[d]^-{{\rm id}}& \ldots \ar[l] \\ \ldots & \check{H}^q(U_{k+1} \cap V_{k+1}) \ar[l] & \check{H}^q(U_{k+1}) \oplus \check{H}^q(V_{k+1}) \ar[l] & \check{H}^q(Z) \ar[l] & \ldots \ar[l]}\]  The direct limit (in $k$) of this sequence of Mayer--Vietoris sequences is, because of the continuity property of \v{C}ech cohomology, nothing but the exact sequence \eqref{eq:mvietoris}. Since the direct limit of a sequence of exact sequences is again an exact sequence, the claim follows.

\begin{proposition} \label{prop:inc_iso} Let $O \subseteq U \cap V$ be a clopen neighbourhood of the union of the essential quasicomponents and denote by $G$ its complement in $U \cap V$. Suppose that $\check{H}^q(X)$ has finite dimension for some $q \geq 0$. Then the inclusion $\overline{O} \subseteq W^u_{\infty}/\tilde{X}$ induces an isomorphism in cohomology in degree $q+1$.
\end{proposition}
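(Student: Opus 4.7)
The plan is to exploit the Mayer--Vietoris sequence for the closed decomposition $W^u_\infty/\tilde X = \overline O \cup \overline G$ together with the vanishing supplied by Proposition \ref{prop:inc_zero}. The key preliminary observation is that $\overline O \cap \overline G \subseteq \{[\tilde X],\infty\}$: the sets $O$ and $G$ partition $U\cap V$ into two clopen pieces, so their closures inside $U\cap V$ stay disjoint, and the only new points that can appear in either closure when passing to $W^u_\infty/\tilde X$ are $[\tilde X]$ and $\infty$. In particular $\overline O\cap\overline G$ is a finite discrete space whose cohomology is concentrated in degree zero.

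For $q\ge 1$ both $\check H^q(\overline O\cap\overline G)$ and $\check H^{q+1}(\overline O\cap\overline G)$ vanish and Mayer--Vietoris contracts to an isomorphism
\[\phi_{q+1}\colon \check H^{q+1}(W^u_\infty/\tilde X)\xrightarrow{\sim} \check H^{q+1}(\overline O)\oplus \check H^{q+1}(\overline G).\]
Since $\phi_{q+1}$ is the pair of restriction maps and its second component vanishes by Proposition \ref{prop:inc_zero}, one concludes at once that $\check H^{q+1}(\overline G)=0$ and that the first component $i_{\overline O}^*$ is an isomorphism.

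The case $q=0$ is the step that I expect to require real attention. The previous argument still shows that $i_{\overline O}^*\colon \check H^1(W^u_\infty/\tilde X)\to \check H^1(\overline O)$ is surjective (and that $\check H^1(\overline G)=0$), but injectivity reduces, by exactness, to showing that the connecting homomorphism $\delta_0\colon \check H^0(\overline O\cap\overline G)\to \check H^1(W^u_\infty/\tilde X)$ is zero, equivalently that $\psi_0\colon \check H^0(\overline O)\oplus \check H^0(\overline G)\to \check H^0(\overline O\cap\overline G)$ is surjective. The apparent obstruction is that locally constant functions on $\overline O$ cannot distinguish $[\tilde X]$ from $\infty$, because any essential quasicomponent $F\subseteq O$ is contained in a single quasicomponent of $\overline O$ whose closure contains both points. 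The rescue comes from $\overline G$: by Lemma \ref{lem:quasi4}.(3) it decomposes as the disjoint clopen union $\overline G_{\tilde X}\uplus \overline G_\infty$ separating $[\tilde X]$ from $\infty$, so locally constant functions on $\overline G$ can take independent prescribed values at those two points. Combining such a function with an appropriate constant on $\overline O$ realises every element of $\check H^0(\overline O\cap\overline G)$ as $\psi_0(\alpha,\beta)$, which forces $\delta_0=0$ and concludes the proof.
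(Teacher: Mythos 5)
Your proof is correct and follows essentially the same route as the paper: a Mayer--Vietoris sequence for the closed decomposition $W^u_\infty/\tilde X = \overline O\cup\overline G$, with $\overline O\cap\overline G\subseteq\{[\tilde X],\infty\}$, the vanishing of the $\overline G$-component furnished by Proposition~\ref{prop:inc_zero} for $q\geq 1$, and the decomposition $\overline G=\overline G_{\tilde X}\uplus\overline G_\infty$ from Lemma~\ref{lem:quasi4} used to show that $\psi_0$ (in fact, already its $\check H^0(\overline G)$-component) is onto $\check H^0(\overline O\cap\overline G)$, which splits the sequence at degree $q=0$.
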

\begin{proof} Write $W^u_{\infty}/\tilde{X}$ as the union of the two compact sets $\overline{O}$ and $\overline{G}$. According to the discussion preceding this proposition we may write an exact Mayer--Vietoris sequence \[\xymatrix{ \ldots & \ar[l] \check{H}^{q+1}(\overline{O}) \oplus \check{H}^{q+1}(\overline{G}) & \ar[l] \check{H}^{q+1}(W^u_{\infty}/\tilde{X}) & \ar[l] \check{H}^q(\overline{O} \cap \overline{G}) & \ar[l]}\] The intersection $\overline{O} \cap \overline{G}$ consists of, at most, the pair of points $\infty$ and $[\tilde{X}]$, so for $q \geq 1$ the above sequence truncates to \begin{equation} \label{eq:mvietoris0} \xymatrix{ 0 & \ar[l] \check{H}^{q+1}(\overline{O}) \oplus \check{H}^{q+1}(\overline{G}) & \ar[l] \check{H}^{q+1}(W^u_{\infty}/\tilde{X}) & \ar[l] 0}\end{equation} and, since the inclusion induced map $\check{H}^{q+1}(W^u_{\infty}/\tilde{X}) \to \check{H}^{q+1}(\overline{G})$ is zero according to Proposition \ref{prop:inc_zero}, this implies that $\check{H}^{q+1}(W^u_{\infty}/\tilde{X}) \cong \check{H}^{q+1}(\overline{O})$ as was to be shown.

It remains to analyze the case $q = 0$, which corresponds to the end of the Mayer--Vietoris sequence:
\begin{equation} \label{eq:mvietoris00}
\xymatrix{ \ldots & \ar[l] \check{H}^1(W^u_{\infty}/\tilde{X}) & \ar[l] \check{H}^0(\overline{O} \cap \overline{G}) & \ar[l] \check{H}^{0}(\overline{O}) \oplus \check{H}^{0}(\overline{G}) & \ar[l] \check{H}^{0}(W^u_{\infty}/\tilde{X}) & \ar[l] 0}
\end{equation}

Observe once again that $\overline{O} \cap \overline{G}$ consists of, at most, the two points $\infty$ and $[\tilde{X}]$. Recalling from Lemma \ref{lem:quasi4} that $\overline{G}$ is partitioned in two clopen sets, $\overline{G}_{\tilde{X}}$ and $\overline{G}_{\infty}$, it is clear that a necessary condition for $\overline{O} \cap \overline{G}$ to contain $\infty$ is that $\overline{G}_{\infty} \neq \emptyset$, and similarly for $[\tilde{X}]$. Thus the inclusion induced map $\check{H}^0(\overline{G}_{\tilde{X}}) \oplus \check{H}^0(\overline{G}_{\infty}) \to \check{H}^0(\overline{O} \cap \overline{G})$ is surjective. However, $\check{H}^0(\overline{G}_{\tilde{X}}) \oplus \check{H}^0(\overline{G}_{\infty}) = \check{H}^0(\overline{G})$ because both $\overline{G}_{\tilde{X}}$ and $\overline{G}_{\infty}$ provide a partition of $\overline{G}$ into clopen sets, and so it follows that the inclusion induced map $\check{H}^0(\overline{G}) \to \check{H}^0(\overline{O} \cap \overline{G})$ is surjective. Thus the Mayer--Vietoris sequence \eqref{eq:mvietoris00} splits, proving that the sequence \eqref{eq:mvietoris0} is also exact for $q = 0$. Reasoning as above we prove the proposition for $q = 0$ too.
\end{proof}

The proof of the above proposition also shows that the cohomology group $\check{H}^{q+1}(\overline{G})$ vanishes (for $q \geq 0$), which is the formal counterpart of the intuitive idea that quasicomponents that are not essential do not contribute to the cohomology of $W^u_{\infty}/\tilde{X}$.

\begin{proof}[Proof of Theorem \ref{teo:essential}] Since $X$ has finitely many connected components, $U \cap V$ has finitely many quasicomponents $F_1, \ldots, F_d$ as shown in Proposition \ref{prop:describeH1}. Denote by $E$ their union.
\smallskip

{\it Claim.} Let $p \in U \cap V$ be a point not in $E$. Then there exists a clopen set $O \subseteq U \cap V$ that contains $E$ but not $p$.

{\it Proof of claim.} Let $F$ be the quasicomponent of $U \cap V$ that contains $p$. Since $p \not\in E$, clearly $F$ is different from the $F_i$. Let $G$ be a clopen set that contains $F$ and is disjoint from the $F_i$. Letting $O$ be the complement of $G$ in $U \cap V$ proves the claim.
\smallskip

Let $\mathcal{O}$ be the set of clopen subsets of $U \cap V$ that contain $E$. Clearly $\overline{O} = [\tilde{X}] \cup O \cup \infty$ for each $O \in \mathcal{O}$ and, by the claim above, $\bigcap_{O \in \mathcal{O}} \overline{O} = \overline{E}$. Thus considering $\mathcal{O}$ as a directed set (by the inclusion relation) one has $\varprojlim_{O \in \mathcal{O}} \overline{O} = \overline{E}$ and therefore by the continuity property of \v{C}ech cohomology $\check{H}^*(\overline{E}) \cong \varinjlim_{O \in \mathcal{O}} \check{H}^*(\overline{O})$. The theorem then follows from Proposition \ref{prop:inc_iso}.
\end{proof}

Theorem \ref{teo:essential} states that the cohomological information contained in the unstable manifold is carried by the essential quasicomponents. To complete the picture, we show in Corollary \ref{cor:minimal} that this information is actually contained in connected sets that we could legitimately call \emph{branches}. The subtle points here is that a quasicomponent is not always connected. We first need the following remark:

\begin{remark}\label{rem:connectedness} Let $E$ be the union of the essential quasicomponents of $W^u_{\infty} \setminus (\tilde{X} \cup \infty)$. Then $\overline{E}$ is connected (unless it is empty).
\end{remark}

Notice that this fact would be immediate if the essential quasicomponents $F_j$ were connected; however, we only know that each $F_j$ is a union of connected components of $W^u_{\infty} \setminus (\tilde{X} \cup \infty)$.

\begin{proof}[Proof of Remark \ref{rem:connectedness}] As before, we work in the quotient space $W^u_{\infty}/\tilde{X}$. By Proposition \ref{prop:reach} every component of $U \cap V$ is adherent either to $[\tilde{X}]$ or to $\infty$, and this implies that $\overline{E}$ has at most two connected components: $A_{\infty}$ containing $\infty$ and $A_{\tilde{X}}$ containing $[\tilde{X}]$. Suppose $A_{\infty}$ and $A_{\tilde{X}}$ are different components. Then $\{A_{\infty}, A_X\}$ is a partition of $\overline{E}$ in $\tilde{f}$--invariant clopen sets so the $\omega$--limit of any point of $A_{\tilde{X}}$ is contained in $A_{\tilde{X}}$ and the $\alpha$--limit of any point in $A_{\infty}$ is contained in $A_{\infty}$. This contradicts the attractor--repeller decomposition that $\overline{E}$ inherits from $W^u_{\infty}/\tilde{X}$ unless $E$ is empty.
\end{proof}

Now we can prove Corollary \ref{cor:minimal}. For brevity we will require that $X$ has finitely generated \v{C}ech cohomology in all degrees, but the corollary can also be stated separately for each degree, as in Theorem \ref{teo:essential}.

\begin{corollary} \label{cor:minimal} Assume that $X$ has finitely generated \v{C}ech cohomology in all degrees. As before, let $F_j$ be the essential quasicomponents of $W^u_{\infty} \setminus (\tilde{X} \cup \infty)$ and denote by $E$ their union. There exists an invariant continuum \(C \subseteq \overline{E}\) such that the inclusions $C \subseteq \overline{E} \subseteq W^u_{\infty}/\tilde{X}$ induce isomorphisms in cohomology in all dimensions and \(C \setminus (\tilde{X} \cup \infty)\) consists of finitely many connected sets \(G_i\).
\end{corollary}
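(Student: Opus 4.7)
The plan is to extract $C$ from $\overline{E}$ by a two-stage reduction. The first stage passes from the (possibly wild) set $\overline{E}$ to a slightly larger clopen-type neighbourhood on which the cohomology already stabilizes; the second stage cuts down inside each essential quasicomponent to a single connected piece in a way compatible with the dynamics.

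First I would invoke Theorem \ref{teo:essential} together with the hypothesis that $\check{H}^*(X)$ is finitely generated in every degree to conclude that $\check{H}^*(W^u_{\infty}/\tilde{X})$ is finitely generated as well. The continuity of \v{C}ech cohomology, applied to the directed system of clopen neighbourhoods $O$ of $E$ in $U \cap V$, gives $\check{H}^*(\overline{E}) = \varinjlim \check{H}^*(\overline{O})$, so for some clopen neighbourhood $O$ the inclusion $\overline{O} \hookrightarrow W^u_{\infty}/\tilde{X}$ already induces isomorphisms in cohomology in all degrees. Since there are only finitely many essential quasicomponents $F_1,\ldots,F_m$ and they are mutually separated by clopen sets, I may shrink $O$ to a disjoint union $O_1 \sqcup \cdots \sqcup O_m$ with $F_j \subseteq O_j$ and $O_j$ clopen in $U \cap V$. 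Because $\tilde{f}^{-1}$ permutes the $F_j$'s with permutation $\varphi$ of finite order $r$, I can replace each $O_j$ by $\bigcap_{n=0}^{r-1} \tilde{f}^{-n}(O_{\varphi^n(j)})$ to arrange $\tilde{f}^{-1}(O_j) = O_{\varphi(j)}$, preserving the clopen and neighbourhood properties.

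For the second stage I would select, inside each $\overline{O_j}$ (closure taken in $W^u_{\infty}/\tilde{X}$), a sub-continuum $K_j$ that contains $[\tilde{X}]$ and $\infty$ and preserves the $j$th cohomological summand in the decomposition of Theorem \ref{teo:essential}. Existence of such a $K_j$ rests on the classical fact that in a compact Hausdorff space, two points lie in the same connected component if and only if they lie in the same quasicomponent; combined with the fact that $F_j$ is essential (so $[\tilde{X}]$ and $\infty$ both belong to $\overline{F_j} \subseteq \overline{O_j}$) and with the connectedness of $\overline{E}$ (Remark \ref{rem:connectedness}), this produces a continuum containing both distinguished points. Choosing $K_j$ irreducible between $[\tilde{X}]$ and $\infty$ among those continua for which $K_j \hookrightarrow \overline{F_j}$ induces surjections in cohomology (a minimality argument via Zorn's lemma applied to the compact family of such continua, using that nested intersections of continua are continua) and then making the family $\{K_j\}_{j=1}^m$ equivariant by replacing $K_j$ with $\tilde{f}^n K_{\varphi^n(j)}$ averaged over the orbit, gives the required choices.

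Finally I would set $C := \bigcup_{j=1}^m K_j$ and $G_j := K_j \setminus \{[\tilde{X}], \infty\}$. By construction $C$ is compact and $\tilde{f}$-invariant, and it is connected because every $K_j$ contains $[\tilde{X}]$. The decomposition $C \setminus \{[\tilde{X}], \infty\} = G_1 \sqcup \cdots \sqcup G_m$ is a disjoint union of finitely many connected sets since $G_j \subseteq F_j$ and the $F_j$'s are pairwise separated by clopens of $U \cap V$. To verify that $C \hookrightarrow \overline{E}$ induces isomorphisms in \v{C}ech cohomology, I would run the same Mayer--Vietoris argument as in the proof of Theorem \ref{teo:essential} (Propositions \ref{prop:inc_zero} and \ref{prop:inc_iso}), replacing $\overline{O}$ by $C$ and using that each $K_j$ was chosen to preserve the cohomology of $\overline{F_j}$. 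The main obstacle will be precisely this last cohomological verification: it requires ensuring that the irreducible sub-continua $K_j$ can be chosen simultaneously minimal and cohomologically full, which is the technical heart of the argument and uses both compactness and the finite-dimensionality of each $\check{H}^*(\overline{F_j})$ provided by Theorem \ref{teo:essential}.
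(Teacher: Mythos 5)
There is a genuine gap, located exactly where you flag the ``technical heart'': the selection of the sub-continua $K_j$ inside each $\overline{F_j}$. Two distinct problems arise there. First, your phrasing mixes two incompatible minimality conditions: a continuum in $\overline{F_j}$ irreducible between $[\tilde{X}]$ and $\infty$ typically has \emph{less} cohomology than $\overline{F_j}$, not the same (take $\overline{F_j}$ to be a $2$-sphere with two marked points; any irreducible continuum between them is an arc, killing $\check{H}^2$). Your hedge --- taking a minimal element of the family of subcontinua for which the inclusion is cohomologically surjective --- fixes the existence issue but does not deliver what the corollary actually asserts, namely that $K_j \setminus \{[\tilde{X}],\infty\}$ is \emph{connected}. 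Removing two points from a continuum can disconnect it, and minimality in the cohomological family gives no control over this. Second, even before that, $\overline{F_j}$ is not known to be a continuum: $F_j$ is a quasicomponent of the non-compact space $U\cap V$ and may well be disconnected (the paper's Remark~\ref{rem:connectedness} only establishes that $\overline{E}$, the closure of the \emph{union}, is connected --- and even there comments that this would be immediate if the $F_j$ were connected, which is not known). So the object inside which you are trying to find an irreducible continuum may not contain one. Finally, the ``averaging over the orbit'' you propose to make $\{K_j\}$ equivariant is not a well-defined operation on continua; replacing it by an intersection or union breaks either connectedness or the cohomological constraint.

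The paper takes a different and shorter route that avoids all of this. Instead of reducing $\overline{E}$ quasicomponent by quasicomponent, it applies Zorn's lemma to the family $\mathcal{C}$ of \emph{all} invariant continua $C \ni [\tilde{X}],\infty$ for which $C \hookrightarrow W^u_\infty/\tilde{X}$ is a cohomology isomorphism ($\overline{E} \in \mathcal{C}$ by Theorem~\ref{teo:essential} and Remark~\ref{rem:connectedness}; the intersection of a nested chain stays in $\mathcal{C}$ by continuity of \v{C}ech cohomology). Taking a minimal $C$, it then invokes Remark~\ref{rmk:Egeneral} to rerun Proposition~\ref{prop:describeH1} and Theorem~\ref{teo:essential} inside the smaller ambient space $W := C$: this shows $C \setminus ([\tilde{X}]\cup\infty)$ has only finitely many essential quasicomponents $G_i$, whose union has closure in $\mathcal{C}$, hence equals $C$ by minimality. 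The connectedness of each $G_i$ then follows not from any ad hoc construction but from the general topology fact (\cite[Corollary 9.3]{wilder1}) that in a space with finitely many quasicomponents, quasicomponents coincide with components. This bootstrapping step --- reapplying the entire machinery of the section to a smaller invariant set $C$ --- is what your argument is missing and what makes the connectedness conclusion come for free.
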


\begin{proof} We work in the quotient space $W^u_{\infty}/\tilde{X}$. Consider the family $\mathcal{C}$ of all invariant continua \(C \subseteq W^u_{\infty}/\tilde{X}\) that contain \([\tilde{X}]\) and \(\infty\) and such that the inclusion \(C \subseteq W^u_{\infty}/\tilde{X}\) induces isomorphisms in cohomology in all degrees (in particular, $C$ has finitely generated cohomology because $W^u_{\infty}/\tilde{X}$ does, by assumption). By Theorem \ref{teo:essential} and Remark \ref{rem:connectedness} the set $\overline{E}$ belongs to $\mathcal{C}$. Using the continuity property of \v{C}ech cohomology it is easy to that the intersection of every nested sequence of elements from $\mathcal{C}$ also belongs to $\mathcal{C}$, and it follows from Zorn's lemma that $\mathcal{C}$ has a minimal element (with respect to the inclusion). We claim that any such minimal set $C$ satisfies the corollary. To see this, recall from Remark \ref{rmk:Egeneral} that all the results in this section hold for any compact invariant subset $W$ of $W^u_{\infty}$ that contains $\tilde{X}$ and $\infty$ and has finitely generated \v{C}ech cohomology. In particular, Proposition \ref{prop:describeH1} and Theorem \ref{teo:essential} applied to $W := C$ show that \(C \setminus ([\tilde{X}] \cup \infty)\) has finitely many essential quasicomponents $G_1, \ldots, G_n$ and their union $G$ satisfies that the inclusion \( \overline{G} \subseteq C\) induces isomorphisms in cohomology. It follows that $\overline{G}$ belongs to the family $\mathcal{C}$ (notice that $\overline{G}$ is a continuum by Remark \ref{rem:connectedness}) so, by the minimality of $C$, the equality $C = \overline{G}$ must hold. In particular \(C \setminus ([\tilde{X}] \cup \infty)\) has finitely many quasicomponents (the $G_i$), and then a result from general topology (\cite[Corollary 9.3, p. 19]{wilder1}) guarantees that each quasicomponent is actually a component; that is, each of the $G_i$ is connected.
\end{proof}

\subsection*{Addendum} As promised in the Introduction, we now devote a few lines to discuss the general case when $X$ is not assumed to have finitely many connected components. A quick revision of the proof of Theorem \ref{teo:mvietoris} shows that now there is no guarantee that the Mayer--Vietoris sequence \eqref{eq:mayer} can be truncated to an exact sequence, even in the lowest degree $q = 0$. However, even though we cannot describe the full group $\check{H}^1(W^u_{\infty},\tilde{X} \cup \infty)$, we can still describe the eigenvectors of $\tilde{f}^*$:

\begin{proposition} Let $u \in \check{H}^1(W^u_{\infty},\tilde{X} \cup \infty)$ be an eigenvector of $\tilde{f}^*$ associated to an eigenvalue $\lambda$. Then $u$ has the form $u = \Delta(\varphi)$, where $\varphi \in \check{H}^0(U \cap V)$ satisfies $\varphi(\tilde{f}F) = \lambda \varphi(F)$ for every essential quasicomponent $F$.
\end{proposition}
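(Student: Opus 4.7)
The plan is to combine two key ingredients that are already available in the paper: the Mayer--Vietoris sequence \eqref{eq:mayer}, and the nonexistence of nonzero eigenvalues on the cohomology of an attractor-basin pair (Corollary \ref{cor:no_eigen}). I will first show that every eigenvector $u$ with nonzero eigenvalue must lie in the image of the connecting homomorphism $\Delta$; then, given any preimage $\varphi$ of $u$ under $\Delta$, I will use naturality of $\Delta$ and the kernel description from Proposition \ref{prop:qc} to extract the eigenvalue condition on essential quasicomponents. (The case $\lambda = 0$ is excluded because the condition $\varphi(\tilde{f}F)=0$ for all essential $F$ forces $\varphi \in \ker \Delta$ by Proposition \ref{prop:qc}, giving $u=0$.)

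For the first step, consider the maps $\ell^* \colon \check{H}^1(W^u_{\infty},\tilde{X} \cup \infty) \to \check{H}^1(U,\tilde{X})$ and $k^* \colon \check{H}^1(W^u_{\infty},\tilde{X} \cup \infty) \to \check{H}^1(V,\infty)$ induced by inclusions, which appear in the Mayer--Vietoris sequence \eqref{eq:mayer}. Both commute with $\tilde{f}^*$, so $k^*(u)$ and $\ell^*(u)$ are either zero or eigenvectors of $\tilde{f}^*$ with eigenvalue $\lambda \ne 0$. Now $\infty$ is a global attractor for $\tilde{f}$ in $V$, so Corollary \ref{cor:no_eigen} applied to $(V,\infty)$ shows $\tilde{f}^*$ has no nonzero eigenvalues on $\check{H}^*(V,\infty)$, forcing $k^*(u)=0$. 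Similarly, $\tilde{X}$ is a global attractor for $\tilde{f}^{-1}$ in $U$, so $(\tilde{f}^{-1})^*$ has no nonzero eigenvalues on $\check{H}^*(U,\tilde{X})$. Since $\tilde{f}$ is a homeomorphism, any eigenvector of $\tilde{f}^*$ with eigenvalue $\lambda \ne 0$ is automatically an eigenvector of $(\tilde{f}^{-1})^*$ with eigenvalue $\lambda^{-1} \ne 0$; thus $\ell^*(u)=0$ as well. Exactness of \eqref{eq:mayer} then yields $u \in \mathrm{im}(\Delta)$, so we can write $u=\Delta(\varphi)$ for some $\varphi \in \check{H}^0(U \cap V)$.

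For the second step, naturality of $\Delta$ with respect to $\tilde{f}$ gives $\Delta(\tilde{f}^*\varphi) = \tilde{f}^*\Delta(\varphi) = \tilde{f}^*u = \lambda u = \Delta(\lambda \varphi)$, hence $\tilde{f}^*\varphi - \lambda\varphi \in \ker \Delta$. By Proposition \ref{prop:qc} this difference vanishes on every essential quasicomponent $F$; that is, $(\tilde{f}^*\varphi)(F) = \lambda \varphi(F)$. Since $\tilde{f}$ is a homeomorphism that permutes the quasicomponents of $U \cap V$, for any $p \in F$ we have $(\tilde{f}^*\varphi)(p) = \varphi(\tilde{f}p)$, and $\tilde{f}p$ lies in the quasicomponent $\tilde{f}F$, on which $\varphi$ takes the constant value $\varphi(\tilde{f}F)$. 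Hence $(\tilde{f}^*\varphi)(F) = \varphi(\tilde{f}F)$, and the relation $\varphi(\tilde{f}F) = \lambda \varphi(F)$ follows.

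The main obstacle is the first step: without the finiteness hypothesis on the components of $X$, Theorem \ref{teo:mvietoris} does not apply and one cannot conclude that $\ell^*$ and $k^*$ vanish identically on the whole group. The eigenvector hypothesis is exactly what allows one to sidestep this, replacing a global truncation argument with the local spectral argument based on Corollary \ref{cor:no_eigen} applied to $\tilde{f}$ and $\tilde{f}^{-1}$ on the two basins.
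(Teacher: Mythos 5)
Your argument is correct and reproduces the paper's own proof: you kill $k^*(u)$ and $\ell^*(u)$ by applying Corollary~\ref{cor:no_eigen} to the two attractor--basin pairs (for $(U,\tilde{X})$ via the inverse trick with $g=\tilde{f}^{-1}$; for $(V,\infty)$ directly with $g=\tilde{f}$), deduce $u\in\mathrm{im}\,\Delta$ from exactness of the Mayer--Vietoris sequence \eqref{eq:mayer}, and translate $\tilde{f}^*\varphi-\lambda\varphi\in\ker\Delta$ via Proposition~\ref{prop:qc}. (Your parenthetical on $\lambda=0$ is logically circular as written --- it assumes the conclusion to exclude the case --- whereas the direct reason is simply that $\tilde{f}^*$ is an isomorphism because $\tilde{f}$ is a homeomorphism; this does not affect the correctness of the main argument.)
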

\begin{proof} If nonzero, the image of $u$ in $\check{H}^1(U,\tilde{X})$ under the homomorphism induced by the inclusion map $(U,\tilde{X}) \subseteq (W^u_{\infty},\tilde{X} \cup \infty)$ would be an eigenvector for $\tilde{f}^*$ (or more precisely for the homomorphism induced by the restriction $\tilde{f}|_{(U,\tilde{X})}$). Since $\tilde{f}$ is a homeomorphism, $u$ would also be an eigenvector for $(\tilde{f}^{-1})^*$ (with eigenvalue $\lambda^{-1}$) and this contradicts Corollary \ref{cor:no_eigen} (with $Z = U$, $g = \tilde{f}^{-1}$, $A = \tilde{X}$). A similar observation applies to the inclusion $(V,\infty) \subseteq (W^u_{\infty},\tilde{X} \cup \infty)$. Thus $u$ is sent to zero by the leftmost arrow of the Mayer--Vietoris sequence \[\xymatrix{\ldots & \ar[l] \check{H}^1(U,\tilde{X}) \oplus \check{H}^1(V,\infty) & \check{H}^1(W^u_{\infty},\tilde{X} \cup \infty) \ar[l] & \check{H}^0(U \cap V) \ar[l]_-{\Delta} & \ldots \ar[l]}\] and so there exists $v \in \check{H}^0(U \cap V)$ such that $u = \Delta(v)$. The eigenvector condition $\tilde{f}^*(u) - \lambda u = 0$ translates into $\tilde{f}^*(v) - \lambda v \in \ker\ \Delta$.

Represent $v$ by a locally constant function $\varphi \colon  U \cap V \to \mathbb{C}$. Recalling from Proposition \ref{prop:qc} that ${\rm ker}\ \Delta$ consists of those locally constant functions that vanish on the essential quasicomponents of $U \cap V$, the condition $\tilde{f}^*(v) - \lambda v \in \ker\ \Delta$ translates into $\varphi(\tilde{f}F) = \lambda \varphi(F)$ for every essential quasicomponent of $U \cap V$, as claimed.
\end{proof}

It is clear from the description of $\check{H}^1(W^u_{\infty},\tilde{X} \cup \infty)$ provided by Theorem \ref{teo:main1_intro} for $X$ having finitely many connected components that the eigenvalues of $\tilde{f}^*$ must be roots of unity. This property is still true even when $X$ has infinitely many connected components:

\begin{proposition} \label{prop:rootsofunity} Every eigenvalue of $\tilde{f}^*$ on $\check{H}^1(W^u_{\infty},\tilde{X} \cup \infty)$ is a root of unity.
\end{proposition}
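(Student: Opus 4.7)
My plan is to promote the relation satisfied by the representative $\varphi$ in the previous proposition into an honest eigenfunction equation, and then argue that the resulting locally constant function must have finite range, so that $\lambda$ permutes a finite set of nonzero values and is therefore a root of unity.

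Note first that $\lambda\ne 0$, since $\tilde f^*$ is an isomorphism of $\check H^1(W^u_\infty,\tilde X\cup\infty)$. By the previous proposition, $u=\Delta(\varphi)$ where $\varphi$ is locally constant on $U\cap V$, does not vanish on every essential quasicomponent (because $u\ne 0$), and satisfies $\tilde f^*\varphi-\lambda\varphi\in\ker\Delta$. Corollary \ref{cor:no_eigen}, applied to $\tilde f^{-1}|_U$ and to $\tilde f|_V$ whose global attractors are $\tilde X$ and $\infty$ respectively, supplies explicit inverses of $\tilde f^*-\lambda\cdot\id$ on $\check H^0(U,\tilde X)$ and on $\check H^0(V,\infty)$ via the summation formalism of Section \ref{sec:series}: on the first space invertibility of $\tilde f^*-\lambda$ follows from that of $(\tilde f^{-1})^*-\lambda^{-1}$, which is explicitly summed, and on the second space the geometric-style series of Corollary \ref{cor:no_eigen} applies directly. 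Decomposing $\tilde f^*\varphi-\lambda\varphi$ as the Mayer--Vietoris image of some $(\alpha,\beta)\in\check H^0(U,\tilde X)\oplus\check H^0(V,\infty)$ and inverting on each summand produces $\tilde\alpha$ and $\tilde\beta$ with $(\tilde f^*-\lambda)\tilde\alpha=\alpha$ and $(\tilde f^*-\lambda)\tilde\beta=\beta$. Replacing $\varphi$ by $\varphi-\tilde\alpha|_{U\cap V}+\tilde\beta|_{U\cap V}$ leaves $u=\Delta(\varphi)$ unchanged (the added term lies in $\ker\Delta$) but upgrades the relation to $\tilde f^*\varphi=\lambda\varphi$ in $\check H^0(U\cap V)$.

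At this point $\varphi\colon U\cap V\to\mathbb K$ is locally constant with $\varphi\circ\tilde f=\lambda\varphi$, so each level set $\varphi^{-1}(c)$ is clopen and $\tilde f$ sends $\varphi^{-1}(c)$ bijectively onto $\varphi^{-1}(\lambda c)$. If the range of $\varphi$ is a finite subset of $\mathbb K$, then multiplication by $\lambda$ permutes this finite set; since $u\ne 0$ forces at least one nonzero value in the range, the resulting permutation of nonzero values has some finite order $r\ge 1$, yielding $\lambda^r=1$. Finiteness of the range is equivalent to the statement that every $\tilde f$-orbit in the set of essential quasicomponents on which $\varphi$ does not vanish is finite, because along an infinite orbit $\{\tilde f^nF_0\}$ with $c:=\varphi(F_0)\ne 0$ the values $\lambda^nc$ are pairwise distinct whenever $\lambda$ is not a root of unity.

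I would derive a contradiction from such an infinite orbit by a compactness argument in $W^u_\infty$: if a compact subset $K\subseteq U\cap V$ met infinitely many iterates $\tilde f^{n_m}F_0$, then choosing $p_m\in K\cap\tilde f^{n_m}F_0$ and passing to a convergent subsequence $p_m\to q\in K$, the local constancy of $\varphi$ around $q$ would force $\varphi(p_m)=\varphi(q)$ for large $m$; but $\varphi(p_m)=\lambda^{n_m}c$ with the $\lambda^{n_m}$ pairwise distinct, a contradiction. The technical heart of the argument is the construction of such a $K$; the natural candidate is an ``annular'' region $W^u_\infty\setminus(N\cup N')$ obtained by excising small open neighbourhoods of $\tilde X$ and $\infty$, through which every essential quasicomponent, whose closure in $W^u_\infty$ meets both $\tilde X$ and $\infty$, ought morally to pass. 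The hardest point I foresee is handling the possibility that an essential quasicomponent could be disconnected, with separate pieces accumulating on $\tilde X$ and on $\infty$ but none actually meeting such a cross-section; dealing with this will likely require the finer analysis of reaching behaviour of connected components versus quasicomponents developed in Section \ref{sec:describe1}, applied inside each $\tilde f^nF_0$ to produce a genuine witness point in $K$.
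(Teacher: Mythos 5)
Your overall strategy — represent the eigenvector as $u=\Delta(\varphi)$, use the relation $\varphi(\tilde f F)=\lambda\varphi(F)$ on essential quasicomponents, and then derive a contradiction from local constancy of $\varphi$ along a compact set meeting infinitely many iterates of a fixed essential quasicomponent — is the same idea the paper uses, and your compactness argument is essentially a restatement of the paper's final step. However, there are two issues.

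First, the ``upgrade'' step (inverting $\tilde f^*-\lambda$ on $\check H^0(U,\tilde X)$ and $\check H^0(V,\infty)$ via the series formalism, then correcting $\varphi$ by $\tilde\alpha$ and $\tilde\beta$) is sound in spirit (modulo a sign: you want $\varphi-\tilde\alpha-\tilde\beta$, not $\varphi-\tilde\alpha+\tilde\beta$), but it is an unnecessary detour. The paper never promotes the relation to a genuine eigenfunction equation on all of $U\cap V$; it just uses $\varphi(\tilde f^n F)=\lambda^n\varphi(F)$, which is already valid for essential quasicomponents since $\tilde f$ permutes them. The entire first paragraph of your argument can be deleted.

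Second — and this is the genuine gap — the technical heart you correctly identify (construction of a compact $K\subseteq U\cap V$ through which infinitely many iterates $\tilde f^{n}F_0$ pass) is left unresolved. You are right that essential quasicomponents can be disconnected, so neither $F_0$ itself nor its iterates need meet a fixed annular cross-section $K=W^u_\infty\setminus(N\cup N')$. The paper's resolution (Lemma \ref{lem:converge_essential}, proved via Propositions \ref{prop:reach} and \ref{prop:longer} and Remark \ref{rem:subseq} in Appendix \ref{app:two}, not in Section \ref{sec:describe1} as you suggest) is genuinely dynamical, not purely topological: fix a single connected component $C$ of $F_0$; by Proposition \ref{prop:reach} it reaches, say, $\tilde X$. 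Under forward iteration the component $C_n:=\tilde f^nC$ still reaches $\tilde X$ (since $\tilde f$ is a homeomorphism preserving $\tilde X$), while any fixed basepoint $p\in C$ satisfies $\tilde f^n(p)\to\infty$. Thus for $n$ large the \emph{connected} set $C_n$ has points both in $N$ and in $N'$, hence must meet your annulus $K$. This ``dynamical stretching'' of a component is the observation your proposal is missing; without it, the compactness step has no witness points, and the argument does not close.
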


To prove this result we need a lemma. Let us say that a sequence of quasicomponents $(F_n)$ of $W^u_{\infty} \setminus (\tilde{X} \cup \infty)$ \emph{converges} to another quasicomponent $F$ if there exists a sequence of points $p_n \in F_n$ that converges to a point $p \in F$. With this definition, the following lemma should seem reasonable:

\begin{lemma} \label{lem:converge_essential} Let $F$ be a quasicomponent of $W^u_{\infty} \setminus (\tilde{X} \cup \infty)$. Then the sequence of forward iterates $(\tilde{f}^nF)_{n \geq 0}$ or the sequence of backward iterates $(\tilde{f}^nF)_{n \leq 0}$ has a subsequence that converges to an essential quasicomponent.
\end{lemma}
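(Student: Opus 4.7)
The plan is to treat the essential case trivially and reduce the non-essential case to an analysis of the backward $\omega$-limit of $F$ via the attractor-repeller structure of $W^u_{\infty}$.

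First, if $F$ is itself essential then Proposition \ref{prop:describeH1} forces $F$ to be $\tilde{f}$-periodic: the finite set of essential quasicomponents is $\tilde{f}$-invariant, so $\tilde{f}^m F = F$ for some $m \geq 1$ and the constant sequence $F, F, F, \ldots$ at indices $0, m, 2m, \ldots$ trivially converges to $F$ (take $p_{km} = p$ for any fixed $p \in F$). So assume $F$ is non-essential. I would first observe that $F$ cannot be periodic either: if $\tilde{f}^m F = F$ held, then for any $p \in F$ the full orbit $\{\tilde{f}^{km} p\}_{k \in \mathbb{Z}}$ would lie in $F$ and would accumulate on both $\tilde{X}$ (as $k \to -\infty$) and $\{\infty\}$ (as $k \to +\infty$), placing points of $\tilde{X}$ and $\infty$ in $\overline{F}$ and contradicting non-essentiality. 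By Proposition \ref{prop:reach}, $F$ reaches at least one of $\tilde{X}$ or $\{\infty\}$, and by symmetry between $\tilde{f}$ and $\tilde{f}^{-1}$ I will suppose that $F$ reaches $\infty$ (and look at backward iterates).

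The next step is to produce at least some limit point of the backward iterates that lies in $W^u_{\infty} \setminus (\tilde{X} \cup \infty)$. Since $\infty$ is asymptotically stable, fix a positively invariant open neighborhood $V$ of $\infty$ with $\overline{V} \cap \tilde{X} = \emptyset$. Pick $q_k \in F$ with $q_k \to \infty$ and let $n_k \geq 1$ be the smallest integer such that $\tilde{f}^{-n_k}(q_k) \notin V$; continuity of each $\tilde{f}^{-n}$ together with $q_k \to \infty$ forces $n_k \to \infty$. Setting $p_k := \tilde{f}^{-n_k}(q_k) \in F_{-n_k}$, the points $p_k$ lie in the compact set $K := \overline{\tilde{f}^{-1}(V)} \setminus V \subseteq W^u_{\infty} \setminus (\tilde{X} \cup \infty)$, so after passing to a subsequence $p_{k_j} \to p^* \in K$.

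The main obstacle is to upgrade this limit so that it sits in one of the finitely many essential quasicomponents $E_1, \ldots, E_m$ rather than in an arbitrary quasicomponent. For this I would consider the backward $\omega$-limit $\Lambda := \bigcap_{N \geq 0} \overline{\bigcup_{n \geq N} F_{-n}}$, which is a closed, non-empty, $\tilde{f}$-invariant compact subset of $W^u_{\infty}$ containing $\infty$ (since $\infty \in \overline{F_{-n}}$ for every $n$) and meeting $\tilde{X}$ (take a subsequential limit of $\tilde{f}^{-n}(p)$ for any $p \in F$). The construction of $p^*$ in the previous paragraph shows $\Lambda \cap (W^u_{\infty} \setminus (\tilde{X} \cup \infty)) \neq \emptyset$, so $\Lambda$ strictly exceeds $(\Lambda \cap \tilde{X}) \cup \{\infty\}$. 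Then the attractor-repeller argument of Proposition \ref{prop:noessential}, applied inside $\Lambda$ through Remark \ref{rmk:Egeneral}, forces $\Lambda$ to possess at least one essential quasicomponent of its own; any such quasicomponent is contained in some $E_i$ because essentiality reduces to reaching both $\tilde{X}$ and $\infty$. Choosing a point $p^{**}$ in this essential quasicomponent of $\Lambda$, the definition of $\Lambda$ produces a sequence $r_j \in F_{-N_j}$ with $N_j \to \infty$ and $r_j \to p^{**}$, which is the desired subsequence of backward iterates converging to $E_i$. The delicate part is the legitimacy of invoking Remark \ref{rmk:Egeneral} for $\Lambda$: one must verify the needed finiteness of $\check{H}^*(\Lambda;\mathbb{K})$ (or, failing that, replace $\Lambda$ by an enlargement of it inside $W^u_{\infty}$ for which the hypotheses of Remark \ref{rmk:Egeneral} are directly met) before the attractor-repeller dichotomy can be applied to produce an essential quasicomponent there.
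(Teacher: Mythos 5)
The paper's own proof is far shorter and more direct than yours: it fixes a component $C$ of $F$ that (say) reaches $\tilde{X}$, observes that the forward iterates $C_n := \tilde{f}^nC$ all still reach $\tilde{X}$ (because $\tilde{X}$ is invariant and $\tilde{f}$ is a homeomorphism), and that the forward iterates of any fixed $p\in C$ converge to $\infty$ (since $p$ is in the basin of attraction of $\infty$). This puts a sequence $q_n\to\tilde{X}$ and a sequence $p_n\to\infty$ inside the \emph{same} component $C_n$, so Proposition~\ref{prop:longer} together with Remark~\ref{rem:subseq} immediately delivers a convergent subsequence to an essential quasicomponent. No case distinction between essential and non-essential $F$ is needed, and no auxiliary limit set is introduced.

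Your approach is genuinely different, but it has a real gap that you partly flagged. After constructing the backward limit set $\Lambda$ you need to produce an essential quasicomponent inside it, and for this you propose applying the attractor--repeller dichotomy of Proposition~\ref{prop:noessential} through Remark~\ref{rmk:Egeneral}. The problem is not only the cohomological finiteness of $\Lambda$ that you mention; look more carefully at what Proposition~\ref{prop:noessential} (specifically the implication (iii)$\Rightarrow$(ii)) depends on: it uses Lemma~\ref{lem:quasi4}, which in turn relies on Proposition~\ref{prop:reach} and Remark~\ref{rem:components}, and the proof of Remark~\ref{rem:components} uses the standing assumption that the ambient space has \emph{finitely many connected components} to find a power of $\tilde{f}$ fixing a given component. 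For a general backward $\omega$-limit $\Lambda$ (even after adjoining $\tilde{X}$ and $\infty$) there is no reason for this finiteness to hold, and $\Lambda$ can certainly have infinitely many components. So the "delicate" part is not merely a technical inconvenience: the proposed workaround of "enlarging $\Lambda$" would have to restore this component-finiteness, which is essentially as hard as the original problem. The decisive idea you are missing is that no detour through $\Lambda$ is needed at all: the invariance of $\tilde{X}$ (resp.\ $\infty$) makes the ``reaches $\tilde{X}$'' (resp.\ ``reaches $\infty$'') property automatically inherited by iterates of a component, so the hypotheses of Proposition~\ref{prop:longer} can be fed directly, as the paper does. Your construction of the limit point $p^*\in K$ via the escape time $n_k$ is, incidentally, exactly the compactness extraction that Remark~\ref{rem:subseq} already packages; the difficulty arises only because your sequences $q_k$ and $p_k$ live in different components, which is precisely what the paper's choice of a fixed $C$ avoids.
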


The proof of the lemma is postponed to Appendix \ref{app:two}.

\begin{proof}[Proof of Proposition \ref{prop:rootsofunity}] Let $\lambda$ be an eigenvalue of $\tilde{f}^*$ and let $u$ be an eigenvector associated to $\lambda$. According to the previous proposition we may write $u = \Delta(\varphi)$ for some locally constant $\varphi \colon U \cap V \to \mathbb{K}$ that satisfies $\varphi(\tilde{f}F) = \lambda \varphi(F)$ for every essential quasicomponent $F$. Notice that $\varphi$ cannot belong to $\ker\ \Delta$, since otherwise the eigenvector $u$ would be zero; thus, by Proposition \ref{prop:qc} there exists an essential quasicomponent $F$ such that $\varphi(F) \neq 0$. Consider its trajectory $(\tilde{f}^nF)_{n \in \mathbb{Z}}$. By the lemma above this has a subsequence $(\tilde{f}^{n_k}F)$ (where $n_k \longrightarrow +\infty$ or $n_k \longrightarrow -\infty$) that converges to an essential quasicomponent $F_0$. Since $\varphi$ is locally constant, it is constant on a neighbourhood of $F_0$ and therefore $\varphi(\tilde{f}^{n_k}F)$ is constant for big enough $k$. However $\varphi(\tilde{f}^{n_k}F) = \lambda^{n_k} \varphi(F)$ and $\varphi(F)$ and $\lambda$ are nonzero, so the only way for this sequence to be eventually constant is that $\lambda^{n_{k+1} - n_k} = 1$ for large $k$. Hence $\lambda$ is a root of unity.
\end{proof}

Now we can prove Theorem \ref{teo:components} from the Introduction. Recall that this result stated that, if $h^1(f,X)$ has a nonzero eigenvalue $\lambda \in \mathbb{C}$ that is not a root of unity, then at least one of the following alternatives must hold:
\begin{itemize}
	\item[(i)] $X$ has infinitely many connected components.
	\item[(ii)] $f^* \colon  \check{H}^1(X;\mathbb{C}) \to \check{H}^1(X;\mathbb{C})$ has $\lambda$ as an eigenvalue.
\end{itemize}

\begin{proof}[Proof of Theorem \ref{teo:components}] (We omit the coefficients $\mathbb{C}$ from the notation). Let $u_0 \in \check{H}^1(W^u_{\infty},\infty)$ be an eigenvector of eigenvalue $\lambda$. If nonzero, the image of $u_0$ in $\check{H}^1(\tilde{X})$ under the inclusion induced map would be an eigenvector of eigenvalue $\lambda$ and by Proposition \ref{prop:Xtilde} alternative (ii) would hold. Thus let us assume that this is not the case and prove that then (i) holds.

Consider the exact sequence for the triple $(W^u_{\infty},\tilde{X} \cup \infty,\infty)$, the polynomial $P(x) := x - \lambda$, and the following commutative diagram: \[\xymatrix{\check{H}^1(\tilde{X}) \ar[d]_{P(\tilde{f}^*)}& \check{H}^1(W^u_{\infty},\infty) \ni u_0 \ar[l] \ar[d]_{P(\tilde{f}^*)} & \check{H}^1(W^u_{\infty},\tilde{X} \cup \infty) \ni v_0 \ar[l] \ar[d]_{P(\tilde{f}^*)} & \check{H}^0(\tilde{X}) \ar[l] \ar[d]_{P(\tilde{f}^*)}\\ \check{H}^1(\tilde{X}) & \check{H}^1(W^u_{\infty},\infty) \ar[l] & \check{H}^1(W^u_{\infty},\tilde{X} \cup \infty) \ar[l] & \check{H}^0(\tilde{X}) \ar[l]}\] (As usual, we have identified $\check{H}^*(\tilde{X} \cup \infty,\infty) = \check{H}^*(\tilde{X})$.) We denote by $G$ the image of $\check{H}^0(\tilde{X})$ in $\check{H}^1(W^u_{\infty},\tilde{X} \cup \infty)$ under the coboundary homomorphism. Observe that the commutativity of the rightmost square of the diagram shows that $P(\tilde{f}^*)(G) \subseteq G$, so that $P(\tilde{f}^*)$ restricts to a map $P(\tilde{f}^*)|_G \colon  G \to G$.

As argued in the previous paragraph, we assume that $u_0$ is annihilated by the topmost left arrow. Thus $u_0$ is the image of some $v_0 \in \check{H}^1(W^u_{\infty},\tilde{X} \cup \infty)$ under the corresponding arrow. Since $u_0$ is an eigenvector of eigenvalue $\lambda$ and therefore $P(\tilde{f}^*)(u_0) = 0$ (where $P(\tilde{f}^*)$ is considered as an endomorphism of $\check{H}^1(W^u_{\infty},\infty)$), the commutativity of the middle square of the above diagram implies that $P(\tilde{f}^*)(v_0) \in G$, where now $P(\tilde{f}^*)$ is considered as an endomorphism of $\check{H}^1(W^u_{\infty},\tilde{X} \cup \infty)$).

By assumption $\lambda$ is not a root of unity. It follows from Proposition \ref{prop:rootsofunity} that the endomorphism $P(\tilde{f}^*)$ of $\check{H}^1(W^u_{\infty},\tilde{X} \cup \infty)$ is injective, and so is its restriction $P(\tilde{f}^*)|_G \colon  G \to G$. Now suppose $X$ had only finitely many components. Then by Proposition \ref{prop:Xtilde} the same would be true of $\tilde{X}$, so $G$ would be finite dimensional and therefore $P(\tilde{f}^*)|_G \colon  G \to G$ would actually be surjective. In particular, since $P(\tilde{f}^*)(v_0) \in G$ and $P(\tilde{f}^*)$ is injective, we would have $v_0 \in G$ but this is impossible since then the eigenvector $u_0$ would be zero. This contradiction shows that $X$ must have infinitely many connected components; that is, alternative (i) must hold.
\end{proof}

We remark that (ii) cannot hold if the phase space is $\mathbb{R}^2$ and $f$ is a homeomorphism. Indeed, in that case by Alexander duality the homomorphism $f^* \colon  \check{H}^1(X) \to \check{H}^1(X)$ is conjugate to $f_* \colon  H_0(\mathbb{R}^2 - X) \to H_0(\mathbb{R}^2 - X)$, and it is clear that the eigenvalues of the latter must be roots of unity since $H_0(\mathbb{R}^2 - X)$ is generated by the connected components of $\mathbb{R}^2 - X$ and $f$ permutes them.

The G--horseshoe is a variant of the well known Smale's horseshoe, described in Subsection \ref{subsec:horseshoe}, that provides a simple illustration of this theorem. In the notation of Figure \ref{fig:Ghorse}, $X$ is the maximal invariant subset of the square $N$ under the homeomorphism $f$ of $\mathbb{R}^2$ whose action on $N$ is illustrated in the figure. The set $X$ is exactly the same as $\Lambda$, the invariant set in Smale's horseshoe, the difference between this example and that of Subsection \ref{subsec:horseshoe} purely resides in the dynamics.

\begin{figure}[h!]
\null\hfill
\subfigure{
\begin{pspicture}(0,0)(4.6,9.2)
	\rput[bl](0,2.1){\scalebox{0.75}{\includegraphics{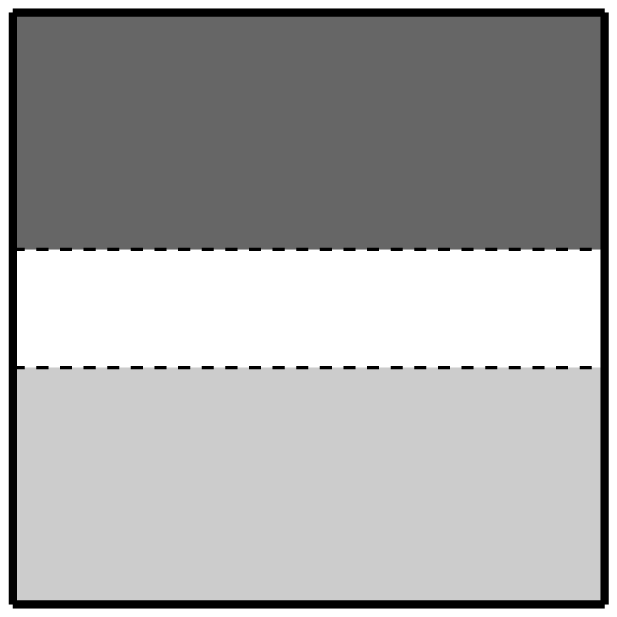}}}
	\rput[bl](3,1.6){$N$}
\end{pspicture}
}
\hfill
\subfigure{
\begin{pspicture}(0,0)(6,8.4)
	\rput[bl](0,0){\scalebox{0.75}{\includegraphics{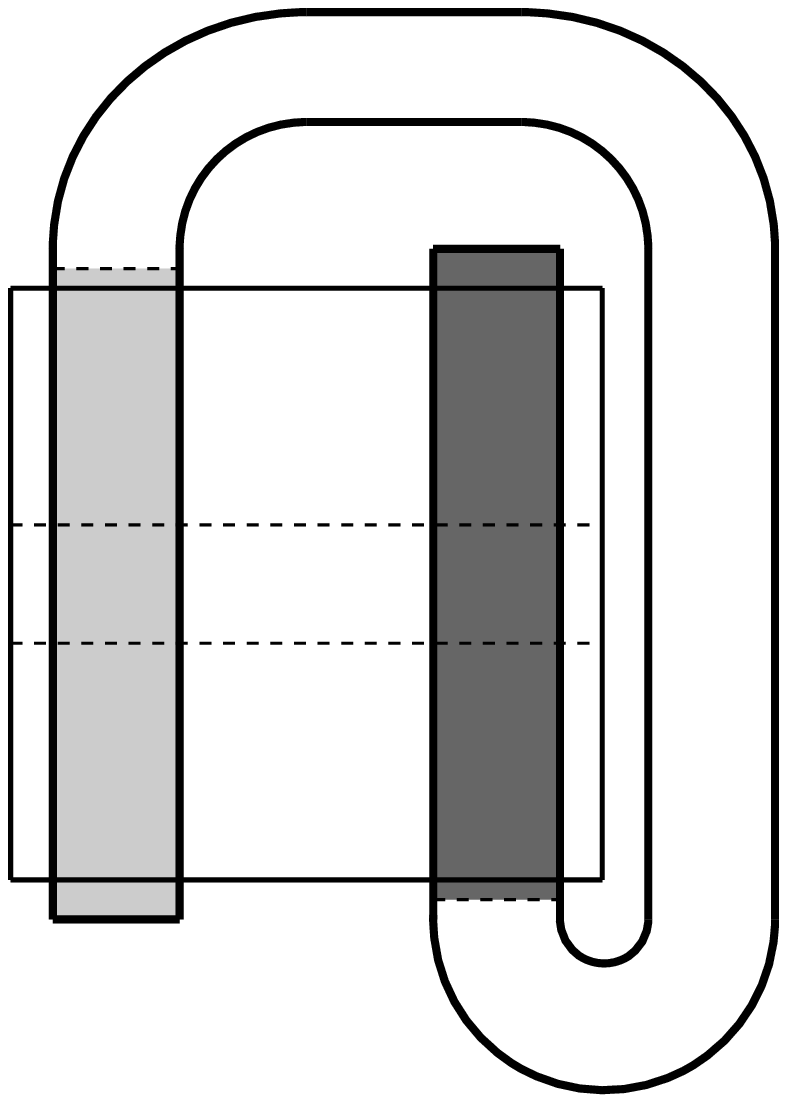}}}
	\rput[tr](1,1){$f(N)$}
\end{pspicture}
}
\hfill\null
\caption{The G--horseshoe \label{fig:Ghorse}}
\end{figure}

By letting $L$ consist of three horizontal stripes of appropriate thickness, two of them at the top and bottom of $N$ and the third at mid height, one can easily construct an index pair $(N,L)$ for $X$. Call these three stripes $L_{\rm top}, L_{\rm bot}$ and $L_{\rm mid}$. Let $a$ be an oriented vertical path that begins in $L_{\rm bot}$ and ends in $L_{\rm mid}$; also, let $b$ join $L_{\rm mid}$ and $L_{\rm top}$ in a similar fashion. Then the homology $H_1(N,L;\mathbb{C})$ is easily seen to have $\{a,b\}$ as a basis, and the action of $f_*$ on this basis is $f_*(a) = a+b$ and $f_*(b) = a+b$. This has eigenvalues $0$ and $2$. The computation in cohomology is dual to this, so it has the same eigenvalues. It follows that $h^1(f,X)$ has the nonzero eigenvalue $\lambda = 2$, and Theorem \ref{teo:components} then implies that $X$ must have infinitely many connected components. This can be checked explicitly by constructing the G--horseshoe and noting that it is a Cantor set.

\appendix
\section{\label{app:one}}

In this appendix we make use of some results from infinite dimensional topology to prove the following proposition:

\begin{proposition} \label{prop:att_countable} Let $K$ be an attractor for a homeomorphism $f$ of locally compact, metrizable absolute neighbourhood retract $M$. Then $K$ has countable \v{C}ech cohomology with integer coefficients.
\end{proposition}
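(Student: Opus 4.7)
The plan is to reduce the computation of $\check{H}^*(K;\mathbb{Z})$ to a countable direct limit of finitely generated abelian groups by exhibiting a countable cofinal family of compact ANR neighborhoods of $K$.

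First I would invoke the following standard fact from ANR theory: in a locally compact, metrizable ANR $M$, every compact subspace has a neighborhood basis consisting of compact ANRs. (This follows from Hanner's characterization, since sufficiently small closed neighborhoods in a locally compact metric ANR can be taken to be compact ANRs.) Applying this to $K$ and combining it with the asymptotic stability of $K$ (which provides a basis of compact positively invariant neighborhoods) one obtains a decreasing sequence $N_1 \supseteq N_2 \supseteq \ldots$ of compact ANR neighborhoods of $K$ with $\bigcap_n N_n = K$. The homeomorphism $f$ plays no real role here beyond guaranteeing that $K$ is an isolated invariant continuum; what is used is only that $K$ is compact in a locally compact metric ANR.

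Next I would apply two classical properties:
\begin{itemize}
\item[(a)] Every compact ANR has the homotopy type of a finite CW complex (West's theorem), and in particular has finitely generated integer singular cohomology; since compact ANRs are HLC, their singular and \v{C}ech cohomologies agree, so $\check{H}^q(N_n;\mathbb{Z})$ is finitely generated for every $n$ and $q$.
\item[(b)] \v{C}ech cohomology is continuous on nested intersections of compacta: $\check{H}^q(K;\mathbb{Z}) = \varinjlim_n \check{H}^q(N_n;\mathbb{Z})$.
\end{itemize}

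Finally I would conclude by a purely algebraic observation. A direct limit of a sequence of countable abelian groups is countable, because it is a quotient of the countable disjoint union $\coprod_n \check{H}^q(N_n;\mathbb{Z})$. Since each $\check{H}^q(N_n;\mathbb{Z})$ is finitely generated, hence countable, the limit $\check{H}^q(K;\mathbb{Z})$ is countable in every degree $q$, as claimed.

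The only real obstacle is the existence of the compact ANR neighborhood basis; everything else is formal. If one wanted to avoid quoting the ANR neighborhood basis result directly, an alternative route would be to embed $M$ as a closed subset of some $\mathbb{R}^N$ (or of Hilbert space $\ell^2$ in the non--finite--dimensional case) via a closed embedding of the locally compact ANR, take a retraction $r$ of an open neighborhood onto $M$, and then thicken $K$ by small polyhedral neighborhoods and retract; this produces the same cofinal sequence of compact ANRs and the rest of the argument proceeds identically.
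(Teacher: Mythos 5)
There is a genuine gap: you assert, as ``a standard fact from ANR theory,'' that every compact subspace of a locally compact metrizable ANR has a neighborhood basis of \emph{compact} ANRs, and attribute it to Hanner's characterization. That is not what Hanner's theorem gives. Hanner's criterion says a metrizable space is an ANR iff it is locally an ANR; it produces \emph{open} ANR neighborhoods (open subsets of ANRs are ANRs), but says nothing about compact neighborhoods of a compact set. A compact neighborhood $N$ of $K$ is a closed subset of $M$ and has no reason to be an ANR---it can fail local contractibility at points of its frontier, and $K$ itself need not be an ANR, so Hanner's result on ANR neighborhoods of ANR subsets does not apply either. Your fallback construction has the same defect: embedding $M$ closed in $\mathbb{R}^N$ with a retraction $r\colon U\to M$ and taking a polyhedral compact neighborhood $P$ of $K$ in $U$, the set $r(P)$ is a compact neighborhood of $K$ in $M$ but need not be an ANR, since $r(P)$ is generally not contained in $P$, so $r|_P$ is not a retraction and the image cannot be concluded to be an ANR. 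Precisely this obstacle is what the paper's proof is designed to circumvent: it passes to $\mathcal{A}\times Q\times[0,1)$, invokes Edwards' theorem to recognize this as a $Q$-manifold, triangulates it via Chapman, and only then extracts one compact ANR neighborhood $P$ (a compact polyhedron times $Q$) of $\hat K = K\times Q\times\{0\}$; the proposition is then proved for $\hat K$, which has the same \v{C}ech cohomology as $K$.

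The remainder of your argument---finite generation of the cohomology of a compact ANR (the paper uses homotopy domination by a finite complex, you cite West, either works), continuity of \v{C}ech cohomology under nested intersections of compacta, and countability of a countable direct limit of countable groups---is correct and matches the paper. One further structural difference is worth noting: you would need a countable cofinal family of compact ANR neighborhoods, while the paper needs only \emph{one}, because the dynamics then supplies the family for free: a high enough power $g$ of $\hat f$ satisfies $g(P)\subseteq P$ and $\bigcap_n g^n(P)=\hat K$, and each $g^n(P)$ is homeomorphic to $P$ hence automatically a compact ANR. This is where the hypothesis that $f$ is a homeomorphism and $K$ an attractor actually enters; in your version the dynamics plays essentially no role, which should itself raise a flag, since the proposition is false without some restriction ensuring that $K$ can be approximated cohomologically by finitely generated stages.
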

\begin{proof} Denote by $Q$ the Hilbert cube and consider the homeomorphism of $M \times Q \times [0,1)$ defined as $\hat{f} := f \times {\rm id}_Q \times (t \longmapsto t^2)$; that is, $\hat{f}(p,q,t) := (f(p),q,t^2)$. Clearly $\hat{f}$ is a homeomorphism that has $\hat{K} := K \times Q \times \{0\}$ as an attractor with basin of attraction $\hat{\mathcal{A}} = \mathcal{A} \times Q \times [0,1)$, where $\mathcal{A}$ is the basin of attraction of $K$ in $M$.
\smallskip

{\it Claim.} $\hat{K}$ has a compact neighbourhood $P \subseteq \hat{\mathcal{A}}$ that is a compact absolute neighbourhood retract (henceforth, ANR).
\smallskip

{\it Proof of claim.} First observe that $\mathcal{A}$ is separable. To see this, pick a compact neighbourhood $F$ of $K$ in $\mathcal{A}$. Since $F$ is compact and metrizable (being a subset of the metrizable phase space $M$), it is separable. Then $\mathcal{A} = \bigcup_{n \leq 0} f^nF$ is a countable union of separable spaces, therefore also separable.

Now we are entitled to apply the ANR theorem of Edwards \cite[Chapter XIV]{chapman2} to conclude that $\mathcal{A} \times Q$ is a $Q$--manifold and furthermore $\mathcal{A} \times Q \times [0,1) = \hat{\mathcal{A}}$ is triangulable \cite[Corollary 1, p. 330]{chapman1}. This means that there exist a countable, locally finite simplicial complex $S$ and a homeomorphism $h$ such that $\hat{\mathcal{A}} \cong |S| \times Q$ via $h$.

Since $h(\tilde{K})$ is a compact subset of $|S| \times Q$, its projection ${\rm pr}_{|S|}\ h(\tilde{K})$ onto the $|S|$ factor is a compact set, so it has a polyhedral compact neighbourhood $N$ in $|S|$. Then $N \times Q$ is a compact ANR neighbourhood of $h(K)$ in $|S| \times Q$, and its preimage $P$ under $h$ is a compact ANR neighbourhood of $\hat{\tilde{K}}$ in $\hat{\mathcal{A}}$. This concludes the proof of the claim. $_{\blacksquare}$
\smallskip

We continue with the proof of the proposition. Let $P$ be as described in the claim above. Let $g$ denote a power of $\hat{f}$ big enough so that $g(P) \subseteq P$. This guarantees that the sets $g^n(P)$ form a decreasing nested sequence of compact sets, and clearly their intersection is precisely $\hat{K}$. Thus the cohomology of $\hat{K}$ is the direct limit of the direct sequence $H^*(P) \to H^*(g(P)) \to H^*(g^2(P)) \to \ldots$ where the arrows denote the inclusion induced homomorphisms. Being a compact ANR, the space $P$ is homotopy dominated by a finite simplicial complex $K$ (\cite[Corollary 6.2, p. 139]{hu}) and therefore its cohomology groups are quotients of those of $K$, which are finitely generated. Thus the cohomology groups of $P$ are also finitely generated. The direct limit of the sequence above is therefore a countable set, proving the proposition.
\end{proof}

\section{\label{app:two}}

In this Appendix we prove Lemmas \ref{lem:quasi4} and \ref{lem:converge_essential}, which are concerned with the quasicomponent structure of $W^u_{\infty} \setminus (\tilde{X} \cup \infty)$ and were used earlier on in Sections \ref{sec:background} and \ref{sec:describe1}. To simplify language, and when there is no risk of confusion, throughout this appendix we will always refer to the quasicomponents of $W^u_{\infty}\setminus (\tilde{X} \cup \infty)$ simply as ``quasicomponents'', and similarly for its components.

\subsection{Auxiliary results} We begin by establishing some auxiliary results concerning the convergence of quasicomponents. Recall that a sequence of quasicomponents $F_n$ converges to another quasicomponent $F$ if there exists a sequence of points $p_n \in F_n$ that converges to a point $p \in F$. We denote this situation by writing $(F_n) \longrightarrow F$. There follow some elementary facts related to this definition:

\begin{proposition} \label{prop:convergence} Let $(F_n)$ be a sequence of quasicomponents of $W^u_{\infty} \setminus (\tilde{X} \cup \infty)$.
\begin{enumerate}
	\item If $(F_n)$ converges to $F$, the same is true of every subsequence of $(F_n)$.
	\item If $(F_n)$ converges to both $F$ and $F'$ then $F = F'$.
\end{enumerate}
\end{proposition}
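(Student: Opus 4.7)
The plan is to prove both parts essentially by unwinding the definitions, with the crux being the standard separation property of quasicomponents by clopen sets.

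For part (1), the verification is immediate. If $p_n \in F_n$ is a sequence with $p_n \to p \in F$, then for any subsequence $(F_{n_k})$ the corresponding points $p_{n_k} \in F_{n_k}$ still converge to $p \in F$, which is exactly what convergence $F_{n_k} \to F$ requires.

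For part (2), I would set $Y := W^u_{\infty} \setminus (\tilde{X} \cup \infty)$ and pick witnesses $p_n, p_n' \in F_n$ with $p_n \to p \in F$ and $p_n' \to p' \in F'$. Suppose for contradiction that $F \neq F'$. Since $F$ and $F'$ are distinct quasicomponents of $Y$, the definition of quasicomponent provides a clopen subset $O$ of $Y$ with $p \in O$ and $p' \notin O$ (explicitly, $O$ is the intersection of one such separating clopen set with $Y$; any clopen set of $Y$ containing $p$ contains $F$, and since $F \neq F'$ at least one of them must miss $p'$). Then I would write $O = U \cap Y$ and $Y \setminus O = U' \cap Y$ for open $U, U' \subseteq W^u_{\infty}$; since $p \in U$ and $p_n \to p$ with $p_n \in Y$, we get $p_n \in O$ for large $n$, and symmetrically $p_n' \in Y \setminus O$ for large $n$.

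The key step, and the only point where there is any content beyond bookkeeping, is the elementary observation that a quasicomponent is contained in every clopen subset of the ambient space that meets it. Applied here: $O$ is clopen in $Y$ and $p_n \in O \cap F_n$, so $F_n \subseteq O$; but then $p_n' \in F_n \subseteq O$ contradicts $p_n' \in Y \setminus O$. This forces $F = F'$. I do not anticipate any real obstacle; the only thing to be careful about is that convergence in the compact ambient space $W^u_{\infty}$ restricts correctly to convergence in the (possibly non-locally-compact) subspace $Y$, which is handled by the remark that $p, p' \in Y$ so that open neighbourhoods in $W^u_{\infty}$ pulled back to $Y$ still capture tails of the sequences.
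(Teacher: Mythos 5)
Your proof is correct and follows essentially the same route as the paper's: part (1) by direct inspection, and part (2) via the fact that a quasicomponent is contained in or disjoint from every clopen set, combined with the observation that a clopen set (being open, resp.\ closed, in $Y$) eventually captures a convergent sequence in $Y$. The only cosmetic difference is that you argue by contradiction using a separating clopen set, while the paper argues directly that every clopen set of $Y$ containing $p$ must also contain $q$ (using openness of $O$ to trap the $p_n$, the clopen property to trap the $q_n$, and closedness of $O$ to trap the limit $q$); these are contrapositives of one another and carry the same content.
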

\begin{proof} Part (1) is trivial. As for (2), begin by picking sequences of points $p_n, q_n \in F_n$ converging to $p \in F$ and $q \in F'$ respectively. Let $O$ be any clopen subset of $W^u_{\infty} \setminus (\tilde{X} \cup \infty)$ such that $p \in O$. Since $p_n \to p$, there exists $n_0$ such that for $n \geq n_0$ one has $p_n \in O$. It follows that $F_n \subseteq O$, since a quasicomponent is either contained in or disjoint from any given clopen set. Consequently the sequence $(q_n)_{n \geq n_0}$ is contained in $O$, which is closed, so its limit $q$ also belongs to $O$. We have thus shown that every clopen set that contains $p$ also contains $q$, so both points belong to the same quasicomponent of $W^u_{\infty} \setminus (\tilde{X} \cup \infty)$.
\end{proof}

\begin{proposition} \label{prop:quasi2} Let $(F_n) \longrightarrow F$ be a convergent sequence of quasicomponents. Assume that $F$ does not reach $\infty$, so in particular it reaches $\tilde{X}$. Then for large enough $n$ each $F_n$ must also reach $\tilde{X}$.
\end{proposition}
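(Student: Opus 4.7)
The plan is to proceed by contradiction. Assume, after passing to a subsequence, that no $F_n$ reaches $\tilde{X}$; then by Proposition \ref{prop:reach} each $F_n$ must reach $\infty$, so $\infty\in\overline{F_n}$.

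The key goal is to produce a clopen set $C$ of the subspace $W^u_\infty\setminus(\tilde{X}\cup\infty)$ which contains $F$ and whose closure in $W^u_\infty$ is disjoint from $\infty$. Once $C$ is in hand the contradiction is immediate: the convergence $p_n\to p\in F\subseteq C$ together with $C$ being open in the subspace gives $p_n\in C$ for all sufficiently large $n$; since $F_n$ is a quasicomponent and $C$ is clopen, this forces $F_n\subseteq C$ for large $n$. But then $\overline{F_n}\subseteq\overline{C}$, which does not contain $\infty$, contradicting that $F_n$ reaches $\infty$.

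To construct $C$, I would first exploit the asymptotic stability of $\infty$ as an attractor to fix an open positively invariant neighbourhood $N$ of $\infty$ with $\overline{N}\cap\overline{F}=\emptyset$ and $\overline{N}\cap\tilde{X}=\emptyset$; such an $N$ exists because $\infty$ admits a base of positively invariant open neighbourhoods and both $\overline{F}$ and $\tilde{X}$ are compact sets disjoint from $\{\infty\}$. Next I would apply Lemma \ref{lem:quasi4} to a clopen set $O\subseteq W^u_\infty\setminus(\tilde{X}\cup\infty)$ chosen to contain every essential quasicomponent while being disjoint from $F$. Since $F$ reaches $\tilde{X}$ but not $\infty$, it is not essential; hence $F\cap O=\emptyset$ and items (1)--(3) of the lemma place the whole of $F$ inside $G_{\tilde{X}}$, which is clopen in the subspace, and guarantee $\overline{G_{\tilde{X}}}\cap\{\infty\}=\emptyset$. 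Setting $C:=G_{\tilde{X}}$ completes the argument.

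The hard part is constructing $O$. When the essential quasicomponents form a finite family one proceeds directly: for each essential quasicomponent $F'$ pick a clopen of the subspace containing $F'$ and missing $F$ (a standard application of the definition of quasicomponent, in the spirit of Lemma \ref{lem:kronecker}), and take the union. In the general case extra care is required, and one must exploit the compactness of $W^u_\infty$ together with the attractor--repeller structure to cover all essential quasicomponents simultaneously by a single clopen. This is the main obstacle; the rest of the proof, as indicated above, is a short formal consequence of the definition of quasicomponent together with the openness of $C$ and the convergence $p_n\to p$.
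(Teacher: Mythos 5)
Your proposal has a genuine circularity problem: you invoke Lemma~\ref{lem:quasi4} to manufacture the clopen set $C:=G_{\tilde X}$, but the paper's proof of Lemma~\ref{lem:quasi4} \emph{itself} uses the contrapositive of Proposition~\ref{prop:quasi2} (in the step showing that $G_{\infty}$ is closed in $W^u_{\infty}\setminus\infty$). So, unless you can produce an independent proof of Lemma~\ref{lem:quasi4}, the argument is logically circular. This is not merely a presentational issue: the entire reason Proposition~\ref{prop:quasi2} appears as an auxiliary result in the appendix is precisely to feed into the proof of Lemma~\ref{lem:quasi4}.

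There is also a second gap that you honestly flag yourself: you never construct the clopen $O$ containing all essential quasicomponents and disjoint from $F$. In the case where there are only finitely many essential quasicomponents a Kronecker-style argument works, but Proposition~\ref{prop:quasi2} is stated (and needed) with no finiteness hypothesis. Your sketch of the general case, ``exploit compactness of $W^u_{\infty}$ together with the attractor--repeller structure,'' is exactly the content one would hope to extract from a proof, not a step one can leave open.

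The paper's proof sidesteps both difficulties with a direct frontier-crossing and compactness argument. It fixes a compact neighbourhood $P$ of $\infty$ disjoint from $F$. Because each connected component $C_n\subseteq F_n$ reaches $\infty$ while $p_n\notin P$ for large $n$, connectedness forces $C_n$ to cross the compact frontier $\mathrm{fr}(P)$. Points $p_n'\in C_n\cap\mathrm{fr}(P)$ then have a convergent subsequence with limit $p'\in\mathrm{fr}(P)$, so $(F_n)$ converges both to $F$ and to the quasicomponent $F'$ of $p'$. Uniqueness of limits (Proposition~\ref{prop:convergence}.(2)) forces $F=F'$, but then $F\cap P\neq\emptyset$, contradicting the choice of $P$. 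No clopen sets and no appeal to Lemma~\ref{lem:quasi4} are needed, and the argument works without any finiteness assumption. If you want to repair your approach you would at least need to prove Lemma~\ref{lem:quasi4} independently of Proposition~\ref{prop:quasi2}, and to show how to build $O$ without assuming finitely many essential quasicomponents; as written, the proposal does not establish the proposition.
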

\begin{proof} We reason by contradiction, so (after passing to a subsequence) we assume that none of the $F_n$ reaches $\tilde{X}$. Since $(F_n) \longrightarrow F$, there exists a sequence of points $p_n \in F_n$ that converges to $p \in F$. Choose a compact neighbourhood $P$ of $\infty$ in $W^u_{\infty}$ that is disjoint from $F$. Let $C_n$ be the connected component of $F_n$ that contains $p_n$. Notice that each $C_n$ must reach $\infty$ by Proposition \ref{prop:reach} (because it does not reach $\tilde{X}$ by assumption) so $C_n \cap P \neq \emptyset$; also, since $p_n \not\in P$ for big enough because $p \not\in P$, the fact that $C_n$ is connected implies $C_n \cap {\rm fr}(P) \neq \emptyset$. Choosing points $p'_n \in C_n \cap {\rm fr}(P)$, the compactness of ${\rm fr}(P)$ ensures that we may find a subsequence of $(p'_n)$ that converges to some $p' \in {\rm fr}(P)$; hence, $(F_n)$ itself converges to the quasicomponent $F'$ that contains $p'$. Proposition \ref{prop:convergence}.(2) implies that $F = F'$ and so in particular $p' \in F' \cap {\rm fr}(P) = F \cap {\rm fr}(P) \subseteq F \cap P$, which contradicts the choice of $P$.
\end{proof}

\begin{proposition} \label{prop:longer} Let $(F_n) \longrightarrow F$ be a convergent sequence of quasicomponents. Suppose that there exist sequences of points $p_n, q_n \in F_n$ such that (i) $p_n$ and $q_n$ belong to the same component $C_n$ of $U \cap V$ and (ii) $p_n \longrightarrow \infty$ and $q_n \longrightarrow \tilde{X}$ (in the sense that $q_n$ eventually enters any prescribed neighbourhood of $\tilde{X}$). Then $F$ is essential.
\end{proposition}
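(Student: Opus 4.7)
The plan is to show separately that $\infty \in \overline{F}$ and that $\overline{F} \cap \tilde{X} \neq \emptyset$. Both arguments are essentially the same and rely on three ingredients: (a) choosing a compact neighborhood $P$ of $\infty$ disjoint from $\tilde{X}$ (respectively, a neighborhood $V$ of $\tilde{X}$ disjoint from $\infty$), (b) using the connectedness of $C_n$ to obtain points of $C_n$ on $\partial P$ (respectively, $\partial V$), and (c) invoking the uniqueness of the limit of a convergent sequence of quasicomponents (Proposition \ref{prop:convergence}.(2)) to conclude that the accumulation points so obtained actually belong to $F$.

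More precisely, to show $\infty \in \overline{F}$ I would fix a decreasing basis $(P_k)_{k \geq 1}$ of compact neighborhoods of $\infty$ with $\bigcap_k P_k = \{\infty\}$ and $\overline{P_k} \cap \tilde{X} = \emptyset$ (such a basis exists because $W^u_{\infty}$ is compact Hausdorff and $\tilde{X}$, $\infty$ are disjoint closed sets). For each $k$, the topological boundary $\partial P_k$ is a compact subset of $U \cap V = W^u_{\infty} \setminus (\tilde{X} \cup \infty)$. Since $p_n \to \infty \in \mathrm{int}(P_k)$ while $q_n$ eventually enters any neighborhood of $\tilde{X}$ and is thus eventually disjoint from $\overline{P_k}$, for $n$ large enough we have $p_n \in \mathrm{int}(P_k)$ and $q_n \in W^u_{\infty} \setminus \overline{P_k}$. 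Connectedness of $C_n$ then forces $C_n \cap \partial P_k \neq \emptyset$; I pick $r_n^{(k)}$ in this intersection and, by compactness of $\partial P_k$, extract a subsequence converging to some $r^{(k)} \in \partial P_k$. Since $r_n^{(k)} \in C_n \subseteq F_n$ and the relevant subsequence of $(F_n)$ still converges to $F$ by Proposition \ref{prop:convergence}.(1), the uniqueness statement Proposition \ref{prop:convergence}.(2) forces the quasicomponent of $r^{(k)}$ to equal $F$; in particular $r^{(k)} \in F$. As $k$ grows, the points $r^{(k)} \in P_k$ have all their accumulation points in $\bigcap_k P_k = \{\infty\}$, so $r^{(k)} \to \infty$ and therefore $\infty \in \overline{F}$.

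For the other half, I would repeat the argument verbatim with the roles of $\infty$ and $\tilde{X}$ exchanged: take a basis $(V_k)$ of compact neighborhoods of $\tilde{X}$ with $\overline{V_k}$ disjoint from $\infty$, use the same connectedness argument on $C_n$ to produce points $r^{(k)} \in F \cap \partial V_k$, and extract a subsequential limit $r^* \in \overline{F}$. Since $r^{(k)} \in V_k$ for all large $k$, the limit $r^*$ lies in every open neighborhood of $\tilde{X}$, hence in $\tilde{X}$ itself (which is closed). This yields $\overline{F} \cap \tilde{X} \neq \emptyset$ and completes the proof that $F$ is essential.

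The main technical point is justifying that the accumulation points $r^{(k)}$ actually lie in $F$ rather than in some unrelated quasicomponent; this is exactly where the uniqueness of quasicomponent limits (Proposition \ref{prop:convergence}.(2)) is essential. Beyond that, everything reduces to the standard fact that a connected set meeting both a set and the complement of its closure must meet the boundary, together with straightforward compactness arguments in $W^u_{\infty}$.
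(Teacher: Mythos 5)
Your proof is correct and follows essentially the same strategy as the paper's: use connectedness of $C_n$ to intercept points of $C_n$ on the frontier of a small compact neighbourhood of $\infty$ (resp.\ $\tilde{X}$), extract a convergent subsequence by compactness, identify the limit as a point of $F$ via the uniqueness of quasicomponent limits (Proposition~\ref{prop:convergence}.(2)), and then let the neighbourhood shrink. The only cosmetic difference is that you organize the shrinking via a fixed countable decreasing basis $(P_k)$ rather than the paper's ``for arbitrarily small $P$ and $Q$''.
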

\begin{proof} Let $P$ and $Q$ be arbitrarily small, disjoint, compact neighbourhoods of $\infty$ and $\tilde{X}$ in $W^u_{\infty}$, respectively. For big enough $n$ we have $p_n \in P$ and $q_n \in Q$ so, since $C_n$ is connected, both intersections $C_n \cap {\rm fr}(P)$ and $C_n \cap {\rm fr}(Q)$ are nonempty. Choose sequences $p'_n \in C_n \cap {\rm fr}(P)$ and $q'_n \in C_n \cap {\rm fr}(Q)$. The compactness of ${\rm fr}(P)$ and ${\rm fr}(Q)$ guarantees that $(p'_n)$ and $(q'_n)$ have subsequences that converge to points $p' \in {\rm fr}(P)$ and $q' \in {\rm fr}(Q')$. The uniqueness of limits established in Proposition \ref{prop:convergence}.(2) implies that $p', q' \in F$. In particular $F$ has a nonempty intersection with $P$ and $Q$ and, since they were arbitrarily small, $F$ reaches both $\infty$ and $\tilde{X}$; that is, it is an essential quasicomponent.
\end{proof}

\begin{remark} \label{rem:subseq} The proof of Proposition \ref{prop:longer} shows that the $C_n$ have a nonempty intersection with the frontier of a suitable small neighbourhood $P$ of $\infty$. Since that frontier is compact, choosing points $p_n \in C_n \cap {\rm fr}(P)$ there is a subsequence $(p_{n_k})$ that converges to some $p \in {\rm fr}(P)$. Letting $F$ be the quasicomponent that contains $p$, we see that $(F_{n_k}) \longrightarrow F$. Thus even if the sequence $(F_n)$ is not required to be convergent, it will have a convergent subsequence $(F_{n_k})$ and the conclusion of the proposition will hold for the subsequence $(F_{n_k})$.
\end{remark}

\subsection{Proof of Lemma \ref{lem:quasi4}} Let us recall the statement of the lemma. Let $O$ be a clopen subset of $W^u_{\infty} \setminus (\tilde{X} \cup \infty)$ that contains all the essential quasicomponents. Define the sets \[G_{\tilde{X}} := \bigcup \{F : F \text{ is a quasicomponent that reaches $\tilde{X}$ and such that } F \cap O = \emptyset\}\] and \[G_{\infty} := \bigcup \{F : F \text{ is a quasicomponent that reaches $\infty$ and such that } F \cap O = \emptyset\}.\] We need to prove that:
\begin{itemize}
	\item[(1)] $W^u_{\infty} \setminus (\tilde{X} \cup \infty) = O \uplus G_{\tilde{X}} \uplus G_{\infty}$.
	\item[(2)] $G_{\tilde{X}}$ is a clopen subset of $W^u_{\infty} \setminus \tilde{X}$ and $G_{\infty}$ is a clopen subset of $W^u_{\infty} \setminus \infty$.
	\item[(3)] $\overline{G}_{\tilde{X}}$ is disjoint from $\infty$ and $\overline{G}_{\infty}$ is disjoint from $\tilde{X}$. Here the closures are taken in $W^u_{\infty}$ (and are therefore compact).
\end{itemize}

Part (1) is easy. Since $O$ is a clopen set of $W^u_{\infty} \setminus (\tilde{X} \cup \infty)$, every quasicomponent $F$ is either contained in $O$ or disjoint from it. By Proposition \ref{prop:reach} every $F$ must reach either $\tilde{X}$ or $\infty$, so clearly $W^u_{\infty} \setminus (\tilde{X} \cup \infty) = O \cup G_{\tilde{X}} \cup G_{\infty}$. By definition $O$ is disjoint from $G_{\tilde{X}}$ and $G_{\infty}$, and these two are also mutually disjoint because a quasicomponent contained in both would be essential and hence contained in $O$, contradicting the definition of $G_{\tilde{X}}$ and $G_{\infty}$.

Now we show that $G_{\infty}$ is closed in $W^u_{\infty} \setminus \infty$. Let $(a_n) \subseteq G_{\infty}$ be a sequence that converges to some $a \in W^u_{\infty} \setminus \infty$. We want to prove that $a \in G_{\infty}$. Denote by $C_n$ and $F_n$ the component and quasicomponent, respectively, that contain $a_n$. By part (1) $F_n$ reaches $\infty$ but not $\tilde{X}$, so the same is true of $C_n$. In particular, we may choose points $p_n \in C_n$ that converge to $\infty$.
\smallskip

{\it Claim.} $a \not\in \tilde{X}$.

{\it Proof of claim.} Assume, on the contrary, that $a \in \tilde{X}$. Then letting $q_n = a_n$ in Proposition \ref{prop:longer} and passing to a subsequence by Remark \ref{rem:subseq} we may assume that $(F_n)$ converges to an essential quasicomponent $F$. In particular there is a sequence of points $b_n \in F_n$ that converges to some point $b \in F$. Since $F \subseteq O$ and $O$ is open in $W^u_{\infty} \setminus (\tilde{X} \cup \infty)$, $b_n \in O$ for big enough $n$ so $F_n \subseteq O$ because $O$ is a clopen set in $W^u_{\infty} \setminus (\tilde{X} \cup \infty)$. Thus in particular $a_n \in O$, which contradicts $a_n \in G_{\infty}$ by part (1).
\smallskip

It follows from the claim that $a \in W^u_{\infty} \setminus (\tilde{X} \cup \infty)$, so it makes sense to consider the quasicomponent $F$ that contains $a$. We then have that $(F_n) \longrightarrow F$ and the contrapositive of of Proposition \ref{prop:quasi2} shows that $F$ must reach $\infty$. It cannot reach $\tilde{X}$ too because then it would be essential and $a \in F \subseteq O$, but $O$ is open in $W^u_{\infty} \setminus (\tilde{X} \cup \infty)$ and so for big enough $n$ we would have $a_n \in O$, which is not the case. Thus $a \in G_{\infty}$, as was to be shown.

An entirely analogous argument shows that $G_{\tilde{X}}$ is closed in $W^u_{\infty} \setminus \tilde{X}$, and by part (1) this implies that $G_{\tilde{X}}$ and $G_{\infty}$ are actually clopen sets of $W^u_{\infty} \setminus (\tilde{X} \cup \infty)$. We use this to prove that $G_{\infty}$ is also open in $W^u_{\infty} \setminus \infty$ by showing that its complement $\tilde{X} \cup O \cup G_{\tilde{X}}$ is closed. Let $(a_n)$ be a sequence in $\tilde{X} \cup O \cup G_{\tilde{X}}$ that converges to a point $a \in W^u_{\infty} \setminus \infty$. We need to show that $a \in \tilde{X} \cup O \cup G_{\tilde{X}}$. If $(a_n)$ has infinitely many terms in $\tilde{X}$ then, since $\tilde{X}$ is compact, $a \in \tilde{X}$ and we are finished. If not, after discarding the first few terms of the sequence we can assume that none of them belongs to $\tilde{X}$ and so the sequence is actually contained in $O \cup G_{\tilde{X}} \subseteq W^u_{\infty} \setminus (\tilde{X} \cup \infty)$. But we know that $O \cup G_{\tilde{X}}$ is closed there, so $a \in O \cup G_{\tilde{X}} \subseteq \tilde{X} \cup O \cup G_{\tilde{X}}$. This finishes the proof of part (2).

Now part (3) is very easy. It will suffice to prove that $\infty \not\in {\overline{G}}_{\tilde{X}}$, since the assertion about ${\overline{G}}_{\infty}$ is entirely analogous. Suppose on the contrary that $\infty \in {\overline{G}}_{\tilde{X}}$. Then clearly $\infty \in \overline{G}^{W^u_{\infty} \setminus \tilde{X}}_{\tilde{X}}$ too, where the closure is now taken in $W^u_{\infty}\setminus \tilde{X}$. However, by part (2) the set $G_{\tilde{X}}$ is closed in $W^u_{\infty} \setminus \tilde{X}$ so $\overline{G}^{W^u_{\infty} \setminus \tilde{X}}_{\tilde{X}} = G_{\tilde{X}}$ and we would have $\infty \in G_{\tilde{X}}$, which is not the case.

\subsection{Proof of Lemma \ref{lem:converge_essential}} Again, let us recall the statement of the lemma: for any quasicomponent $F$, the sequence of forward iterates $(\tilde{f}^nF)_{n \geq 0}$ or the sequence of backward iterates $(\tilde{f}^nF)_{n \leq 0}$ has a subsequence that converges to an essential quasicomponent. To prove this, let $C$ be a component of $F$. By Proposition \ref{prop:reach} $C$ must reach either $\infty$ or $\tilde{X}$. We assume the second to be the case, since the argument in the other case is entirely analogous.

Consider the sequence $C_n := \tilde{f}^n C$ for $n \geq 0$. On the one hand each $C_n$ reaches $\tilde{X}$ so we may certainly construct a sequence $q_n \in C_n$ such that $q_n \longrightarrow \tilde{X}$; on the other, letting $p$ be any point in $C$ and considering its iterates $p_n := \tilde{f}^n(p)$ we see that $p_n$ converges to $\infty$. Thus by Proposition \ref{prop:longer} and Remark \ref{rem:subseq} the sequence $F_n$ has a subsequence that converges to an essential quasicomponent.

\section{\label{app:canonical}}
This section provides a brief discussion on the concept of (compactified) unstable manifold $W_{\infty}$ introduced in Section \ref{sec:background}. More precisely, we prove Propositions \ref{prop:unstablecanonical} and \ref{prop:iteratesappendix}. In words, we show that $W_{\infty}$ does not depend on the choice of index pair employed in its definition and that it is not affected by replacing $f$ by one of its iterates $f^n$. Furthermore, the homeomorphism $\widetilde{f^n}$ induced by $f^n$ in the unstable manifold is conjugate to $(\tilde{f})^n$.

We will first introduce some notation. Throughout this appendix $A, A'$ will be used to denote isolating neighborhoods of an isolated invariant set $X$.
For any positive integer $m$ denote $\Inv_m(f, A)$ the set of points $p \in A$ for which there exists $\{p_i\}_{i=-m}^m$ in $A$ such that $p_0 = p$ and $f(p_{i+1}) = p_i$ for every $-m \le i < m$. In words, $\Inv_m(f, A)$ contains the points $p \in A$ for which we can find an orbit of $f$ of length $2m+1$ contained in $A$ whose middle element is $p$.
If we restrict ourselves to negative semiorbits we obtain $\Inv_m^-(f, A)$, where $p$ belongs to $\Inv_m^-(f, A)$ if there exist $\{p_i\}_{i = 0}^m$ in $A$ such that $f(p_{i+1}) = p_i$. The notation $\Inv^-(f, A)$ is reserved to the set of points $p \in A$ through which there is a full negative semiorbit $\{p_i\}_{i=0}^{\infty}$ contained in $A$. To ease the notation we will typically drop the map $f$ if it is clear from the context.

In the case of homeomorphisms, the previous objects have trivial descriptions in terms of $f$ and $A$ (e.g. $\Inv^-_m(A) = \displaystyle \cap_{k = 0}^m f^{k}(A)$) but if $f$ is non--invertible those descriptions are no longer valid and some caution is needed. The proof of the following lemma (extracted from \cite[Prop 2.2]{franksricheson}) needs some work in this most general situation:

\begin{lemma} \label{lem:invm=inv}
$\Inv^-(A) = \displaystyle \bigcap_{m \ge 0} \Inv^-_m(A)$ \enskip and \enskip$\Inv(A) = \displaystyle \bigcap_{m \ge 0} \Inv_m(A)$.
\end{lemma}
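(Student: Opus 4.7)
The plan is to establish the trivial inclusions $\Inv^-(A) \subseteq \bigcap_{m\geq 0} \Inv^-_m(A)$ and $\Inv(A) \subseteq \bigcap_{m\geq 0} \Inv_m(A)$ by simply truncating a given full (semi)orbit, and then to prove the reverse inclusions by a standard diagonal extraction argument that exploits two facts: $A$ is compact (as an isolating neighbourhood of $X$) and $f$ is continuous. The non-invertibility of $f$ prevents the naive description $\Inv^-_m(A) = \bigcap_{k=0}^m f^k(A)$ that works for homeomorphisms, so some compactness input is genuinely needed.

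For the first identity, suppose $p \in \bigcap_{m\geq 0} \Inv^-_m(A)$. For each $m \geq 1$ fix a backward chain $\{p_i^{(m)}\}_{i=0}^m \subseteq A$ with $p_0^{(m)} = p$ and $f(p_{i+1}^{(m)}) = p_i^{(m)}$ for $0 \leq i < m$. Using the compactness of $A$, build inductively nested subsequences $(m_k^{(0)}) \supseteq (m_k^{(1)}) \supseteq \cdots$ of positive integers such that, for each fixed $i \geq 0$, the sequence $p_i^{(m_k^{(i)})}$ converges in $A$ to some limit $p_i$ as $k \to \infty$; then the diagonal subsequence $m_k := m_k^{(k)}$ satisfies $p_i^{(m_k)} \to p_i$ for every $i \geq 0$. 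Continuity of $f$ gives
\[
f(p_{i+1}) \;=\; \lim_{k \to \infty} f(p_{i+1}^{(m_k)}) \;=\; \lim_{k \to \infty} p_i^{(m_k)} \;=\; p_i,
\]
so $\{p_i\}_{i \geq 0} \subseteq A$ is a full backward semiorbit through $p_0 = p$, proving $p \in \Inv^-(A)$.

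For the second identity, the forward direction is elementary: if $p \in \bigcap_{m\geq 0} \Inv_m(A)$ then $f^i(p) \in A$ for every $i \geq 0$ (taking $p_{-i}^{(m)} = f^i(p)$ for $i \leq m$). For the backward direction one applies the very same diagonal extraction as above to the subchains $\{p_i^{(m)}\}_{i = 0}^m$ of the full bilateral orbits $\{p_i^{(m)}\}_{i=-m}^m$ provided by $p \in \Inv_m(A)$. Concatenating the resulting full backward semiorbit with the forward semiorbit $\{f^i(p)\}_{i \geq 0}$ yields a full orbit through $p$ contained in $A$, so $p \in \Inv(A)$.

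The only non-trivial point is the diagonal extraction, and it is entirely standard once one realises that at each level $i$ the selection takes place in the compact space $A$; thus I do not expect any real obstacle beyond writing the induction carefully. No use of Alexander--Spanier machinery or any of the dynamical results from the body of the paper is required: the lemma is a soft topological statement, and the proof will occupy only a few lines.
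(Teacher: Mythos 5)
Your proof is correct and follows essentially the same route as the paper: one inclusion is trivial by truncation, and the other is obtained by compactness of $A$ and continuity of $f$ via iterated subsequence extraction (the paper presents this as a straightforward induction on passing to nested subsequences; your explicit diagonal argument just makes that step cleaner). The only cosmetic difference is that the paper deduces the second identity directly as a consequence of the first, while you rerun the diagonal extraction on the backward chains and concatenate with the forward orbit, which is equally valid.
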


\begin{proof}
Let us focus on the first claim because the second one is a straightforward consequence of the first. One of the inclusions follows easily from the inclusion $\Inv^-_m(A) \subset \Inv^-(A)$ so we just need to prove that if $q \in \Inv_m^-(A)$ for every $m \ge 0$ then $q \in \Inv^-(A)$. We suppose, by hypothesis, that there exist points $\{q^m_i: 1 \le i \le m\}$ in $A$ such that $f(q^m_{i+1}) = q^m_i$ and $f(q^m_1) = q$ for every $1 \le i < m$. Since $A$ is compact, $\{q^m_1\}_{m = 1}^{\infty}$ has at least one accumulation point, say $q_1$. After passing to a subsequence of $m$, we can suppose that $q^m_1$ converges as $m \to \infty$ to $q_1 \in A$. Evidently, $f(q_1) = q$. Next, we consider the set $\{q^m_2: 1 \le m\}$. Again, after passing to a subsequence, we can suppose that $q^m_2$ converges to $q_2 \in A$ and $f(q_2) = q_1$. This procedure can be used to define inductively an infinite backward semiorbit of $q$ in $A$, so $q \in \Inv^-(A)$.
\end{proof}

Now we are ready to prove the following basic fact: if $X$ is an isolated invariant set for $f$ then $X$ is also an isolated invariant set for $f^n$ for any $n \ge 1$ (cf. Proposition \ref{prop:iteratesappendix}):

Let $A$ be an isolating neighborhood of $X$ for $f$ and choose a smaller compact neighbourhood $A'$ of $X$ such that $A' \cup f(A') \cup \ldots \cup f^{n-1}(A') \subseteq A$. By definition, $\Inv(A', f^n) \subseteq \Inv(A', f)$ whereas, by Lemma \ref{lem:invm=inv}, we also have
$$
\Inv(A', f) = \Inv(A, f) = \bigcap_{m \ge 0} \Inv_m(A, f) = \bigcap_{k \ge 0} \Inv_{nk}(A, f) \supset \bigcap_{k \ge 0} \Inv_k(A', f^n) = \Inv(A', f^n),
$$
Thus we conclude that $X = \Inv(A', f^n)$ so $A'$ is an isolating neighborhood of $X$ for $f^n$.

Our next step is to prove that $W_{\infty}$ is a canonical object (Proposition \ref{prop:unstablecanonical}). Recall that the compactified unstable manifold is defined as
$$
W_{\infty} = \varprojlim \left\{ N/L \xleftarrow{f_{\#}} N/L \xleftarrow{f_{\#}} \ldots \right\}
$$
for an index pair $(N, L)$ for $X$ and $f$. Once we remove $\infty$ (recall that it corresponds to $([L], [L], [L], \ldots)$) we are left with a self--similar set that can be recovered from a ``local'' unstable manifold as we shall show below.
If $A$ is an isolating neighborhood of $X$, the limit set of any negative semiorbit contained in $A$ is itself invariant so it must be part of $X$. This suggests that $\Inv^-(A)$ would be a good candidate of ``local'' unstable manifold, should this concept be given any sense.

Let us translate $\Inv^-(A)$ to the language of inverse limits. Write
$$W^A = \varprojlim \left\{\Inv^-(A) \xleftarrow{f} \Inv^-(A) \xleftarrow{f} \ldots \right\}.$$
Observe that we can trivially form a chain of inclusions:
$$W^A \to W^{f(A)} \to W^{f^2(A)} \to \ldots$$
Indeed, if $\xi = (\xi_0, \xi_1, \ldots) \in W^A$ then, by definition, $f(\xi_{i+1})= \xi_i \in f(A)$ for every $i \ge 0$ so $\xi$ belongs to $W^{f(A)}$.

Suppose that $A' \subset A$ is another isolating neighborhood of $X$. Since $X = \displaystyle \cap_{k \ge 0} \mathrm{Inv}_k(A)$, we can find $m \ge 0$ such that $\mathrm{Inv}_m(A) \subseteq A'$.
As a consequence, any point $p$ that belongs to $f^m(\Inv_m(A))$, i.e. there is an orbit of length $2m+1$ inside $A$ that ends with $p$, also belongs to $f^m(A')$.

From this observation we obtain a chain of inclusions that serves to relate the limits $W^A$ for different isolating neighborhoods:
\begin{equation}\label{eq:apC}
W^{A'} \subset W^A \subset W^{f^m(A')}
\end{equation}

Let us explain why we shall think of $W^A$ as the initial part (close to $\tilde{X}$) of $W_{\infty}$.
Consider the union of the increasing sets $\{W^{f^j(A)}\}_{j \ge 0}$, call it $W^X$. As an immediate consequence of Equation (\ref{eq:apC}) we deduce that $W^X$ is independent of the choice of isolating neighborhood $A$.
The topology of $W^X$ comes from those of the sets $W^{f^j(A)}$ which in turn have the topology of the inverse limit. The latter, however,
can be expressed in simpler terms: it is the initial topology with respect to the projection that sends $\xi = (\xi_i)$ to $\xi_0$. Indeed, $\xi, \xi'$ are close elements in the inverse limit $W^A$ iff $\xi_j, \xi'_j$ are close for every $j \ge 0$. Fix an arbitrary neighborhood of $X$, there exist $k$ such that $\xi_j, \xi'_j$ belong to the neighborhood for all $j \ge k$, so we only need to examine the values $j < k$. Therefore, $\xi$ and $\xi'$ are close in $W^A$ iff $\xi_j, \xi'_j$ are close for all $j < k$ which is clearly equivalent to $\xi_0$ being close to $\xi'_0$ in $A$.

In order to prove that $W_{\infty}$ is canonical we will prove that it coincides with the Alexandroff one--point compactification of $W^X$. Equivalently, there is a homeomorphism between $W^X$ and $W_{\infty} \setminus \infty$.

Let $(N, L)$ be the index pair used to define the compactified unstable manifold.
Since, $N \setminus L$ is an isolating neighborhood of $X$ (for the sake of the argument it is irrelevant whether $N\setminus L$ is compact or not), $W^X$ is the union of \{$W^{f^j(N \setminus L)}: j \ge 0\}$.
Take any $\xi = (\xi_0, \xi_1, \ldots) \in W^{f^j(N \setminus L)}$. Clearly, the sequence $(\xi_i)_i$ is eventually contained in $N \setminus L$, say $\xi_i \in N \setminus L$ for every $i \ge m$. We then associate to $\xi$ the element $\mathbf{g}(\xi) = ((f_{\#})^m(\xi_m), \ldots, f_{\#}(\xi_m), \xi_m, \xi_{m+1}, \ldots)$, that does not depend on $m$ provided $m$ is large enough.
It is now an exercise to verify that $\mathbf{g}$ defines a homeomorphism between $W^X = \cup_{j \ge 0} W^{f^j(N \setminus L)}$ and $W_{\infty} \setminus \infty$. This proves that $W_{\infty}$ does not depend on the choice of index pair.

The homeomorphism $\tilde{f}$ is transformed, under conjugation by $\mathbf{g}$, in a map $\overline{f} \colon W^X \to W^X$ that has the simple expression

\centerline{
$\overline{f} \colon (\xi_0, \xi_1, \xi_2, \ldots) \mapsto (f(\xi_0), \xi_0 = f(\xi_1), \xi_1 = f(\xi_2), \ldots).$
}

\medskip

Let us discuss the modifications caused by replacing $f$ by an iterate $f^n$ for a given $n \ge 1$. This substitution makes sense because $X$ is still invariant and isolated for $f^n$, as we showed above.

Consider the inverse limit
$$
W_n^A = \varprojlim \left\{ \Inv^-(f^n, A) \xleftarrow{f^n} \Inv^-(f^n, A) \xleftarrow{f^n} \ldots \right\}.
$$
Denote $A' = A \cup f(A) \cup \ldots \cup f^{n-1}(A)$. There is a natural map $\beta$ that sends
$$
W_n^A \ni \eta = (\eta_i)_{i \ge 0} \mapsto \beta(\eta) = (f^{n-1}(\eta_0), \ldots, f(\eta_0), \eta_0, f^{n-1}(\eta_1), \ldots, \eta_1, \ldots) \in W^{A'}
$$
Intuitively, the map $\beta$ has an inverse that is trivially defined by $$\gamma\colon (\xi_0, \xi_1,\xi_2 \ldots) \mapsto (\xi_{n-1}, \xi_{2n-1}, \xi_{3n-1}, \ldots).$$
Notice however that the image of an element of $W^{A'}$ under $\gamma$ does not necessarily belong to $W_n^A$. An argument similar to the one that lead to Equation (\ref{eq:apC}) yields an integer $m$ such that the image of $\gamma$ is contained in $W_n^{f^m(A)}$. The composition map
$$
W_n^A \xrightarrow{\beta} W^{f^{n-1}(A)} \xrightarrow{\gamma} W_n^{f^m(A)}
$$
is the inclusion. The definitions of $\beta$ and $\gamma$ can be extended to $W_n^{f^j(A)}$ and $W^{f^j(A')}$, respectively, and thus to the union
$$
W_n^X = \bigcup_{j \ge 0} W_n^{f^j(A)} \xrightarrow{\quad\beta\quad} W^X = \bigcup_{j \ge 0} W^{f^j(A)} \xrightarrow{\quad\gamma\quad} W_n^X = \bigcup_{j \ge 0} W_n^{f^j(A)},
$$
It is easy to see that these extended versions $\beta$ and $\gamma$ are one--to--one and the composition $\gamma \circ \beta$ is the identity map. We conclude that $W_n^X$ is homeomorphic to $W^X$.

Let $W_{n, \infty}$ be the compactified unstable manifold for $f^n$. Recall that $W_{n, \infty} \setminus \infty$ is identified to $W_n^X$ and the action of $\widetilde{f^n}$ becomes

\centerline{
$\overline{f^n}(\eta_0, \eta_1, \eta_2, \ldots) = (f^n(\eta_0), f^n(\eta_1) = \eta_0 , f^n(\eta_2) = \eta_1, \ldots).$
}

Then, it follows easily that
$$
\beta \circ \overline{f^n} = (\overline{f})^n \circ \gamma,
$$
and we deduce that the maps $\widetilde{f^n}$ and $(\tilde{f})^n$ are conjugate. This concludes Proposition \ref{prop:iteratesappendix}.

\bibliographystyle{plain}
\bibliography{biblio}

\end{document}